\pgfplotsset{compat=1.16}
\newtheorem{thm}{Theorem}[section]
\newtheorem{lemma}[thm]{Lemma}
\newtheorem{cor}[thm]{Corollary}
\newtheorem{prop}[thm]{Proposition}
\newtheorem*{thm*}{Theorem}
\newtheorem*{lemma*}{Lemma}
\newtheorem*{cor*}{Corollary}
\newtheorem*{prop*}{Proposition}
\newtheorem*{conjecture*}{Conjecture}
\theoremstyle{definition}
\newtheorem*{defn*}{Definition}
\theoremstyle{definition}
\theoremstyle{definition}
\newtheorem{ex}{Example}
\theoremstyle{remark}
\newtheorem*{ex*}{Example}
\theoremstyle{definition}
\newtheorem{assm}{Assumption}
\theoremstyle{definition}
\newtheorem*{assm*}{Assumption}
\theoremstyle{definition}
\theoremstyle{definition}
\newtheorem*{condn*}{Condition}
\theoremstyle{remark}
\newtheorem{remark}[thm]{Remark}
\theoremstyle{remark}
\newtheorem*{remark*}{Remark}
\DeclareFontFamily{U}{mathx}{\hyphenchar\font45}
\DeclareFontShape{U}{mathx}{m}{n}{
      <5> <6> <7> <8> <9> <10> gen * mathx
      <10.95> mathx10 <12> <14.4> <17.28> <20.74> <24.88> mathx12
      }{}
\DeclareSymbolFont{mathx}{U}{mathx}{m}{n}
\DeclareMathSymbol{\intop}  {1}{mathx}{"B3}
\newcommand{\wt}{\widetilde}
\newcommand{\wh}{\widehat}
\let\temp\phi
\let\phi\varphi
\let\varphi\temp
\newcommand{\pr}{\mathbb{P}}
\newcommand{\R}{\mathbb{R}}
\newcommand{\normalN}{\mathcal{N}}
\newcommand{\given}{\,|\,}
\renewcommand{\norm}[1]{\Vert#1\Vert}
\newcommand{\ip}[1]{\langle#1\rangle}
\newcommand{\mix}{\mathscr{M}}
\newcommand{\mor}{\mathcal{R}}
\title{Uniform Consistency in Nonparametric Mixture Models}
\author{Bryon Aragam and Ruiyi Yang}
\date{\emph{University of Chicago and Princeton University}}
\begin{document}
\maketitle

{\let\thefootnote\relax\footnote{Contact: \texttt{bryon@chicagobooth.edu}, \texttt{ry8311@princeton.edu}}}

\begin{abstract}
We study uniform consistency in nonparametric mixture models as well as closely related mixture of regression (also known as mixed regression) models, where the regression functions are allowed to be nonparametric 
and the error distributions are assumed to be convolutions of a Gaussian density.
We construct uniformly consistent estimators under general conditions while simultaneously highlighting several pain points in extending existing pointwise consistency results to uniform results. 
The resulting analysis turns out to be nontrivial, and several novel technical tools are developed along the way. 
In the case of mixed regression, we prove $L^{1}$ convergence of the regression functions while allowing for the component regression functions to intersect arbitrarily often, which presents additional technical challenges.
We also consider generalizations to general (i.e., non-convolutional) nonparametric mixtures.
\end{abstract}

\setcounter{tocdepth}{2}
\tableofcontents

\section{Introduction}
\label{sec:intro}

Mixture models are a classical approach to modeling heterogeneous populations composed of many subpopulations, and have found a variety of applications in prediction and classification \citep{castelli1995,castelli1996,cozman2003,dan2018ssl}, clustering \citep{fraley2002,melnykov2010finite}, and latent variable models \citep{allman2009identifiability,gassiat2020identifiability,kivva2021learning,kivva2022identifiability}. Mixture models can also be used as a flexible tool for density estimation \citep{genovese2000rates,ghosal2007,kruijer2010adaptive} and arise in the study of empirical Bayes \citep{saha2020nonparametric,feng2015nonparametric} and deconvolution \citep{fan1991optimal,zhang1990fourier,moulines1997maximum}. When covariates are involved, mixtures can be used to model heterogeneous dependencies between an observation $Y$ and some covariate(s) $X$, in which the conditional distribution $\pr[Y\given X=x]$ arises as a mixture of multiple (noisy) regression curves. Despite their relevance and usefulness in applications,  mixture models   can be   notoriously difficult to analyze:   Except in special cases, mixture models are a classical example of a nonidentifiable, irregular statistical model. 
  As one might imagine, this situation is exacerbated for \emph{nonparametric} mixtures, to the extent that even fundamental properties such as identifiability and consistency remain only partially addressed.

For parametric mixture models, many of these issues have been carefully addressed: We now have optimal estimators for Gaussian mixtures \citep{heinrich2018strong,wu2020optimal,doss2020optimal}, a detailed understanding of the EM algorithm for mixtures \citep{balakrishnan2017statistical,cai2019chime}, and efficient algorithms for mixed linear regression models \citep{yi2014,kwon2021minimax}. The situation for nonparametric mixtures, however, is quite different. Here and in the sequel, by a ``nonparametric mixture'' we mean a finite mixture whose mixture components belong to a nonparametric family of distributions. For both vanilla nonparametric mixtures and mixtures of nonparametric regressions, much less is known despite many decades of work. 
  For example, although there is a substantial body of work focused on core identifiability and estimation problems, \emph{uniform} consistency has been comparatively understudied; see Section~\ref{sec:review} for a more detailed review of previous work.

Motivated by this disparity, in this paper we study uniform consistency in nonparametric mixture models and highlight several subtleties that arise when constructing uniformly consistent estimators and that appear to be peculiar to the setting of nonparametric mixtures. Although uniform consistency is often an afterthought---typically amounting to compactness and uniformity assumptions on the model---we hope to illustrate that for nonparametric mixtures, uniform consistency is a subtle matter with some surprising properties.
By ``uniform consistency'' we mean consistency that is uniform over a statistical model. 
We will focus on the nonparametric generalization of mixed regression, in which $\pr[Y\given X=x]$ is a mixture over $K$ nonparametric regression models $m_{k}(x)+z_{k}$, where the error distribution is assumed to be unknown and comes from a nonparametric family of densities. As a special case, this subsumes vanilla nonparametric mixtures (i.e., without covariates), which will be considered as well.

Let us begin by introducing the statistical model that will be our primary interest (for technical definitions, see Section~\ref{sec:model}): 
The response $Y$ is modeled by $m_{k}(X)+z_{k}$ with probability $\lambda_{k}$ ($k=1,\ldots,K$) and $z_{k}\sim f$, where $m_{k}$ are regression functions, $f$ is a density function, and $\lambda_{k}$ are weights satisfying $0<\lambda_{k}<1$ and $\sum_{k}\lambda_{k}=1$. Assuming as usual that the noise $z_{k}$ is independent of the covariates $X$, this implies that the conditional density $p(\,\cdot\given x)$ satisfies %
\begin{align}
\label{eq:defn:npmor}
p(y\given x)
= \sum_{k}\lambda_{k}f(y-m_{k}(x)).
\end{align}
In other words, for each fixed $x$, we have a mixture model whose weights and components are given by $\lambda_{k}$ and $f(y-m_{k}(x))$, respectively. Geometrically, this can be visualized as a location mixture of $K$ components whose shape is given by $f$ and whose location (mean) is given by $m_{k}(x)$; as $x$ varies, the density $f$ is translated by the value $m_{k}(x)$ (see Figure \ref{figure:modal regression}). Although our main focus will be on the case where each mixture component has the same error density $f$, extensions to unequal error densities are discussed in Sections~\ref{sec:npmix} and~\ref{sec:gen}.

\begin{figure}[t]
\minipage{0.333\textwidth}
\resizebox{\columnwidth}{!}{
\begin{tikzpicture}
\begin{axis}[
    axis lines = left,
    xlabel = \(x\),
    ylabel = {},
    color=black,
    xmin=-4*pi, ymin=-6, xmax=4*pi, ymax=6
]
    
\addplot[domain=-15:15, samples=100, color=red]{sin(deg(x))};
\addplot[domain=-15:15, samples=100, color=blue]{-sin(deg(x))};
\draw[line width=0.55mm,->] (axis cs:0, 0) -- (axis cs:1, 1);
    \draw[line width=0.5mm,->] (axis cs:0, 0) -- (axis cs:1, -1);
\end{axis}
\end{tikzpicture}
}
\subcaption{} \label{figure:transversality}
\endminipage
\minipage{0.333\textwidth}
\resizebox{\columnwidth}{!}{
\begin{tikzpicture}
\begin{axis}[
    axis lines = left,
    xlabel = \(x\),
    ylabel = {},
    color=black,
    xmin=-3.5*pi, ymin=-6, xmax=3.5*pi, ymax=6
]
     
\addplot[domain=-15:15, samples=100, color=red]{3+sin(deg(x))};
\addplot[domain=-15:15, samples=100, color=red,dashed]{3.4+sin(deg(x))};
\addplot[domain=-15:15, samples=100, color=red,dashed]{2.6+sin(deg(x))};

\addplot[domain=-15:15, samples=100, color=blue]{-3-sin(deg(x))};
\addplot[domain=-15:15, samples=100, color=blue,dashed]{-3.4-sin(deg(x))};
\addplot[domain=-15:15, samples=100, color=blue,dashed]{-2.6-sin(deg(x))};

\addplot[green, samples=150,line width=0.5mm] (1*(0.1*exp(-(x-2.6)^2/2/0.05)+0.4*exp(-(x-3.4)^2/2/0.05)+0.4*exp(-(x+2.6)^2/2/0.05)+0.1*exp(-(x+3.4)^2/2/0.05))/1.414/1.77/0.05,x);

\end{axis}
\end{tikzpicture}
}
\subcaption{} \label{figure:modal regression}
\endminipage
\minipage{0.333\textwidth}
\resizebox{\columnwidth}{!}{
\begin{tikzpicture}
\begin{axis}[
    axis lines = left,
    xlabel = \(x\),
    ylabel = {},
    color=black,
    xmin=-3.5*pi, ymin=-6, xmax=3.5*pi, ymax=6
]

\addplot[domain=-15:15, samples=100, color=red]{1+sin(deg(x))};
\addplot[domain=-15:15, samples=100, color=blue]{-1-sin(deg(x))};
\addplot[green, samples=150,line width=0.5mm] (0.25*(exp(-(x-2)^2/2/0.05)+exp(-(x+2)^2/2/0.05))/1.414/1.77/0.05+pi/2,x);
\pgfplotsinvokeforeach{pi/2}{
  \draw[dashed] ({rel axis cs: 0,0} -| {axis cs: #1, 0}) -- ({rel axis cs: 0,1} -| {axis cs: #1, 0});}
\end{axis}
\end{tikzpicture}
}
\subcaption{} \label{figure:point of separation}
\endminipage
    \caption{Examples of mixed regression models and the underlying assumptions. (a). Transversality: The two curves have different derivatives at points of intersection as shown by the black arrows. (b). Modal regression: The green curve represents a multi-modal error distribution, for which modal regression yields a four-component regression model (dashed lines) instead of two components (solid lines). 
    (c). An example of two non-transversal (equal derivatives at intersections) regression functions that intersect infinitely often. The dashed line represents a point of separation. 
    }
    \label{fig:my_label}
\end{figure}

Our main results establish identifiability and \emph{uniform} consistency in the model \eqref{eq:defn:npmor} when both $f$ and the $m_{k}$ are unknown and nonparametric (and in particular, nonlinear and non-Gaussian, respectively).
In order to rescue identifiability, we rely on local separation between regression functions.   This is a natural assumption that arises in applications (often implicitly) involving data clustering \citep{fraley2002,melnykov2010finite}, such as computer vision \citep{kampffmeyer2019deep} and differential expression in genetics \citep[e.g.][]{pan2002model,si2014model,erola2020model}. Weakly separated mixtures present estimation challenges even in parametric models \citep{arora2005learning,regev2017learning}, and this has practical implications for example in causal inference \citep{feller2016weak}. Of course, unless stronger assumptions are made, one would not expect to be able to distinguish mixture components that overlap significantly.
To make the concept of separation concrete, we focus on regression models \eqref{eq:defn:npmor} whose error distributions can be written as the convolution of a Gaussian density,   i.e., when 
\begin{align}
    f=\phi_{\sigma}\ast G_0:=\int_{\mathbb{R}} \phi_{\sigma}(x-\theta)dG_0(\theta),\label{eq:convolution assumption}
\end{align}
where $\phi_{\sigma}$ is the density of $\mathcal{N}(0,\sigma^2)$ and $G_0$ is a compactly supported probability measure over $\mathbb{R}$. The model \eqref{eq:convolution assumption} gives a natural and accessible way to quantify the separation between individual components of \eqref{eq:defn:npmor} at a point $x$, for instance as the distance between the supports of the measures $G_0(\cdot-m_k(x))$ while retaining identifiability.
Densities of the form \eqref{eq:convolution assumption} are quite flexible and have appeared previously in the literature on nonparametric estimation \citep[e.g.][]{genovese2000rates,ghosal2001entropies} and hypothesis testing \citep[e.g.][]{efron2004large,cai2010optimal}.
Indeed, any Borel probability measure on $\mathbb{R}$ can be approximated by such a density \cite[see e.g.][Corollary 6]{nguyen2019approximations}, which satisfies the need in applications for flexible error models. 
Thus, this model serves as a natural first step in understanding more general nonparametric mixtures.

Unlike previous work, we focus on consistency of estimating $f$ and the $m_k$'s in the $L^{1}$ norm, which presents particular challenges for the $m_{k}$'s.  In particular, estimation of $m_{k}(x)$ for any fixed $x$ is much simpler and does not require careful handling near points where two different regression curves may intersect. Our results in fact allow for up to countably many such intersections 
as long as there exists a \emph{single} point where the regression curves are well-separated. 
Crucially, however, we do not assume that the regression functions are uniformly separated and in fact allow for different regression functions to intersect. 
As a matter of independent interest, our results also require a careful analysis of a distance-based estimator for vanilla nonparametric mixtures. This analysis involves several new ideas and is crucial to obtaining uniform bounds on the error of our proposed mixed regression estimator.
 
\begin{remark}
The term ``pointwise'' can have two distinct meanings in our setting: The usual pointwise consistency of an estimator and pointwise convergence of the functions $\widehat{m}_k$. Recall that the latter means $\widehat{m}_k(x)\to m_k(x)$ (e.g. in probability) for each $x$, as opposed to $L^1$ consistency which requires $\norm{\widehat{m}_k - m_k}_1\to 0$. To avoid confusion, we refer to pointwise convergence of the function values as ``convergence of $\widehat{m}_k(x)$ for fixed $x$'', and reserve ``pointwise'' for pointwise consistency, which in our setting \emph{always} assumes $L^1$ consistency of the regression estimates $\widehat{m}_k$. In particular, uniform consistency means $L^1$ convergence, uniformly over a family of regression functions to be defined shortly.
\qed
\end{remark}

\paragraph*{Contributions}
More precisely, the main results of this paper can be summarized as follows:

\begin{enumerate}
    \item   \emph{Impossibility} (Sections \ref{sec:unif:npmor:fail},~\ref{sec:unif:npmix:fail}): We show with explicit examples that without additional assumptions on the model, there cannot exist a uniformly consistent estimator, even when there exists a pointwise consistent estimator. See also Section~\ref{sec:consistent estimation special case}.   
    \item \emph{Mixed regression} (Section \ref{sec:npmor}):   By exploiting a point of separation,   we introduce a uniformly consistent estimator for classes of mixed regression models \eqref{eq:defn:npmor} with convolutional Gaussian error densities \eqref{eq:convolution assumption}.
    The resulting analysis reveals several subtleties regarding uniform consistency issues for these models. 
    \item \emph{Vanilla nonparametric mixtures} (Section \ref{sec:npmix}): We introduce a uniformly consistent estimator for finite nonparametric mixture models   under the same convolution assumption \eqref{eq:convolution assumption} while allowing different mixture components with different $G_0$'s for each component.    
    The resulting analysis introduces a novel project-smooth-denoise construction, which may be of independent interest.
    \item \emph{Extension to general densities and other generalizations} (Section \ref{sec:gen}): We consider generalizations where the error density is not necessarily a convolutional Gaussian mixture and introduce a pointwise consistent estimator in this setting. We also consider the case where the error densities $f$ are allowed to depend on $k$.
      Finally, we discuss other generalizations such as allowing for different points of separation and higher-dimensional analogues of our results.

\end{enumerate}
  Each of these sections contains a proof outline for the main result in that section, while deferring all technical proofs to the appendices.  

We also discuss identifiability in these models (Section~\ref{sec:model}) and show that \emph{pointwise} consistency in both models is straightforward.   A unifying theme throughout is that while pointwise consistency results may be straightforward to derive based on existing literature, uniform results are much more subtle and require novel estimators, above and beyond simply adding uniform assumptions to existing pointwise estimators.
As our intention is to expose and highlight these subtleties, our emphasis is on generality and minimal assumptions. As an aid to the reader, we have included numerous examples to illustrate our assumptions, as well as a concrete set of assumptions in Section~\ref{sec:npmor:concrete} for
the reader interested in a more digestable version of our general assumptions.

\paragraph*{Overview}

\begin{figure}[t]
\begin{subfigure}[t]{0.49\textwidth}
  \centering 
  \includegraphics[width=\textwidth]{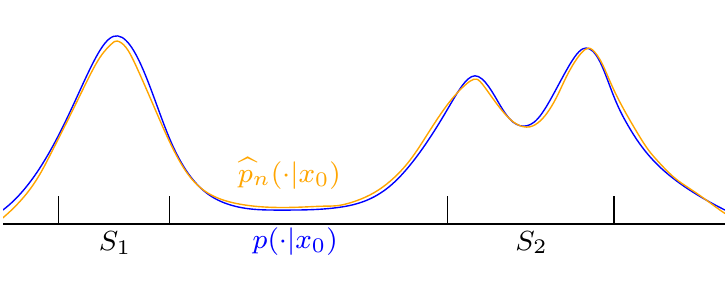}
  \caption{True density $p(\,\cdot\given x_0)$ (blue) and the conditional density estimator $\widehat{p}_n(\,\cdot\given x_0)$ (yellow). High-density regions of $p(\,\cdot\,\given x_0)$ are marked by the intervals $S_1$ and $S_2$.
  }\label{fig:true}
\end{subfigure}
~\begin{subfigure}[t]{0.49\textwidth}
  \centering 
  \includegraphics[width=\textwidth]{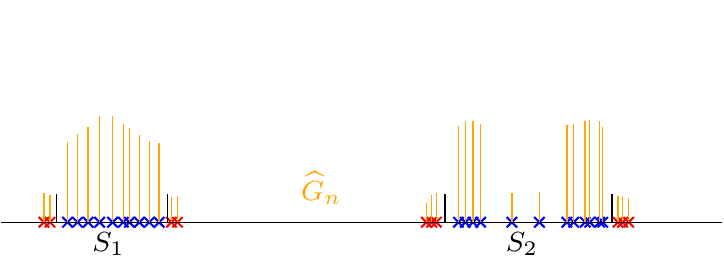}
  \caption{Estimated (discrete) mixing measure $\widehat{G}_n$ after projecting $\widehat{p}_n(\,\cdot\given x_0)$. The $\times$s represent the atoms of $\widehat{G}_n$, the sticks represent the associated weights. Red $\times$s denote outliers.}\label{fig:project}
\end{subfigure}
\\
\vspace{1em}
\\
\begin{subfigure}[t]{0.49\textwidth}
  \centering 
  \includegraphics[width=\textwidth]{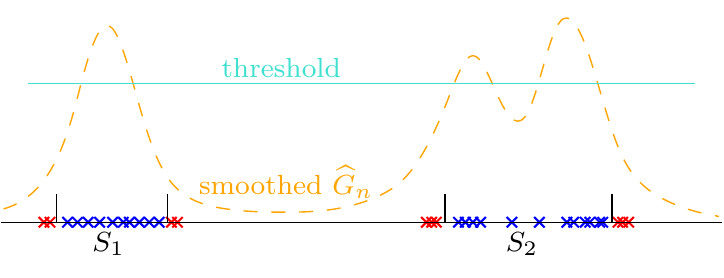}
  \caption{Smoothed estimate of $\widehat{G}_n$ (note that this is different from $\widehat{p}_n(\,\cdot\given x_0)$) together with the estimated threshold used to locate high-density regions.}\label{fig:smooth}
\end{subfigure}
~
\begin{subfigure}[t]{0.49\textwidth}
  \centering 
  \includegraphics[width=\textwidth]{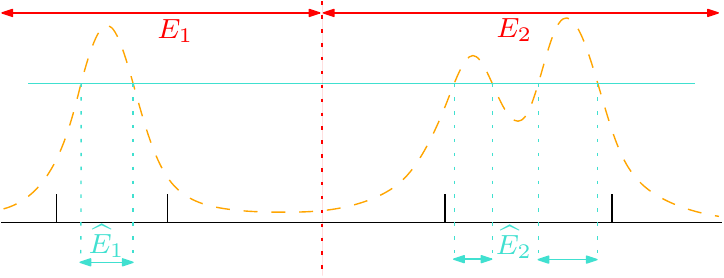}
  \caption{Estimated high density regions $\widehat{E}_1$ and $\widehat{E}_2$, which are used to construct a partition $E_1\cup E_2$ of the real line.}\label{fig:denoise}
\end{subfigure}
\caption{Overview of the project-smooth-denoise procedure. The intervals $S_1$ and $S_2$ represent high-density regions of $p(\,\cdot\,\given x_0)$; see \eqref{eq:def:supp} for a formal definition. 
An (a) initial conditional density estimator $\widehat{p}_n(\,\cdot\given x_0)$ is (b) projected then (c) smoothed and denoised to construct a partition $(E_1,E_2)$ of the input space.
The outputs are the estimated component mixing measures $\widehat{G}_n(\cdot\,|\,E_1)$ and $\widehat{G}_n(\cdot\,|\,E_2)$.
}\label{fig:overview}
\end{figure}

In order to present the main ideas at a high-level, here we give an overview of our proposed estimator. For fixed $x$, recall that the conditional density $p(\,\cdot\,\given x)$ in \eqref{eq:defn:npmor} is itself a mixture model with $K$ components.
\begin{enumerate}
    \item Construct a conditional density estimator $\widehat{p}_n(\,\cdot\,\given x)$.  
    \item Estimate the mixing proportions and the error density using the following project-smooth-denoise procedure (Figure~\ref{fig:overview}):
    \begin{enumerate}
        \item Project $\widehat{p}_n(\,\cdot\given x_0)$ onto the space of mixture of Gaussians, where $x_0$ is a point of separation (Figure \ref{fig:project}).  
        \item Smooth the obtained mixing measure $\widehat{G}_n$ to a density (Figure \ref{fig:smooth}).
        \item Denoise the original mixture by thresholding the smoothed density
        to recover the mixture components (Figure \ref{fig:denoise}).   
    \end{enumerate}
    \item Estimate the regression functions using a minimum distance estimator. 
\end{enumerate}
As mentioned, a crucial ingredient in the above procedure is the estimation of vanilla nonparametric mixtures (Step 2), whose discussion is separated into Section~\ref{sec:npmix}.

The rest of the paper is organized as follows. We review previous work in Section~\ref{sec:review}. In Section \ref{sec:model} we formalize our problem setup and assumptions, and state an identifiability result. Section \ref{sec:npmor} introduces an estimation procedure and shows its uniform consistency for certain families of mixed regression models. Section~\ref{sec:npmix} discusses estimation of vanilla nonparametric mixtures, which is a key ingredient in estimating mixed regression models. Section \ref{sec:gen} discusses several generalizations stemming from our model assumptions, and Section~\ref{sec:discussion} concludes with some discussion. All proofs are deferred to the appendices; Appendix~\ref{sec:proofs} contains detailed technical proofs, with additional supporting lemmas deferred to Appendices~\ref{sec:technical lemma} and \ref{sec:density estimator}.

\vspace{0.2cm}
\noindent{\textbf{Notation.}}
For $1\leq p <\infty$ we shall denote $\|f\|_{L^p(S)}=(\int_S |f(x)|^p dx)^{1/p}$ and $\|f\|_{L^{\infty}(S)}=\operatorname{ess\,sup}_{x\in S} |f(x)|$ for a subset $S\subset \mathbb{R}$, and simply $\|f\|_p$ when $S=\mathbb{R}$. We denote $a\vee b=\operatorname{max}\{a,b\}$ and $a\wedge b=\operatorname{min}\{a,b\}$. The notation $\mathcal{F}$ stands for the Fourier transform and $\mathcal{F}^{-1}$ its inverse.   For a density $\varphi$ and a probability measure $G$, we denote $(\varphi\ast G)(x)=\int \varphi(x-\theta) dG(\theta)$ as their convolution. For two nonempty sets $A,B\subset \mathbb{R}$, we denote $\operatorname{dist}(A,B)=\operatorname{inf}_{x\in A,y\in B}|x-y|$.      Lastly for two probability measures $P,Q$ over $\mathbb{R}$, the 1-Wasserstein distance is defined as 
\begin{align}
    W_1(P,Q)=\underset{\gamma\in\Gamma(P,Q)}{\operatorname{inf}}\int_{\mathbb{R}\times \mathbb{R}} |x-y| d\gamma (x,y), \label{eq:def wass}
\end{align} 
where $\gamma\in\Gamma(P,Q)$ denotes the set of all couplings between $P$ and $Q$.

\section{Review of previous work}
\label{sec:review}

  The prototypical mixture of regression model as in \eqref{eq:defn:npmor} would be one where both the $m_k$'s and $f$ are parametric, e.g., linear regression with Gaussian errors. Based on this, there have been many extensions that move beyond parametric regression functions, parametric errors, constant mixing proportions, or any combination of the three. In this section, we shall review some of the literature based on whether parametric assumptions are imposed on the regression functions. For our later developments, analysis of vanilla nonparametric mixtures turns out to be crucial, and we also give an updated account for them.  

\paragraph*{Parametric regression functions}
  
Arguably the most popular version of the mixed regression (also known as mixture of regressions) model is the mixed linear regression model, which is the special case $m_{k}(X) = \ip{\theta_{k},X}$ in \eqref{eq:defn:npmor}, where $\theta_{k}\in\R^{p}$. In this setting, \cite{young2010mixtures,huang2012mixture} extend the usual parametric model to allow covariate-dependent mixing proportions, and \cite{hunter2012semiparametric,vandekerkhove2013estimation} study the case of general nonparametric errors. Focusing more on computational guarantees,   \cite{yi2014,kwon2020converges,kwon2021minimax} investigate convergence of EM algorithms and \cite{chen2014convex,hand2018convex,li2018learning,yen2018mixlasso} study estimation with low or optimal sample complexity, with \cite{chen2014convex} also considering nonparametric errors.   An excellent overview of mixed linear regression, and mixture models more broadly, is \cite{fruhwirth2006}.

Closely related to mixed linear regression models are so-called mixture of experts models \citep{jacobs1991,jordan1994}. In the mixture of experts model, it is assumed that \citep[following the notation of][]{jiang1999}
\begin{align}
p(y\given x)
= \sum_{j}g_{j}(x)\pi(h_{j}(x), y),
\end{align}
where $h_{j}(x)=\alpha_{j}+\ip{\beta_{j},x}$ is the linear mean response of $Y$ conditional on $X$.
Related work on these models includes results on approximation \citep{jiang1999,nguyen2016,zeevi1998}, identifiability \citep{jiang1999identifiability}, and estimation \citep{makkuva2019breaking,ho2019convergence}.
There are two technical distinctions between mixtures of experts and mixtures of regressions: 1) The weights $g_{j}(x)$ (also known as the ``gating functions'') are allowed to depend on $x$, and 2) The conditional mean functions $\pi(h_{j}(x), y)$ typically follow a very specific parametric (e.g. generalized linear) structure. At a more basic level, mixtures of experts are typically used to approximate a single, nonparametric response and are not in general identifiable. These distinctions stand in contrast to our setting in which the motivation is to identify and estimate the $K$ heterogeneous response curves $m_{k}(x)$. 

\paragraph*{Nonparametric regression functions }
More closely related to our work are \cite{huang2013,xiang2018semiparametric}, where the former considers a special case of \eqref{eq:defn:npmor} in which the error densities $f_{k}$ are Gaussian (i.e., $z_{k}\sim\normalN(0,\sigma_{k}^{2}(x))$) while allowing for general $\lambda_{k}(x)$, $m_{k}(x)$, and $\sigma_{k}^{2}(x)$.
Here, the authors prove consistency and asymptotic normality for fixed $x$ by assuming the $m_{k}$ are differentiable and transversal, i.e., that the derivatives of each $m_{k}$ differ at points of intersection (see Figure \ref{figure:transversality}). It is not hard to see why this condition is useful: If two regression functions are allowed to match derivatives at a point of intersection, then one can construct two smooth sets of regression functions that will yield the same joint $\pr(X,Y)$: The original $m_{k}$ and $m_{j}$, as well as 
\begin{align}
\label{eq:transversal:fail}
\wt{m}_{k}(u) = \begin{cases}
m_{k}(u) & u \le x \\
m_{j}(u) & u \ge x
\end{cases},
\qquad
\wt{m}_{j}(u) = \begin{cases}
m_{j}(u) & u \le x \\
m_{k}(u) & u \ge x
\end{cases}.
\end{align}
Transversality implies that $\wt{m}_{k}$ and $\wt{m}_{j}$ as constructed above would be nondifferentiable at $x$, violating the differentiability requirement.
A similar ``non-parallel'' condition appears in \cite{kitamura2018nonparametric}, where identifiability in nonparametric mixture models is studied in depth. 
Their approach to identifiability is based on moment generating functions, similar to \cite{teicher1963}.   Our approach is quite different to these works that stop short of proving uniform consistency,   which is our main focus and more subtle than pointwise consistency. See Remark~\ref{rem:compare} for a more detailed comparison.

Another common approach to modeling heterogeneous regression functions is modal regression \citep{yao2012local,yao2014new,chen2016nonparametric}, which aims to find the modes of the joint distribution $\pr(X,Y)$. This is a flexible nonparametric model that provides an attractive alternative to parametric models for mixed regression, however, one notable drawback of modal regression is the inability to handle error distributions that are themselves multimodal, even if they are well-separated (see Figure~\ref{figure:modal regression}). Modal regression also suffers from issues near points of intersection, as illustrated by Figure~2 in \cite{chen2016nonparametric}.

\paragraph*{Vanilla nonparametric mixtures }
As indicated above, a key technical hurdle in the study of nonparametric mixed regression models is the identifiability of the \emph{conditional} mixture for fixed $x$. As such, it is worth pausing to review what is known about identifiability in vanilla (i.e., covariate-free) nonparametric mixtures. For a detailed overview, see \cite{fruhwirth2006} and \cite{ritter2014}. By a nonparametric mixture, we mean a probability measure of the form $\mu=\sum_{k=1}^{K}\lambda_{k}\mu_{k}$, where $\mu_{k}$ are probability measures and $\lambda_{k}$ nonnegative weights summing to unity. It is clear that \emph{any} probability measure $\mu$ can be written as a mixture in an infinite number of ways, simply by noting that $\mu(A)=\mu(A\given E)\mu(E) + \mu(A\given E^{c})\mu(E^{c})$ for any measurable sets $A$ and $E$. 
Clearly, additional assumptions are needed to ensure identifiability.
  For example, it is well-known that translation families and product mixtures are identifiable \citep{teicher1960,teicher1961,teicher1963,teicher1967}, and a necessary and sufficient condition is linear independence of densities \citep{chandra1977mixtures,yakowitz1968}. The use of minimum distance estimators to estimate the mixing measure is classical \citep{deely1968}, and there is a now refined analysis of optimality in estimating parametric mixtures \citep[e.g.][]{ishwaran1996identifiability,chen1995,heinrich2018strong,ho2016convergence,ho2019singularity}. To the best of our knowledge, similar optimality results have not been obtained for nonparametric mixtures.
  Recent work focuses on nonparametric extensions when observations are grouped according to the latent class assignments \citep{vandermeulen2019operator,ritchie2020consistent}, when the covariates carry certain latent structures \citep{allman2009identifiability,gassiat2016nonparametric}, and when the component distributions are symmetric \citep{bordes2006,hunter2007}, products of univariate distributions \citep{hall2003nonparametric,hall2005mixture,elmore2005}, covariate-dependent \citep{compiani2016using}, or well-separated \citep{aragam2020identifiability}. 
  The basic thrust of this line of work on nonparametric identifiability is to restrict the component measures $\mu_{k}$ to satisfy various regularity assumptions such as independence, symmetry, or separation. In the present work, we build on the idea of separation studied in \cite{aragam2020identifiability}. Finally, we note that the models we introduce in the next section, based on convolutional mixtures, have appeared in a variety of contexts previously including Bayesian nonparametrics \citep{nguyen2013} and the empirical geometry of multivariate data \citep{koltchinskii2000empirical}.

\section{Model assumptions and identifiability}\label{sec:model}

We begin by presenting the details of our model assumptions and some preliminaries on identifiability in this section. We also briefly discuss pointwise vs. uniform consistency in these models.

\subsection{Model assumptions}
\label{sec:model:assm}
Recall the basic model \eqref{eq:defn:npmor}. It follows that for any marginal density $p_{X}(x)$ over $x$, the joint density is specified as follows:
\begin{align}
    p(x,y)=p_X(x)\sum_{k=1}^K \lambda_kf(y-m_k(x)), \quad \quad (x,y)\in [a,b]\times \mathbb{R}, \label{eq:mor joint density}
\end{align}
where $\sum_{k=1}^K\lambda_k=1$ and $f$ is a common error density satisfying $\int_{\mathbb{R}} xf(x)dx=0$.   Throughout this paper, we shall assume $K$ is known and $\operatorname{min}_k\lambda_k>0$ so that \eqref{eq:mor joint density} is indeed a $K$-component mixture model for each $x$; indeed when $K$ is unknown nonparametric mixtures are known to be fundamentally nonidentifiable, even under strong additional assumptions \cite[see e.g.][Section 2.2]{aragam2020identifiability}.  We also assume that $p_X>0$ on $[a,b]$ for simplicity; see Section~\ref{sec:step 1} for discussion and Section~\ref{sec:gen:other} on how to generalize this. Consider the following parameter space  
\begin{align*}
    \mor=\bigg\{\left(p_X,f,\{\lambda_k\}_{k=1}^K, \{m_k\}_{k=1}^K\right)\in \mathcal{P}([a,b])\times\mathcal{P}(\mathbb{R})\times \mathbb{R}^K\times [\mathcal{C}^0([a,b])]^K:\\
    \underset{x\in[a,b]}{\operatorname{inf}}\, p_X(x)>0,\,\,\int_{\mathbb{R}}xf(x)dx=0,\,\,\underset{k=1,\ldots,K}{\operatorname{min}}\,\lambda_k>0,\,\,\sum_{k=1}^K \lambda_k=1\bigg\},
\end{align*} 
where $\mathcal{P}(\Omega)$ and $\mathcal{C}^0(\Omega)$ denote the set of all probability densities and the set of real-valued continuous functions on $\Omega\subset\R$, respectively.   Let $\Phi:\mor \rightarrow \mathcal{P}([a,b]\times \mathbb{R})$ be the map that associates a parameter tuple in $\mor$ to the corresponding density \eqref{eq:mor joint density}. We say that a subfamily $\Phi(\mathcal{L})$ of  $\Phi(\mor)$ with $\mathcal{L}\subset \mor$ is identifiable if $\Phi$ is injective over $\mathcal{L}$. Without additional assumptions, the model $\Phi(\mor)$ is not identifiable,   and therefore the purpose of this section is to introduce subfamilies of $\mor$ over which identifiability is ensured.

Our main results are for the case where $f$ can be expressed as a Gaussian convolution as in \eqref{eq:convolution assumption}.
We do not assume that $G_{0}$ has a density. 
As we discuss in Section~\ref{sec:unif:npmix:fail}, even when they are identifiable, nonparametric mixtures may be poorly behaved, so this assumption is made in order to make uniform estimation feasible, although we do not believe it is fundamentally necessary and can likely be relaxed. 
The key structural assumption that we will exploit is the following idea of a point of separation. 
\begin{assm}[Point of separation]\label{assumption:point of sep}
There exists $x_0\in [a,b]$ so that 
\begin{align*}
    \underset{j\neq k}{\operatorname{min}}\, |m_j(x_0)-m_k(x_0)|>2\operatorname{diam}(\operatorname{supp}(G_0)). 
\end{align*}
\end{assm}
\noindent
In particular, we only require there be one such point of separation: Away from this point of separation, the $m_k$'s can be arbitrarily close and even intersect multiple times on the rest of the domain (see Figure \ref{figure:point of separation}). The rationale of this assumption is that if the regression functions stay close over the whole interval, then there is less hope to estimate each of them. 
Since the conditional density of \eqref{eq:mor joint density} at $x_0$ is a convolutional Gaussian mixture with mixing measure $\sum_{k=1}^K \lambda_k G_0(\cdot-m_k(x_0))$, Assumption \ref{assumption:point of sep} is essentially requiring that the separation between the supports of $G_0(\cdot-m_k(x_0))$ is larger than their diameter so that single linkage clustering for instance can identify them. Under stronger assumptions on $G_0$, this assumption can be relaxed; see Section~\ref{sec:npmix:cond}.

\subsection{Identifiability}
\label{sec:model:ident}
Before presenting our main results, we pause to discuss identifiability in the model $\Phi(\mor)$. See \cite{kitamura2018nonparametric,aragam2020identifiability} 
for a more detailed investigation of identifiability in nonparametric mixtures, including generalizations of some of the results below. Let $\mor_{x_0}\subset \mor$ be a subfamily satisfying the following conditions:
\begin{enumerate}[label=(A\arabic*)]
    \item\label{assm:npmor:conv} $f=\phi_{\sigma}\ast G_0$ with $G_0$ having compact support (cf. \eqref{eq:convolution assumption}). 
    \item\label{assm:npmor:distinct} $\underset{j\neq k}{\operatorname{min}}\,|\lambda_j-\lambda_k|>0$. 
    \item\label{assm:npmor:sep} $\underset{j\neq k}{\operatorname{min}}\, |m_j(x_0)-m_k(x_0)|>2\operatorname{diam}(\operatorname{supp}(G_0))$. 
    \item\label{assm:npmor:intersect} The set $Z=\{x\in[a,b]: \exists j\neq k, \,m_j(x)=m_k(x) \}$ is countable. 
\end{enumerate}
\noindent
For obvious reasons, the value $x_{0}$ will be referred to as a \emph{point of separation} and points in the set $Z$ will be referred to as \emph{points of intersection} in the sequel.

\begin{thm}\label{thm:identifiability convolutional}
The mixed regression model $\Phi(\mor_{x_0})$ is identifiable.
\end{thm}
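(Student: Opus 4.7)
The plan is to proceed in three stages. Integrating out $y$ immediately gives $p_X = p_X'$, so identifiability reduces to recovering $(f, \{\lambda_k\}, \{m_k\})$ from the conditional densities $p(\cdot\,|\,x) = \sum_k \lambda_k f(\cdot - m_k(x))$ for each $x \in [a,b]$. The bulk of the work takes place at the point of separation $x_0$, where \ref{assm:npmor:sep} lets us disentangle the mixture components; the remaining $x \neq x_0$ are handled via Fourier inversion together with \ref{assm:npmor:distinct} and \ref{assm:npmor:intersect}.

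At $x_0$, using \ref{assm:npmor:conv}, the conditional density factors as $\phi_\sigma \ast H$ with $H := \sum_k \lambda_k G_0(\cdot - m_k(x_0))$ a compactly supported probability measure; the analogue for the primed tuple gives $\phi_\sigma \ast H = \phi_{\sigma'} \ast H'$. To pin down $\sigma$ I would work with the moment generating function $M(t) := \int e^{ty} p(y|x_0)\,dy$, which is finite on all of $\mathbb{R}$ because $p(\cdot|x_0)$ has Gaussian tails: the factorization yields $M(t) = e^{\sigma^2 t^2/2} M_H(t) = e^{\sigma'^2 t^2/2} M_{H'}(t)$, and the compact support of $H, H'$ forces $\log M_H(t), \log M_{H'}(t) = O(|t|)$, so dividing by $t^2$ and letting $t \to \infty$ yields $\sigma^2 = \sigma'^2$. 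Injectivity of Gaussian convolution (since $\widehat{\phi_\sigma}$ never vanishes) then gives $H = H'$. I expect the identification of $\sigma$ to be the main obstacle, since the representation $f = \phi_\sigma \ast G_0$ is a priori not unique as a pair $(\sigma, G_0)$; the MGF asymptotic is the cleanest way I see to show that the Gaussian smoothing scale is nonetheless determined by $f$ alone.

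Given $H = H'$, I would extract $G_0$, the weights, and the values $m_k(x_0)$ using the separation hypothesis. By \ref{assm:npmor:sep}, the translated sets $P_k := m_k(x_0) + \supp(G_0)$ each have diameter $\diam(\supp(G_0))$ and are pairwise separated by gaps strictly larger than $\diam(\supp(G_0))$. Hence the inter-cluster gaps between distinct $P_k$'s are strictly longer than any intra-cluster gap inside $\conv(\supp(G_0)) \setminus \supp(G_0)$, and since $K$ is known, the $K-1$ longest complementary intervals of $\supp(H)$ inside its convex hull recover the partition $\{P_k\}_{k=1}^K$ intrinsically from $\supp(H)$. The same cutting procedure applied to $\supp(H)=\supp(H')$ produces the primed partition $\{P_k'\}_{k=1}^K$, so the two partitions coincide. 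Matching clusters via a permutation $\tau$ and using the zero-mean condition on $G_0, G_0'$ (which follows from $\int y f\,dy = 0$), the identity $\lambda_k G_0(\cdot - m_k(x_0)) = H|_{P_k} = \lambda_{\tau(k)}' G_0'(\cdot - m_{\tau(k)}'(x_0))$ yields $\lambda_k = \lambda_{\tau(k)}'$, $m_k(x_0) = m_{\tau(k)}'(x_0)$, and $G_0 = G_0'$. Distinctness of the $\lambda_k$'s from \ref{assm:npmor:distinct} pins down $\tau$, so after relabeling we conclude $f = f'$, $\lambda_k = \lambda_k'$, and $m_k(x_0) = m_k'(x_0)$.

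To propagate identifiability of the $m_k$ to all of $[a,b]$, I would Fourier-transform the conditional density equality at a general $x$ to obtain $\widehat{f}(\xi) \sum_k \lambda_k e^{-i\xi m_k(x)} = \widehat{f}(\xi) \sum_k \lambda_k e^{-i\xi m_k'(x)}$. Since $\widehat{f}(\xi) = e^{-\sigma^2\xi^2/2}\widehat{G_0}(\xi)$ and $\widehat{G_0}$ is a nonzero entire function of exponential type (Paley--Wiener), $\widehat{f}$ vanishes only on a countable set; dividing and using continuity gives $\sum_k \lambda_k \delta_{m_k(x)} = \sum_k \lambda_k \delta_{m_k'(x)}$ as discrete measures. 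For $x \notin Z$ all $m_k(x)$ are distinct, and combined with distinct weights \ref{assm:npmor:distinct} this forces $m_k(x) = m_k'(x)$ for each $k$. Countability of $Z$ from \ref{assm:npmor:intersect} makes $[a,b]\setminus Z$ dense in $[a,b]$, and continuity of the $m_k, m_k'$ extends the equality to all of $[a,b]$, completing the argument.
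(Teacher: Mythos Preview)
Your proof is correct and follows the same three-stage architecture as the paper: marginalize to get $p_X$, exploit the separation at $x_0$ to recover $(f,\{\lambda_k\})$, then use distinct weights to pin down the $m_k$ pointwise off a countable set and extend by continuity. A few implementation differences are worth noting. First, you devote effort to identifying $\sigma$ via MGF asymptotics; in the paper $\sigma$ is treated as a fixed constant across the family (the proof of Proposition~\ref{prop:identifiability convolutional} deconvolves both sides by the same $\phi_\sigma$, and Section~\ref{sec:npmor} states explicitly that $\sigma$ is assumed known), so this step is not needed there---though your argument is valid and would be required were $\sigma$ allowed to vary. Second, at $x_0$ you partition $\supp(H)$ via the $K-1$ longest gaps, which works cleanly because all clusters share the common diameter $\diam(\supp(G_0))$; the paper instead routes through the more general Proposition~\ref{prop:identifiability convolutional} (allowing different $G_k$), whose proof uses a pigeonhole-type combinatorial argument on supports. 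Third, for general $x$ you cancel $\widehat f$ via Paley--Wiener and reduce to equality of discrete measures, while the paper invokes identifiability of translation families \citep{yakowitz1968} and peels off one component at a time; these are equivalent. In this last step your argument is marginally cleaner: once the discrete measures agree, distinctness of the $m_k(x)$ automatically forces distinctness of the $m_k'(x)$, so you need only work outside the intersection set $Z$ of the unprimed model, whereas the paper takes $Z$ to be the (still countable) union of both models' intersection sets.
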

\begin{remark}\label{remark:identifiability}
Apart from assuming a point of separation, we remark that another crucial assumption in the definition of $\mor_{x_0}$ is that $\operatorname{min}_{j\neq k}|\lambda_j-\lambda_k|>0$, i.e., the mixing proportions are distinct. This will play an important role in the estimation procedure as we demonstrate in Section \ref{sec:npmor}. Roughly speaking, the distinct mixing proportions can be used to solve the label switching issue near points where the regression functions intersect. 
Although we do not require that the $m_k$'s are differentiable for identifiability, Condition \ref{assm:npmor:distinct} can be replaced by assuming the $m_k$'s are transversal (and hence differentiable) as in \cite[][Theorem 1]{huang2013}. 
\qed
\end{remark}

\begin{remark}
The assumption \ref{assm:npmor:distinct} rules out a set of weights that has measure zero and thus the model can be considered as \emph{generically} identifiable \citep{allman2009identifiability} if we drop \ref{assm:npmor:distinct}. Similar conclusions have also been observed for parametric \citep{vandermeulen2019operator,ho2019singularity} and semiparametric \citep{hunter2007,hunter2012semiparametric,bordes2006} models.   \qed
\end{remark}

The proof of Theorem \ref{thm:identifiability convolutional} can be found in Appendix~\ref{appen:proof of identifiability}, but we illustrate the main idea here. 
First, we identify the error density and the mixing proportions by exploiting the point of separation $x_0$. By restricting the mixed regression model at $x_0$, the problem reduces to that of a finite mixture and we can make use of the following identifiability result of vanilla mixture models. Define 
\begin{align}
    \mix=\left\{\left(\{f_k\}_{k=1}^K,\{\lambda_k\}_{k=1}^K, \{\mu_k\}_{k=1}^K\right)\in \mathcal{P}(\mathbb{R})^K\times \mathbb{R}^K\times \mathbb{R}^K\right\} \label{eq:finite mixture model}
\end{align}
to be   the parameter space   satisfying 
\begin{enumerate}[label=(B\arabic*)]
    \item\label{assm:npmix:conv} $f_k=\phi_{\sigma}\ast G_k$ with $G_k$ having compact support and $\int_{\mathbb{R}}x f_k(x) dx=0$. 
    \item\label{assm:npmix:wgt} $\underset{k}{\operatorname{min}}\, \lambda_k>0$ and $\sum_{k=1}^K \lambda_k=1$. 
    \item\label{assm:npmix:sep} $\underset{j\neq k}{\operatorname{min}}\,  \operatorname{dist} \left(\operatorname{supp} (G_j(\cdot-\mu_j)),\,\operatorname{supp}( G_k(\cdot-\mu_k))\right)> \underset{k}{\operatorname{max}}\,\operatorname{diam}(\operatorname{supp}(G_k))$. 
\end{enumerate}
Here $G_k(\cdot-\mu_k)$ is the translation of $G_k$ by $\mu_k$.   Let $\Psi:\mix \rightarrow \mathcal{P}(\mathbb{R})$ be the map that associates a parameter tuple $\big(\{f_k\}_{k=1}^K,\{\lambda_k\}_{k=1}^K, \{\mu_k\}_{k=1}^K\big)$ to the corresponding density $\sum_{k=1}^K \lambda_k f_k(\cdot-\mu_k)$.   
\begin{prop}\label{prop:identifiability convolutional}
  The mixture model $\Psi(\mix)$ is identifiable, i.e., $\Psi$ is injective over $\mix$.  
\end{prop}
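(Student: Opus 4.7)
The plan is to reduce identifiability of $\Psi(\mix)$ to three steps: (i) deconvolve the common Gaussian factor to recover the mixing measure $H := \sum_{k=1}^K \lambda_k G_k(\cdot-\mu_k)$; (ii) use the separation condition~\ref{assm:npmix:sep} to extract each translated support $S_k := \supp(G_k(\cdot-\mu_k))$ from $\supp(H)$; and (iii) read off $\lambda_k$, $\mu_k$, $G_k$, and hence $f_k$, from $H\rvert_{S_k}$.

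For step (i), observe that the mixture density equals $\phi_\sigma \ast H$. Because the characteristic function $e^{-\sigma^2 t^2/2}$ of $\phi_\sigma$ is nowhere vanishing, Fourier inversion forces the finite measure $H$ to be uniquely determined by the density. Consequently, if two parameter tuples in $\mix$ produce the same density, then $\sum_k \lambda_k G_k(\cdot-\mu_k) = \sum_k \lambda'_k G'_k(\cdot-\mu'_k)$ as measures.

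Step (ii) is the crux. Write $\supp(H) = \bigsqcup_k S_k = \bigsqcup_{k'} S'_{k'}$ (the unions are disjoint by~\ref{assm:npmix:sep}), and set $D = \max_k \diam S_k$, $D' = \max_{k'}\diam S'_{k'}$. If some $S'_{k'}$ intersects two distinct $S_j, S_k$, then $\diam(S'_{k'}) \geq \dist(S_j,S_k) > D$, giving $D' > D$; symmetrically, if some $S_j$ intersects two distinct $S'_{k'}, S'_{l'}$, then $D > D'$. These two conclusions are mutually exclusive, so WLOG the first scenario does not occur: each $S'_{k'}$ lies in a unique $S_{k(k')}$. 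Surjectivity of $k' \mapsto k(k')$ onto $[K]$ follows because any $S_k$ missed from the image would be forced to equal $\emptyset$, contradicting that $G_k$ is a probability measure; then surjectivity on a finite set of the same cardinality is bijectivity, and a routine set-theoretic argument using disjointness upgrades the inclusions $S'_{k'(k)} \subseteq S_k$ to equalities $S'_{k'(k)} = S_k$.

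Step (iii) is then straightforward. Since $S_k = S'_{k'(k)}$ and the other summands of $H$ have support disjoint from $S_k$, we obtain $\lambda_k = H(S_k) = \lambda'_{k'(k)}$ and $H\rvert_{S_k}/\lambda_k = G_k(\cdot-\mu_k) = G'_{k'(k)}(\cdot - \mu'_{k'(k)})$. The centering assumption $\int xf_k\,dx=0$ in~\ref{assm:npmix:conv}, combined with $\phi_\sigma$ having mean zero, forces $G_k$ itself to have mean zero, so $\mu_k$ is uniquely recovered as the mean of $H\rvert_{S_k}/\lambda_k$; translating back yields $G_k$, and finally $f_k = \phi_\sigma \ast G_k$. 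The main obstacle is precisely step (ii): the symmetric $D$-versus-$D'$ comparison is essential, since without it one cannot rule out two admissible decompositions whose supports mutually straddle each other.
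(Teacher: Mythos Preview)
Your proof is correct and follows the same three-step skeleton as the paper: deconvolve to recover the mixing measure $H$, use the separation condition to match up the support pieces, and then read off the parameters from $H\rvert_{S_k}$. Steps (i) and (iii) are essentially identical to the paper's (the paper cites \cite{nguyen2013} for the deconvolution, whereas you invoke the nonvanishing Gaussian characteristic function directly; both are fine).

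The one substantive difference is in the combinatorial core of step (ii). The paper argues by contradiction and pigeonhole: if some $F_{i^\ast}$ straddles two sets $E_{j^\ast},E_{k^\ast}$, then $\diam(\supp H_{i^\ast})>\max_k\diam(\supp G_k)$, which forces every other $F_k$ to avoid both $E_{j^\ast}$ and $E_{k^\ast}$; the remaining $K-1$ sets $F_k$ are then squeezed into $K-2$ of the $E_j$'s, so two of them land in a common $E_j$, contradicting the separation of the $F$'s. Your argument is a cleaner symmetric dichotomy: straddling in one direction forces $D'>D$, in the other $D>D'$, so at most one direction of straddling can occur, and the rest follows by surjectivity on equal-cardinality sets. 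Your route avoids the pigeonhole bookkeeping and is a bit more transparent; the paper's route has the minor advantage of not needing the ``WLOG'' relabeling of the two tuples. Either way the conclusion is the same.
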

\begin{remark}
Proposition~\ref{prop:identifiability convolutional} allows for different error densities for each component in the definition of $\mix$ and can be seen as a special case of the general results from \cite{aragam2020identifiability}, although our proof is more straightforward owing to the additional structure provided by \ref{assm:npmix:conv}.
For mixed regression models,
assumption \ref{assm:npmor:sep} implies that the conditional density $p(\,\cdot\given x_0)$ satisfies \ref{assm:npmix:sep} and so Proposition \ref{prop:identifiability convolutional} gives identifiability of $f$ and $\{\lambda_k\}_{k=1}^K$.  \qed
\end{remark}
Once the error density and the mixing proportions have been identified, we see that the model \eqref{eq:mor joint density} at each $x$ is then a finite mixture of the location family $\{f(\cdot-\mu)\}_{\mu\in\mathbb{R}}$, from which we can identify the values of the regression functions for each $x$. The difficulty would then to assemble the $m_k(x)$'s correctly across all $x\in[a,b]$. Since $Z$ is countable, we can decompose the interval $[a,b]$ as a union of subinterval $Z_k$'s where the $m_k$'s do not intersect. Continuity would allow us to correctly identify the $m_k$'s over each $Z_k$, however, a new difficulty arises when attempting to connect these $m_{k}$ across points of intersection (cf. \eqref{eq:transversal:fail}), as illustrated by the following example:

\begin{ex}
\label{ex:uneq}
Consider a two-component mixed regression model
 \begin{align*}
    y|x\sim 
    \begin{cases}
    m_1(x)+\varepsilon  \quad \operatorname{w.p.} \,\, \lambda\\
    m_2(x)+\varepsilon  \quad \operatorname{w.p.} \,\, 1-\lambda
    \end{cases}
\end{align*}  
where $m_1(x)=|x|,\,m_2(x)=-|x|$, $0<\lambda<1$ and $\varepsilon$ is Gaussian. Theorem \ref{thm:identifiability convolutional} implies that the model is identifiable if $\lambda\neq\frac12$. If $\lambda=\frac12$, it is then indistinguishable from the alternate model with means $\tilde{m}_1(x)=-x$ and $\tilde{m}_2(x)=x$. The problem occurs precisely at the point $x=0$ when we try to connect the segments on either side. However if $\lambda\neq \frac12$, then one can join the different pieces by matching the associated mixing proportions and there is a unique way to do so. In \cite{huang2013}, transversality was used to deal with this issue based on derivative information of the $m_k$'s. In particular, higher order smoothness assumptions can alleviate the label switching issue across points of intersection and guarantee identifiability. Nonetheless, we will show in Section~\ref{sec:unif:npmor:fail} that even for $\mathcal{C}^{\infty}$ regression functions there are issues with \emph{uniformity} if the mixing proportions are not distinct. \qed
\end{ex}

\noindent
In order to avoid this difficulty, 
Condition \ref{assm:npmor:distinct} on distinct mixing proportions gives a unique way to assign the labels.

\subsection{Pointwise vs. uniform consistency}\label{sec:consistent estimation special case}
Under the model assumptions introduced above, one can construct (pointwise) consistent estimators of the parameters $(f,\{\lambda_k\}_{k=1}^K,$ $ \{m_k\}_{k=1}^K)$ by following the same steps as in the identifiability arguments. In other words, we can use the local information provided by the point of separation $x_0$ to construct estimators of  the error density and the mixing proportions, which are then used to infer the regression functions globally. Let $\{(X_i,Y_i)\}_{i=1}^n$ be i.i.d. samples from the joint density \eqref{eq:mor joint density}.
\begin{prop}\label{prop:pointwise consistency convolutional}
Suppose $\mor_{x_0}$ satisfies additionally 
\begin{enumerate}[label=(A5)]
    \item\label{assumption:joint:holder} The joint density $p_X(x)\sum_{k=1}^K \lambda_kf(y-m_k(x))$ is $\beta$-H\"older continuous for some $\beta>0$. 
\end{enumerate}
Then there exists estimators $\widehat{f}_n$, $\{\widehat{\lambda}_{n,k}\}_{k=1}^K$ and $\{\widehat{m}_{n,k}\}_{k=1}^K$ so that with probability one
\begin{align*}
    \|\widehat{f}_n-f\|_1 \vee \underset{k}{\operatorname{max}}\, |\widehat{\lambda}_{n,k}-\lambda_k| \vee \underset{k}{\operatorname{max}}\, \|\widehat{m}_{n,k}-m_k\|_{L^1[a,b]} \xrightarrow{n\rightarrow \infty} 0.
\end{align*}
\end{prop}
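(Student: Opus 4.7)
My plan is to mirror the identifiability argument of Theorem 3.1, converting each step into a consistent estimation step. First, I would use a kernel density estimator to consistently estimate the joint density $p(x,y)$ and hence the conditional density $p(\cdot\mid x_0)$ in $L^1$; assumption (A5) together with the lower bound $\inf_x p_X(x) > 0$ ensures this via standard KDE theory (shrink the bandwidth at the appropriate H\"older rate, divide by a marginal KDE $\widehat{p}_{n,X}(x_0)$). The output is an estimator $\widehat{p}_n(\cdot\mid x_0)$ with $\|\widehat{p}_n(\cdot\mid x_0) - p(\cdot\mid x_0)\|_1 \to 0$ almost surely.

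Next, I would recover $f$ and $\{\lambda_k\}$ by solving the minimum distance problem
$$\bigl(\widehat{f}_n,\{\widehat{\lambda}_{n,k}\}\bigr) \;\in\; \argmin_{(g,\{w_k\},\{\mu_k\})\in \mix_n} \Bigl\| \widehat{p}_n(\cdot\mid x_0) - \textstyle\sum_k w_k\, g(\cdot - \mu_k) \Bigr\|_1$$
over a sieve $\mix_n \subset \mix$ that grows to $\mix$ as $n\to\infty$ (e.g., truncating the supports of $G_0$, the noise scale $\sigma$, and the separation gap). By (A1) and (A3), the true parameters lie in $\mix$ (up to sieve approximation), and injectivity of $\Psi$ from Proposition 3.2 together with a standard $M$-estimator continuity argument gives $\|\widehat{f}_n - f\|_1 \to 0$ and $\max_k |\widehat{\lambda}_{n,k} - \lambda_k| \to 0$ almost surely.

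Third, with $\widehat{f}_n$ and $\{\widehat{\lambda}_{n,k}\}$ in hand, I would estimate the regression values pointwise: for each $x\in [a,b]$, let
$$\widehat{\mu}_n(x) \;\in\; \argmin_{\mu\in [-M,M]^K}\Bigl\|\widehat{p}_n(\cdot\mid x) - \textstyle\sum_k \widehat{\lambda}_{n,k}\,\widehat{f}_n(\cdot - \mu_k)\Bigr\|_1,$$
where $M$ is a compactification bound containing $m_k([a,b])$ for all $k$ (valid because the $Y_i$'s are bounded in probability under the standing assumptions). For $x\notin Z$, the true values $\{m_k(x)\}$ are distinct, so the $f$-location mixture is identifiable and $\widehat{\mu}_n(x)$ converges to $\{m_k(x)\}$ as an unordered set. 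I would then assign labels by matching each recovered location to the component whose estimated weight is $\widehat{\lambda}_{n,k}$, which is unambiguous for large $n$ by (A2). This yields $\widehat{m}_{n,k}$ with $\widehat{m}_{n,k}(x)\to m_k(x)$ for every $x\in[a,b]\setminus Z$. Since $Z$ is countable by (A4), convergence holds Lebesgue-almost-everywhere on $[a,b]$, and the uniform bound $|\widehat{m}_{n,k}(x)|\le M$ lets the dominated convergence theorem conclude $\|\widehat{m}_{n,k} - m_k\|_{L^1[a,b]}\to 0$ almost surely.

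The hard part is the labeling step at the end. The minimum distance estimator recovers regression values only as an unordered set, and stitching them into globally defined functions requires a labeling that is stable across $x$. The distinct-weights condition (A2) is exactly what makes this work, since each component carries a unique tag $\lambda_k$ independent of $x$, and this tag is consistently estimated by $\widehat{\lambda}_{n,k}$. Without (A2) one could flip labels across each intersection in $Z$ as in Example 3.6, and even though the pointwise unordered set of values would still be correctly estimated, the global $L^1$ convergence of the labeled functions $\widehat{m}_{n,k}$ would fail.
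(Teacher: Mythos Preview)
Your plan tracks the paper's own argument closely: KDE for the conditional density under (A5), recover $(f,\{\lambda_k\})$ from the mixture at the point of separation $x_0$, then a minimum-distance estimator \eqref{eq:MDE} for the regression values at each $x$ with labels pinned down by the distinct weights (A2), and pass to $L^1$ via dominated convergence over $[a,b]\setminus Z$. The paper's proof of this proposition is in fact just a pointer back to the construction behind Theorem~\ref{thm:uniform consistency}, noting that under the weaker regularity (A5) one invokes Lemma~\ref{lemma:convergence of KDE} for almost-sure $L^1$ convergence of $\widehat p_n(\cdot\mid x)$ in place of the rate lemma used for uniformity.

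The one substantive difference is your second step. You recover $(f,\{\lambda_k\})$ by an abstract sieve $M$-estimator over $\mix_n\nearrow\mix$, appealing to identifiability (Proposition~\ref{prop:identifiability convolutional}) and a compactness/continuity argument. The paper instead uses the explicit project--smooth--denoise construction of Section~\ref{sec:npmix}: project $\widehat p_n(\cdot\mid x_0)$ onto finite Gaussian mixtures to estimate the overall mixing measure, then smooth and threshold to isolate each $\lambda_k G_0(\cdot-m_k(x_0))$. Your route is plausible for pointwise consistency---the paper itself allows any consistent plug-in at this stage (Remark~\ref{rem:prev:est})---but the sieve details are thin: you must design $\mix_n$ so that it eventually contains the true parameter while remaining compact in a topology under which $G_0\mapsto\phi_\sigma\ast G_0$ is $L^1$-continuous, and you must retain a separation constraint of the form \ref{assm:npmix:sep} in the closure so that the limiting minimizer is unique. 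The paper's explicit construction sidesteps these issues and, more to the point, is the version that later upgrades to \emph{uniform} consistency; a bare sieve/identifiability argument would not.
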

\noindent
This result is a special case of more general results discussed in Section~\ref{sec:gen}.
Roughly speaking, consistent estimators $\widehat{f}_n$ and $\{\widehat{\lambda}_{n,k}\}_{k=1}^K$ can be obtained from a conditional density estimate $\widehat{p}_n(\,\cdot\given x_0)$ thanks to Assumption \ref{assumption:point of sep}. 
A minimum distance estimator can then be employed to obtain the estimates $\{\widehat{m}_{n,k}(x)\}_{k=1}^K$ for each $x$, which are joined using \ref{assm:npmor:distinct} to yield the $\widehat{m}_{n,k}$'s. 
This result extends to mixed regression models whose error densities are not necessarily convolutional Gaussian; see Section \ref{sec:gen} for more discussion. 

The crucial point here is that pointwise consistent estimators are relatively easy to obtain under our model assumptions, while uniformly consistent estimation requires substantially more efforts.
  As illustrated in Example~\ref{ex:uneq}, without additional regularity conditions on the $m_{k}$, it is impossible to decide how to ``split'' the curves past this intersection when the associated mixing proportions are equal (e.g., see the discussion around \eqref{eq:transversal:fail}). 
As we will show, without additional assumptions this issue about equal mixing proportions proves fatal when it comes to the existence of uniformly consistent estimators in this model (Section~\ref{sec:unif:npmor:fail}). 
Moreover, one might hope to extend the analysis of \cite{huang2013} to nonparametric errors using recent work on nonparametric mixture models \citep{aragam2020identifiability}, however, as we shall see these conditions also preclude uniformity in estimation (Section~\ref{sec:unif:npmix:fail}).  
Evidently, uniform consistency is a particularly subtle issue when it comes to nonparametric mixtures.

\section{Uniformly consistent estimation of mixed regression} \label{sec:npmor}
In this section we will study uniformly consistent estimation of the mixed regression model introduced in Section \ref{sec:model}. As in Section \ref{sec:consistent estimation special case}, the high level idea is again similar to the identifiability argument but requires a more refined analysis to push through. 
As in the previous section, we assume that $f=\phi_{\sigma}\ast G_0$ for some unknown $G_0$.
To avoid technical digressions, we fix throughout the marginal density $p_X$, assuming $\operatorname{inf}_{[a,b]} \, p_X >0$ and $\|p_X\|_{L^{\infty}[a,b]} \vee \|p^{\prime}_X\|_{L^{\infty}[a,b]}<\infty$. 
Under these assumptions, let $\mathcal{U}_{x_0}\subset \mor_{x_0}$ be a subfamily of tuples $\vartheta=(p_X,f,\{\lambda_k\}_{k=1}^K, \{m_k\}_{k=1}^K)$ satisfying the following regularity assumptions:
\begin{enumerate}[label=(C\arabic*)]
     \item $\underset{\vartheta\in\mathcal{U}_{x_0}}{\operatorname{sup}}\, \operatorname{diam}(\operatorname{supp}(G_0))<\infty$ and $\underset{\vartheta\in\mathcal{U}_{x_0}}{\operatorname{sup}}\,\Big( \underset{k}{\operatorname{max}}\, \|m_k\|_{L^{\infty}[a,b]} \vee \underset{k}{\operatorname{max}}\, \|m_k^{\prime}\|_{L^{\infty}[a,b]}\Big) <\infty$.\label{assumption:unpmor:regularity}
 \end{enumerate}
together with the following structural assumptions
 \begin{enumerate}[label=(C\arabic*)]
 \setcounter{enumi}{1}
     \item $\underset{\vartheta\in\mathcal{U}_{x_0}}{\operatorname{inf}}\, \underset{j}{\operatorname{min}}\, \lambda_j >0,
     \quad \underset{\vartheta\in\mathcal{U}_{x_0}}{\operatorname{inf}}\, \underset{j\neq k}{\operatorname{min}}\, |\lambda_j-\lambda_k|>0.$ \label{assumption:unpmor:lambda}
     \item $\underset{\vartheta\in\mathcal{U}_{x_0}}{\operatorname{inf}}\,\Big[\underset{j\neq k}{\operatorname{min}}\, |m_j(x_0)-m_k(x_0)|- 2\operatorname{diam}(\operatorname{supp}(G_0))\Big]>0.$ \label{assumption:unpmor:separation}
     \item $\underset{\vartheta\in\mathcal{U}_{x_0}}{\operatorname{inf}}\, |(\mathcal{F}G_0)(t)|\geq J(t)$ for some function $J>0$ and every $t$.  \label{assumption:unpmor:FT}
     \item $\underset{\vartheta\in\mathcal{U}_{x_0}}{\operatorname{sup}}\,\operatorname{Leb}\!\big(\{x: \exists \,j\neq k ,\,  |m_j(x)-m_k(x)|\leq a_n \}\big) \xrightarrow{n\rightarrow \infty} 0$ for all $a_n\rightarrow 0,$ where $\operatorname{Leb}(\cdot)$ denotes the Lebesgue measure of a set. \label{assumption:unpmor:sep of reg}
 \end{enumerate}
   Additional discussion on these assumptions can be found in Remark~\ref{rem:diff:mk}.  
The main result in this section is the following: 
\begin{thm}
\label{thm:uniform consistency}
 
There exist uniformly consistent estimators $\widehat{f}_n$, $\{\widehat{\lambda}_{n,k}\}_{k=1}^K$, and $\{\widehat{m}_{n,k}\}_{k=1}^K$ for $\mathcal{U}_{x_0}$, where is $\mathcal{U}_{x_0}$ is defined by \ref{assumption:unpmor:regularity}-\ref{assumption:unpmor:sep of reg}.
More specifically, for any $\varepsilon>0$, 
\begin{align*}
    \underset{\mathcal{U}_{x_0}}{\operatorname{sup}}\,\,\mathbb{P}\left( \|\widehat{f}_n-f\|_{1} \vee \underset{k}{\operatorname{max}}\, |\widehat{\lambda}_{n,k}-\lambda_k| \vee \underset{k}{\operatorname{max}}\, \|\widehat{m}_{n,k}-m_k\|_{L^1[a,b]} >\varepsilon\right) \xrightarrow{n\rightarrow\infty} 0.
\end{align*}
\end{thm}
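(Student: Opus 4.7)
The plan is to follow the three-stage project-smooth-denoise-then-minimum-distance pipeline outlined in Figure~\ref{fig:overview}, pushing through the pointwise estimation arguments while tracking uniformity in $\mathcal{U}_{x_0}$. First I would produce a kernel-type conditional density estimator $\widehat{p}_n(\cdot\given x)$. Under \ref{assumption:unpmor:regularity}, the joint density is Lipschitz in $x$ (the $m_k$'s and their derivatives are uniformly bounded, and $f=\phi_\sigma \ast G_0$ is $C^{\infty}$ with uniformly compactly supported $G_0$), and $p_X$ is bounded away from $0$ and above. Standard smoothing arguments with an appropriately tuned bandwidth then give
\begin{align*}
    \sup_{\vartheta \in \mathcal{U}_{x_0}} \mathbb{P}\Big( \|\widehat{p}_n(\cdot\given x) - p(\cdot\given x)\|_1 > \varepsilon \Big) \xrightarrow{n\to \infty} 0,
\end{align*}
uniformly over $x\in [a,b]$.

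Second, I would apply the results of Section~\ref{sec:npmix} at the point of separation $x_0$. The conditional slice $p(\cdot\given x_0)= \sum_k \lambda_k f(\cdot - m_k(x_0))$ is a vanilla finite mixture with a shared convolutional Gaussian error density, and \ref{assumption:unpmor:separation}--\ref{assumption:unpmor:FT} translate directly into the separation and Fourier-lower-bound conditions needed by the project-smooth-denoise estimator of Section~\ref{sec:npmix}. This produces uniformly consistent estimators $\widehat{f}_n \to f$ in $L^1$, $\widehat{\lambda}_{n,k}\to \lambda_k$, and component-wise mean estimates at $x_0$, with a consistent labeling assigned once and for all via the ordering in \ref{assumption:unpmor:lambda}. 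Third, for each $x\in[a,b]$ I would define the minimum distance estimator
\begin{align*}
    (\widehat{m}_{n,1}(x),\ldots,\widehat{m}_{n,K}(x)) \in \arg\min_{\mu_1,\ldots,\mu_K} \Big\| \widehat{p}_n(\cdot\given x) - \sum_{k=1}^K \widehat{\lambda}_{n,k}\, \widehat{f}_n(\cdot - \mu_k) \Big\|_1,
\end{align*}
so that the label of $\widehat{m}_{n,k}$ is inherited from $\widehat{\lambda}_{n,k}$. For any $x$ with $\min_{j\ne k}|m_j(x)-m_k(x)|>0$, I would use identifiability (Proposition~\ref{prop:identifiability convolutional}) together with the distinctness in \ref{assumption:unpmor:lambda} to show that the unique population minimizer is the truth, and then a compactness-plus-continuity argument using \ref{assumption:unpmor:regularity} (which confines the search over $\mu_k$ to a compact set) to transfer the triangle-inequality bound $\|\widehat{p}_n(\cdot\given x) - \sum_k \widehat{\lambda}_{n,k}\widehat{f}_n(\cdot - m_k(x))\|_1 \to 0$ into pointwise convergence $|\widehat{m}_{n,k}(x) - m_k(x)|\to 0$, uniformly in $\mathcal{U}_{x_0}$.

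The main obstacle is upgrading this pointwise-in-$x$ convergence to the $L^1[a,b]$ statement required by the theorem, the delicate point being the behavior of the minimum distance estimator near points of intersection of the $m_k$'s, where the conditional mixture degenerates and the label assignment can flip. I would address this by choosing a slowly vanishing threshold $a_n \downarrow 0$ and decomposing $[a,b]$ into the good set $\mathcal{G}_n := \{x:\min_{j\ne k}|m_j(x)-m_k(x)|\ge a_n\}$ and its complement $\mathcal{B}_n$. Assumption~\ref{assumption:unpmor:sep of reg} ensures $\operatorname{Leb}(\mathcal{B}_n)\to 0$ uniformly in $\mathcal{U}_{x_0}$, and since the feasible means live in a bounded set by \ref{assumption:unpmor:regularity}, the integrand $|\widehat{m}_{n,k}(x)-m_k(x)|$ is uniformly bounded, so $\mathcal{B}_n$ contributes negligibly to the $L^1$ norm. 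On $\mathcal{G}_n$, the separation $a_n$ combined with the separation gap between the $\lambda_k$'s from \ref{assumption:unpmor:lambda} prevents label-switching in the minimum distance estimator, and the pointwise convergence can be integrated (after checking uniform boundedness and a dominated convergence argument). The balancing act between $a_n$, the deconvolution rate driven by $J(\cdot)$ in \ref{assumption:unpmor:FT}, and the overall density estimation rate is the technical heart of the argument and is where I expect the proof to be most involved.
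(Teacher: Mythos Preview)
Your three-stage pipeline and the good/bad set decomposition via \ref{assumption:unpmor:sep of reg} match the paper's approach exactly. There is, however, a genuine gap in your third step. A ``compactness-plus-continuity'' argument transferring $L^1$ density closeness into parameter closeness works only for a \emph{fixed} model $\vartheta$: compactness of the search domain $[-B,B]^K$ ensures that any convergent subsequence of minimizers converges to the unique population minimizer, but it gives no \emph{rate}, and the rate can degrade arbitrarily as $\vartheta$ ranges over $\mathcal{U}_{x_0}$ (different $G_0$'s, different configurations of the $m_k(x)$'s). Without a quantitative bound that is uniform in $\vartheta$, you cannot choose the threshold $a_n$ so that the good-set argument works simultaneously for all models in the family.

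The paper closes this gap with an explicit modulus-of-continuity inequality (Lemma~\ref{lemma:Hausdorff error regression function}, built on the deconvolution-type bound \eqref{eq:moc remark}): with $\widehat{V}=\sum_k\lambda_k\delta_{\widehat{m}_{n,k}(x)}$ and $V=\sum_k\lambda_k\delta_{m_k(x)}$ one has $W_1(\widehat{V},V)\le C_B\,F_{J,\sigma}(\|\widehat{V}\ast f-V\ast f\|_1)$, where $F_{J,\sigma}$ depends only on $J$ and $\sigma$ and not on the particular model. This is precisely where \ref{assumption:unpmor:FT} enters---not in the project-smooth-denoise step at $x_0$ (which needs only \ref{assumption:unpmor:regularity}--\ref{assumption:unpmor:separation}), but here, to make the parameter-recovery rate uniform. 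Once this is in hand, the error $e_n(x)$ is bounded uniformly over $\mathcal{U}_{x_0}$, the separation condition \eqref{eq:assump reduction} holds on your good set $\mathcal{G}_n$ for $a_n$ taken as a fixed multiple of that uniform bound, and the balancing act you allude to in your final paragraph goes through as in the proof of Proposition~\ref{prop:step 3 uniform consistency}.
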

\noindent
The remainder of this section is devoted to discussing this result and its assumptions in detail, along with a proof outline that constructs the estimators explicitly.
More specifically, in Section~\ref{sec:est}, we outline the main ideas behind the constructive proof of Theorem \ref{thm:uniform consistency}, while deferring technical details to Appendix~\ref{app:proof:sec4}.  

We emphasize that these estimators are not abstract, and will be explicitly constructed in the sequel. There are three main steps:
\begin{enumerate}
\item Estimation of the conditional density $p(\cdot\given x)$ via kernel density estimators (Section~\ref{sec:step 1});
\item Estimation of $\lambda_{k}$ and $f$ via estimation of the vanilla mixture model $p(\cdot\given x_{0})$ (Section~\ref{sec:step 2}; details in Section~\ref{sec:npmix});
\item Estimation of the regression functions $m_k$ via a minimum distance estimator (Section~\ref{sec:step 3}).
\end{enumerate}
\noindent
  Throughout, we assume that $x_0$ and $\sigma$ are known, which is crucial to illustrating our main point that uniform consistency is challenging even under such knowledge---i.e. the difficulties are not somehow due to orthogonal problems in estimating the variance or points of separation. 
The second step above is the most delicate, and involves a careful project-smooth-denoise construction that is detailed in Section~\ref{sec:npmix}.

\begin{remark}
\label{rem:diff:mk}
We now discuss briefly the assumptions made on $\mathcal{U}_{x_0}$. 
\begin{itemize}
    \item \ref{assumption:unpmor:regularity}-\ref{assumption:unpmor:separation} are simply uniform versions of those in \ref{assm:npmor:conv}-\ref{assm:npmor:sep}.
    \item \ref{assumption:unpmor:regularity} ensures a uniformly consistent conditional density estimator as a crucial first step.
To focus on mixture models, we assume the $m_k$ are differentiable in \ref{assumption:unpmor:regularity} for simplicity, however, we expect that a similar result assuming only weaker H\"older-type continuity is possible.
We remark that it is only in this step that we need the differentiability of $p_X$ and the $m_k$'s. 
    \item \ref{assumption:unpmor:lambda} is of utmost importance: We show below by example that uniform consistency is impossible without assuming distinct mixing proportions (see also Remark \ref{remark:identifiability}). 
    \item \ref{assumption:unpmor:separation} is a crucial separation condition that allows different mixture components to be identified as in Section~\ref{sec:model:ident} and will be the key structural assumption that we exploit for estimating vanilla nonparametric mixtures in Section~\ref{sec:npmix}. This can be relaxed under strong regularity assumptions on $G_0$ that we discuss in Section~\ref{sec:npmix:cond}. 
    \item \ref{assumption:unpmor:FT} is a technical assumption to ensure a modulus-of-continuity type result \eqref{eq:moc remark} for finite mixture models and allows a wide range of nonparametric $G_0$'s as discussed in Section~\ref{sec:npmor:cond:FT}.  
    This type of assumption is standard in the nonparametric deconvolution literature \citep{fan1991optimal,nguyen2013}, where $J$ is usually taken to be $c|t|^{-\beta}$ or $c\exp(-|t|^{\beta}/\gamma)$ for constants $c,\beta,\gamma$.
    \item \ref{assumption:unpmor:sep of reg} controls the separation  between different regression functions around points of intersection and will be discussed in more detail in Section~\ref{sec:npmor:cond}. In particular, this assumption is satisfied under a uniform version of the transversality assumption from \cite{huang2013}. We also provide an example where transversality fails (Example~\ref{ex:C6}), thereby proving that this is not necessary. \qed
\end{itemize}

\end{remark}

\subsection{Nonexistence of uniformly consistent estimators }\label{sec:unif:npmor:fail}
Before describing the estimator, we illustrate why the second part of Condition~\ref{assumption:unpmor:lambda} is crucial,   which might be somewhat surprising at first.   
Consider the mixed regression model
 \begin{align*}
    (\mathscr{P}):\quad y|x\sim 
    \begin{cases}
    m_1(x)+\varepsilon \quad \operatorname{w.p.} \,\, \frac12 \\
    m_2(x)+\varepsilon \quad \operatorname{w.p.} \,\, \frac12 
    \end{cases}
\end{align*} 
where $m_1,m_2\in \mathcal{C}^{\infty}(\mathbb{R})$ and $\varepsilon$ is Gaussian. Let $\{(X_i,Y_i)\}_{i=1}^n$ be i.i.d. samples from it. We will construct an alternate model that generates the same   data   distribution as $\mathscr{P}$. Without loss of generality assume there are no $X_i$'s that lie in between $u:=X_1$ and $v:=X_2$ and that $u<v$. Let $\phi\in\mathcal{C}^{\infty}(\mathbb{R})$ be a smooth function satisfying $\phi=0$ for $x\leq u$ and $\phi=1$ for $x\geq v$, and define
\begin{align*}
    \widetilde{m}_1&=(1-\phi) m_1+\phi m_2 \\
    \widetilde{m}_2&=(1-\phi) m_2 +\phi m_1.
\end{align*}
We then have $\widetilde{m}_1,\widetilde{m}_2\in\mathcal{C}^{\infty}(\mathbb{R})$ and 
\begin{align*}
    \widetilde{m}_1= \begin{cases}
    m_1 \quad &x\leq u\\
    m_2 \quad &x\geq v
    \end{cases}
    \quad \quad 
    \widetilde{m}_2=
    \begin{cases}
    m_2 \quad &x\leq u\\
    m_1 \quad &x\geq v
    \end{cases}.
\end{align*}
In particular the two models 
 \begin{align*}
    (\mathscr{P}):\quad y|x\sim 
    \begin{cases}
    m_1(x)+\varepsilon \quad \operatorname{w.p.} \,\, \frac12 \\
    m_2(x)+\varepsilon \quad \operatorname{w.p.} \,\, \frac12 
    \end{cases}\quad (\mathscr{P}_n):\quad 
    y|x\sim 
    \begin{cases}
    \widetilde{m}_1(x)+\varepsilon \quad \operatorname{w.p.} \,\, \frac12 \\
    \widetilde{m}_2(x)+\varepsilon \quad \operatorname{w.p.} \,\, \frac12 
    \end{cases}
\end{align*} 
have the same conditional distribution on $(-\infty,u]\cup [v,\infty)$ and any estimator based on $\{(X_i,Y_i)\}_{i=1}^n$ cannot distinguish between them.   Notice that the construction of such a $\mathscr{P}_n$ can be carried out for every $n$, giving a sequence of models $\{\mathscr{P}\}_{n=2}^{\infty}$, each of which is indistinguishable from $\mathscr{P}$ based on data $\{(X_i,Y_i)\}_{i=1}^n$.   
Hence there cannot be a uniformly consistent estimator for the regression functions over subfamilies of $\mor_{x_0}$ that allow equal mixing proportions even if the $m_k$'s are restricted to be $\mathcal{C}^{\infty}(\mathbb{R}).$
The problem lies precisely in the fact that in between any two adjacent $X_i$'s, the model could undergo a label switching and no degree of smoothness can prevent this if the mixing proportions are equal. In particular, the best one can do in this case is to estimate the function values $m_k(x)$ for fixed $x$ but not how to connect them. Similar problems arise in mixtures with more than two components, where one is not expected to estimate those components with equal mixing proportions.

\subsection{Construction of estimator}\label{sec:est}
Throughout this section we assume to be given $n$ i.i.d. samples $\{(X_i,Y_i)\}_{i=1}^n$ from the model \eqref{eq:mor joint density}. The estimation procedure starts by constructing a conditional density estimator $\widehat{p}_n(y|x)$ of the mixed regression model (Section \ref{sec:step 1}). By exploiting the point of separation $x_0$ in Assumption \ref{assumption:point of sep}, we will then construct estimators of the error density and the mixing proportions (Section \ref{sec:step 2})
  based on  $\widehat{p}_n(\,\cdot\given x_0)$.  %
The remaining estimation of the regression functions for a fixed $x$ reduces to a parametric estimation problem, where a minimum distance estimator is employed to achieve $L^{1}$ (as opposed to pointwise) approximation (Section \ref{sec:step 3}). While introducing in detail each of these steps, we shall see how the assumptions \ref{assumption:unpmor:regularity}-\ref{assumption:unpmor:sep of reg} progressively build up.

\subsubsection{Conditional density estimator}\label{sec:step 1}
Our estimation procedure starts by estimating the conditional density.   
Conditional density estimators have been studied extensively in the literature \citep[e.g.][]{de2003conditional,efromovich2007conditional,efromovich2005estimation,li2021minimax}, but a precise $L^1$ rate result (with explicit dependence on all the model parameters) seems to be missing. Therefore, to make our presentation self-contained, we include such a construction and bound its $L^1$ error in Appendix~\ref{sec:consistency cde}. As noted in Remark~\ref{rem:diff:mk}, this step is the only step in which differentiability of the regression functions is needed, and we further assume $p_X$ is bounded away from zero so that a simple ratio of kernel density estimators (KDE) will suffice in our setting, although more sophisticated estimators exist under weaker assumptions.   More precisely, let $\widehat{p}_n(x,y)$ be a KDE for the joint density $p(x,y)$ and $\widehat{p}_{X,n}(x)$ be a KDE for the marginal density $p_{X}(x)$. We have the following result: 
\begin{prop}\label{prop:step 1 uniform consistency}
Let $\widehat{p}_{n}(\,\cdot\given x)=\widehat{p}_n(x,\cdot)/\widehat{p}_{X,n}(x)$ be a ratio of kernel density estimators   with the box kernel $H=\frac12 \mathbf{1}_{[-1,1]}$,   with bandwidth $h_n$ satisfying 
\begin{align*}
    h_n\rightarrow 0, \quad \frac{nh_n^2}{|\log h_n|}\rightarrow\infty, \quad \frac{|\log h_n|}{\log \log n}\rightarrow \infty, \quad h_n^2 \leq ch_{2n}^2 
\end{align*}
for some $c>0$. Let $\mathcal{V}_{x_0}\subset \mor_{x_0}$ be a subfamily satisfying \ref{assumption:unpmor:regularity}.
Then we have 
\begin{align}
     \underset{\mathcal{V}_{x_0}}{\operatorname{sup}}\,\underset{ x\in[a+h_n,b-h_n]}{\operatorname{sup}}\,\mathbb{E}\|\widehat{p}_{n}(\,\cdot\given x)-p(\,\cdot\given x)\|_1 &\xrightarrow{n\rightarrow\infty}0. \label{eq:uniform L1 cde}
\end{align}
\end{prop}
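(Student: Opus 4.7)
The plan is to reduce the $L^1$ conditional-density error to a joint-density $L^1$ error and a marginal pointwise error, combined with a lower bound on $\widehat{p}_{X,n}(x)$. Writing the standard ratio decomposition
\begin{align*}
\widehat{p}_n(y\given x)-p(y\given x)=\frac{\widehat{p}_n(x,y)-p(x,y)}{\widehat{p}_{X,n}(x)}+p(y\given x)\cdot\frac{p_X(x)-\widehat{p}_{X,n}(x)}{\widehat{p}_{X,n}(x)}
\end{align*}
and integrating in $y$ (using $\int p(y\given x)\,dy=1$) yields
\begin{align*}
\|\widehat{p}_n(\cdot\given x)-p(\cdot\given x)\|_1\le\frac{\|\widehat{p}_n(x,\cdot)-p(x,\cdot)\|_1+|\widehat{p}_{X,n}(x)-p_X(x)|}{\widehat{p}_{X,n}(x)}.
\end{align*}
So I need to bound each numerator term in expectation, uniformly in $x$ and over the family, and show the denominator stays bounded away from zero with high probability.

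For the denominator, the standing assumption $\inf_{\mathcal{V}_{x_0}}\inf_{[a,b]}p_X\ge c>0$ together with a bias of order $h_n$ for the box kernel (using $\|p_X'\|_{L^\infty[a,b]}<\infty$) yields $\mathbb{E}\widehat{p}_{X,n}(x)\ge 3c/4$ for large $n$, uniformly over $\mathcal{V}_{x_0}$ and $x\in[a+h_n,b-h_n]$. A Bernstein inequality then gives $\mathbb{P}(\widehat{p}_{X,n}(x)<c/2)\le\exp(-Cnh_n)$, which vanishes under the stated bandwidth conditions. On the complement event I use the trivial bound $\|\widehat{p}_n(\cdot\given x)-p(\cdot\given x)\|_1\le 2$; this failure event contributes $o(1)$ in expectation. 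The marginal numerator is routine: $\mathbb{E}|\widehat{p}_{X,n}(x)-p_X(x)|\le O(h_n)+O((nh_n)^{-1/2})\to 0$.

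The heart of the argument is bounding $\mathbb{E}\|\widehat{p}_n(x,\cdot)-p(x,\cdot)\|_1$. I would split the integral over $y$ into a central region $[-L_n,L_n]$ and its complement. On the central region, Cauchy--Schwarz combined with a pointwise bias-variance analysis gives
\begin{align*}
\mathbb{E}\int_{-L_n}^{L_n}|\widehat{p}_n(x,y)-p(x,y)|\,dy\le\sqrt{2L_n}\cdot\Bigl(O(h_n)+O\bigl((nh_n^2)^{-1/2}\bigr)\Bigr),
\end{align*}
where the bias bound uses uniform Lipschitz continuity of $p(x,y)$: by \ref{assumption:unpmor:regularity} the $m_k$ and $p_X$ have uniformly bounded derivatives, and $f=\phi_\sigma\ast G_0$ is smooth with derivatives controlled by $\sigma$ and by the uniformly bounded diameter of $\supp(G_0)$. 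For the tail, the uniformly bounded support of $G_0$ and the uniform bound on $\|m_k\|_\infty$ force $p(x,y)$ to have uniform sub-Gaussian decay in $y$; meanwhile the compact support of the box kernel keeps $\widehat{p}_n(x,\cdot)$ supported within $h_n$ of the $Y_i$'s, so $\mathbb{E}\int_{|y|>L_n}\bigl(\widehat{p}_n(x,y)+p(x,y)\bigr)\,dy$ is exponentially small in $L_n$ provided $L_n$ outgrows the sub-Gaussian envelope of the $Y_i$'s.

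The main obstacle is balancing the two pieces of the joint $L^1$ bound: $L^1$ rates for KDEs on unbounded domains are not automatic and require one to reconcile the growth rate of $L_n$ with the shrinking bandwidth. Choosing $L_n\sim\sqrt{|\log h_n|}$ (or similar) makes $\sqrt{L_n}\bigl(h_n+(nh_n^2)^{-1/2}\bigr)\to 0$ under the stated bandwidth conditions while keeping the tail contribution $e^{-cL_n^2}\to 0$. Every constant in these bounds depends only on the uniform quantities from \ref{assumption:unpmor:regularity} and on $\inf p_X$, $\|p_X\|_\infty$, $\|p_X'\|_\infty$, which are fixed across $\mathcal{V}_{x_0}$; the bounds are also uniform in $x\in[a+h_n,b-h_n]$ because each ingredient of the bias-variance decomposition is itself uniform in $x$. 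Combining these pieces with the denominator control delivers the stated double supremum.
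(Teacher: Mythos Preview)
Your proposal is correct, and the overall architecture---reduce to a joint $L^1$ error plus a marginal error, then do a bias--variance split on the joint---matches the paper. But two steps differ in a way worth noting.

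First, your ratio decomposition keeps the random denominator $\widehat{p}_{X,n}(x)$ and handles it by a Bernstein tail bound plus the trivial $L^1$ bound on the bad event. The paper instead writes
\[
\Bigl|\tfrac{\widehat{p}_n(x,y)}{\widehat{p}_{X,n}(x)}-\tfrac{\widehat{p}_n(x,y)}{p_X(x)}\Bigr|
+\Bigl|\tfrac{\widehat{p}_n(x,y)}{p_X(x)}-\tfrac{p(x,y)}{p_X(x)}\Bigr|
\]
and uses the identity $\int \widehat{p}_n(x,y)\,dy=\widehat{p}_{X,n}(x)$ so that the first term integrates to $|\widehat{p}_{X,n}(x)-p_X(x)|/p_X(x)$. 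The denominator is then the \emph{deterministic} $p_X(x)\ge p_{\min}$, and no concentration argument for $\widehat{p}_{X,n}$ is needed. This is a cleaner route and avoids your event-splitting.

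Second, for the joint $L^1$ variance term the paper applies Cauchy--Schwarz pointwise in $y$ to get $\mathbb{E}\int|\widehat p_n-\mathbb{E}\widehat p_n|\,dy\le (nh_n^2)^{-1/2}\int\sqrt{(\mathcal H_{h_n}\ast p)(x,y)}\,dy$, and then bounds the last integral directly by exploiting the Gaussian tail of $p(x,\cdot)$---no truncation level $L_n$ is introduced, and no balancing is required. Your truncate-and-balance approach with $L_n\sim\sqrt{|\log h_n|}$ also works under the stated bandwidth conditions, but it introduces an extra moving part and a (harmless) $\sqrt{L_n}$ loss in the rate. The paper's direct integration yields the sharper explicit bound of Lemma~\ref{lemma:cde bound}.
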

\noindent
\begin{remark}
In the sequel, any conditional density estimator satisfying \eqref{eq:uniform L1 cde} suffices. 
  Possible choices of the bandwidth include $h_n\asymp n^{-1/(2+\beta)}$ for any $\beta>0$.    \qed
\end{remark}

\subsubsection{Estimating the error density and mixing proportions}\label{sec:step 2}
The next step is to construct estimators of the error density $f$ and the mixing proportions $\lambda_{k}$. 
Similarly as the discussion before Proposition \ref{prop:identifiability convolutional}, the problem in this subsection reduces to that of a finite mixture and can be decoupled from the other parts.
Since these results may be of independent interest, we defer the details to  Section \ref{sec:npmix}, where a complete description of the estimator and its analysis can be found.

  The estimator is based on a novel ``project-smooth-denoise'' construction.  
The rough idea is to first project $\widehat{p}_{n}(\,\cdot\given x_0)$ onto finite mixtures of Gaussians to get an approximation of the mixing measure $\sum_{k=1}^K \lambda_k G_0(\cdot-m_k(x_0))$ and then employ a careful smooth-denoise step to recover the individual components $(\lambda_k, G_0(\cdot-m_k(x_0)))$, from which we obtain the estimates $\widehat{f}_n$ and $\{\widehat{\lambda}_{n,k}\}_{k=1}^K$ (cf. Figure~\ref{fig:overview}). 
This is where the point of separation $x_0$ is used: By \eqref{eq:defn:npmor}, $p(\,\cdot\given x_0)$ can be interpreted as a finite mixture, and the separation enables estimation of this mixture (see discussion surrounding Assumption~\ref{assumption:point of sep}). 
The following result, as a corollary of Theorem \ref{prop:npmix uniform consistency} applied to $p(\,\cdot\given x_0)$, establishes that this procedure provides uniformly consistent estimation of the mixture model at $x_{0}$.

\begin{prop}\label{prop:step 2 uniform consistency}
Let $\mathcal{V}_{x_0}\subset \mor_{x_0}$ be a subfamily satisfying \ref{assumption:unpmor:regularity}-\ref{assumption:unpmor:separation}. There exist estimators $\widehat{f}_n$, $\{\widehat{\lambda}_{n,k}\}_{k=1}^K$, and a sequence $\varepsilon_{n}\rightarrow 0$ so that 
\begin{align*}
    \underset{\mathcal{V}_{x_0}}{\operatorname{sup}}\,\, \mathbb{P}\left(\|\widehat{f}_n-f\|_1\vee\underset{k}{\operatorname{max}}\,|\widehat{\lambda}_{n,k}-\lambda_k| >\varepsilon_{n}\right)\xrightarrow{n\rightarrow\infty} 0.
\end{align*}
\end{prop}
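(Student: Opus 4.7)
The plan is to reduce Proposition~\ref{prop:step 2 uniform consistency} to the vanilla nonparametric mixture estimation result (Theorem~\ref{prop:npmix uniform consistency} of Section~\ref{sec:npmix}), applied at the point of separation $x_0$. First, invoke Proposition~\ref{prop:step 1 uniform consistency} to obtain a conditional density estimator with
\[
\underset{\mathcal{V}_{x_0}}{\sup}\,\mathbb{E}\|\widehat{p}_n(\,\cdot\given x_0)-p(\,\cdot\given x_0)\|_1 \xrightarrow{n\to\infty} 0,
\]
which by Markov's inequality gives uniform convergence in probability. Here \ref{assumption:unpmor:regularity} is exactly what is needed to apply Proposition~\ref{prop:step 1 uniform consistency}, and the assumption that $p_X$ is bounded away from zero on $[a,b]$ (with $x_0\in(a,b)$) ensures that eventually $x_0\in[a+h_n,b-h_n]$ so the proposition applies.

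Next, observe that by \eqref{eq:defn:npmor} and \ref{assm:npmor:conv},
\[
p(\,\cdot\given x_0) \;=\; \sum_{k=1}^K \lambda_k\, f(\,\cdot-m_k(x_0)) \;=\; \sum_{k=1}^K \lambda_k\, (\phi_\sigma\ast G_{0,k})(\,\cdot-m_k(x_0)),
\]
with $G_{0,k}=G_0$ for each $k$, placing $p(\,\cdot\given x_0)$ in the model class $\Psi(\mix)$ of Proposition~\ref{prop:identifiability convolutional}. The key step is then to verify that the uniform conditions \ref{assumption:unpmor:regularity}--\ref{assumption:unpmor:separation} translate into \emph{uniform} versions of \ref{assm:npmix:conv}--\ref{assm:npmix:sep}. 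Conditions \ref{assm:npmix:conv} and \ref{assm:npmix:wgt} follow immediately from \ref{assumption:unpmor:regularity}--\ref{assumption:unpmor:lambda}, while for \ref{assm:npmix:sep} we note that $\operatorname{supp}(G_0(\cdot-m_k(x_0)))=\operatorname{supp}(G_0)+m_k(x_0)$, so for $j\neq k$
\[
\operatorname{dist}\bigl(\operatorname{supp} G_0(\cdot-m_j(x_0)),\,\operatorname{supp} G_0(\cdot-m_k(x_0))\bigr) \;\geq\; |m_j(x_0)-m_k(x_0)|-\operatorname{diam}(\operatorname{supp} G_0),
\]
which by \ref{assumption:unpmor:separation} exceeds $\operatorname{diam}(\operatorname{supp} G_0)=\max_k\operatorname{diam}(\operatorname{supp} G_{0,k})$ by a quantity bounded below uniformly over $\mathcal{V}_{x_0}$.

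Finally, feed $\widehat{p}_n(\,\cdot\given x_0)$ into the project-smooth-denoise estimator of Section~\ref{sec:npmix} to obtain estimates of the component mixing measures $\lambda_k G_0(\cdot-m_k(x_0))$; Theorem~\ref{prop:npmix uniform consistency}, whose uniform hypotheses are satisfied by the above verification, yields a rate $\varepsilon_n\to 0$ for the resulting $\widehat{\lambda}_{n,k}$ and the component-wise mixing measure estimates. Since all components share the common shape $G_0$, we can recover $\widehat{f}_n$ by convolving any one of the recentered component measures with $\phi_\sigma$ (or by averaging the $K$ such estimates to improve finite-sample behavior), and $\widehat{\lambda}_{n,k}$ comes directly from the extracted component weights. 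The main obstacle is therefore packed into Theorem~\ref{prop:npmix uniform consistency} itself; here the work is to (i) confirm that the $L^1$-close density $\widehat{p}_n(\,\cdot\given x_0)$, rather than i.i.d.\ samples, is a legitimate input for the project-smooth-denoise pipeline (which only depends on the density estimate through its $L^1$ error, since the projection step minimizes a $W_1$-type criterion), and (ii) handle the labeling of the $K$ extracted components, which is resolved using the distinct-weights assumption in \ref{assumption:unpmor:lambda} to match estimated weights to true ones via a simple sorting argument.
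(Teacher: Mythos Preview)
Your proposal is essentially correct and mirrors the paper's argument for the interior case $x_0\in(a,b)$: reduce to the vanilla mixture at $x_0$, verify that \ref{assumption:unpmor:regularity}--\ref{assumption:unpmor:separation} imply the uniform hypotheses \ref{assumption:D1}--\ref{assumption:D3} of Theorem~\ref{prop:npmix uniform consistency}, feed in the conditional density estimator from Proposition~\ref{prop:step 1 uniform consistency}, and fix the labeling via the distinct-weights part of \ref{assumption:unpmor:lambda} (exactly as in Remark~\ref{remark:relabel}).

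The one genuine gap is your parenthetical assumption that $x_0\in(a,b)$. The model allows $x_0\in[a,b]$, and when $x_0$ is a boundary point Proposition~\ref{prop:step 1 uniform consistency} does not apply directly, since its guarantee is only over $[a+h_n,b-h_n]$. The paper treats this as a separate case: rather than working at $x_0$, it applies the project-smooth-denoise pipeline to $\widehat{p}_n(\,\cdot\given x_0+h_n)$ (or $x_0-h_n$). The point is that \ref{assumption:unpmor:separation} has a uniform slack $\xi>0$, and the derivative bound in \ref{assumption:unpmor:regularity} gives
\[
|m_j(x_0+h_n)-m_k(x_0+h_n)| \geq |m_j(x_0)-m_k(x_0)| - 2\,\underset{\ell}{\max}\,\|m_\ell'\|_\infty\, h_n,
\]
so for $h_n$ small enough the shifted point is still a point of separation with slack $\xi/2$, uniformly over $\mathcal{V}_{x_0}$. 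One then invokes Proposition~\ref{lemma:conditioning} on the sequence of families $\{p(\,\cdot\given x_0+h_n)\}$ rather than a single fixed family. This boundary adjustment is the only missing ingredient in your outline.
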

\noindent

\begin{remark}\label{remark:relabel}
A direct application of Theorem \ref{prop:npmix uniform consistency} implies consistency up to a permutation. Since now the mixing proportions are uniformly separated, so are the estimates $\widehat{\lambda}_{n,k}$'s asymptotically. Hence after a relabelling which sorts both $\{\widehat{\lambda}_{n,k}\}_{k=1}^K$ and $\{\lambda_k\}_{k=1}^K$ in increasing order, the permutation will be the identity uniformly over $\mathcal{V}_{x_0}$ for all large $n$. This additional step is not necessary but is adopted for notational convenience. \qed
\end{remark}

\subsubsection{Estimating the regression functions}\label{sec:step 3}

Now it remains to introduce estimators for the regression functions $m_k$
via the following minimum distance estimator: Let $B$ be a constant so that $\operatorname{max}_k \|m_k\|_{L^{\infty}[a,b]}\leq B$ (this could be chosen based on assumption \ref{assumption:unpmor:regularity} or to be sufficiently large based on data). Define for $x\in[a,b]$
\begin{align}
    (\widehat{m}_{n,1}(x),\ldots,\widehat{m}_{n,K}(x))^T=\underset{\theta\in [-B,B]^K}{\operatorname{arg\,min}}\, \left\|\sum_{k=1}^K \widehat{\lambda}_{n,k}\widehat{f}_n(\cdot-\theta_k)-\widehat{p}_{n}(\,\cdot\given x)\right\|_1, \label{eq:MDE}
\end{align}
where we take any minimizer if there are multiple ones. 
\begin{lemma}\label{lemma:Hausdorff error regression function}
Suppose $\operatorname{min}_k \lambda_k>0$ and $|\mathcal{F}G_0|\geq J$ for some function $J>0$. For each fixed $x\in[a,b]$ we have
\begin{align*}
    \underset{k}{\operatorname{max}}\,\, \underset{j}{\operatorname{min}}\, |\widehat{m}_{n,j}(x)-m_k(x)|\leq \frac{e_n(x)}{\operatorname{min}_k \lambda_k}, 
\end{align*}
where 
\begin{align*}
    e_n(x) = C_B F_{J,\sigma}\Big(\|\widehat{f}_n-f\|_1 + \sum_{k=1}^K |\widehat{\lambda}_{n,k}-\lambda_k|+\|\widehat{p}_{n}(\,\cdot\given x)-p(\,\cdot\given x)\|_1\Big) 
\end{align*}
with $C_B$ a constant depending only on $B$ and $F_{J,\sigma}$ a strictly increasing function depending only on $J,\sigma$ that satisfies $F_{J,\sigma}(d)\xrightarrow{d\rightarrow 0}0$. Furthermore if $\operatorname{min}_{j\neq k}|\lambda_j-\lambda_k|>0$ and 
\begin{align}
    \underset{j\neq k}{\operatorname{min}}\, |m_j(x)-m_k(x)| >\frac{6e_n(x)}{\operatorname{min}_k\lambda_k\wedge \operatorname{min}_{j\neq k}|\lambda_j-\lambda_k|} \label{eq:assump reduction}
\end{align}
then
\begin{align*}
    \underset{k}{\operatorname{max}}\, |\widehat{m}_{n,k}(x)-m_k(x)|\leq \frac{e_n(x)}{\operatorname{min}_k \lambda_k}.
\end{align*}
\end{lemma}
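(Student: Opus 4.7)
The plan is to lift the comparison from location estimates to discrete mixing measures, use the MDE optimality to control an $L^1$ distance between convolutions, invoke the deconvolution modulus of continuity~\eqref{eq:moc remark} to upgrade this to a $W_1$ bound on the mixing measures, and then read off the Hausdorff-type bounds.

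Set $\widehat{\mu}:=\sum_{k}\widehat{\lambda}_{n,k}\delta_{\widehat{m}_{n,k}(x)}$ and $\mu:=\sum_{k}\lambda_k\delta_{m_k(x)}$, so $p(\cdot\given x)=f\ast\mu$ and $\sum_{k}\widehat{\lambda}_{n,k}f(\cdot-\widehat{m}_{n,k}(x))=f\ast\widehat{\mu}$. Because $m_k(x)\in[-B,B]$, the tuple $(m_1(x),\ldots,m_K(x))$ is feasible in \eqref{eq:MDE}; inserting it into the optimality inequality and applying two triangle inequalities together with translation-invariance of the $L^1$ norm produces
\[
\|f\ast\widehat{\mu}-f\ast\mu\|_{1}\;\le\; 2\|\widehat{f}_n-f\|_1+\sum_{k}|\widehat{\lambda}_{n,k}-\lambda_k|+2\|\widehat{p}_{n}(\cdot\given x)-p(\cdot\given x)\|_{1}.
\]
Since $f=\phi_{\sigma}\ast G_0$ with $|\mathcal{F}G_0|\ge J$, the modulus-of-continuity bound~\eqref{eq:moc remark} for finite mixtures supplies a strictly increasing $F_{J,\sigma}$ vanishing at $0$ and a constant $C_B$ (absorbing the fact that both $\widehat{\mu}$ and $\mu$ are supported in $[-B,B]$) so that $W_1(\widehat{\mu},\mu)\le C_B F_{J,\sigma}(\|f\ast\widehat{\mu}-f\ast\mu\|_{1})\le e_n(x)$. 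The first claim now falls out of a one-line transport argument: all mass $\lambda_k$ at $m_k(x)$ must be moved at least $\min_j|\widehat{m}_{n,j}(x)-m_k(x)|$, so $\lambda_k\min_j|\widehat{m}_{n,j}(x)-m_k(x)|\le W_1(\widehat{\mu},\mu)\le e_n(x)$; dividing by $\min_k\lambda_k$ and maximizing over $k$ yields $\max_k\min_j|\widehat{m}_{n,j}(x)-m_k(x)|\le e_n(x)/\min_k\lambda_k$.

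For the two-sided bound, set $r:=e_n(x)/[\min_k\lambda_k\wedge\min_{j\ne k}|\lambda_j-\lambda_k|]$. Under the separation hypothesis~\eqref{eq:assump reduction} the balls $B_{3r}(m_k(x))$ are pairwise disjoint, and by the one-sided bound each contains at least one $\widehat{m}_{n,j}(x)$; a pigeonhole count over the $K$ atoms on each side forces a bijection $k\mapsto j(k)$ with $\widehat{m}_{n,j(k)}(x)\in B_r(m_k(x))$. To identify $j(k)=k$ under the sorted-label convention of Remark~\ref{remark:relabel}, let $\pi$ be an optimal coupling between $\widehat{\mu}$ and $\mu$: the $\pi$-mass leaving $m_k(x)$ beyond distance $3r$ is at most $W_1/(3r)\le e_n(x)/(3r)$, and the $\pi$-mass arriving at $\widehat{m}_{n,j(k)}(x)$ from any $m_{k'}(x)$ with $k'\ne k$ must travel at least $5r$, summing to at most $e_n(x)/(5r)$. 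Combining, $|\widehat{\lambda}_{n,j(k)}-\lambda_k|\le e_n(x)/(3r)\le \tfrac13\min_{k'\ne k}|\lambda_{k'}-\lambda_k|$, which pins down $\widehat{\lambda}_{n,j(k)}$ as the $k$-th order statistic of the estimated weights and forces $j(k)=k$.

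The main obstacle is the deconvolution step: because $f$ is smoothed by a Gaussian kernel, inverting $\mu\mapsto f\ast\mu$ is severely ill-posed in the super-smooth regime, and producing a quantitative $F_{J,\sigma}$ with the correct dependence on $J$, $\sigma$, and the compact-support constant $B$ is the delicate quantitative input. Once that stability estimate is in hand, the rest is mass-transport bookkeeping on atomic measures.
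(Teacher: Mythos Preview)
Your argument is correct and follows the same overall architecture as the paper's proof: pass to discrete mixing measures, control the $L^1$ distance of their $f$-convolutions via the MDE optimality and triangle inequalities, invoke the modulus-of-continuity Lemma~\ref{lemma:moc} to obtain a $W_1$ bound, and then convert this to Hausdorff-type control on the atoms. The one substantive difference is your choice of measures: you work with $\widehat{\mu}=\sum_k\widehat{\lambda}_{n,k}\delta_{\widehat{m}_{n,k}(x)}$ carrying the \emph{estimated} weights, whereas the paper uses $\widehat{V}=\sum_k\lambda_k\delta_{\widehat{m}_{n,k}(x)}$ with the \emph{true} weights at the estimated locations. Both are legitimate, and the $L^1$ bookkeeping differs only in harmless constants. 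For the second part this choice propagates: the paper applies \cite[Lemma~3]{wu2020optimal} to $\widehat{V}$ and $V$ to get $|\lambda_k-\lambda_{\Pi(k)}|\le W_1(\widehat{V},V)/r_n(x)<\tfrac12\min_{j\ne k}|\lambda_j-\lambda_k|$, which forces $\Pi=\mathrm{id}$ directly from distinctness of the true weights; you instead bound $|\widehat{\lambda}_{n,j(k)}-\lambda_k|$ by a transport argument and then appeal to the sorted-label convention of Remark~\ref{remark:relabel}. The paper's route is slightly cleaner in that it never needs to compare estimated and true weights, while yours is more self-contained in that it avoids citing the Wu--Zhou lemmas and carries out the transport bounds by hand.
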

\noindent
The key in proving Lemma \ref{lemma:Hausdorff error regression function} is the following modulus-of-continuity type result (Lemma \ref{lemma:moc}) that slightly generalizes \cite[][Theorem 2]{nguyen2013}: 
\begin{align}
    W_1(\widehat{V},V)\leq C_BF_{J,\sigma}\big(\|\widehat{V}\ast f-V\ast f\|_1\big),\quad \widehat{V}=\sum_{k=1}^K \lambda_k \delta_{\widehat{m}_{n,k}(x)},\quad V=\sum_{k=1}^K \lambda_k \delta_{m_{k}(x)}.  \label{eq:moc remark}
\end{align}
Here, $W_1$ is the 1-Wasserstein distance defined as in \eqref{eq:def wass} and $\delta_a$ is the Dirac delta at a point $a\in\mathbb{R}$.

For each mixed regression model the lower bound function $J$ can be taken as $|\mathcal{F}G_0|$ provided that it never vanishes, but a uniform lower bound as in \ref{assumption:unpmor:FT} is needed for uniform consistency.   In Section~\ref{sec:npmor:cond:FT} we explicitly construct families of compactly supported $G_0$'s satisfying \ref{assumption:unpmor:FT}.

Lemma \ref{lemma:Hausdorff error regression function} establishes error estimates over regions where the $m_k$'s satisfy \eqref{eq:assump reduction}. Since the error $e_n(x)\rightarrow 0$ in probability over $[a+h_n,b-h_n]$ (by Propositions~\ref{prop:step 1 uniform consistency} and~\ref{prop:step 2 uniform consistency}), such regions will eventually be the whole interval $(a,b)$ so that we can achieve consistent estimation in $L^1[a,b]$ norm. Before stating the result, however, we make a further remark on how the assumption of distinct mixing proportions enters our estimation procedure \eqref{eq:MDE}. In particular, we have defined the function estimate $\widehat{m}_{n,k}$ to be the collection of all $\widehat{m}_{n,k}(x)$'s that are associated with $\widehat{\lambda}_{n,k}$. Since the $\lambda_k$'s are distinct, so are the estimates $\widehat{\lambda}_{n,k}$'s asymptotically and hence this defines a unique consistent labelling procedure for the estimates $\{\widehat{m}_{n,k}(x)\}_{k=1}^K$.

By combining these observations, we can prove $L^{1}$ consistency of the resulting regression estimates:

\begin{prop}\label{prop:step 3 uniform consistency}
Let~$\mathcal{V}_{x_0}\subset \mor_{x_0}$ be a subfamily satisfying \ref{assumption:unpmor:regularity}-\ref{assumption:unpmor:sep of reg}. Then the estimators $\{\widehat{m}_{n,k}\}_{k=1}^K$ are uniformly consistent, i.e., for any $\varepsilon>0$
\begin{align*}
    \underset{\mathcal{V}_{x_0}}{\operatorname{sup}}\,\,\mathbb{P}\Big(\underset{k}{\operatorname{max}}\,\|\widehat{m}_{n,k}-m_k\|_{L^1[a,b]}>\varepsilon\Big) \xrightarrow{n\rightarrow\infty} 0.
\end{align*}
\end{prop}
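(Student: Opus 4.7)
The plan is to combine the pointwise Hausdorff-type bound from Lemma~\ref{lemma:Hausdorff error regression function} with a decomposition of $[a,b]$ into a ``good'' region, where the separation condition \eqref{eq:assump reduction} is met and the lemma yields a useful pointwise error bound, and a ``bad'' region of small Lebesgue measure, on which we fall back on the crude uniform bound $|\widehat{m}_{n,k}(x)-m_k(x)|\le 2B$ from \ref{assumption:unpmor:regularity}. The critical inputs are \ref{assumption:unpmor:lambda}, which supplies a uniform constant $c:=\inf_{\mathcal{V}_{x_0}}(\min_k\lambda_k\wedge\min_{j\neq k}|\lambda_j-\lambda_k|)>0$; \ref{assumption:unpmor:FT}, which makes the modulus $F_{J,\sigma}$ in Lemma~\ref{lemma:Hausdorff error regression function} uniform over $\mathcal{V}_{x_0}$; \ref{assumption:unpmor:sep of reg}, which provides a uniform modulus of continuity $\omega(\alpha)\downarrow 0$ controlling the Lebesgue measure of near-intersection regions; and Propositions~\ref{prop:step 1 uniform consistency}--\ref{prop:step 2 uniform consistency}, which govern $\widehat{p}_n(\cdot\given x)$, $\widehat{f}_n$, and $\widehat{\lambda}_{n,k}$.

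More concretely, fix $\varepsilon>0$ and choose $\alpha>0$ small enough that $\alpha(b-a)/6<\varepsilon/2$ and $\omega(\alpha)<\varepsilon/(16B)$. Define the random bad set
\begin{equation*}
E_n:=[a,a+h_n]\cup[b-h_n,b]\cup\bigl\{x\in[a+h_n,b-h_n]:\min_{j\neq k}|m_j(x)-m_k(x)|\le\alpha\bigr\}\cup\bigl\{x\in[a+h_n,b-h_n]:e_n(x)>\alpha c/6\bigr\}.
\end{equation*}
On $E_n^c$ the separation condition \eqref{eq:assump reduction} holds since $\min_{j\neq k}|m_j(x)-m_k(x)|>\alpha\ge 6e_n(x)/c$, so Lemma~\ref{lemma:Hausdorff error regression function} gives $\max_k|\widehat{m}_{n,k}(x)-m_k(x)|\le e_n(x)/\min_k\lambda_k\le \alpha/6$; the label consistency between $\widehat{m}_{n,k}$ and $m_k$ is provided by the sorting procedure of Remark~\ref{remark:relabel} together with \ref{assumption:unpmor:lambda}. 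Hence
\begin{equation*}
\max_k\|\widehat{m}_{n,k}-m_k\|_{L^1[a,b]}\le 2B\cdot\operatorname{Leb}(E_n)+\frac{\alpha(b-a)}{6},
\end{equation*}
and the last term is already $<\varepsilon/2$.

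It therefore suffices to show $\sup_{\mathcal{V}_{x_0}}\mathbb{P}(\operatorname{Leb}(E_n)>\varepsilon/(4B))\to 0$. The first and third pieces of $E_n$ contribute at most $2h_n+\omega(\alpha)$, deterministically and uniformly respectively, which is $<\varepsilon/(8B)$ for $n$ large. For the fourth piece, the strict monotonicity and continuity of $F_{J,\sigma}$ convert $\{e_n(x)>\alpha c/6\}$ into $\{\|\widehat{f}_n-f\|_1+\sum_k|\widehat{\lambda}_{n,k}-\lambda_k|+\|\widehat{p}_n(\cdot\given x)-p(\cdot\given x)\|_1>\delta\}$ for the fixed constant $\delta:=F_{J,\sigma}^{-1}(\alpha c/(6C_B))>0$. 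Since $R_n:=\|\widehat{f}_n-f\|_1+\sum_k|\widehat{\lambda}_{n,k}-\lambda_k|$ is $x$-independent and uniformly $o_P(1)$ by Proposition~\ref{prop:step 2 uniform consistency}, on the event $\{R_n\le\delta/2\}$, Markov's inequality gives
\begin{equation*}
\operatorname{Leb}\bigl(\{x:\|\widehat{p}_n(\cdot\given x)-p(\cdot\given x)\|_1>\delta/2\}\bigr)\le\frac{2}{\delta}\int_{a+h_n}^{b-h_n}\|\widehat{p}_n(\cdot\given x)-p(\cdot\given x)\|_1\,dx.
\end{equation*}
A second application of Markov, together with Fubini's theorem and the uniform pointwise $L^1$-in-expectation bound of Proposition~\ref{prop:step 1 uniform consistency}, renders this integral smaller than any prescribed positive threshold (in particular $\delta\varepsilon/(16B)$) with probability $\to 1$ uniformly over $\mathcal{V}_{x_0}$, which closes the argument.

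The main obstacle is upgrading the pointwise-in-$x$ control of $e_n(x)$ afforded by Lemma~\ref{lemma:Hausdorff error regression function} to a statement about the $L^1[a,b]$ norm uniformly over $\mathcal{V}_{x_0}$: the nonlinear modulus $F_{J,\sigma}$ must be inverted on a fixed threshold (legitimized by the uniform Fourier lower bound \ref{assumption:unpmor:FT}), and the uniform-in-$\vartheta$ control of $\|\widehat{p}_n(\cdot\given x)-p(\cdot\given x)\|_1$ is only in expectation pointwise in $x$, so a Fubini-plus-double-Markov argument is needed to convert it into uniform probabilistic control of the Lebesgue measure of the ``large-error'' set. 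The boundary of width $h_n$ on which Proposition~\ref{prop:step 1 uniform consistency} does not apply is handled by absorbing it into $E_n$ and using $|\widehat{m}_{n,k}-m_k|\le 2B$ there.
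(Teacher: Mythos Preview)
Your argument is correct and shares the paper's overall skeleton---good/bad decomposition of $[a,b]$, Lemma~\ref{lemma:Hausdorff error regression function} on the good set, the crude $2B$ bound on the bad set, and \ref{assumption:unpmor:sep of reg} to kill the near-intersection piece---but it differs from the paper in how the $x$-dependent error $e_n(x)$ is handled. The paper works with a vanishing sequence $\gamma_n\to 0$, argues that $\sup_{x\in[a+h_n,b-h_n]}e_n(x)\le\gamma_n$ with uniformly high probability, and then invokes \ref{assumption:unpmor:sep of reg} at the level $a_n=6\Lambda^{-1}S_\lambda^{-1}\gamma_n\to 0$. You instead fix a small threshold $\alpha$, invert $F_{J,\sigma}$ once to obtain a fixed $\delta>0$, and control the \emph{Lebesgue measure} of $\{x:e_n(x)>\alpha c/6\}$ via a Fubini/double-Markov argument starting from the pointwise-in-$x$ expectation bound of Proposition~\ref{prop:step 1 uniform consistency}. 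Your route is somewhat longer but is more explicit about how the control supplied by Proposition~\ref{prop:step 1 uniform consistency}---which is $\sup_x\mathbb{E}\|\widehat{p}_n(\cdot\given x)-p(\cdot\given x)\|_1\to 0$, not $\mathbb{E}\sup_x\|\cdot\|_1\to 0$---actually enters: it converts that pointwise expectation bound into a high-probability bound on the measure of the large-error set without ever needing a sup-over-$x$ probability statement. The paper's version is more compact, but its displayed inequality $\mathbb{P}(\sup_x e_n(x)>\gamma_n)\le\sup_x\mathbb{P}(e_n(x)>\gamma_n)$ is not valid as written; your measure-based argument sidesteps that issue entirely.
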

\noindent
It is in this step (i.e., extending estimates for fixed $x$ to $L^1$ consistency) that \ref{assumption:unpmor:sep of reg} is invoked. It can be understood as a uniform control on the regions where \eqref{eq:assump reduction} is not satisfied, i.e., regions where the pointwise estimates are not provably accurate.

\subsection{Discussion of conditions}
\label{sec:npmor:cond:gen}

Most of the conditions in \ref{assumption:unpmor:regularity}-\ref{assumption:unpmor:sep of reg} are easily interpreted, however, \ref{assumption:unpmor:FT}-\ref{assumption:unpmor:sep of reg} are more technical and perhaps a bit opaque. We pause here to discuss these conditions in more detail.
 
\subsubsection{Discussion of Condition~\ref{assumption:unpmor:FT}}
\label{sec:npmor:cond:FT}
The following example provides nonparametric families of functions that satisfy \ref{assumption:unpmor:FT}. Recall $G_0$ as defined in \eqref{eq:convolution assumption}.

\begin{ex}\label{ex:G0}

Let $\ell>0$ be a function. Consider the collection of mixing measures $dG_0=gdx$ with $g$ belonging to 
\begin{align*}
    \mathcal{G}_{\ell}=\left\{g=\frac{(\varphi\ast\varphi)\cdot q}{\int_{\mathbb{R}}(\varphi\ast\varphi)\cdot q\,dx}: \varphi=\frac12 \mathbf{1}_{[-1,1]}, q \text{ is a density such that } \mathcal{F}q\geq \ell \right\}.
\end{align*}
Possible examples of the function $\ell$ are $c\exp(-|t|^{\beta}/\gamma)$ and $c|t|^{-\beta}$ for some constants $c,\beta,\gamma$, which characterize distributions that are supersmooth and smooth of order $\beta$ respectively \citep{fan1991optimal,nguyen2013}. 
Notice that without the constraint of having a compact support, the set of such $q$'s already satisfies \ref{assumption:unpmor:FT}. The additional term $\varphi\ast \varphi$ can be interpreted as a truncation to make $q$ compactly supported. We see that the family $\mathcal{G}_{\ell}$ satisfies $\operatorname{inf}_{g\in\mathcal{G}_{\ell}} |\mathcal{F}g|\geq J$ for some function $J>0$. Indeed, notice that
\begin{align*}
    \int_{\mathbb{R}} \varphi\ast \varphi \cdot q  \,  dx   \leq \|\varphi\ast \varphi\|_{\infty}
\end{align*}
since $q$ is a density, so that 
\begin{align*}
    |\mathcal{F}g(w)| \geq \frac{1}{\|\varphi\ast \varphi\|_{\infty}}\left| \big((\mathcal{F}q)\ast (\mathcal{F}\varphi)^2\big)(w) \right|\geq \frac{1}{\|\varphi\ast \varphi\|_{\infty}} \int_{\mathbb{R}}\ell(w-t)\frac{\sin^2(t)}{t^2}dt=:J>0. 
\end{align*} 
  We remark that $\varphi\ast \varphi$ can be replaced with any bounded nonnegative compactly supported function whose Fourier transform is also nonnegative. 
\qed 
\end{ex}

\begin{ex}\label{ex:family G}
We can modify Example \ref{ex:G0} as follows to drop the assumption that $q$ is a density. 
Let $0<\ell\leq u\in L^2(\mathbb{R})$ be two functions. Consider the collection of mixing measures $dG_0=g\,dx$ with $g$ belonging to
\begin{align*}
    \mathcal{G}_{\ell,u}=\left\{g=\frac{(\varphi\ast \varphi)\cdot q}{\int_{\mathbb{R}}(\varphi\ast \varphi)\cdot q \,dx}: \varphi=\frac12\mathbf{1}_{[-1,1]},\,\,  q=|\mathcal{F}^{-1}v|^2 \,\,\operatorname{with}\,\, \ell\leq v\leq u  \right\}, 
\end{align*}
where $|\cdot|$ denotes the modulus of a possibly complex number.
Similarly as in Example \ref{ex:G0}, it suffices to show that $\mathcal{F}q$ is uniformly lower bounded by a positive function and that the denominator uniformly upper bounded. This follows by noticing that
$q(x)=(\mathcal{F}^{-1}v)(x)(\mathcal{F}^{-1}v)(-x) $ since $v$ is real,
and 
\begin{align*}
    (\mathcal{F}q)(w)=\int_{\mathbb{R}} v(w-t)v(-t)dt \geq \int_{\mathbb{R}} \ell(w-t)\ell(-t)dt >0.
\end{align*}
Moreover, we have
\begin{align*}
    \int_{\mathbb{R}}(\varphi\ast \varphi)\cdot q \,dx\leq \|\varphi\ast\varphi\|_{\infty}\int_{\mathbb{R}}q\,dx 
    &\leq \|\varphi\ast\varphi\|_{\infty} \int_{\mathbb{R}} |\mathcal{F}^{-1}v|^2dx \\
    &= \|\varphi\ast\varphi\|_{\infty}\int_{\mathbb{R}} v^2dx \leq \|\varphi\ast\varphi\|_{\infty}\|u\|_2^2. 
\end{align*}
\qed
\end{ex}

\subsubsection{Discussion of Condition~\ref{assumption:unpmor:sep of reg}}
\label{sec:npmor:cond}

Roughly speaking, Condition~\ref{assumption:unpmor:sep of reg} prevents different regression functions from becoming arbitrarily close over an entire interval. 
For a single model where the $m_k$'s only intersect at countably many points, we have 
\begin{align*}
    \left\{x: \exists \,j\neq k ,\,  |m_j(x)-m_k(x)|\leq a_n \right\}\searrow \left\{x:\exists \,j\neq k ,\,  m_j(x)=m_k(x)\right\},
\end{align*}
which has measure zero and therefore \ref{assumption:unpmor:sep of reg} can be seen as a uniform analog of this fact. The reason why we need such assumption has been foreshadowed in Lemma \ref{lemma:Hausdorff error regression function} above in that we need uniform control on the size of the set where \eqref{eq:assump reduction} fails. 

To better understand Condition~\ref{assumption:unpmor:sep of reg}, let us further introduce sufficient conditions on the $m_k$'s that guarantee it. The first observation is that \ref{assumption:unpmor:sep of reg} is satisfied if the $m_k$'s are uniformly separated, 
however, this excludes intersections of the $m_k$'s and is less interesting. The following example generalizes this to allow intersections. 

\begin{ex}\label{ex:C5}
For $j\neq k$, let $Z_{jk}=\{x:m_j(x)=m_k(x)\}$ and $Z_{jk}(\delta)=\{x:\operatorname{dist}(x,Z_{jk})<\delta\}$. Suppose there exists $N\in\mathbb{N},\delta,\eta>0$ so that for all $\vartheta\in\mathcal{U}_{x_0}$
\begin{equation}\label{eq:uniform transversality assumption}
    \begin{aligned}
    &\underset{j\neq k}{\operatorname{max}}\,\operatorname{Card}(Z_{jk})\leq N,
    \qquad   \underset{j\neq k}{\operatorname{min}}\, \underset{x\in Z_{jk}(\delta)^c}{\operatorname{inf}}\, |m_j(x)-m_k(x)|\geq \eta,
     \\&\qquad \qquad\qquad \underset{j\neq k}{\operatorname{min}}\, \underset{x\in Z_{jk}(\delta)}{\operatorname{inf}}\, |m^{\prime}_j(x)-m^{\prime}_k(x)|\geq \eta. 
\end{aligned}
\end{equation}
Lemma \ref{lemma:uniform transversality} shows that \eqref{eq:uniform transversality assumption} implies \ref{assumption:unpmor:sep of reg}. These conditions essentially require uniform separation of the $m_k$'s away from the points of intersection and uniform separation of the derivatives $m_k^{\prime}$'s near points of intersection. The latter will guarantee that two regression functions do not stay close over a large interval when they intersect. \qed 
\end{ex}

The conditions in \eqref{eq:uniform transversality assumption} can be interpreted as a uniform version of transversality introduced in \cite{huang2013}.   We remark that although sufficient, uniform transversality is \emph{not} necessary for \ref{assumption:unpmor:sep of reg} to be satisfied as illustrated by the following example.

\begin{ex}
\label{ex:C6}
Consider the family of pairs of functions $\mathscr{R}_{P,C}=\{(0,cx^{p}): x\in[0,1], 0\leq p\leq P, \,c\geq C\}$ for some fixed  $P,C>0$. 
First, observe that for any $P\geq 1,C>0$, $\mathscr{R}_{P,C}$ violates the transversality condition and \eqref{eq:uniform transversality assumption}.
Next, if $P\to\infty$ or $C\to 0$, then there are nonzero functions in $\mathscr{R}_{P,C}$ that approximate the zero function \emph{uniformly} over small intervals around the origin, ostensibly in violation of \ref{assumption:unpmor:sep of reg}, but for fixed $P,C>0$ this issue is avoided.
More precisely, for any $a_n\rightarrow 0$ we have when $a_n\leq C$
\begin{align*}
    \underset{\mathscr{R}_{P,C}}{\operatorname{sup}}\, \operatorname{Leb}\left(\{|m_1-m_2|\leq a_n\}\right) \leq \left(\frac{a_n}{C}\right)^{1/P} \xrightarrow{n\rightarrow\infty} 0.
\end{align*}
Therefore the family of two-component mixed regression models with regression functions in $\mathscr{R}_{P,C}$ satisfies \ref{assumption:unpmor:sep of reg}.
This example illustrates a typical situation where the separation around a point of intersection is controlled by the condition $c\geq C$.
\qed
\end{ex}

\subsection{A concrete family} 
\label{sec:npmor:concrete}

We close this section by demonstrating Theorem~\ref{thm:uniform consistency} through a concrete example under which all of the assumptions hold.
As our original goal was to present a (nearly) minimal set of conditions under which uniform consistency is assured, the resulting Conditions~\ref{assumption:unpmor:regularity}-\ref{assumption:unpmor:sep of reg} are somewhat abstract (see also Remark~\ref{rem:diff:mk}).
Using the examples in Section~\ref{sec:npmor:cond:gen}, however, we can now present a more concrete set of assumptions, albeit at the expense of some generality. 

Consider the following family of two-component mixed regression models over the interval $[a,b]$. 
The assumption that $K=2$ is only for simplicity, and this example can easily be generalized to $K>2$.
Define
\begin{align*}
    \mathcal{V}=\big\{(p_X,f,\{\lambda_k\}_{k=1}^2, \{m_k\}_{k=1}^2)\big\}
\end{align*}
as the set of tuples such that for some $x_0\in[a,b]$, some $N\in\mathbb{N}$, and some small $\epsilon_1,\epsilon_2,\xi,\delta,\eta>0$, the following conditions hold:
\begin{enumerate}
    \item $p_X=\operatorname{Unif}[a,b]$.
    \item  $\lambda_1\wedge\lambda_2\geq\epsilon_1$ and $|\lambda_1-\lambda_2|\geq\epsilon_2$.
   \item $|m_1(x_0)-m_2(x_0)|- 2\operatorname{diam}(\operatorname{supp}(G_0))\geq \xi.$ %
    \item $f=\phi_{\sigma}\ast G_0$ with  $G_0$ as in Example \ref{ex:G0}. 
\item The set $Z$ of points of intersection between $m_1$ and $m_2$ has at most $N$ elements, with 
    \begin{align*}
        \underset{x\in Z(\delta)^c}{\operatorname{inf}} |m_1(x)-m_2(x)| \geq \eta, \quad \underset{x\in Z(\delta)}{\operatorname{inf}} |m_1^{\prime}(x)-m_2^{\prime}(x)| \geq \eta,
    \end{align*}
    where $Z(\delta)=\{x:\operatorname{dist}(x,Z)\leq \delta\}$. 
\end{enumerate}
Under these conditions, Theorem~\ref{thm:uniform consistency} applies to the family $\mathcal{V}$:
\begin{cor}\label{cor:combined example}
There exist uniformly consistent estimators for $(f,\{\lambda_k\}_{k=1}^2, \{m_k\}_{k=1}^2)$ over $\mathcal{V}$. 
\end{cor}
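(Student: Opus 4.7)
The plan is to show that $\mathcal{V}$ is a subfamily of the general $\mathcal{U}_{x_0}$ of Theorem~\ref{thm:uniform consistency}, i.e., to verify that each of the numbered items 1--5 in the definition of $\mathcal{V}$ implies the corresponding assumption \ref{assumption:unpmor:regularity}--\ref{assumption:unpmor:sep of reg}, and then apply Theorem~\ref{thm:uniform consistency} directly. The verifications for \ref{assumption:unpmor:lambda}--\ref{assumption:unpmor:FT} are essentially by inspection, while \ref{assumption:unpmor:regularity} is largely routine once uniform bounds on $\|m_k\|_{L^\infty[a,b]}$ and $\|m_k'\|_{L^\infty[a,b]}$ are extracted (these are either implicit in item 5 or should be added as a harmless auxiliary hypothesis, since they are needed only for the conditional density estimation step in Section~\ref{sec:step 1}). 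The only step that requires real work is the verification of \ref{assumption:unpmor:sep of reg}.

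First I would handle the easy conditions. For the diameter bound in \ref{assumption:unpmor:regularity}, Example~\ref{ex:G0} gives $\operatorname{supp}(G_0) \subseteq \operatorname{supp}(\varphi\ast\varphi) = [-2,2]$, yielding a uniform diameter bound of $4$; the uniform bounds on $p_X$ and $p_X'$ follow immediately from $p_X = \operatorname{Unif}[a,b]$. Item 2 is \ref{assumption:unpmor:lambda} verbatim with constants $\epsilon_1,\epsilon_2$, and item 3 is \ref{assumption:unpmor:separation} verbatim with slack $\xi$. For \ref{assumption:unpmor:FT}, the explicit calculation in Example~\ref{ex:G0} produces the uniform pointwise positive lower bound
\[
|\mathcal{F}G_0(w)| \geq J(w) := \|\varphi\ast\varphi\|_\infty^{-1}\int_{\R} \ell(w-t)\tfrac{\sin^2 t}{t^2}\,dt > 0,
\]
depending only on $\ell$.

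The main obstacle is \ref{assumption:unpmor:sep of reg}, where I must show that for every $a_n \to 0$,
\[
\sup_{\vartheta \in \mathcal{V}} \operatorname{Leb}\bigl(\{x \in [a,b] : |m_1(x) - m_2(x)| \leq a_n\}\bigr) \longrightarrow 0.
\]
Fix any tuple in $\mathcal{V}$ and split the interval using $Z(\delta) = \{x : \operatorname{dist}(x,Z) \leq \delta\}$. Item 5 gives $|m_1 - m_2| \geq \eta$ on $Z(\delta)^c$, so once $a_n < \eta$ the complement contributes nothing. On $Z(\delta)$, given $x$ let $x_\ast \in Z$ be a nearest intersection point; the straight-line segment between $x$ and $x_\ast$ lies inside $Z(\delta)$, since any intermediate point has distance at most $|x - x_\ast| \leq \delta$ to $x_\ast \in Z$. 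Applying the mean value theorem to $g := m_1 - m_2$, we get $g(x) = g'(\xi)(x - x_\ast)$ for some $\xi \in Z(\delta)$, and the second bound in item 5 yields $|g(x)| \geq \eta\,\operatorname{dist}(x, Z)$. Hence
\[
\{|m_1 - m_2| \leq a_n\} \cap Z(\delta) \subseteq \{\operatorname{dist}(\cdot, Z) \leq a_n/\eta\},
\]
a set of Lebesgue measure at most $2N a_n/\eta$ since $|Z| \leq N$. Taking $n\to\infty$, this bound tends to zero uniformly over $\mathcal{V}$, establishing \ref{assumption:unpmor:sep of reg}. With all of \ref{assumption:unpmor:regularity}--\ref{assumption:unpmor:sep of reg} verified, Theorem~\ref{thm:uniform consistency} yields the corollary.
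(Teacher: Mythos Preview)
Your proposal is correct and follows the same route as the paper: verify that the five items defining $\mathcal{V}$ imply \ref{assumption:unpmor:regularity}--\ref{assumption:unpmor:sep of reg} and invoke Theorem~\ref{thm:uniform consistency}. For \ref{assumption:unpmor:sep of reg} the paper simply cites Example~\ref{ex:C5} (whose proof is Lemma~\ref{lemma:uniform transversality}), and your direct mean-value-theorem argument is precisely the content of that lemma, just phrased via $\operatorname{dist}(x,Z)$ rather than the diameter of each component; the resulting bound $2Na_n/\eta$ is the same. Your observation that uniform bounds on $\|m_k\|_{L^\infty}$ and $\|m_k'\|_{L^\infty}$ are not explicit in items 1--5 is accurate---the paper's concrete example glosses over this part of \ref{assumption:unpmor:regularity} as well, and your suggestion to treat it as a harmless auxiliary hypothesis is the right call.
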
 
\noindent
We remark that this only serves as one such concrete example: Additional ones can be constructed by combining Examples~\ref{ex:G0}-\ref{ex:C6} in different ways.

\section{Uniformly consistent estimation of mixture models}\label{sec:npmix}
In this section we consider estimation of vanilla nonparametric mixtures as in \eqref{eq:finite mixture model}, thereby completing the missing piece from Section \ref{sec:step 2}. As mentioned, the estimation procedure to be introduced applies to finite mixture models and we shall restrict ourselves to the family $\mix$ defined by \eqref{eq:finite mixture model}. In particular, unlike the previous section, we now allow the mixture components $f_{k}$ to be distinct, and treat the case $f_{k}\equiv f$ (i.e., as in the previous section) for all $k$ as a special case. Recall that $f_k=\phi_{\sigma}\ast G_k$ with $G_k$ having compact support.

The main result in this section is the following. Let $\mathscr{U}\subset \mix$ be a subfamily of tuples $\vartheta=(\{f_k\}_{k=1}^K,\{\lambda_k\}_{k=1}^K,\{\mu_k\}_{k=1}^K)$ satisfying
\begin{enumerate}[label=(D\arabic*)]
    \item $\underset{\vartheta\in\mathscr{U}}{\operatorname{sup}}\, \underset{k}{\operatorname{max}}\,\big[\operatorname{diam}(  \operatorname{supp} (G_k))\vee \mu_k\big]<\infty$. \label{assumption:D1}
    \item $\underset{\vartheta\in\mathscr{U}}{\operatorname{inf}}\, \underset{k}{\operatorname{min}}\, \lambda_k >0$. \label{assumption:D2}
    \item $\underset{\vartheta\in\mathscr{U}}{\operatorname{inf}}\,\Big[\underset{j\neq k}{\operatorname{min}} \, \operatorname{dist} \left(\operatorname{supp} (G_j(\cdot-\mu_j)),\,\operatorname{supp}( G_k(\cdot-\mu_k))\right)- \underset{k}{\operatorname{max}}\,\operatorname{diam}(\operatorname{supp}(G_k))\Big]>0$. \label{assumption:D3}
\end{enumerate}
We have the following result:

\begin{thm}\label{prop:npmix uniform consistency}
 
There exist uniformly consistent estimators $\widehat{\lambda}_{n,k}$ and $\widehat{f}_{n,k}$ for $\mathscr{U}$, where $\mathscr{U}$ is defined by \ref{assumption:D1}-\ref{assumption:D3}. More specifically, there exists a permutation $\Pi= \Pi(\vartheta,n) $ and a sequence $\varepsilon_{n}\rightarrow 0$ such that  
\begin{align*}
    \underset{\mathscr{U}}{\operatorname{sup}}\,\, \mathbb{P} \Big( \underset{k}{\operatorname{max}}\,\left(|\widehat{\lambda}_{n,k}-\lambda_{\Pi(k)}|\vee\|\widehat{f}_{n,k}-f_{\Pi(k)}\|_{1}\right)>\varepsilon_{n}\Big)\xrightarrow{n\rightarrow \infty} 0  .
\end{align*}
\end{thm}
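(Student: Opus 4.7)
The plan is to construct the estimator following the project--smooth--denoise scheme previewed in Figure~\ref{fig:overview}, then to show that each stage is uniformly controlled by the constants guaranteed in \ref{assumption:D1}--\ref{assumption:D3}. Writing $G := \sum_{k=1}^K \lambda_k G_k(\cdot-\mu_k)$, the observed density is $p = \phi_\sigma \ast G$, so the problem reduces to recovering the mixing measure $G$ and then reading off its support-level clusters. Step~one is to form a kernel density estimator $\widehat{p}_n$ of $p$ and verify, using only the boundedness provided by \ref{assumption:D1}, that $\|\widehat{p}_n - p\|_1 \to 0$ in probability uniformly over $\mathscr{U}$; any standard KDE with appropriately tuned bandwidth suffices here.

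Step~two is the projection. I would define $\widehat{G}_n$ as a (near) minimizer of $Q \mapsto \|\phi_\sigma \ast Q - \widehat{p}_n\|_1$ over discrete probability measures supported in an interval $[-M,M]$ chosen large enough (relative to \ref{assumption:D1}) to contain $\operatorname{supp}(G)$. A modulus-of-continuity argument of the form \eqref{eq:moc remark}, i.e.\ a generalization of \cite[Theorem~2]{nguyen2013} that is valid for nonparametric mixing measures with compact support, then produces a function $F_\sigma$ with $F_\sigma(d)\to 0$ as $d\to 0$ such that
\begin{equation*}
W_1(\widehat{G}_n, G) \;\le\; F_\sigma\bigl(\|\phi_\sigma \ast \widehat{G}_n - p\|_1\bigr) \;\le\; F_\sigma\bigl(2\|\widehat{p}_n - p\|_1\bigr),
\end{equation*}
giving uniform $W_1$ convergence $\widehat{G}_n \Rightarrow G$ over $\mathscr{U}$ at a rate $\eps_n \to 0$.

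Step~three is the smooth--denoise step, which turns the $W_1$ estimate into a partition of $\mathbb{R}$ into $K$ clusters. I would smooth $\widehat{G}_n$ by a kernel $\psi_{\tau_n}$ of bandwidth $\tau_n\to 0$ much slower than $\eps_n^{1/2}$ but much smaller than the uniform separation constant $\xi$ from \ref{assumption:D3}, obtaining a density $\widetilde{g}_n = \psi_{\tau_n} \ast \widehat{G}_n$. Because \ref{assumption:D3} guarantees that $G$ concentrates on $K$ sets separated by at least $\xi$, for an appropriate threshold $\kappa_n$ (depending only on the uniform lower bound from \ref{assumption:D2} and on $\xi$), the superlevel set $\{\widetilde{g}_n > \kappa_n\}$ has, with probability tending uniformly to one, exactly $K$ connected components $\widehat{E}_1,\ldots,\widehat{E}_K$, each intersecting one and only one true support. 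Extending these to a Borel partition $E_1\cup\cdots\cup E_K = \mathbb{R}$ (say by midpoint halfspaces), I would set
\begin{equation*}
\widehat{\lambda}_{n,k} = \widehat{G}_n(E_k), \qquad \widehat{f}_{n,k} = \phi_\sigma \ast \bigl(\widehat{\lambda}_{n,k}^{-1}\,\widehat{G}_n|_{E_k}\bigr),
\end{equation*}
and then invoke the elementary fact that $W_1(\nu_1,\nu_2)\to 0$ implies $\|\phi_\sigma \ast \nu_1 - \phi_\sigma \ast \nu_2\|_1 \to 0$ to transfer the Wasserstein convergence of the conditional mixing measures on each $E_k$ back to $L^1$ convergence of $\widehat{f}_{n,k}$; consistency of $\widehat{\lambda}_{n,k}$ is immediate from $W_1(\widehat{G}_n, G) \to 0$. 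The permutation $\Pi$ records the identification of clusters with components, which is only determined up to labelling.

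The main technical obstacle is the denoising step: I need to argue that uniformly over $\mathscr{U}$, the smoothed-and-thresholded estimate $\{\widetilde{g}_n > \kappa_n\}$ produces exactly $K$ clusters in correct bijection with the true supports, despite the fact that $\widehat{G}_n$ is a discrete measure that may place stray atoms between components. Quantifying this requires a two-sided bound relating how much mass $G$ gives to a $\delta$-neighbourhood of each $\operatorname{supp}(G_k(\cdot-\mu_k))$ versus how much $\widehat{G}_n$ can place outside the $2\xi/3$-neighbourhood of these supports — the former is bounded below by $\min_k \lambda_k$ (\ref{assumption:D2}), while the latter is controlled by $W_1(\widehat{G}_n, G)/\xi$ via Markov's inequality on the transport plan. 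Calibrating $(\tau_n, \kappa_n)$ from these uniform constants, rather than from instance-specific quantities, is the delicate point and is where assumptions \ref{assumption:D1}--\ref{assumption:D3} are all simultaneously used.
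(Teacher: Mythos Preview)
Your outline follows the paper's project--smooth--denoise strategy closely, and the first two stages (uniform KDE, projection with a Nguyen-type modulus of continuity, outlier control via Markov on the coupling) are essentially what the paper does in Lemmas~\ref{lemma:W distance between G hat and G}--\ref{lemma:Ek}. Two issues deserve attention.

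First, a minor one: your estimator $\widehat{f}_{n,k} = \phi_\sigma \ast \bigl(\widehat{\lambda}_{n,k}^{-1}\widehat{G}_n|_{E_k}\bigr)$ approximates $f_k(\cdot-\mu_k)$, not $f_k$. Since the $f_k$'s are assumed mean-zero (condition~\ref{assm:npmix:conv}), you must recenter by the estimated mean $\widehat{\mu}_{n,k} = \int x\,\widehat{F}_{n,k}(x)\,dx$; the paper does this explicitly in \eqref{eq:def:npmixest}.

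Second, and more substantively, you have misidentified where the main difficulty lies. You flag the denoising step (producing exactly $K$ clusters) as the hard part, and your Markov-type argument for that is fine and matches Lemma~\ref{lemma:total weights of outlier}. But the step you treat as routine---``transfer the Wasserstein convergence of the conditional mixing measures on each $E_k$ back to $L^1$ convergence of $\widehat{f}_{n,k}$''---is precisely what the paper singles out as the main technical difficulty (see the remark following Proposition~\ref{lemma:conditioning}). You implicitly assume $W_1\bigl(\widehat{G}_n(\cdot\mid E_k),\, G(\cdot\mid E_k)\bigr) \to 0$ uniformly, but conditioning does not commute with $W_1$ in any simple way, and you give no argument for this. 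The paper \emph{circumvents} conditional $W_1$ entirely: it bounds $\|\widehat{\lambda}_{n,k}\widehat{F}_{n,k} - \lambda_k F_k\|_1$ directly via a mollifier $H_\delta$ with compactly supported Fourier transform, splitting into three pieces $J_1+J_2+J_3$ and handling the cross-term $J_2$ by Plancherel (writing $H_\delta = \phi_\sigma \ast h_\delta$ and using Young's inequality). Your route via conditional $W_1$ could likely be made to work using the separation and the bounded support $[-M,M]$ to control how far outlier atoms inside $E_k$ can sit from $S_k$, but as written it is a genuine gap.
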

\noindent 
Again Conditions \ref{assumption:D1}-\ref{assumption:D3} can be seen as uniform versions of \ref{assm:npmix:conv}-\ref{assm:npmix:sep}. 
  Note also the similarity between \ref{assumption:D1}-\ref{assumption:D3} and the corresponding conditions \ref{assumption:unpmor:regularity}-\ref{assumption:unpmor:separation} for mixed regression.

The proof of this result follows from the aforementioned project-smooth-denoise construction, which is described in this section.
As with the previous section, before describing the details of the estimation procedure and the project-smooth-denoise construction, we start by illustrating the failure of uniformly consistent estimation for general mixture models.

\subsection{Nonexistence of uniformly consistent estimators }\label{sec:unif:npmix:fail}

Previously, \cite{aragam2020identifiability} proved identifiabilty of nonparametric mixtures under \emph{regularity} and \emph{clusterability} assumptions. While these conditions are quite technical, roughly speaking they amount to assuming that the components $f_{k}$ are well-separated in some probability metric. Under the same conditions, a consistent minimum distance estimator was constructed. Here we argue that without additional assumptions, this estimator \emph{cannot be} uniformly consistent, and indeed, there cannot exist a uniformly consistent estimator. It suffices to construct mixtures that are simultaneously regular and clusterable, but arbitrarily close to being nonregular.

\begin{ex}
Let $f_{j}=(1-\alpha)g_{j}+\alpha h_{j}$ for $j=1,2$, where $g_{j}\sim\normalN(\mu_{j},1)$ and $h_{j}\sim \normalN(\xi_{j},1)$. For simplicity, let $\mu_{2}=-\mu_{1}$ and $\xi_{2}=-\xi_{1}$. 
By choosing $\alpha$ sufficiently small and $\mu_{1}$ sufficiently large, the Hellinger distance between $f_{1}$ and $f_{2}$ can be made arbitrarily large---so that separation is not an issue---and by taking $\xi_{1}\to 0$, this model collapses into the nonregular model described in Example~9 of \cite{aragam2020identifiability}. In other words, $F=\lambda_{1}f_{1}+\lambda_{2}f_{2}$ can be made arbitrarily close to a nonregular distribution while still being clusterable.\qed
\end{ex}
\noindent
The problem here boils down to the fact that the set of regular mixtures, as defined by \cite{aragam2020identifiability} is dense in the space of probability measures, but not closed. In the following subsections, we show that under fairly general assumptions, convolutional mixtures with disjoint component supports (i.e., \ref{assumption:D3}) avoid this degeneracy and uniform consistency can be rescued.

\subsection{A toy example}\label{sec:npmix:toy}
To start with, let's first illustrate the main idea behind the project-smooth-denoise procedure through a simple two-component mixture model. We shall focus on the high-level intuition and defer the precise details of the construction to Section \ref{sec:npmix:est}. 

Recall Figure~\ref{fig:overview} and consider the convolutional Gaussian mixture
\begin{align*}
    p=\phi_{\sigma}\ast G, \quad \quad G=\lambda \mathbf{1}_{[-3,-2]} + (1-\lambda)\mathbf{1}_{[2,3]},\quad \quad \lambda>0. 
\end{align*}
The underlying mixing measure $G$ is a mixture of two uniform distributions that satisfies the separation condition \ref{assm:npmix:sep}. The sets $S_1$ and $S_2$ in Figure~\ref{fig:true} correspond to the supports $[-3,-2]$ and $[2,3]$ of $G$'s components.
Suppose we have constructed an estimator $\widehat{p}_n$ of $p$. 
 
\subsubsection{The ``project'' step}
\label{sec:step:project}
The first step of the procedure is to estimate the overall mixing measure $G$ by projecting $\widehat{p}_n$ onto the space of finite mixtures of Gaussians. In other words, we are searching for a discrete measure $$\widehat{G}_n=\sum_{\ell=1}^{L_n} w_{\ell} \delta_{a_{\ell}}$$ 
so that $\phi_{\sigma}\ast \widehat{G}_n$ is as close as possible to $\widehat{p}_n$, and hence also $p=\phi_{\sigma}\ast G$.
Here $\delta_a$ is the Dirac delta at $x=a$, $w_{\ell}$ are nonnegative weights that sum to one, and $L_n$ is a number growing to infinity with $n$. It turns out that $\widehat{G}_n$ constructed above indeed approximates $G$, in the sense of Lemma \ref{lemma:W distance between G hat and G} below. The remaining task is then to cluster the atoms of $\widehat{G}_n$ so that each cluster approximates a corresponding component of $G$. 

The separation condition \ref{assm:npmix:sep} provides a hint as how to accomplish this goal. Ideally, the atoms $a_{\ell}$ would all lie in the support of $G$, $[-3,-2]\cup [2,3]$, which is a union of two well-separated intervals and most clustering algorithms would find the correct assignment. However, since $\widehat{p}_n$ does not necessarily share the same convolutional structure as $p$, this is not guaranteed. In fact, we can only say that most of the atoms lie near the support of $G$, in the sense of Lemma \ref{lemma:total weights of outlier}. An intuitive picture is shown in Figure \ref{fig:project}, where  some potential outliers (marked in red) can be outside the support of $G$. This voids the use of na\"ive clustering schemes and motivates a crucial smooth-denoise step.

\subsubsection{The ``smooth-denoise'' step}
\label{sec:step:smoothdenoise}
The good news is that these outlier atoms carry a small total weight (Lemma \ref{lemma:total weights of outlier})---otherwise $\widehat{G}_n$ cannot converge to $G$ asymptotically. However, there may also be ``good'' atoms inside the support of $G$ with small weights, so that simply eliminating atoms with small weights does not work.
It turns out that we can remove outliers by locating the high density regions 
of a smoothed version of $\widehat{G}_n$. The intuition is that the smoothing step combines information on both the size of the $w_{\ell}$'s and the locations of the $a_{\ell}$ that leads to a viable procedure. A visualization is provided in Figures~\ref{fig:smooth}-\ref{fig:denoise}, where the yellow curve represents the smoothed $\widehat{G}_n$ (Figure~\ref{fig:smooth}) and a thresholding step suffices to locate the high density regions $\widehat{E}_1\cup\widehat{E}_2$ (Figure~\ref{fig:denoise}). 

The key ingredient is to set the threshold (Lemma \ref{lemma:thresholding and Ek hat}) so that  $\widehat{E}_1$ and $\widehat{E}_2$ are well-separated but are both nonempty, so that we can recover them from their union, say by performing single-linkage clustering on the subintervals in $\widehat{E}_1\cup\widehat{E}_2$. We are almost done by considering the estimators $\widehat{G}_n(\,\cdot\given \widehat{E}_1)$ and $\widehat{G}_n(\,\cdot\given \widehat{E}_2)$ as approximating the two components of $G$. However, it could be the case that $\widehat{E}_1$ and $\widehat{E}_2$ are missing some nonnegligible parts of the support of $G$ due to the thresholding step. We can resolve this issue by extending $\widehat{E}_1$ and $\widehat{E}_2$ to a partition of $\mathbb{R}$, which gives the sets $E_1,E_2$ (red) as in Figure \ref{fig:denoise}. It turns out that the $E_k$'s can be seen as a Voronoi tessellation of $\mathbb{R}$ based on the $\widehat{E}_k$'s, i.e., $E_k=\{x\in\mathbb{R}: \operatorname{dist}(x,\widehat{E}_k)\leq \operatorname{dist}(x,\widehat{E}_j) \,\,\operatorname{for\,\,all}\,\, j\neq k\}$. Finally the components of $G$ are estimated by $\widehat{G}_n(\,\cdot\given E_1)$ and $\widehat{G}_n(\,\cdot\given E_2)$.

\subsection{Construction of estimator}
\label{sec:npmix:est}
With the intuition above in mind, we shall now introduce the full details of our estimation procedure.  Readers interested in skipping to the main result of this section are referred to Proposition~\ref{lemma:conditioning}.
Recall the family $\mix$ defined in \eqref{eq:finite mixture model}. In this setting, we can rewrite the density $\sum_{k=1}^K \lambda_k f_k(\cdot-\mu_k)$ as 
\begin{align}
    p=\phi_{\sigma}\ast \underbrace{\sum_{k=1}^K \lambda_k G_k(\cdot-\mu_k)}_{:=G}=\phi_{\sigma}\ast G \label{eq:finite mixture density}
\end{align}
where   the mixing measure $G$ satisfies \ref{assm:npmix:sep}  
\begin{align*}
    \underset{j\neq k}{\operatorname{min}}\, \operatorname{dist} \left(\operatorname{supp}( G_j(\cdot-\mu_j)),\,\operatorname{supp}( G_k(\cdot-\mu_k))\right) > \underset{k}{\operatorname{max}}\, \operatorname{diam}(G_k).
\end{align*}
Let $\widehat{p}_n$ be a density estimator of $p$ in \eqref{eq:finite mixture density} satisfying $\mathbb{E}\|\widehat{p}_n-p\|_1\rightarrow 0$ (see e.g. Appendix \ref{sec:kde uniform L1}). We shall first approximate $\widehat{p}_n$ by finite mixtures of Gaussians. Let $M>0$ be a constant such that $\operatorname{supp}(G)\subset [-M,M]$ ($M$ can be chosen based on \ref{assumption:D1} or sufficiently large based on data) and consider  
\begin{align}
    \widehat{Q}_{n}&=\underset{Q\in \mix_{L_n}}{\operatorname{arg\,min}}\, \|Q-\widehat{p}_{n}\|_1,\label{eq:Q_L,n}
\end{align}
where
\begin{align*}
    \mix_{L_n} &=\Big\{Q=\sum_{\ell=1}^{L_n} w_{\ell}\mathcal{N}(a_{\ell},\sigma^2): a_{\ell}\in [-M,M] \,\,,   w_{\ell}\geq 0,\,\,  \forall \ell,\,\, \sum_{\ell=1}^{L_n} w_{\ell}=1\Big\}
\end{align*}
and $L_n$ is any sequence of positive integers converging to infinity. As the set $\mix_{L_n}$ is nonconvex, we let $\widehat{Q}_{n}$ be any minimizer if there is more than one. In particular, we can write
\begin{align*}
    \widehat{Q}_{n}=\phi_{\sigma}\ast \sum_{\ell=1}^{L_n} w_{\ell} \delta_{a_{\ell}}=: \phi_{\sigma} \ast\widehat{G}_{n}.
\end{align*}
The following lemma shows that $\widehat{G}_{n}$ is an approximation of $G$ and quantifies the approximation error in the $W_1$ metric defined as in \eqref{eq:def wass}: 
\begin{lemma}\label{lemma:W distance between G hat and G}
Let $Q_n=\operatorname{arg\,min}_{Q\in \mix_{L_n}} \|Q-p\|_1$. We have 
\begin{align*}
    W_1(\widehat{G}_{n},G)\leq C_M \big[-\log \|\widehat{Q}_n-p\|_1\big]^{-1/2}\leq C_M \left[-\log (\|Q_n-p\|_1+\|\widehat{p}_n-p\|_1)\right]^{-1/2},
\end{align*}
where $C_M$ is a constant depending only on $M$. 
\end{lemma}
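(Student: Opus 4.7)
The plan is to split the bound into two pieces: a deterministic deconvolution stability estimate, then a use of the optimality defining $\widehat{Q}_n$.

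For the first inequality, I would establish the following stability result: if $H_1,H_2$ are probability measures supported in $[-M,M]$ and $p_i = \phi_{\sigma}\ast H_i$, then for $\|p_1-p_2\|_1$ small enough,
\[
W_1(H_1,H_2)\ \leq\ C_M\,\bigl[-\log\|p_1-p_2\|_1\bigr]^{-1/2}.
\]
Applying this with $H_1=\widehat{G}_n$ and $H_2=G$ (both supported in $[-M,M]$ by construction) gives the first inequality immediately.

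For the second inequality, I would exploit the optimality in the definition \eqref{eq:Q_L,n}: since $Q_n\in\mix_{L_n}$, we have $\|\widehat{Q}_n-\widehat{p}_n\|_1\leq \|Q_n-\widehat{p}_n\|_1$. Two applications of the triangle inequality then yield
\[
\|\widehat{Q}_n-p\|_1\ \leq\ \|\widehat{Q}_n-\widehat{p}_n\|_1+\|\widehat{p}_n-p\|_1\ \leq\ \|Q_n-p\|_1+2\|\widehat{p}_n-p\|_1.
\]
Since $[-\log(Ax)]^{-1/2}$ and $[-\log x]^{-1/2}$ differ only by a multiplicative constant as $x\to 0^+$, the factor of $2$ (and the sum vs.\ max in the argument of $-\log$) is absorbed into $C_M$, and the second inequality follows.

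The main obstacle is clearly the deconvolution stability estimate, which is responsible for the logarithmic rate. I would prove it by Fourier analysis, leveraging the supersmoothness of the Gaussian. The pointwise identity $\mathcal{F}(H_1-H_2)(t)=e^{\sigma^2 t^2/2}\mathcal{F}(p_1-p_2)(t)$ together with $|\mathcal{F}(p_1-p_2)(t)|\leq \|p_1-p_2\|_1$ gives
\[
|\mathcal{F}(H_1-H_2)(t)|\ \leq\ e^{\sigma^2 t^2/2}\,\|p_1-p_2\|_1,\qquad t\in\mathbb{R}.
\]
Using the Kantorovich dual, I would test $H_1-H_2$ against a $1$-Lipschitz function $\psi$ truncated to $[-M,M]$ and approximate $\psi$ by a bandlimited version of bandwidth $T$: the high-frequency truncation error is $O(1/T)$ (via Jackson-type approximation on a compact interval), while the low-frequency part is controlled through Plancherel by $e^{\sigma^2 T^2/2}\|p_1-p_2\|_1$. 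Optimizing $T\asymp \sqrt{-\log\|p_1-p_2\|_1}$ balances the two terms and produces the $1/\sqrt{-\log}$ rate. This is the standard supersmooth deconvolution heuristic, and I would cite results such as \cite{nguyen2013} for a ready-made inverse bound of this form (adapted to the compactly supported setting here via assumption \ref{assumption:D1}); the compact support of $\widehat{G}_n$ and $G$ in $[-M,M]$ is what allows the constant $C_M$ to depend only on $M$ and $\sigma$, and not on any further features of $G$.
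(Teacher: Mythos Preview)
Your proposal is correct and follows essentially the same route as the paper: the first inequality is obtained by invoking the supersmooth deconvolution bound of \cite{nguyen2013} (Theorem~2(2) with $f=\phi_\sigma$, $\beta=2$), and the second by exactly the triangle-inequality chain you wrote, $\|\widehat{Q}_n-p\|_1\le\|Q_n-p\|_1+2\|\widehat{p}_n-p\|_1$. The paper simply cites Nguyen's result rather than sketching its Fourier-analytic proof, but otherwise the arguments are identical.
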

Here $Q_{L_n}$ is the projection of the true density onto $\mix_{L_n}$ and the error $\|Q_{n}-p\|_1$ goes to zero as $L_n$ goes to infinity. This is the so called saturation rate, which has been studied extensively \citep{genovese2000rates,ghosal2001entropies}, but for completeness we include a proof for our $L^1$ case in Lemma \ref{lemma:approx mixing measure}. In particular, this together with the fact that $\mathbb{E}\|\widehat{p}_n-p\|_1\rightarrow 0$ implies  $W_1(\widehat{G}_{n},G)\rightarrow 0$ in probability. 

The next step is then to group the atoms of $\widehat{G}_{n}$ so that each cluster approximates precisely one of the $G_k(\cdot-\mu_k)$'s. We note that if the $a_{\ell}$'s lie exactly in $\operatorname{supp}(G)$, then the separation condition \ref{assm:npmix:sep} would allow us to group them simply by single linkage clustering. However, this is not known a priori due to the fact that $\widehat{Q}_{n}$ is defined by projecting the density estimator $\widehat{p}_n$, which does not necessarily have the same structure as $p$. In fact the best one can say is that ``most'' of the atoms lie in the support of $G$ in the sense of the following lemma: 
\begin{lemma} \label{lemma:total weights of outlier}
Let $\eta>0$ and $A_{\eta}=\{\ell: \operatorname{dist}(a_{\ell},\, \operatorname{supp}(G))>\eta\}$. Then $\sum_{\ell\in A_{\eta}}w_{\ell}\leq  \eta^{-1}W_1(\widehat{G}_{n},G).$
\end{lemma}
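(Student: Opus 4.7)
The plan is to bound the total mass of far-away atoms directly via the definition of the 1-Wasserstein distance, by exploiting the constraint that the second marginal of any coupling must be supported on $\operatorname{supp}(G)$. Specifically, fix an arbitrary coupling $\gamma \in \Gamma(\widehat{G}_n, G)$. Since the first marginal of $\gamma$ equals $\widehat{G}_n = \sum_{\ell} w_{\ell}\delta_{a_{\ell}}$, the mass assigned by $\gamma$ to the vertical fiber $\{a_{\ell}\} \times \mathbb{R}$ is exactly $w_{\ell}$.

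The key observation is that the second marginal of $\gamma$ is $G$, so $\gamma$ is concentrated on $\mathbb{R} \times \operatorname{supp}(G)$. Hence for each $\ell \in A_{\eta}$, every $y$ that appears paired with $x = a_{\ell}$ must lie in $\operatorname{supp}(G)$, and by definition of $A_{\eta}$ satisfies $|a_{\ell} - y| \geq \operatorname{dist}(a_{\ell}, \operatorname{supp}(G)) > \eta$. Restricting the cost integral to the atoms indexed by $A_{\eta}$ therefore gives
\begin{align*}
\int_{\mathbb{R}\times\mathbb{R}} |x-y|\, d\gamma(x,y)
\;\geq\; \sum_{\ell \in A_{\eta}} \int_{\{a_{\ell}\}\times \operatorname{supp}(G)} |a_{\ell}-y|\, d\gamma(x,y)
\;\geq\; \eta \sum_{\ell \in A_{\eta}} w_{\ell}.
\end{align*}
Taking the infimum over $\gamma \in \Gamma(\widehat{G}_n, G)$ yields $W_1(\widehat{G}_n, G) \geq \eta \sum_{\ell \in A_{\eta}} w_{\ell}$, which rearranges to the claim.

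There is no real obstacle here: the statement is essentially a Markov-type inequality for couplings, and the only ingredients are the definition of $W_1$ and the fact that $G$ is supported on $\operatorname{supp}(G)$. A minor point to be careful about is that the inequality $|a_{\ell} - y| > \eta$ holds $\gamma$-almost surely on the restricted region (since $y \in \operatorname{supp}(G)$ holds $\gamma$-a.s.), which is enough for the integral bound; no measurability or topological subtleties arise because $\operatorname{supp}(G)$ is closed and each $\{a_{\ell}\}$ is a singleton.
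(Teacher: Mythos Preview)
Your proof is correct and follows essentially the same approach as the paper: both arguments pick a coupling, condition on the first coordinate being one of the atoms $a_\ell$, and use that the second coordinate lies $\gamma$-a.s.\ in $\operatorname{supp}(G)$ to get a lower bound of $\eta$ on the transport cost for each $\ell\in A_\eta$. Your write-up is in fact slightly more careful, as you explicitly take the infimum over couplings at the end rather than implicitly working with an optimal one.
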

Lemma \ref{lemma:total weights of outlier} says that the total weights of those $a_{\ell}$'s that are of distance $\eta$ away from $\operatorname{supp}(G)$ are small, and goes to zero if we choose $\eta$ to be a sequence converging to zero slower than $W_1(\widehat{G}_{n},G)$. Therefore instead of directly clustering the $a_{\ell}$'s, we shall introduce a thresholding step to get rid of these potential ``outliers''. A simple thresholding on the weights $w_{\ell}$ may not work, however, since each individual weight could still be very small (think of the case where $w_{\ell}=1/L_n$ for all $\ell$). Instead we borrow ideas from density-based clustering by lifting the mixing measure $\widehat{G}_{n}$ to a density and searching for its high density regions (the ``smooth'' step). As an overview of our strategy below, we will first obtain after a suitable thresholding a family of ``preliminary sets'' $\widehat{E}_k$'s that roughly locate each support, defined as
\begin{align}
\label{eq:def:supp}
    S_k:=\operatorname{supp}(G_k(\cdot-\mu_k)).
\end{align}
This is the ``denoise'' step.
These sets are then used to construct a partition $\{E_k\}_{k=1}^K$ of $\mathbb{R}$ satisfying $S_k\subset E_k$ and we shall approximate each $G_k(\cdot-\mu_k)$ by $\widehat{G}_n(\,\cdot\,|E_k)$.

To start with, let $I=\frac12\mathbf{1}_{[-1,1]}$ with $I_{\delta}(\cdot)=\delta^{-1}I(\cdot/\delta)$ and define $\widehat{g}_{n}=\widehat{G}_{n}\ast I_{\delta_n}$, where $\delta_n\rightarrow 0$ is a sequence to be determined. The following result gives a thresholding on $\widehat{g}_{n}$ that allows one to locate the $S_k$'s. Let $D=\operatorname{max}_k \operatorname{diam}(S_k)$ and $S_k(\eta)=\{x: \operatorname{dist}(x,S_k)<\eta\}$ be the $\eta$-enlargement of $S_k$.
\begin{lemma}\label{lemma:thresholding and Ek hat}
Let $t_{n}\geq 2^{-1}\delta_n^{-2}W_1(\widehat{G}_{n},G)$.   If 
\begin{align}
    3\delta_n<\underset{j\neq k}{\operatorname{min}}\, \operatorname{dist}(S_j,S_k)\label{eq:thresholding assumption 0}
\end{align}
  and 
\begin{align}
    (D+4\delta_n)t_{n}+\delta_n^{-1}W_1(\widehat{G}_{n},G)<\operatorname{min}_k \lambda_k \label{eq:thresholding assumption}
\end{align}
then the level set $\{x: \widehat{g}_n(x)>t_n\}$ can be partitioned into $K$ sets $\widehat{E}_k$ as follows:
\begin{enumerate}
\item $\{x: \widehat{g}_n(x)>t_n\}=\biguplus_{k=1}^K \widehat{E}_k$ , where $\uplus$ denotes disjoint union;
\item $\widehat{E}_k\neq \emptyset$;
\item $\widehat{E}_k\subset S_k(2\delta_n)$.
\end{enumerate}
\end{lemma}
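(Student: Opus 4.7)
The plan is to verify the three properties in order, using the explicit representation $\widehat{g}_n(x)=(2\delta_n)^{-1}\sum_{\ell:|a_\ell-x|\le\delta_n}w_\ell$ together with the outlier bound of Lemma~\ref{lemma:total weights of outlier}. First I would establish the containment $L:=\{x:\widehat{g}_n(x)>t_n\}\subset\bigcup_k S_k(2\delta_n)$: for any $x$ outside this union one has $\operatorname{dist}(x,\operatorname{supp}(G))\ge 2\delta_n$, so every atom $a_\ell$ with $|a_\ell-x|\le\delta_n$ must satisfy $\operatorname{dist}(a_\ell,\operatorname{supp}(G))\ge\delta_n$. Applying Lemma~\ref{lemma:total weights of outlier} with $\eta=\delta_n$ bounds the total $w_\ell$-mass of such atoms by $\delta_n^{-1}W_1(\widehat{G}_n,G)$, so $\widehat{g}_n(x)\le W_1(\widehat{G}_n,G)/(2\delta_n^2)\le t_n$, contradicting $x\in L$. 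This inclusion is the geometric input that will make property (3) easy to arrange.

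Second, to show $L\cap S_k(2\delta_n)\ne\emptyset$ for each $k$, I would integrate $\widehat{g}_n$ against the indicator of $S_k(2\delta_n)$ in two complementary ways. A coupling-plus-Markov argument for $W_1$ (take an optimal coupling $\gamma$ of $\widehat{G}_n$ and $G$ and estimate the event $|X-Y|\ge\delta_n$) yields $\widehat{G}_n(S_k(\delta_n))\ge\lambda_k-\delta_n^{-1}W_1(\widehat{G}_n,G)$; since for any atom $a_\ell\in S_k(\delta_n)$ the whole box $[a_\ell-\delta_n,a_\ell+\delta_n]$ lies in $S_k(2\delta_n)$, Fubini then gives
\[
\int_{S_k(2\delta_n)}\widehat{g}_n\,dx\;\ge\;\widehat{G}_n(S_k(\delta_n))\;\ge\;\lambda_k-\delta_n^{-1}W_1(\widehat{G}_n,G).
\]
On the other hand $\widehat{g}_n\le t_n$ on $L^c$ and $\operatorname{Leb}(S_k(2\delta_n))\le D+4\delta_n$, so the integral over $S_k(2\delta_n)\setminus L$ is at most $t_n(D+4\delta_n)$. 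Subtracting these two bounds and invoking \eqref{eq:thresholding assumption} forces $\int_{L\cap S_k(2\delta_n)}\widehat{g}_n>0$, hence $L\cap S_k(2\delta_n)$ has positive Lebesgue measure.

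Finally, I would construct the partition by a nearest-support rule, $\widehat{E}_k:=\{x\in L:\operatorname{dist}(x,S_k)\le\operatorname{dist}(x,S_j)\text{ for all }j\ne k\}$ with a deterministic tie-break; property (1) is then automatic, and (3) follows because every $x\in\widehat{E}_k$ lies in some $S_j(2\delta_n)$ by the first step and the minimality of $\operatorname{dist}(x,S_k)$ forces $\operatorname{dist}(x,S_k)<2\delta_n$. The hard part will be (2): when $\operatorname{dist}(S_k,S_j)\in(3\delta_n,4\delta_n)$ the enlargements $S_k(2\delta_n)$ and $S_j(2\delta_n)$ overlap in a sliver of length $<\delta_n$, and a priori the level-set mass produced by Step~2 could concentrate entirely in such a sliver, on the wrong side of the Voronoi boundary. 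To close this gap I would exploit that \eqref{eq:thresholding assumption 0} implies $S_k(\delta_n)\cap S_j(2\delta_n)=\emptyset$ for every $j\ne k$, so any level-set point in $S_k(\delta_n)$ is automatically assigned to $\widehat{E}_k$; rerunning the Fubini calculation of Step~2 on the exclusive core $S_k(2\delta_n)\setminus\bigcup_{j\ne k}S_j(2\delta_n)$, which still contains $S_k$ strictly by \eqref{eq:thresholding assumption 0} and whose Lebesgue measure remains bounded by $D+4\delta_n$, and combining it with \eqref{eq:thresholding assumption} then produces a level-set point inside that exclusive region, which by construction lies in $\widehat{E}_k$.
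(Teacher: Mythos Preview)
Your Steps~1 and~2 are essentially the paper's argument. The paper obtains the same containment $\{\widehat g_n>t_n\}\subset\bigcup_k S_k(2\delta_n)$ via Lemma~\ref{lemma:total weights of outlier}, and for nonemptiness it uses the $L^1$ bound $\|\widehat g_n-g_n\|_1\le\delta_n^{-1}W_1(\widehat G_n,G)$ (with $g_n=G\ast I_{\delta_n}$) together with the identity $\int_{S_k(2\delta_n)}g_n=\lambda_k$, valid precisely when $3\delta_n<\min_{j\ne k}\operatorname{dist}(S_j,S_k)$, to reach the same lower bound you obtain by coupling. The paper then simply sets $\widehat E_k=\{\widehat g_n>t_n\}\cap S_k(2\delta_n)$ and does not explicitly verify disjointness; so the overlap issue you flag when $\operatorname{dist}(S_j,S_k)\in(3\delta_n,4\delta_n)$ is in fact glossed over there as well. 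Your Voronoi construction is a cleaner way to force a genuine partition while keeping property~(3) automatic.

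There is, however, a gap in your closing paragraph. ``Rerunning the Fubini calculation of Step~2 on the exclusive core'' does not go through with your coupling--Fubini method: for an atom $a_\ell\in S_k(\delta_n)$ the box $[a_\ell-\delta_n,a_\ell+\delta_n]$ only satisfies $\operatorname{dist}(\cdot,S_j)>\delta_n$, so it can still protrude into $S_j(2\delta_n)$ and need not lie in the exclusive core. Shrinking to atoms in $S_k$ makes the boxes fit (they land in $S_k(\delta_n)\subset$ exclusive core), but then your coupling bound gives no useful lower bound on $\widehat G_n(S_k)$. The clean fix is to switch to the paper's two-sided $L^1$ estimate at this point: one checks directly that $\int_{S_k(\delta_n)}g_n=\lambda_k$ (again from $3\delta_n<\operatorname{dist}$), whence $\int_{S_k(\delta_n)}\widehat g_n\ge\lambda_k-\delta_n^{-1}W_1$; comparing with $t_n\operatorname{Leb}(S_k(\delta_n))\le t_n(D+2\delta_n)$ and \eqref{eq:thresholding assumption} then forces a level-set point in $S_k(\delta_n)$, and any such point is strictly closer to $S_k$ than to every other $S_j$ and hence lies in your Voronoi cell $\widehat E_k$.
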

\noindent 
This result suggests that under \eqref{eq:thresholding assumption 0} and \eqref{eq:thresholding assumption}, the high density regions are localized around the true support $S_k$'s. In particular, the assumption \ref{assm:npmix:sep} implies that as $n\rightarrow\infty$
\begin{align*}
    \underset{j\neq k}{\operatorname{min}}\, \operatorname{dist} (S_j(2\delta_n),\,S_k(2\delta_n)) &=\underset{j\neq k}{\operatorname{min}}\, \operatorname{dist} (S_j,\,S_k)-4\delta_n\\
    &> \underset{k}{\operatorname{max}}\,\operatorname{diam}(S_k)+4\delta_n= \underset{k}{\operatorname{max}}\, \operatorname{diam}(S_k(2\delta_n))
\end{align*}
so that the separation between the $\widehat{E}_k$'s are eventually larger than their diameters. Therefore single linkage clustering recovers the $\widehat{E}_k$'s and in particular locates the $S_k$'s.   As in the proof of Theorem \ref{prop:npmix uniform consistency}, we can set $\delta_n=d_n^{1/4}$ and  $t_n=d_n^{1/2}$, where $d_n$ is a (high probability) uniform upper bound of $W_1(\widehat{G}_n,G)$ given by Lemma \ref{lemma:W distance between G hat and G} that converges to zero. These together with  assumptions \ref{assumption:D1} and \ref{assumption:D2} imply that \eqref{eq:thresholding assumption 0} and \eqref{eq:thresholding assumption} are satisfied uniformly over $\mathscr{U}$ when $n$ is large, meaning that the construction is also uniform. In other words, the choices of $t_n,d_n$ and a sufficient sample size $n$ can be determined based on the family $\mathscr{U}$ without the need to tune each individual model.

However, the $\widehat{E}_k$'s could still be much smaller than the true supports and the following modification uses these $\widehat{E}_k$'s  to construct another collection of sets $\{E_k\}_{k=1}^K$ that properly covers the supports. 

\begin{lemma}\label{lemma:Ek}
Let $\xi>0$ be so that 
\begin{align}
\label{eq:lemma:Ek}
    \underset{j\neq k}{\operatorname{min}}\,\operatorname{dist}(S_j,S_k)> D+4\xi.
\end{align}
If $2\delta_n<\xi$, then there exists a partition $\{E_k\}_{k=1}^K$ of $\mathbb{R}$ constructed from $\{\widehat{E}_k\}_{k=1}^K$ so that $S_k(\xi)\subset E_k$. 
\end{lemma}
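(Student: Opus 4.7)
My plan is to construct $\{E_k\}_{k=1}^K$ as a Voronoi-type tessellation of $\mathbb{R}$ generated by the preliminary sets $\{\widehat{E}_k\}_{k=1}^K$, with a fixed tie-breaking rule to guarantee a genuine partition. Concretely, for each $x\in\mathbb{R}$ let $k^*(x)$ be the smallest index $k\in\{1,\ldots,K\}$ minimizing $\operatorname{dist}(x,\widehat{E}_k)$, which is well-defined since every $\widehat{E}_k$ is nonempty by Lemma~\ref{lemma:thresholding and Ek hat} and there are only finitely many sets. Then set $E_k=\{x\in\mathbb{R}:k^*(x)=k\}$, so that $\{E_k\}_{k=1}^K$ partitions $\mathbb{R}$ by construction.

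The main step is then to verify the inclusion $S_k(\xi)\subset E_k$, which reduces to showing that for every $x\in S_k(\xi)$ and every $j\neq k$ one has $\operatorname{dist}(x,\widehat{E}_k)<\operatorname{dist}(x,\widehat{E}_j)$. To bound $\operatorname{dist}(x,\widehat{E}_k)$ from above, I would pick an arbitrary $y\in\widehat{E}_k\subset S_k(2\delta_n)$ together with $z\in S_k$ satisfying $|y-z|<2\delta_n$ and $w\in S_k$ with $|x-w|<\xi$; since $|w-z|\leq\operatorname{diam}(S_k)\leq D$, the triangle inequality gives $|x-y|<D+\xi+2\delta_n$, and taking the infimum over $y$ yields
\[
\operatorname{dist}(x,\widehat{E}_k)\leq D+\xi+2\delta_n.
\]
Conversely, for $j\neq k$ and any $y\in\widehat{E}_j\subset S_j(2\delta_n)$, choose $z'\in S_j$ with $|y-z'|<2\delta_n$; the reverse triangle inequality combined with the hypothesis \eqref{eq:lemma:Ek} gives
\[
|x-y|\geq|w-z'|-|x-w|-|y-z'|>\operatorname{dist}(S_j,S_k)-\xi-2\delta_n>D+3\xi-2\delta_n,
\]
so $\operatorname{dist}(x,\widehat{E}_j)\geq D+3\xi-2\delta_n$. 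Under the hypothesis $2\delta_n<\xi$ we have $D+\xi+2\delta_n<D+3\xi-2\delta_n$, so $\operatorname{dist}(x,\widehat{E}_k)<\operatorname{dist}(x,\widehat{E}_j)$, placing $x\in E_k$.

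I do not foresee serious obstacles: the proof is essentially triangle-inequality bookkeeping, and the tight balance $4\delta_n<2\xi$ reproduces precisely the assumption $2\delta_n<\xi$. The only minor subtlety is that the naive definition via strict nearest-neighbor membership may omit points where ties occur, which is why I reserve the smallest-index convention to assign tied points to some $E_k$; this does not affect the inclusion argument because the estimates above yield strict inequalities throughout the relevant set $S_k(\xi)$, so tied points never lie in $S_k(\xi)$ to begin with.
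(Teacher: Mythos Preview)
Your proof is correct. The paper carries out the construction differently: it orders the $\widehat{E}_k$'s along the real line, sets $\hat a_k=\inf\widehat{E}_k$, $\hat b_k=\sup\widehat{E}_k$, and defines $E_k=[a_k,b_k)$ with $a_k=\tfrac12(\hat b_{k-1}+\hat a_k)$, $b_k=\tfrac12(\hat b_k+\hat a_{k+1})$ (and $a_1=-\infty$, $b_K=\infty$); the inclusion $S_k(\xi)\subset E_k$ is then checked by directly bounding $a_k\le\inf S_k-\xi$ and $b_k\ge\sup S_k+\xi$ using $\widehat{E}_k\subset S_k(2\delta_n)$, $\operatorname{diam}(S_k)\le D$, and the separation hypothesis. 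So the paper exploits the one-dimensional ordering explicitly, whereas your Voronoi argument proceeds purely via triangle inequalities without reference to that ordering. The paper does remark (immediately after the lemma, and again in Section~\ref{sec:gen:other}) that its construction is equivalent to a Voronoi tessellation of $\mathbb{R}$ based on the $\widehat{E}_k$'s, so your approach is precisely the one the paper gestures at but does not write out. The payoff of your route is that it generalizes verbatim to higher dimensions, while the midpoint-interval construction does not; conversely, the paper's version yields explicit intervals for the $E_k$, which is slightly more concrete in the univariate setting.
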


\noindent 
  The construction is detailed in its proof and is equivalent to a Voronoi tessellation of $\mathbb{R}$ based on the $\widehat{E}_k$'s.   Notice that \eqref{eq:lemma:Ek} is implied by \ref{assm:npmix:sep} and is satisfied uniformly over $\mathscr{U}$ for some $\xi>0$ due to \ref{assumption:D3}.
The sets $\{E_k\}_{k=1}^K$ also form a partition for the atoms of $\widehat{G}_n$ and we shall consider $\widehat{G}_n(\,\cdot\given E_k)$ as an approximation of $G_k(\cdot-\mu_k)$. More precisely, we define our estimators as 
\begin{align}
\label{eq:def:npmixest}
        \widehat{f}_{n,k}&=\widehat{F}_{n,k}(\cdot+\widehat{\mu}_{n,k}),\quad \quad \widehat{\lambda}_{n,k}=\widehat{G}_{n}(E_k),
\end{align}
where 
\begin{align*}
    \widehat{F}_{n,k}= \phi_{\sigma}\ast \widehat{G}_{n}(\,\cdot\given E_k),\quad \quad \widehat{\mu}_{n,k}=\int_{\mathbb{R}} x \widehat{F}_{n,k}(x)dx.
\end{align*}
Notice that $\widehat{F}_{n,k}$ is supposed to approximate $f_k(\cdot-\mu_k)$ and hence we need an additional shifting step to recover $f_k$. The key building block is the following result. We remark that in practice we can only recover the $E_k$'s up to a permutation, as we have presented in Theorem \ref{prop:npmix uniform consistency}. 
\begin{prop}\label{lemma:conditioning}
Suppose $S_k(\xi)\subset E_k$ for some $\xi>0$ and $\widehat{\lambda}_{n,k}$, $\widehat{f}_{n,k}$ are defined as in \eqref{eq:def:npmixest}. Then
\begin{align*}
    \underset{k}{\operatorname{max}}\, \left(|\widehat{\lambda}_{n,k}-\lambda_k|\vee \|\widehat{f}_{n,k}-f_k\|_1\right) \leq  C_{M,\xi,\sigma} (\alpha_{n}+\sqrt{\alpha_{n}}),
\end{align*}
where $C_{M,\xi,\sigma}$ is a constant depending only on $M,\xi,\sigma$ and 
\begin{align*}
    \alpha_{n}=(\underset{k}{\operatorname{min}}\, \lambda_k)^{-1}\big(-\log \|\widehat{Q}_n-p\|_1 \big)^{-1/2}.
\end{align*}
\end{prop}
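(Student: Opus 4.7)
The plan is to reduce everything to the Wasserstein bound of Lemma \ref{lemma:W distance between G hat and G}, which gives $r_n:=W_1(\widehat{G}_n,G)\leq C_M[-\log\|\widehat{Q}_n-p\|_1]^{-1/2}$. Writing $\tilde{G}_k:=G_k(\cdot-\mu_k)$ and $h_k:=f_k(\cdot-\mu_k)=\phi_\sigma\ast\tilde{G}_k$, I would bound $|\widehat{\lambda}_{n,k}-\lambda_k|$ directly from $r_n$, and $\|\widehat{f}_{n,k}-f_k\|_1$ by first controlling $W_1(\widehat{G}_n(\cdot\mid E_k),\tilde{G}_k)$ and then invoking the contraction $\|\phi_\sigma\ast P_1-\phi_\sigma\ast P_2\|_1\leq \sqrt{2/(\pi\sigma^2)}\,W_1(P_1,P_2)$, plus a small correction that accounts for the centering step $\widehat{f}_{n,k}=\widehat{F}_{n,k}(\cdot+\widehat{\mu}_{n,k})$. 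The main structural inputs will be the outlier bound of Lemma \ref{lemma:total weights of outlier} with $\eta=\xi$ and the partition property $S_k(\xi)\subset E_k$ from Lemma \ref{lemma:Ek}, which together imply $\operatorname{dist}(S_j,S_k)\geq 2\xi$ for $j\neq k$.

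\textbf{Step 1 (weights).} Since each $\tilde{G}_j$ is supported in $S_j$ and $S_j\cap E_k=\emptyset$ for $j\neq k$, $\lambda_k=G(E_k)$. Kantorovich--Rubinstein duality applied to a $\xi^{-1}$-Lipschitz trapezoidal test function equal to $1$ on $S_k$ and vanishing off $S_k(\xi)$ gives $\widehat{G}_n(S_k(\xi))\geq \lambda_k-\xi^{-1}r_n$; a symmetric upper bound using a bump supported in $S_k(3\xi/2)$ (admissible by the separation), combined with $\widehat{G}_n(S(\xi)^c)\leq \xi^{-1}r_n$ from Lemma \ref{lemma:total weights of outlier}, yields $|\widehat{\lambda}_{n,k}-\lambda_k|\leq C_\xi\,r_n$.

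\textbf{Steps 2--3 (conditional Wasserstein, then $L^1$ and centering).} Let $\gamma$ realize $W_1(\widehat{G}_n,G)$. Because $G$ puts no mass outside $\cup_j S_j$ and $\widehat{G}_n$-mass outside $S(\xi)$ is at most $\xi^{-1}r_n$, a transport-cost argument using $\operatorname{dist}(S_j,S_k)\geq 2\xi$ bounds the boundary fluxes by $\gamma(E_k\times E_k^c)\vee\gamma(E_k^c\times E_k)\leq 2\xi^{-1}r_n$. The sub-coupling $\gamma|_{E_k\times E_k}$ then has common marginal mass $m=\widehat{\lambda}_{n,k}-O(r_n)=\lambda_k-O(r_n)$ and total cost at most $r_n$. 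Rescaling $\gamma|_{E_k\times E_k}/m$ into a probability coupling, and absorbing the resulting $O(r_n/\lambda_k)$ TV-perturbation of each marginal via $W_1\leq 2M\cdot\mathrm{TV}$ on $[-M,M]$, yields $W_1(\widehat{G}_n(\cdot\mid E_k),\tilde{G}_k)\leq C_{M,\xi}\,r_n/\lambda_k$. The convolution contraction then gives $\|\widehat{F}_{n,k}-h_k\|_1\leq C_\sigma W_1$. For the centering, KR with the $1$-Lipschitz identity $x\mapsto x$ yields $|\widehat{\mu}_{n,k}-\mu_k|\leq W_1(\widehat{G}_n(\cdot\mid E_k),\tilde{G}_k)$, and the translation estimate $\|h_k(\cdot+\Delta)-h_k\|_1\leq|\Delta|\cdot\|\phi_\sigma'\|_1$ gives
\[
    \|\widehat{f}_{n,k}-f_k\|_1\leq \|\widehat{F}_{n,k}-h_k\|_1 + |\widehat{\mu}_{n,k}-\mu_k|\cdot\|h_k'\|_1 \leq 2C_\sigma W_1 \leq C_{M,\xi,\sigma}\,r_n/\lambda_k\leq C_{M,\xi,\sigma}\,\alpha_n,
\]
which is absorbed by $\alpha_n+\sqrt{\alpha_n}$.

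\textbf{Main obstacle.} The delicate step is Step 2: Wasserstein theory is cleanest for equal-mass probability measures, whereas $\widehat{G}_n|_{E_k}$ and $G|_{E_k}$ have slightly different total masses, and one must carefully convert the sub-coupling $\gamma|_{E_k\times E_k}$ into a probability coupling of the normalized conditionals without losing the $r_n$ rate. The renormalization is stable only once $\widehat{\lambda}_{n,k}\gtrsim \lambda_k$ with high probability (an output of Step 1 applied with $r_n\ll \min_k\lambda_k$), so Step 1 must be fed into Step 2; a second subtle point is making sure the outlier mass of $\widehat{G}_n$ that leaks across the $E_k$-boundary is accounted for by the transport cost rather than by the (potentially large) ambient diameter $M$, which is what the separation $\operatorname{dist}(S_j,S_k)\geq 2\xi$ guarantees.
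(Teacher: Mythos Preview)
Your proposal is correct and takes a genuinely different route from the paper. The paper does \emph{not} attempt to control the conditional Wasserstein distance $W_1(\widehat{G}_n(\cdot\mid E_k),\tilde{G}_k)$. Instead it reduces to the scalar quantity $\|\widehat{\lambda}_k\widehat{F}_k-\lambda_kF_k\|_1$ (with $F_k=\phi_\sigma\ast\tilde{G}_k$) and bounds this via a mollifier/deconvolution argument: writing $H_\delta$ for a band-limited kernel and $h_\delta=\mathcal{F}^{-1}(\mathcal{F}H_\delta/\mathcal{F}\phi_\sigma)$, one has $\widehat{G}_n\ast H_\delta-G\ast H_\delta=(\widehat{Q}_n-p)\ast h_\delta$, so the restriction-to-$E_k$ step is handled in total variation after smoothing, and Plancherel plus $\|h_\delta\|_2\leq C\exp(\sigma^2/2\delta^2)$ yields a bound $C_{M,\xi}[\delta+\|\widehat{Q}_n-p\|_1\exp(\sigma^2/2\delta^2)+W_1(\widehat{G}_n,G)]$, optimized over $\delta$. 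The centering is then done by a Cauchy--Schwarz bound $|\widehat{\mu}_k-\mu_k|\leq C_{M,\sigma}\sqrt{\|\widehat{F}_k-F_k\|_1}$, which is the origin of the $\sqrt{\alpha_n}$ term in the statement.

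Your approach is more elementary (pure optimal transport, no Fourier analysis) and in fact sharper: your bound on $|\widehat{\mu}_{n,k}-\mu_k|$ via Kantorovich--Rubinstein with the identity test function is linear in $W_1$, not a square root, so you end up with $\|\widehat{f}_{n,k}-f_k\|_1\leq C_{M,\xi,\sigma}\alpha_n$ and the $\sqrt{\alpha_n}$ in the stated bound becomes redundant. The paper's route has the advantage of staying with unrestricted measures throughout (the mollification absorbs the $E_k$-restriction into a total-variation step), whereas your sub-coupling argument requires the careful mass-bookkeeping you flag as the main obstacle; that bookkeeping is indeed the price of the more elementary method, but as you note it is handled by the boundary-flux bound $\gamma(E_k\times E_k^c)\leq 2\xi^{-1}r_n$ together with $W_1\leq 2M\cdot\mathrm{TV}$ on $[-M,M]$, and the residual regime $r_n\gtrsim\min_k\lambda_k$ is vacuous since then $\alpha_n\gtrsim 1$ and the claimed inequality holds trivially.
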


\begin{remark}
Proposition \ref{lemma:conditioning} can be seen as a modulus-of-continuity result in the sense that the component-wise error $|\widehat{\lambda}_{n,k}-\lambda_k|$ and $\|\widehat{f}_{n,k}-f_k\|_1$ is dominated by the overall error $\|\widehat{Q}_n-p\|_1$, which together with Lemma \ref{lemma:W distance between G hat and G} gives consistent estimators. The main technical difficulty comes from the need to bound $W_1(\widehat{G}_n(\,\cdot\given E_k), G(\,\cdot\given E_k))$ in terms of $W_1(\widehat{G}_n,G)$ as the support of $G$ is not necessarily discrete. We circumvent this issue by exploiting the relation between Wasserstein and total variation distances \cite[see e.g.][Theorem 6.15]{villani2008optimal} and noticing that 
\begin{align*}
    \|\widehat{G}_n(E_k)\widehat{G}_n(\,\cdot\given E_k)-G(E_k)G(\,\cdot\given E_k)\|_1&=\int_{E_k}d|\widehat{G}_n-G|(\theta)\nonumber\\
    &\leq \int_{\mathbb{R}}d|\widehat{G}_n-G|(\theta)=\|\widehat{G}_n-G\|_1, %
\end{align*}
where we have abused the notation to treat $\widehat{G}_n$ and $G$ as densities. 
\qed
\end{remark}

\begin{remark}
The idea to approximate $p$ with a mixture of Gaussians was introduced in \cite{aragam2020identifiability} and used to construct a pointwise consistent estimator. The construction used there fails to provide a uniformly consistent estimator, and the reason is explained by the smoothing and denoising steps described in Lemmas~\ref{lemma:total weights of outlier}, \ref{lemma:thresholding and Ek hat}, and~\ref{lemma:Ek} above. Without the additional structure of the convolutional model $f_{k}=\phi_{\sigma}\ast G_{k}$, it is difficult to control the ``outlier'' atoms in $\wh{G}_{n}$ uniformly. A key technical step in our analysis is to bridge this gap by first lifting $\wh{G}_{n}$ to a density before thresholding, and then modifying the resulting clusters using the separation condition \eqref{eq:lemma:Ek}. \qed
\end{remark}

\subsection{Discussion of Condition~\ref{assumption:D3}}
\label{sec:npmix:cond}

We end this section with a discussion on the separation condition \ref{assumption:D3}, which is the main structural assumption in this section. Condition \ref{assumption:D3} ensures that \eqref{eq:lemma:Ek} is satisfied uniformly over $\mathscr{U}$ for some $\xi>0$, which is an important step in guaranteeing the uniformity of the estimation procedure. To simplify the discussion we shall focus on \ref{assm:npmix:sep} in the following.

The rationale of this assumption is that it is the minimal requirement that allows single linkage clustering on the supports $G_k(\cdot-\mu_k)$'s to correctly identify each of them. In particular, if say each $G_k(\cdot-\mu_k)$ consists only of two atoms that are of distance $D$ apart, then it is necessary that the inter-support distance to be larger than $D$. However, if the mixing measure $G$ has certain structure (such as admitting a continuous density), then this additional information can be used to identify the supports under weaker conditions than \ref{assm:npmix:sep}. Below we present one such result, which removes the dependence on the maximum diameter in the lower bound:
\begin{lemma}\label{lemma:g continuous density}
Suppose $G$ has a density $g$ such that $b\leq g\leq B$   over its support   for some $b,B>0$. Suppose that $g$ is H\"older continuous of order $\beta\in(0,1)$ in its support. Suppose further that 
\begin{align}
\label{eq:lemma:g continuous density}
    \underset{j\neq k}{\operatorname{min}}\, \operatorname{dist}(S_j,S_k)=:4\xi>0.
\end{align}
The for $n$ large enough one can construct sets $\{E_k\}_{k=1}^K$ so that $S_k(\xi)\subset E_k$. 
\end{lemma}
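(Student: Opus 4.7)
The plan is to refine the project--smooth--denoise construction of Lemmas~\ref{lemma:thresholding and Ek hat} and \ref{lemma:Ek} by exploiting the density lower bound $g\ge b$ to use a \emph{constant} threshold $t=b/2$ that does not shrink with the smoothing scale $\delta_n$. This decouples the threshold from the maximum diameter $D$ and eliminates the $(D+4\delta_n)t_n$ factor in \eqref{eq:thresholding assumption} responsible for the $D$-dependent separation in \ref{assm:npmix:sep}.

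Starting from $\widehat G_n$ as in Section~\ref{sec:npmix:est}, let $d_n:=W_1(\widehat G_n,G)\to 0$ by Lemma~\ref{lemma:W distance between G hat and G}. The first key estimate is a uniform comparison of $\widehat g_n:=\widehat G_n\ast I_{\delta_n}$ with $g\ast I_{\delta_n}$. Since $\widehat g_n(x)=(2\delta_n)^{-1}\widehat G_n([x-\delta_n,x+\delta_n])$ and similarly for $g\ast I_{\delta_n}$, it suffices to bound the Kolmogorov distance $\|\widehat F_n-F\|_\infty$. The bound $g\le B$ makes $F$ $B$-Lipschitz: if $(\widehat F_n-F)(x_0)=\varepsilon>0$, then for $y\in[x_0,x_0+\varepsilon/(2B)]$ one has $\widehat F_n(y)\ge\widehat F_n(x_0)$ while $F(y)\le F(x_0)+\varepsilon/2$, giving $(\widehat F_n-F)(y)\ge\varepsilon/2$ and
\[
d_n=\int|\widehat F_n-F|\,dx\;\ge\;\frac{\varepsilon^2}{4B},
\qquad\text{so}\qquad
\|\widehat g_n-g\ast I_{\delta_n}\|_\infty\;\le\;\frac{2\sqrt{Bd_n}}{\delta_n}.
\]

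Next, pick $\delta_n\to 0$ satisfying $\delta_n^2\gg d_n$, $\delta_n<\xi/2$, and $L\delta_n^\beta<b/8$, where $L$ is the H\"older constant of $g$. H\"older continuity combined with $g\ge b$ yields $(g\ast I_{\delta_n})(x)\ge b-L\delta_n^\beta\ge 7b/8$ on the $\delta_n$-interior $S_k^{\circ,\delta_n}:=\{x\in S_k:[x-\delta_n,x+\delta_n]\subset S_k\}$, whereas $(g\ast I_{\delta_n})(x)=0$ whenever $\operatorname{dist}(x,\bigsqcup_j S_j)>\delta_n$. Setting $t=b/2$, the sup-norm bound above implies, for $n$ large,
\[
\bigsqcup_k S_k^{\circ,\delta_n}\;\subset\;\{\widehat g_n>t\}\;\subset\;\bigsqcup_k S_k(\delta_n).
\]
Because $\operatorname{dist}(S_j,S_k)\ge 4\xi$ and $\delta_n<\xi/2$, the enlargements $S_k(\delta_n)$ are pairwise disjoint with gaps at least $3\xi$, so $\{\widehat g_n>t\}$ decomposes canonically as $\bigsqcup_k\widehat E_k$ with $S_k^{\circ,\delta_n}\subset\widehat E_k\subset S_k(\delta_n)$.

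Finally, define $\{E_k\}$ to be the Voronoi cells of $\{\widehat E_k\}$ as in Lemma~\ref{lemma:Ek}. For $x\in S_k(\xi)$, pick $y\in S_k$ with $|x-y|<\xi$; for $n$ large enough that each connected component of $S_k$ has length $>2\delta_n$, there exists $z\in S_k^{\circ,\delta_n}\subset\widehat E_k$ with $|y-z|\le\delta_n$, giving $\operatorname{dist}(x,\widehat E_k)\le\xi+\delta_n$. For $j\ne k$, $\widehat E_j\subset S_j(\delta_n)$ yields $\operatorname{dist}(x,\widehat E_j)\ge\operatorname{dist}(S_k,S_j)-|x-y|-\delta_n\ge 3\xi-\delta_n$. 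Since $\xi+\delta_n<3\xi-\delta_n$ whenever $\delta_n<\xi$, the cell $E_k$ contains $x$. The main obstacle is the uniform $W_1$-to-sup-norm conversion for the smoothed measure: the $\sqrt{d_n}/\delta_n$ rate forces $\delta_n\gg\sqrt{d_n}$ while still requiring $\delta_n<\xi$, and the constant threshold $t=b/2$ is only admissible thanks to the density lower bound $g\ge b$. A secondary subtlety is that boundary approximation $S_k\subset\widehat E_k+[-\delta_n,\delta_n]$ requires each connected component of $S_k$ to have length bounded below---an implicit regularity (automatic, e.g., when $S_k$ is a finite union of closed intervals) that is independent of the diameter $D$.
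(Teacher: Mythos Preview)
Your proof is correct and takes a genuinely different route from the paper's. The paper works in $L^1$: it bounds $\|\widehat g_n-g\|_1$ by combining the Wasserstein-to-$L^1$ estimate $\|\widehat G_n\ast I_{\delta_n}-G\ast I_{\delta_n}\|_1\le\delta_n^{-1}W_1(\widehat G_n,G)$ with a H\"older-based bound on $\|G\ast I_{\delta_n}-G\|_1$, then uses a \emph{shrinking} threshold $t_n\to 0$ and controls the measure of the exceptional set $N_{t_n}=\{|\widehat g_n-g|>t_n\}$ by Markov's inequality, obtaining the sandwich $S_k\setminus N_{t_n}\subset\widehat E_k\subset S_k(2\delta_n)$. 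You instead go through $L^\infty$: converting $W_1$ to Kolmogorov distance via the $B$-Lipschitz CDF, you get $\|\widehat g_n-g\ast I_{\delta_n}\|_\infty\lesssim\sqrt{d_n}/\delta_n$, which lets you use the \emph{fixed} threshold $t=b/2$ and produces the clean sandwich $S_k^{\circ,\delta_n}\subset\widehat E_k\subset S_k(\delta_n)$ with no exceptional set. Your $L^\infty$ argument is tidier and, notably, makes the H\"older assumption essentially superfluous (the bound $g\ge b$ on $S_k$ already gives $(g\ast I_{\delta_n})\ge b$ on $S_k^{\circ,\delta_n}$ without appealing to continuity); the paper genuinely needs H\"older to compare $G\ast I_{\delta_n}$ with $G$. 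The price you pay is the slower $\sqrt{d_n}$ rate in the sup-norm conversion, forcing $\delta_n\gg\sqrt{d_n}$ rather than the paper's $\delta_n\gg d_n^{1/(2+\beta)}$-type condition, but this is immaterial for the qualitative conclusion. Both arguments rely on the same implicit connectedness of each $S_k$, which the paper also asserts (from $g\ge b$) without further justification.
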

\noindent
In comparison with \eqref{eq:lemma:Ek}, the lower bound in \eqref{eq:lemma:g continuous density} no longer depends on $D$ and can be arbitrarily small. 
The idea is that since $g$ is uniformly bounded below, its support automatically splits into $K$ connected components even under the weakest separation assumption. The construction of the $E_k$'s then follows the same steps as above. Finally the conclusion $S_k(\xi)\subset E_k$ allows direct application of Proposition \ref{lemma:conditioning} to yield consistent estimators.

\section{Generalizations}\label{sec:gen}
 
We have deliberately focused on a simple setting with univariate covariates and responses as well as Gaussian convolutions in order to emphasize an important point: Uniform consistency is a difficult matter \emph{even under stronger assumptions in a simplified setting}. That is, the difficulties are intrinsic to mixtures, and not other concerns such as the curse of dimensionality or regularity. In this section, we pause to discuss various generalizations that are of interest in practice.

\subsection{Different error densities}
If we drop the requirement of uniformity, then these results can be generalized to different, possibly non-convolutional error densities by using a simple distance-based estimator similar to \eqref{eq:MDE}. This generalization is also the basis of Proposition~\ref{prop:pointwise consistency convolutional} in Section~\ref{sec:consistent estimation special case}. We briefly outline this result here.

Before describing the generalization, let us emphasize how these assumptions were used in the previous sections:
The construction in Section~\ref{sec:npmor} crucially relies on the existence of uniformly consistent estimators of 
the mixture model at $x_{0}$, defined by $\lambda_{k}$ and $f$. In order to define uniformly consistent estimators $(\widehat{\lambda}_{k},\widehat{f})$ for $(\lambda_{k},f)$, in Section~\ref{sec:npmix} we carefully exploited the structure of the convolutional Gaussian model and more specifically, the assumption \ref{assumption:D3}. 
In this step of identifying (and estimating) the error densities and mixing proportions we have always allowed the error densities to be different---it is only in the step of inferring the regression functions (via Lemma~\ref{lemma:Hausdorff error regression function}) that we impose the additional restriction that the error densities are the same. The reason is that if different error densities are allowed, then the mixture at each $x$ is no longer from a simple translation family (i.e., $\{f(\cdot-\mu)\}_{\mu\in\mathbb{R}}$) but $\{f_k(\cdot-\mu)\}_{k\in[K],\mu\in\mathbb{R}}$ whose identifiability is more subtle. 

To extend the identifiability results in Section~\ref{sec:model} to different, possibly non-convolutional error densities $f_{k}$, we make use of the following result, whose proof is based on a classical result due to \cite{teicher1963}:
\begin{lemma}\label{lemma:identifiability K translation}
Suppose $\operatorname{min}_{j\neq k}|\lambda_j-\lambda_k|>0$ and  $\{f_k\}_{k=1}^K$ is a collection of densities so that
\begin{align}
    \underset{t\rightarrow\infty}{\operatorname{lim}}\, \frac{\varphi_{j}(t)}{\varphi_k(t)}\in \{0,\infty\}\quad \quad \forall j\neq k, \label{eq:assumption independence}
\end{align}
where $\varphi_k$ is the characteristic function of $f_k$. Then the equality
\begin{align}
    \sum_{k=1}^K \lambda_k f_k(y-\mu_k)= \sum_{k=1}^K \lambda_k f_k(y-m_k) \quad \quad \forall y\in\mathbb{R} \label{eq:identifiability K translation}
\end{align}
implies $\mu_k=m_k$ for all $k$. 
\end{lemma}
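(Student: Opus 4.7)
The plan is to reduce \eqref{eq:identifiability K translation} to an identity between characteristic functions and then exploit the decay hierarchy \eqref{eq:assumption independence} to isolate a single non-matching translation. Taking Fourier transforms of both sides and using $\widehat{f_k(\cdot-c)}(t) = e^{itc}\varphi_k(t)$, the equation rearranges to
\begin{equation*}
\sum_{k=1}^K \lambda_k \big(e^{it\mu_k} - e^{itm_k}\big)\varphi_k(t) \;=\; 0 \qquad \forall\, t \in \mathbb{R}.
\end{equation*}
I would then set $K' := \{k : \mu_k \neq m_k\}$ and argue by contradiction, assuming $K'$ is nonempty.

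The first key step would be to show that \eqref{eq:assumption independence} induces a (strict) total order on $\{1,\ldots,K\}$: declare $j \prec k$ iff $\varphi_j(t)/\varphi_k(t) \to 0$ as $t\to\infty$. Asymmetry and trichotomy are immediate from the dichotomy $\{0,\infty\}$ in \eqref{eq:assumption independence}, while transitivity follows from $\varphi_j/\varphi_\ell = (\varphi_j/\varphi_k)(\varphi_k/\varphi_\ell)$. After reindexing so that $1 \prec 2 \prec \cdots \prec K$, let $k^\star = \max K'$ under this order. The contributions from $k > k^\star$ vanish because $\mu_k = m_k$ there, so dividing through by $\varphi_{k^\star}(t)$ (which is nonzero for all sufficiently large $t$, as implicit in \eqref{eq:assumption independence}) yields
\begin{equation*}
\lambda_{k^\star}\big(e^{it\mu_{k^\star}} - e^{itm_{k^\star}}\big) \;=\; -\sum_{\substack{k \in K' \\ k < k^\star}} \lambda_k \big(e^{it\mu_k} - e^{itm_k}\big)\frac{\varphi_k(t)}{\varphi_{k^\star}(t)}.
\end{equation*}

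The contradiction would then be extracted by sending $t \to \infty$ along a carefully chosen sequence. The right-hand side tends to zero since each factor $|e^{it\mu_k}-e^{itm_k}|\le 2$ is bounded while $\varphi_k(t)/\varphi_{k^\star}(t) \to 0$ for $k < k^\star$. On the other hand, because $\mu_{k^\star} \neq m_{k^\star}$, the function $t \mapsto |e^{it\mu_{k^\star}}-e^{itm_{k^\star}}| = 2|\sin(t(\mu_{k^\star}-m_{k^\star})/2)|$ is periodic and attains the value $2$ along a sequence $t_n \to \infty$. Evaluating along such a sequence gives $2\lambda_{k^\star} \le 0$, contradicting $\lambda_{k^\star} > 0$. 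Hence $K' = \emptyset$, proving $\mu_k = m_k$ for all $k$.

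The main obstacle I anticipate is rigorously handling the quotients $\varphi_k/\varphi_{k^\star}$: one must argue that $\varphi_{k^\star}$ is eventually nonzero (so that the ratios are well defined for all large $t$) and justify the total ordering despite possible isolated zeros of the characteristic functions; this is where the formulation of hypothesis \eqref{eq:assumption independence} must be used with some care. I note in passing that the distinctness assumption $\min_{j\neq k}|\lambda_j-\lambda_k|>0$ does not appear to enter this argument; it is presumably carried along for consistency with the surrounding discussion rather than being a necessary ingredient of the lemma itself.
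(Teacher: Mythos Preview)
Your proposal is correct and follows essentially the same Teicher-style argument as the paper: take characteristic functions, use \eqref{eq:assumption independence} to totally order the $f_k$, divide by the dominant $\varphi$, and read off that the corresponding translation must match. The paper phrases the last step as an induction (divide by $\varphi_K$, conclude $\mu_K=m_K$, repeat), whereas you go by contradiction with $k^\star=\max K'$; these are cosmetically different but identical in substance. Your handling of the oscillation via an explicit subsequence $t_n$ is in fact a bit more careful than the paper's one-line ``$\lim_{t\to\infty}\lambda_K(e^{i\mu_Kt}-e^{im_Kt})=0$ implies $\mu_K=m_K$,'' and your observation that the distinctness hypothesis $\min_{j\neq k}|\lambda_j-\lambda_k|>0$ is not actually used in the proof is accurate---only $\lambda_k>0$ is needed here.
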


With this in mind, consider the following estimator:
\begin{align}
    (\widehat{m}_{n,1}(x),\ldots,\widehat{m}_{n,K}(x))^T
    \in \underset{\theta\in [-B,B]^K}{\operatorname{arg\,min}}\, \Big\|\sum_{k=1}^K \widehat{\lambda}_{n,k}\widehat{f}_{n,k}(\cdot-\theta_k)-\widehat{p}_{n}(\,\cdot\given x)\Big\|_1, \label{eq:MDE2}
\end{align}
where $B$ is a constant chosen so that $\operatorname{max}_k \|m_k\|_{L^{\infty}[a,b]}\leq B$ and $(\widehat{p}_{n}(\,\cdot\given x),\{\widehat{\lambda}_{n,k}\}_{k=1}^K,\{\widehat{f}_{n,k}\}_{k=1}^K)$ are (pointwise) 
consistent estimators satisfying with probability one
\begin{align}
    \|\widehat{p}_{n}(\,\cdot\given x)-p(\,\cdot\given x)\|_1\xrightarrow{n\rightarrow\infty} 0 \quad \quad \forall x\in(a,b) \label{eq:generalization consistency 1}\\
    \underset{k}{\operatorname{max}}\,\left(|\widehat{\lambda}_{n,k}-\lambda_k| \vee \|\widehat{f}_{n,k}-f_k\|_1\right) \xrightarrow{n\rightarrow\infty} 0.  \label{eq:generalization consistency 2}
\end{align}
In Appendix \ref{sec:pointwise consistency CDE} we include a proof of \eqref{eq:generalization consistency 1} for a standard kernel density estimator and in Remark~\ref{rem:prev:est} we discuss various examples of consistent estimators satisfying \eqref{eq:generalization consistency 2}.   
We will also assume the following natural generalization of \ref{assumption:joint:holder} to different $f_{k}$:
\begin{enumerate}[label=(A$5^{\prime}$)]
    \item\label{assumption:joint:holder:gen} The joint density $p_X(x)\sum_{k=1}^K \lambda_kf_k(y-m_k(x))$ is $\beta$-H\"older continuous for some $\beta>0$. 
\end{enumerate}
Then we have the following result:
\begin{prop}\label{prop:pointwise consistent regression function}
With probability one 
\begin{align*}
    \underset{k}{\operatorname{max}}\, \|\widehat{m}_{n,k}-m_k\|_{L^{1}[a,b]}\xrightarrow{n\rightarrow \infty} 0,
\end{align*}
where $\widehat{m}_{n,k}$ are given by \eqref{eq:MDE2}.
\end{prop}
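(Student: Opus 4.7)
The plan is to combine a classical argmin-consistency argument at each fixed covariate value with a bounded convergence step to pass from pointwise to $L^1$ consistency. First I would show that for each fixed $x \in (a,b) \setminus Z$ (where $Z$ is the countable set of intersection points from \ref{assm:npmor:intersect}), $\widehat{m}_{n,k}(x) \to m_k(x)$ almost surely, and then integrate in $x$.

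For the pointwise step, consider the empirical objective $\widehat{J}_{n,x}(\theta) := \|\sum_k \widehat{\lambda}_{n,k}\widehat{f}_{n,k}(\cdot - \theta_k) - \widehat{p}_n(\,\cdot\given x)\|_1$ minimized in \eqref{eq:MDE2}, along with its population counterpart $J_x(\theta) := \|\sum_k \lambda_k f_k(\cdot - \theta_k) - p(\,\cdot\given x)\|_1$. Translation invariance of the $L^1$ norm gives $\|\widehat{f}_{n,k}(\cdot - \theta_k) - f_k(\cdot - \theta_k)\|_1 = \|\widehat{f}_{n,k} - f_k\|_1$ uniformly in $\theta_k$, so together with \eqref{eq:generalization consistency 1}--\eqref{eq:generalization consistency 2} and the triangle inequality this yields uniform convergence of $\widehat{J}_{n,x}$ to $J_x$ on the compact set $[-B,B]^K$ almost surely. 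The limit $J_x$ is continuous in $\theta$ by continuity of translation in $L^1$, and, after relabelling $\widehat{\lambda}_{n,k}$ to be paired with $\lambda_k$ as in Remark~\ref{remark:relabel}, Lemma~\ref{lemma:identifiability K translation} (applied at $x \notin Z$, where the $m_k(x)$ are distinct) identifies $(m_1(x),\ldots,m_K(x))$ as the unique minimizer of $J_x$ on $[-B,B]^K$. A standard argmin-consistency argument---any subsequential limit $\theta^{\ast}$ of $\widehat{m}_{n,k}(x)$ in the compact parameter space satisfies $J_x(\theta^{\ast}) = 0$ and must therefore coincide with $(m_1(x),\ldots,m_K(x))$---then gives $\widehat{m}_{n,k}(x) \to m_k(x)$ almost surely.

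For the $L^1$ upgrade, let $\Omega^{\ast}$ be a full-probability event on which both \eqref{eq:generalization consistency 1} and \eqref{eq:generalization consistency 2} hold simultaneously. On $\Omega^{\ast}$ the previous step gives $\widehat{m}_{n,k}(x) \to m_k(x)$ for every $x \in (a,b) \setminus Z$, i.e., for Lebesgue-a.e. $x$ since $Z$ is countable. The uniform bound $|\widehat{m}_{n,k}(x) - m_k(x)| \leq 2B$ coming from the constraint $\theta \in [-B,B]^K$ in \eqref{eq:MDE2} together with $\|m_k\|_{L^\infty[a,b]} \leq B$ then allows the bounded convergence theorem to conclude that $\|\widehat{m}_{n,k} - m_k\|_{L^1[a,b]} \to 0$ on $\Omega^{\ast}$, which is exactly the claim.

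The main obstacle is establishing uniqueness of the population minimizer and consistently matching the labels across $x$. This is precisely the role of the distinct-mixing-proportions assumption underlying Lemma~\ref{lemma:identifiability K translation}: without it, the argmin at each $x$ could correspond to an arbitrary permutation of the true means, and the consistency of the assignment $k \leftrightarrow \widehat{m}_{n,k}$ across $[a,b]$ would break down near points of intersection, as in Example~\ref{ex:uneq}. Assumption \ref{assumption:joint:holder:gen} plays no direct role in this argument beyond ensuring that the pointwise conditional density convergence \eqref{eq:generalization consistency 1} is attainable (e.g., by the kernel estimator of Appendix~\ref{sec:pointwise consistency CDE}).
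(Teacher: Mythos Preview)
Your proposal is correct and follows essentially the same route as the paper: uniform-in-$\theta$ convergence of the empirical objective to the population objective, continuity of the limit, uniqueness of its minimizer via Lemma~\ref{lemma:identifiability K translation}, an argmin-consistency argument (which the paper packages as Lemma~\ref{lemma:pointwise consistency convergence}), and then dominated convergence to pass from pointwise to $L^1$. One small remark: your restriction to $x\notin Z$ is unnecessary, since Lemma~\ref{lemma:identifiability K translation} does not require the $m_k(x)$ to be distinct---the paper applies it at every $x\in(a,b)$---but excluding a countable set is of course harmless for the $L^1$ conclusion.
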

\noindent
To prove Proposition~\ref{prop:pointwise consistent regression function}, we rely on the following crucial fact (Lemma \ref{lemma:pointwise consistency convergence}): Under the assumptions of Lemma~\ref{lemma:identifiability K translation}, the minimum-distance estimator is asymptotically unique. The proof can be found in Appendix~\ref{appen:proof of pointwise consistency}.

\begin{remark}
\label{rem:compare}
In related work, \cite{kitamura2018nonparametric} studied nonparametric identifiability in mixed regression models under similar assumptions to \eqref{eq:assumption independence}. 
Moreover, both this work and ours assume knowledge of a specific point $x_{0}\in\R$ around which ``local mixtures'' can be identified.
While their work is mostly focused on identifiability, they also define a pointwise consistent estimator for the special case $K=2$ and obtain pointwise rates of convergence for this estimator. 
While our proofs do imply rates for our estimators, we have made no attempt to optimize these upper bounds. Moreover, it is known that pointwise and uniform rates in mixture models can differ even in parametric models; see \cite{heinrich2018strong}. \qed
\end{remark}

\begin{remark}
\label{rem:uniform:difficulty}
We remark that there are two main technical difficulties for boosting this result to uniform consistency. The first comes from the need of uniformly consistent estimators as in \eqref{eq:generalization consistency 1} and \eqref{eq:generalization consistency 2}. As discussed in Section \ref{sec:unif:npmix:fail}, existing estimators \citep[e.g.][]{aragam2020identifiability} are not uniformly consistent. The second lies in the fact that for the minimum distance estimator defined in \eqref{eq:MDE2}, the best one can hope for is recovery of the true parameter on the level of the conditional density. In other words, we can only control the error  $\|\sum_{k=1}^K \lambda_kf_k(\cdot-\widehat{m}_{n,k}(x)) -\sum_{k=1}^K \lambda_kf_k(y-m_k(x))\|_1$ and need a modulus of continuity result to lift such error estimates to that for $|\widehat{m}_{n,k}(x)-m_k(x)|$.  Such results have been proved for the case $f_k\equiv f$ in \cite{heinrich2018strong} but remains open when the $f_k$'s could be different. We leave such investigations for future work.  \qed
\end{remark}

\begin{remark}
\label{rem:prev:est}
We conclude by discussing several cases in which (pointwise) consistent estimators $(\widehat{\lambda}_{n,k},\widehat{f}_{n,k})$ exist for each $k$:
\begin{itemize}
\item \cite{aragam2020identifiability} constructs a consistent estimator based on \emph{clusterability} and \emph{regularity} conditions; roughly speaking these conditions require that the mixture components $f_{k}$ are well-separated;
\item \cite{bordes2006} 
and \cite{hunter2007} 
consider mixtures of symmetric distributions and each construct consistent estimators;
\item \cite{chandra1977mixtures} and \cite{fisher1970estimating} provide general recipes for constructing consistent estimates of identifiable mixing distributions in general settings. See also \cite{rao1992}. 
\end{itemize}
Any of these estimators can be plugged into \eqref{eq:MDE2} and used in Proposition~\ref{prop:pointwise consistent regression function}. \qed
\end{remark}

\subsection{Different points of separation}\label{sec:gen:different x0}
  In our discussion so far, we have assumed the point of separation $x_0$ to be the same for the families of mixed regression models that we have considered. We remark that it is possible to extend our results to allow different points of separation. More precisely, we have the following generalization of Theorem \ref{thm:identifiability convolutional}, whose proof can be found in Appendix~\ref{appen:different sep}. 

\begin{thm}\label{thm:identifiability differnt x0}
The model $\Phi(\cup_{x_0\in[a,b]}\mor_{x_0})$ is identifiable. 
\end{thm}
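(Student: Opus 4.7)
The plan is the following. Given two tuples $\theta^{(0)}\in\mor_{x_0}$ and $\theta^{(1)}\in\mor_{x_1}$ with $\Phi(\theta^{(0)})=\Phi(\theta^{(1)})$, I want to conclude $\theta^{(0)}=\theta^{(1)}$. When $x_0=x_1$ this is exactly Theorem~\ref{thm:identifiability convolutional}, so I focus on $x_0\neq x_1$. The argument below is symmetric in the two points, so after possibly swapping $(\theta^{(0)},x_0)\leftrightarrow(\theta^{(1)},x_1)$ I may assume $D_1:=\operatorname{diam}(\operatorname{supp}(G_0^{(1)}))\leq D_0:=\operatorname{diam}(\operatorname{supp}(G_0^{(0)}))$. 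Equality of the joint densities gives $p_X$ and the conditional densities $p(\,\cdot\given x)$ at every $x\in[a,b]$.

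The first step will be to match the ``scale'' $\sigma$ and the overall mixing measure at $x_0$. At $x_0$ one has
\[
\phi_{\sigma_0}\ast G_0^{(0)}\ast \mu_{x_0}^{(0)}=\phi_{\sigma_1}\ast G_0^{(1)}\ast \mu_{x_0}^{(1)},\qquad \mu_{x_0}^{(i)}:=\sum_{k=1}^K \lambda_k^{(i)}\delta_{m_k^{(i)}(x_0)}.
\]
I will evaluate Fourier transforms on the imaginary axis, where both sides are strictly positive. Since $G_0^{(i)}$ and $\mu_{x_0}^{(i)}$ are compactly supported, $\widehat{G_0^{(i)}}$ and $\widehat{\mu_{x_0}^{(i)}}$ grow at most exponentially in $|s|$, while the Gaussian factor contributes $e^{\sigma_i^2 s^2/2}$. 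Taking logarithms, dividing by $s^2$ and letting $s\to\infty$ then forces $\sigma_0=\sigma_1=:\sigma$, and Gaussian deconvolution yields $H:=G_0^{(0)}\ast \mu_{x_0}^{(0)}=G_0^{(1)}\ast \mu_{x_0}^{(1)}$.

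Next I will identify the components of $H$ at $x_0$. By \ref{assm:npmor:sep} applied to $\theta^{(0)}$, $\operatorname{supp}(H)=\bigsqcup_{j=1}^K C_j$ with $C_j:=\operatorname{supp}(G_0^{(0)})+m_j^{(0)}(x_0)$ of diameter $D_0$ and the $C_j$'s pairwise separated by more than $D_0$. For $\theta^{(1)}$'s decomposition, each translate $\operatorname{supp}(G_0^{(1)})+m_k^{(1)}(x_0)$ has diameter $D_1\leq D_0$ and is contained in $\operatorname{supp}(H)$, so it cannot contain two points lying in distinct $C_j$'s and therefore sits inside a single $C_{\pi(k)}$. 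The covering identity $\bigcup_k (\operatorname{supp}(G_0^{(1)})+m_k^{(1)}(x_0))=\bigsqcup_j C_j$ forces $\pi$ to be a permutation of $[K]$. Restricting $H$ to $C_j$, I will match total masses to obtain $\lambda_j^{(0)}=\lambda_{\pi^{-1}(j)}^{(1)}$ and match the (zero--mean) centers to obtain $m_j^{(0)}(x_0)=m_{\pi^{-1}(j)}^{(1)}(x_0)$; these together give $G_0^{(0)}=G_0^{(1)}$ and hence $f^{(0)}=f^{(1)}$. I then relabel $\theta^{(1)}$ via $\pi$ so that labels agree at $x_0$.

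Finally, to extend $m_k^{(0)}=m_k^{(1)}$ to all of $[a,b]$, I will repeat the Fourier step at every $x$ to obtain $G_0\ast \mu_x^{(0)}=G_0\ast \mu_x^{(1)}$; since $\widehat{G_0}$ is entire with only isolated zeros, this gives $\mu_x^{(0)}=\mu_x^{(1)}$ as discrete measures. On the cocountable set $[a,b]\setminus Z$ (with $Z$ as in \ref{assm:npmor:intersect} for $\theta^{(0)}$), $\mu_x^{(0)}$ has $K$ atoms at distinct locations carrying the distinct weights $\lambda_1,\ldots,\lambda_K$ guaranteed by \ref{assm:npmor:distinct}, so matching by weight forces $m_k^{(0)}(x)=m_k^{(1)}(x)$ for each $k$, and continuity of the regression functions promotes this equality to $[a,b]$. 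The main obstacle is the component--matching step: because $\theta^{(1)}$ is not assumed to satisfy separation at $x_0$, Proposition~\ref{prop:identifiability convolutional} cannot be invoked directly to equate the two decompositions of $p(\,\cdot\given x_0)$, and the WLOG reduction on diameters together with the rigid cluster geometry of $\operatorname{supp}(H)$ imposed by $\theta^{(0)}$ is what substitutes for a direct appeal to that proposition.
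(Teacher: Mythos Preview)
Your proof is correct and follows essentially the same strategy as the paper's: the diameter-comparison argument you package as a WLOG reduction ($D_1\le D_0$) is exactly the paper's disjunction (either each $F_k(x_0)$ nests into some $E_{\Pi_0(k)}(x_0)$ or each $E_k(x_1)$ nests into some $F_{\Pi_1(k)}(x_1)$, proved by contradiction on the diameters), and the remainder---matching masses and first moments on the identified support clusters to recover $(\lambda_k,G_0,f)$, then invoking translation-family identifiability together with distinct weights to pin down the regression curves on a cocountable set and extending by continuity---mirrors the paper's argument for Theorem~\ref{thm:identifiability convolutional}. Your Fourier-on-the-imaginary-axis step establishing $\sigma_0=\sigma_1$ is a small bonus, as the paper tacitly treats $\sigma$ as fixed across the model class.
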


The main ideas are similar to those for Theorem \ref{thm:identifiability convolutional}, by modifying the proof of Proposition \ref{prop:identifiability convolutional} to account for the possibly different $x_0$'s. Likewise, Theorem \ref{thm:uniform consistency} can be extended to the family $\mathcal{U}$ that satisfies  \ref{assumption:unpmor:regularity}, \ref{assumption:unpmor:lambda}, \ref{assumption:unpmor:FT}, \ref{assumption:unpmor:sep of reg} and 
\begin{enumerate}[label=(C$3^{\prime}$)]
    \item  $\underset{\vartheta\in\mathcal{U}}{\operatorname{inf}}\, \underset{x_0\in[a,b]}{\operatorname{max}}\, \big[|m_j(x_0)-m_k(x_0)|- 2\operatorname{diam}(\operatorname{supp}(G_0))\big]>0$. \label{assumption:C5 prime}
\end{enumerate}
In other words, the condition \ref{assumption:C5 prime} means that each model has a point of separation that can be different from instance to instance, and the amount of separation is uniformly lower bounded over $\mathcal{U}$. The estimation procedure and the proofs in Section \ref{sec:npmor} readily generalize to this setting, if in the step of estimating the error density and mixing proportions we work with the conditional density estimator $\widehat{p}_n(\,\cdot\given x_0)$ at a point of separation $x_0$   (or $\widehat{p}_n(\cdot\,|\,x_0\pm h_n)$ when $x_0$ is near boundary).   However, this does require knowledge of $x_0$ for each model in the class, and a provable procedure for finding such $x_0$ is still an important future direction. Empirically one can search for such points of separation by finding the point that maximizes the separation of the data as in \ref{assumption:C5 prime}. For instance, consider for each $x$ all the data pairs $D_x:=\{(X_i,Y_i): |X_i-x|\leq \delta\}$ for some small $\delta>0$. By running a clustering algorithm on $D_x$ and computing the centers of the resulting clusters, one can obtain a measure of separation $\operatorname{Sep}(x)$ at the point $x$ based on the distances between these centers. The point that maximizes $\operatorname{Sep}(x)$ would then be a candidate for the point of separation.

\subsection{ Other generalizations}\label{sec:gen:other}
Most of our results can be readily generalized to the setting of multivariate covariates $X$ and responses $Y$ with suitable modifications. For instance, Assumption (A4) can be replaced by assuming that the points of intersection have measure zero and partition the space into countably many pieces, since these are the relevant properties used in   the proof of Theorem \ref{thm:identifiability convolutional} \cite[see also][proof of Theorem 1]{huang2013}. The construction of $E_k$'s in Lemma 5.5 is equivalent to a Voronoi tessellation that holds in higher dimensions and continues to give similar guarantees under possibly stronger separation assumptions. Secondly, we have focused on convolutional Gaussian error densities $\phi_{\sigma}\ast G_0$ purely for simplicity:
The proposed estimators can be generalized to other convolutional families $\varphi\ast G_0$ with different source densities $\varphi$. For example, a similar analysis can be carried out under technical assumptions such as $|\mathcal{F}\varphi|>0$, $\|\varphi^{\prime}\|_1<\infty$, $\int x^2\varphi(x)dx<\infty$, together with a saturation result as in Lemma \ref{lemma:approx mixing measure}. Finally, our assumption that the covariate $X$ is supported on a compact interval $[a,b]$ is only a technical one that allows us to establish $L^1[a,b]$ consistency for estimating the regression functions. In the case where $X$ is supported on all of $\mathbb{R}$ with density $p_X$, we can show instead that $\int_{\mathbb{R}}|\widehat{m}_{n,k}(x)-m_k(x)| p_X(x)dx\rightarrow 0$ in probability, i.e., consistency in the $p_X$-weighted $L^1$ norm, which is a norm that has been used in other contexts (e.g. \cite{li2021minimax} and the references therein).

\section{Discussion}\label{sec:discussion}

We have undertaken a systematic study of uniform consistency in nonparametric mixture models, including both vanilla mixtures and mixed regression.
We constructed uniformly consistent estimators for mixed regression (Theorem~\ref{thm:uniform consistency}) and vanilla mixtures (Theorem~\ref{prop:npmix uniform consistency}) in nonparametric settings. In particular, our results make only mild nonparametric assumptions on the regression functions, error densities, and/or mixture components. Various extensions to weaker separation conditions as well as non-convolutional error densities have been outlined as well. Furthermore, the analysis highlights several subtleties in bootstrapping existing pointwise results to uniform results. In particular, the importance of the convolution structure and the resulting separation assumptions, as well as the (perhaps surprising) pivotal role played by having distinct weights in the model. We also illustrated how uniform consistency can easily break without these assumptions.
These results provide insight and justification into nonparametric latent variable models, for which mixtures are arguably the simplest case.   As our focus has been primarily theoretical, given the relevance of flexible, nonparametric models in practice, an important next step is to instantiate our models in practical applications.

\bibliography{bib} 
\bibliographystyle{abbrvnat}

\appendix

\section{Proofs}\label{sec:proofs}

\subsection{A guide to the proofs}

This appendix collects all the technical proofs of results from the main paper. There is one source of anachronism which we highlight up front: Strictly speaking, Theorem~\ref{thm:uniform consistency} depends on Theorem~\ref{prop:npmix uniform consistency}, and so for proper logical flow, the proof of Theorem~\ref{prop:npmix uniform consistency} should be read first. However, the reader willing to accept Theorem~\ref{prop:npmix uniform consistency} on a first reading can safely proceed directly with the proof of Theorem~\ref{thm:uniform consistency}. This is how we have presented the proofs, and should cause no confusion. Readers interested in strict logical flow are advised to begin reading Appendix~\ref{app:proof:sec5} before Appendix~\ref{app:proof:sec4}.

As a further guide to the interested reader, we provide the following overview.
\begin{itemize}
    \item The proofs for Section~\ref{sec:model} in Appendix~\ref{sec:proof identifiability} are self-contained and may be read in any order.
    \item An overview of the main ideas behind the proof of Theorem~\ref{thm:uniform consistency} can be found in Section~\ref{sec:est}.
    The necessary technical proofs are found in Appendix~\ref{app:proof:sec4}.
    \item An overview of the main ideas behind the proof of Theorem~\ref{prop:npmix uniform consistency} can be found in Sections~\ref{sec:npmix:toy}-\ref{sec:npmix:est}.
    The necessary technical proofs are found in Appendix~\ref{app:proof:sec5}.
    \item The proofs for Section~\ref{sec:gen} in Appendix~\ref{sec:proof for section 6} are self-contained and may be read in any order.
\end{itemize}
Finally, for the most part, proofs are presented in the order they appear in the main text.

\subsection{Proofs for Section \ref{sec:model}}\label{sec:proof identifiability}

\subsubsection{Proof of Proposition \ref{prop:identifiability convolutional}}\label{appen:proof of identifiability}
We need to show that 
\begin{align*}
    \sum_{k=1}^K \lambda_k f_k(y-\mu_k)=\sum_{k=1}^K \pi_k g_k(y-m_k)
\end{align*}
implies $\{(f_k,\lambda_k,\mu_k)\}_{k=1}^K=\{(g_k,\pi_k,m_k)\}_{k=1}^K$.
Using the fact that $f_k=\phi_{\sigma}\ast G_k$ and $g_k=\phi_{\sigma}\ast H_k$, we get 
\begin{align*}
    \phi_{\sigma}\ast \sum_{k=1}^K \lambda_kG_k(\cdot-\mu_k)=\phi_{\sigma}\ast \sum_{k=1}^K \pi_kH_k(\cdot-m_k),
\end{align*}
where $G_k(\cdot-\mu_k)$ is the translation of $G_k$ by $\mu_k$ (and similarly for the $H_k$'s). 
By \cite[][Theorem 2]{nguyen2013} we have 
\begin{align}
    \sum_{k=1}^K \lambda_kG_k(\cdot-\mu_k)=\sum_{k=1}^K \pi_kH_k(\cdot-m_k) \label{eq:mixing measure equal}
\end{align}
and in particular their supports are equal, i.e., 
\begin{align}
    \bigcup_{k=1}^K E_k=\bigcup_{k=1}^K F_k, \label{eq:supp equal}
\end{align}
where $E_k=\operatorname{supp}(G_k(\cdot-\mu_k))$ and $F_k=\operatorname{supp}(H_k(\cdot-m_k))$. We claim that for each $k$, $F_k\subset E_{\Pi(k)}$ for some $\Pi(k)$. Suppose otherwise that both $F_{i^{\ast}}\cap E_{j^{\ast}}$ and $F_{i^{\ast}}\cap E_{k^{\ast}}$ are nonempty for some $i^{\ast}$ and $j^{\ast}\neq k^{\ast}$. The assumption on the separation implies that 
\begin{align}
    \underset{j\neq k}{\operatorname{min}}\, \operatorname{dist}(E_j,E_k) > \underset{k}{\operatorname{max}}\,\operatorname{diam}(\operatorname{supp}(G_k)),\quad \underset{j\neq k}{\operatorname{min}}\, \operatorname{dist}(F_j,F_k) > \underset{k}{\operatorname{max}}\,\operatorname{diam}(\operatorname{supp} (H_k)).\label{eq:identifiability proof separation}
\end{align}
Hence $\operatorname{diam}(\operatorname{supp}( H_{i^{\ast}}))=\operatorname{diam}(F_{i^{\ast}})\geq \operatorname{dist}(E_{j^{\ast}},E_{k^{\ast}})>\operatorname{max}_k\operatorname{diam}(\operatorname{supp} (G_k))$. Then for any $k\neq i^{\ast}$, we see that $F_k$ does not intersect with either $E_{j^{\ast}}$ or $E_{k^{\ast}}$ because otherwise 
\begin{align*}
    \operatorname{dist}(F_{i^{\ast}},F_k) &\leq \operatorname{max} \{\operatorname{diam}(E_{j^{\ast}}), \operatorname{diam}(E_{k^{\ast}})\}\\
    &=\operatorname{max}\{\operatorname{diam}(\operatorname{supp} (G_{j^{\ast}})),\operatorname{diam}(\operatorname{supp} (G_{k^{\ast}}))\}<\operatorname{diam}(\operatorname{supp}( H_{i^{\ast}})), 
\end{align*}
a contradiction to \eqref{eq:identifiability proof separation}.  Therefore it follows that the remaining $K-1$ sets $\{F_k\}_{k=2}^K$ have union that is contained in $\bigcup_{k\neq j^{\ast},k^{\ast}} E_k$, a union of $K-2$ disjoint sets. In particular, there will be two sets $F_{m^{\ast}}$ and $F_{\ell^{\ast}}$ that intersect a same $E_{k}$. But a similar argument as above implies that 
\begin{align*}
    \operatorname{dist}(F_{m^{\ast}},F_{\ell^{\ast}})\leq \operatorname{diam}(E_k)=\operatorname{diam}(\operatorname{supp} (G_k))<\operatorname{diam}(\operatorname{supp} (H_{i^{\ast}})),
\end{align*}
again a contradiction to \eqref{eq:identifiability proof separation}. Therefore our original claim holds that for any $k$, $F_k\subset E_{\Pi(k)}$ for some $\Pi(k)$ and the argument above also implies $\Pi$ is a bijection. Since the $E_k$'s are disjoint, the equality \eqref{eq:supp equal} implies $F_k=E_{\Pi(k)}$ and evaluating \eqref{eq:mixing measure equal} at $F_k$ implies $\pi_k=\lambda_{\Pi(k)}$. As a consequence by restricting \eqref{eq:mixing measure equal} to their common support $F_k$, we have 
\begin{align}
    H_k(\cdot-m_k)=G_{\Pi(k)}(\cdot-\mu_{\Pi(k)}). \label{eq:individual mixing measure equal}
\end{align}
The assumption $\int_{\mathbb{R}}xf_k(x)dx=0$ translates to  $\int_{\mathbb{R}}\theta dG_k(\theta)=0$ and hence \eqref{eq:individual mixing measure equal} implies $m_k=\mu_{\Pi(k)}$ by computing the first moment. Therefore it follows that $H_k=G_{\Pi(k)}$ and $g_k=f_{\Pi(k)}$, which concludes the proof since $\Pi$ is a bijection.   
\qed

\subsubsection{Proof of Theorem \ref{thm:identifiability convolutional}}
We need to show that if
\begin{align*}
    p_X(x)\sum_{k=1}^K \lambda_k f(y-m_k(x))= q_X(x)\sum_{k=1}^K \pi_kg(y-\mu_k(x)) \quad \quad \forall \,\,(x,y)\in[a,b]\times \mathbb{R}
\end{align*}
then $(p_X,f,\{\lambda_k\}_{k=1}^K, \{m_k\}_{k=1}^K)=(q_X,g,\{\pi_k\}_{k=1}^K, \{\mu_k\}_{k=1}^K)$.
By integrating the above equation with respect to $y$ we see that $p_X=q_X$. Since $p_X=q_X>0$ over $[a,b]$ by assumption, the above equation reduces to 
\begin{align}
    \sum_{k=1}^K \lambda_k f(y-m_k(x))= \sum_{k=1}^K \pi_kg(y-\mu_k(x)) \quad \quad \forall \,\,(x,y)\in[a,b]\times \mathbb{R}.  \label{eq:identifiability outline conditional density}
\end{align}
Setting $x=x_0$ in the above equation we get by Proposition \ref{prop:identifiability convolutional} that $(f,\{\lambda_k\}_{k=1}^K)=(g,\{\pi_k\}_{k=1}^K)$ and the equation reduces to (possibly up to permutation of the indices)
\begin{align}
     \sum_{k=1}^K \lambda_k f(y-m_k(x))= \sum_{k=1}^K \lambda_k f(y-\mu_k(x)) \quad \quad \forall \,\,(x,y)\in[a,b]\times \mathbb{R}. \label{eq:translation family}
\end{align}
By assumption, the regression functions of both models intersect only at countably many points. We shall abuse the notation and let $Z$ denote the set of all intersection points for both models,   which is still countable and partitions the real line into disjoint interval $(z_k,z_{k+1})$'s. Fix $x\in (z_k,z_{k+1})$   in \eqref{eq:translation family}, by identifiability of translation family \cite[][Proposition~6]{yakowitz1968} at least two elements in $\{m_k(x),\mu_k(x)\}_{k=1}^K$ are equal. Since  $\{m_k(x)\}_{k=1}^K$ and $\{\mu_k(x)\}_{k=1}^K$ are both distinct, this implies $m_{k^{\ast}}(x)=\mu_{\Pi(k^{\ast})}(x)$ for some $k^{\ast}$ and \eqref{eq:translation family} reduces to
\begin{align*}
    (\lambda_{k^{\ast}}-\lambda_{\Pi(k^{\ast})}) f(y-m_{k^{\ast}}(x))+\sum_{k\neq k^{\ast}} \lambda_k f(y-m_k(x))-\sum_{k\neq \Pi(k^{\ast})} \lambda_k f(y-\mu_k(x))=0.
\end{align*}
Now since $m_{k^{\ast}}(x)$ is different from the remaining $m_k(x)$'s and $\mu_k(x)$'s, we conclude again by identifiability of translation family that $\lambda_{k^{\ast}}=\lambda_{\Pi(k^{\ast})}$, which implies $k^{\ast}=\Pi(k^{\ast})$ since the $\lambda_k$'s are distinct. Now repeating the above process we will end up with $m_k(x)=\mu_k(x)$ for all $k$ and for all $x\in Z^c$. Since $Z$ is countable   and in particular has measure zero, this continues to hold for $x\in Z$ by the continuity of the regression functions. This concludes the proof. \qed

\subsubsection{Proof of Proposition \ref{prop:pointwise consistency convolutional}}

Notice that this is the pointwise version Theorem \ref{thm:uniform consistency}, but the stronger almost sure convergence is claimed under the weaker regularity assumption \ref{assumption:joint:holder} on the joint density.  This is due to the difference between Lemmas \ref{lemma:convergence of KDE} and \ref{lemma:cde bound}, where the former suffices for pointwise consistency. Therefore with the almost sure convergence of the density estimator, the result can be proved in the same way as Theorem \ref{thm:uniform consistency} and is omitted. 
\qed

\subsection{Proofs for Section \ref{sec:npmor}}
\label{app:proof:sec4}

In this appendix we prove our first main result (Theorem~\ref{thm:uniform consistency}). Theorem~\ref{thm:uniform consistency} is ultimately a corollary of two results of independent interest given by Propositions~\ref{prop:step 2 uniform consistency} and~\ref{prop:step 3 uniform consistency}. A complete proof of the theorem can be found at the end of this appendix, after the proofs of these two propositions and other supporting lemmas have been presented.

\subsubsection{Proof of Proposition \ref{prop:step 1 uniform consistency}}

Lemma \ref{lemma:cde bound} bounds the error $\operatorname{sup}_{x\in[a,b]}\mathbb{E}\|\widehat{p}_n(\,\cdot\given x)-p(\,\cdot\given x)\|_1$ in terms of quantities depending on $p_X$ and the $m_k$'s. The result then follows from a uniform control of these quantities imposed by our assumptions on $p_X$ and \ref{assumption:unpmor:regularity}.   
\qed

\subsubsection{Proof of Proposition \ref{prop:step 2 uniform consistency}}
\textbf{Case 1: $x_0\in(a,b)$.}
We shall directly apply Theorem \ref{prop:npmix uniform consistency} to the vanilla mixture $p(\,\cdot\given x_0)$. To check the assumptions, notice that the mixing measure underlying $p(\,\cdot\given x_0)$ is 
\begin{align*}
    G=\sum_{k=1}^K \lambda_k G_0(\cdot-m_k(x_0))
\end{align*}
so that assumptions \ref{assumption:unpmor:regularity}  and \ref{assumption:unpmor:lambda} imply \ref{assumption:D1} and \ref{assumption:D2}. Furthermore, 
\begin{align*}
    &\operatorname{dist}\big(\operatorname{supp}(G_0(\cdot-m_j(x_0))), \operatorname{supp}(G_0(\cdot-m_k(x_0)))\big)\\
    &= \operatorname{max}\{0,|m_j(x_0)-m_k(x_0)|-\operatorname{diam}(\operatorname{supp}(G_0))\}
\end{align*}
so that \ref{assumption:unpmor:separation} implies \ref{assumption:D3}. Finally, the estimators in Theorem \ref{prop:npmix uniform consistency} rely on a uniformly consistent density estimator of $p(\,\cdot\given x_0)$ (not explicitly mentioned in the statements but in the details of the procedure). This is then given by Proposition  \ref{prop:step 1 uniform consistency}, which implies 
\begin{align*}
    \underset{\mathcal{V}_{x_0}}{\operatorname{sup}}\,\, \mathbb{E}\|\widehat{p}_n(\,\cdot\given x_0)-p(\,\cdot\given x_0)\|_1\xrightarrow{n\rightarrow\infty}{0}
\end{align*}
  since $x_0\in(a,b)$.   
Therefore Theorem \ref{prop:npmix uniform consistency} applies. 
The fact that we can take $\Pi$ to be the identity has been discussed in Remark \ref{remark:relabel}.

  \textbf{Case 2: $x_0=a$ or $x_0=b$.} We consider the case $x_0=a$; the case $x_0=b$ is similar. Instead of working directly with $p(\cdot\,|\,x_0)$, we shall apply Proposition \ref{lemma:conditioning} to the sequences $p(\cdot\,|\,x_0+h_n)$ (or $p(\cdot\,|\,x_0-h_n)$ when $x_0=b$). By Proposition \ref{prop:step 1 uniform consistency}, we have
\begin{align}
    \underset{\mathcal{V}_{x_0}}{\operatorname{sup}}\,\, \mathbb{E} \|\widehat{p}_n(\cdot\,|\, x_0+h_n)-p(\cdot\,|\,x_0+h_n)\|_1 \xrightarrow{n\rightarrow\infty} 0. \label{eq:kde boundary}
\end{align}
Next we show that $x_0+h_n$ is also a point of separation for all large $n$. Indeed, \ref{assumption:unpmor:separation} implies that 
\begin{align*}
    \underset{j\neq k}{\operatorname{min}}\,\, |m_j(x_0)-m_k(x_0)| \geq 2\operatorname{diam}(\operatorname{supp}(G_0))+\xi,\qquad \forall\,\, \vartheta\in\mathcal{V}_{x_0},
\end{align*}
for some slack $\xi>0.$ We have 
\begin{align*}
     &|m_j(x_0+h_n)-m_k(x_0+h_n)|\\
    &=|m_j(x_0+h_n)-m_j(x_0)+m_j(x_0)-m_k(x_0)+m_k(x_0)-m_k(x_0+h_n)|\\
    &\geq |m_j(x_0)-m_k(x_0)|-|m_j(x_0+h_n)-m_j(x_0)|-|m_k(x_0)-m_k(x_0+h_n)|\\
    &\geq 2\operatorname{diam}(\operatorname{supp}(G_0))+\xi-2\,\underset{\ell}{\operatorname{max}}\,\|m_{\ell}^{\prime}\|_{L^{\infty}[a,b]}h_n,\qquad \forall j\neq k.
\end{align*}
Therefore when $h_n\leq  \xi(4 \operatorname{max}_{\ell}\|m_{\ell}^{\prime}\|_{L^{\infty}[a,b]})^{-1}$ say, the collections $\mathscr{U}_n:=\{p(\cdot\,|\,x_0+h_n),\vartheta\in\mathcal{V}_{x_0}\}$ satisfy \ref{assumption:D3} with a uniform slack $\xi/2$. Similarly, \ref{assumption:unpmor:regularity} implies the $\mathscr{U}_n$'s satisfy \ref{assumption:D1} for all $n$ large with the same upper bound since $|\mu_k|=|m_k(x_0+h_n)|\leq \operatorname{max}_k \|m_k\|_{L^{\infty}[a,b]}+h_n\operatorname{max}_k \|m_k^{\prime}\|_{L^{\infty}[a,b]}$. Finally \ref{assumption:D2} is satisfied as before. Hence we can apply the estimation procedure to the sequence $\widehat{p}_n(\cdot\,|\, x_0+h_n)$ (or the sequence of families $\mathscr{U}_n$) and conclude by Proposition \ref{lemma:conditioning} the desired uniform consistency result. 
 
\qed

\subsubsection{Proof of Lemma \ref{lemma:Hausdorff error regression function}}
Let 
\begin{align*}
    \widehat{V}=\sum_{k=1}^K \lambda_k\delta_{\widehat{m}_{n,k}(x)}, \quad \quad V=\sum_{k=1}^K \lambda_k \delta_{m_k(x)}.
\end{align*}
By Lemma \ref{lemma:moc}, we have 
\begin{align}
     W_1(\widehat{V},V)\leq C_B F_{J,\sigma}(\|\widehat{V}\ast f-V\ast f\|_1),  \label{eq:moc MoR}
\end{align}
where $C_B$ is a constant depending only on $B$ and $F_{J,\sigma}$ is a strictly increasing function that depends only on $J,\sigma$ and satisfies $F_{J,\sigma}(d)\xrightarrow{d\rightarrow 0}0$. Now we have 
\begin{align*}
    \|\widehat{V}\ast f-V\ast f\|_1
    &=\left\|\sum_{k=1}^K \lambda_k f(\cdot-\widehat{m}_{n,k}(x))- \sum_{k=1}^K \lambda_k f(\cdot-m_k(x))\right\|_1\\
    &\leq \left\|\sum_{k=1}^K \lambda_k f(\cdot-\widehat{m}_{n,k}(x))- \sum_{k=1}^K \widehat{\lambda}_{n,k} \widehat{f}_n(\cdot-\widehat{m}_{n,k}(x))\right\|_1\\
    &\quad +\left\|\sum_{k=1}^K \widehat{\lambda}_{n,k} \widehat{f}_n(\cdot-\widehat{m}_{n,k}(x))-\sum_{k=1}^K \lambda_k f(\cdot-m_k(x))\right\|_1=:E_1+E_2.
\end{align*}
For $E_1$ we have
\begin{align}
    E_1&
    \leq \left\|\sum_{k=1}^K \lambda_k f(\cdot-\widehat{m}_{n,k}(x))- \sum_{k=1}^K \lambda_k \widehat{f}_n(\cdot-\widehat{m}_{n,k}(x))\right\|_1\nonumber\\
    &\quad +\left\|\sum_{k=1}^K \lambda_k \widehat{f}_n(\cdot-\widehat{m}_{n,k}(x))-\sum_{k=1}^K \widehat{\lambda}_{n,k} \widehat{f}_n(\cdot-\widehat{m}_{n,k}(x))\right\|_1
    \leq \|\widehat{f}_n-f\|_1+\sum_{k=1}^K |\widehat{\lambda}_{n,k}-\lambda_k|.\label{eq:E1 uc}
\end{align}
For $E_2$ we notice that since $\sum_{k=1}^K \lambda_k f(y-m_k(x))$ is the true conditional density $p(\,\cdot\given x)$, we have 
\begin{align}
    E_2&=\left\|\sum_{k=1}^K \widehat{\lambda}_{n,k} \widehat{f}_n(y-\widehat{m}_{n,k}(x))-p(\,\cdot\given x)\right\|_1\nonumber \\
    &\leq \left\|\sum_{k=1}^K \widehat{\lambda}_{n,k} \widehat{f}_n(y-\widehat{m}_{n,k}(x))-\widehat{p}_{n}(\,\cdot\given x)\right\|_1 +\|\widehat{p}_{n}(\,\cdot\given x)-p(\,\cdot\given x)\|_1\nonumber\\
    &=:E_3+\|\widehat{p}_{n}(\,\cdot\given x)-p(\,\cdot\given x)\|_1. \label{eq:E2 uc}
\end{align}
The term $E_3$ is exactly in the form of the definition \eqref{eq:MDE}, and can be bounded by replacing $\widehat{m}_{n,k}(x)$ with $m_k(x)$ (since $\widehat{m}_{n,k}(x)$ is a minimizer). So we have 
\begin{align}
    E_3 &\leq \left\|\sum_{k=1}^K \widehat{\lambda}_{n,k} \widehat{f}_n(y-m_k(x))- \widehat{p}_{n}(\,\cdot\given x)\right\|_1\nonumber\\
    &\leq \left\|\sum_{k=1}^K \widehat{\lambda}_{n,k} \widehat{f}_n(y-m_k(x))- p(\,\cdot\given x)\right\|_1
    +\|\widehat{p}_{n}(\,\cdot\given x)-p(\,\cdot\given x)\|_1\nonumber\\
    &=\left\|\sum_{k=1}^K \widehat{\lambda}_{n,k} \widehat{f}_n(y-m_k(x))-\sum_{k=1}^K \lambda_k f(y-m_k(x)) \right\|_1+\|\widehat{p}_{n}(\,\cdot\given x)-p(\,\cdot\given x)\|_1\nonumber \\
    &\leq \|\widehat{f}_n-f\|_1 + \sum_{k=1}^K |\widehat{\lambda}_{n,k}-\lambda_k|+\|\widehat{p}_{n}(\,\cdot\given x)-p(\,\cdot\given x)\|_1. \label{eq:E3 uc}
\end{align}
Therefore combining \eqref{eq:E1 uc}, \eqref{eq:E2 uc} and \eqref{eq:E3 uc} we have 
\begin{align*}
    \|\widehat{V}\ast f-V\ast f\|_1 &\leq 2\left(\|\widehat{f}_n-f\|_1 + \sum_{k=1}^K |\widehat{\lambda}_{n,k}-\lambda_k|+\|\widehat{p}_{n}(\,\cdot\given x)-p(\,\cdot\given x)\|_1\right),
\end{align*}
which together with \eqref{eq:moc MoR} and the fact that $F_J$ is strictly increasing implies $W_1(\widehat{V},V) \leq e_n(x)$ (after redefining $F_{J,\sigma}$ as a dilation of itself). Now \cite[][Lemma~2]{wu2020optimal} implies that 
\begin{align*}
    d_{\operatorname{Haus}}\left(\{\widehat{m}_{n,k}(x)\}_{k=1}^K,\{m_k(x)\}_{k=1}^K\right) \leq \frac{W_1(\widehat{V},V)}{\operatorname{min}_k \lambda_k}\leq \frac{e_n(x)}{\operatorname{min}_k \lambda_k},
\end{align*}
where $d_{\operatorname{Haus}}(A,B)=\operatorname{max}\{\operatorname{sup}_{x\in A}\operatorname{dist}(x,B), \operatorname{sup}_{y\in B}\operatorname{dist}(y,A)\}$ is the Hausdorff distance between $A$ and $B$.  
Then for a fixed $k$, there exists $\Pi(k)\in \{1,\ldots,K\}$ so that
\begin{align}
    |\widehat{m}_{n,\Pi(k)}(x)-m_k(x)|\leq \frac{e_n(x)}{\operatorname{min}_k \lambda_k}, \label{eq:mean bound reduction}
\end{align}
proving the first part of the lemma. 
We notice that $\Pi$ is single-valued; otherwise the pigeonhole principle would imply that for some $j^{\ast}\neq k^{\ast}$ there is a common $\widehat{m}_{n,\Pi(j^{\ast})}=\widehat{m}_{n,\Pi(k^{\ast})}$ so that 
\begin{align*}
    |m_{j^{\ast}}(x)-m_{k^{\ast}}(x)|\leq |m_{j^{\ast}}(x)-\widehat{m}_{n,\Pi(j^{\ast})}(x)|+|\widehat{m}_{n,\Pi(k^{\ast})}(x)-m_{k^{\ast}}(x)|\leq \frac{2e_n(x)}{\operatorname{min}_k \lambda_k},
\end{align*}
which contradicts \eqref{eq:assump reduction}. This also implies that $\Pi$ is injective and is hence a permutation. 
Let 
\begin{align*}
    r_n(x)=\frac{2e_n(x)}{\operatorname{min}_k\lambda_k\wedge \operatorname{min}_{j\neq k}|\lambda_j-\lambda_k|}.
\end{align*}
For any $i\neq j$, we have $i=\Pi(i^{\ast})$ and $j=\Pi(j^{\ast})$ for some $i^{\ast}$, $j^{\ast}$, and 
\begin{align*}
    &|\widehat{m}_{n,i}(x)-\widehat{m}_{n,j}(x)|\\
    &=|\widehat{m}_{n,\Pi(i^{\ast})}(x)-\widehat{m}_{n,\Pi(j^{\ast})}(x)|\\
    &\geq |m_{i^{\ast}}(x)-m_{j^{\ast}}(x)|-|m_{i^{\ast}}(x)-\widehat{m}_{n,\Pi(i^{\ast})}(x)|-|m_{j^{\ast}}(x)-\widehat{m}_{n,\Pi(j^{\ast})}(x)|\\
    &> \frac{6e_n(x)}{\operatorname{min}_k\lambda_k\wedge \operatorname{min}_{j\neq k}|\lambda_j-\lambda_k|}-\frac{2e_n(x)}{\operatorname{min}_k\lambda_k}\geq  2r_n(x).
\end{align*}
In particular the $\widehat{m}_{n,k}$'s are also well-separated and
hence for a fixed $k$ we have 
\begin{align*}
    [m_k(x)-r_n(x),m_k(x)+r_n(x)] \cap \{\widehat{m}_{n,1}(x),\ldots,\widehat{m}_{n,K}(x)\}&=\widehat{m}_{n,\Pi(k)}(x)\\
    [\widehat{m}_{n,\Pi(k)}(x)-r_n(x),\widehat{m}_{n,\Pi(k)}(x)+r_n(x)] \cap \{m_1(x),\ldots,m_K(x)\}&=m_k(x).
\end{align*}
Then \cite[][Lemma~3]{wu2020optimal}  applied with $\delta=r_n(x)$ implies
\begin{align*}
    \underset{k}{\operatorname{max}}\, |\lambda_k -\lambda_{\Pi(k)}|\leq \frac{W_1(\widehat{V},V)}{r_n(x)}\leq \frac{e_n(x)}{r_n(x)}\leq \frac12\underset{j\neq k}{\operatorname{min}}\,|\lambda_j-\lambda_k|,
\end{align*}
which implies that the left hand side equals zero and $\Pi$ is the identity. Therefore \eqref{eq:mean bound reduction} reads 
\begin{align}
    \underset{k}{\operatorname{max}}\, |\widehat{m}_{n,k}(x)-m_k(x)| &\leq \frac{e_n(x)}{\operatorname{min}_j\lambda_j},
\end{align} 
as desired.
\qed

\subsubsection{Proof of Proposition \ref{prop:step 3 uniform consistency}}
  Under \ref{assumption:unpmor:regularity}-\ref{assumption:unpmor:separation}, Proposition \ref{prop:step 2 uniform consistency} implies that   
\begin{align*}
    \underset{\mathcal{V}_{x_0}}{\operatorname{sup}}\,\,\mathbb{P}\left(\|\widehat{f}_n-f\|_1 \vee \underset{k}{\operatorname{max}}\,|\widehat{\lambda}_{n,k}-\lambda_k| >\varepsilon_n\right) \xrightarrow{n\rightarrow\infty} 0
\end{align*}
for some sequence $\varepsilon_n\rightarrow 0$. This together with Proposition \ref{prop:step 1 uniform consistency} implies that the error $e_n(x)$ in Lemma \ref{lemma:Hausdorff error regression function}   (which holds uniformly under \ref{assumption:unpmor:FT})   satisfies for some sequence $\gamma_n\rightarrow 0$ that    
\begin{align*}
    \underset{\mathcal{V}_{x_0}}{\operatorname{sup}}\,\, \mathbb{P}\Big(\underset{x\in[a+h_n,b-h_n]}{\operatorname{sup}} e_n(x)>\gamma_n\Big) \leq \underset{\mathcal{V}_{x_0}}{\operatorname{sup}} \underset{x\in[a+h_n,b-h_n]}{\operatorname{sup}} \mathbb{P}(e_n(x)>\gamma_n)\xrightarrow{n\rightarrow\infty} 0.
\end{align*}
Therefore it suffices to consider the event $\{\operatorname{sup}_{x\in [a+h_n,b-h_n]}  e_n(x)\leq \gamma_n\}$.   
Denote
\begin{align*}
    &\Lambda=\underset{\vartheta\in\mathcal{U}_{x_0}}{\operatorname{inf}}\, \underset{k}{\operatorname{min}}\, \lambda_k ,\qquad S_{\lambda}=\underset{\vartheta\in\mathcal{U}_{x_0}}{\operatorname{inf}}\, \underset{j\neq k}{\operatorname{min}}\, |\lambda_j-\lambda_k|,\quad \\
    &B=\underset{\vartheta\in\mathcal{U}_{x_0}}{\operatorname{sup}}\,\big( \underset{k}{\operatorname{max}}\, \|m_k\|_{L^{\infty}[a,b]} \vee \underset{k}{\operatorname{max}}\, \|m_k^{\prime}\|_{L^{\infty}[a,b]}\big)\\
    &\mathcal{O}_n=\left\{x: \exists \,j\neq k ,\,  |m_j(x)-m_k(x)|\leq 6\Lambda^{-1}S_{\lambda}^{-1}\gamma_n \right\}.
\end{align*}
We have by Lemma \ref{lemma:Hausdorff error regression function}
 \begin{align}
    &\int_{[a,b]} |\widehat{m}_{n,k}(x)-m_k(x)| dx \nonumber\\
    &= \int_{[a+h_n,b-h_n]\cap \mathcal{O}_n^c}  |\widehat{m}_{n,k}(x)-m_k(x)| dx+\int_{[a+h_n,b-h_n]^c \cup \mathcal{O}_n}  |\widehat{m}_{n,k}(x)-m_k(x)| dx  \nonumber \\
    & \leq \frac{(b-a)\gamma_n}{\operatorname{min}_k\lambda_k}+2B (|\mathcal{O}_n|+2h_n)\xrightarrow{n\rightarrow\infty} 0 \label{eq:step 3 2}
\end{align} 
uniformly over $\mathcal{V}_{x_0}$, where we have used \ref{assumption:unpmor:sep of reg} in the last step. The result then follows.
\qed

\subsubsection{Proof of Theorem \ref{thm:uniform consistency}}
 
Theorem \ref{thm:uniform consistency} now follows as an immediate corollary to Propositions~\ref{prop:step 2 uniform consistency} and~\ref{prop:step 3 uniform consistency}. In more detail, it is enough to construct uniformly consistent estimators of the error density $f$, the mixture weights $\lambda_k$, and the regression functions $m_k$. We recall the main steps in constructing these estimators here (see Section~\ref{sec:est} for details):
\begin{enumerate}
    \item First, estimate the conditional density $p(\cdot\given x)$ via KDE (cf. Section~\ref{sec:step 1}).
    \item Next, estimate $\lambda_{k}$ and $f$ by interpreting $p(\cdot\given x_{0})$ as a vanilla mixture model as in \eqref{eq:defn:npmor}, and using the project-smooth-denoise construction to define estimators $\widehat{f}_n$ and $\{\widehat{\lambda}_{n,k}\}_{k=1}^K$ (cf. Section~\ref{sec:step 2}). 
    \item Finally, estimate the regression functions $m_k$ via minimum distance estimators $\{\widehat{m}_{n,k}\}_{k=1}^K$ (cf. Section~\ref{sec:step 3}).
\end{enumerate}
By \ref{assumption:unpmor:regularity}-\ref{assumption:unpmor:separation}, Proposition \ref{prop:step 2 uniform consistency} establishes uniform consistency of $\widehat{f}_n$, $\{\widehat{\lambda}_{n,k}\}_{k=1}^K$. Moreover, by \ref{assumption:unpmor:regularity}-\ref{assumption:unpmor:sep of reg}, Proposition \ref{prop:step 3 uniform consistency} establishes uniform consistency of $\{\widehat{m}_{n,k}\}_{k=1}^K$. 
 
\qed

\subsection{Proofs for Section \ref{sec:npmix}}\label{app:proof:sec5}

In this appendix we prove our second main result (Theorem~\ref{prop:npmix uniform consistency}). The proof of this theorem can be found at the end of this appendix, after the proofs of the technical lemmas have been presented.

\subsubsection{Proof of Lemma \ref{lemma:W distance between G hat and G}}

By \cite[][Theorem~2(2)]{nguyen2013} applied in their notation with $f=\phi_{\sigma}$ so that $\beta=2$, we have 
\begin{align*}
    W_1(\widehat{G}_{n},G)\leq C_M \left[-\log \|\widehat{Q}_{n}-p\|_1\right]^{-1/2}.
\end{align*}
Here we remark that the constant in their theorem actually depends on the compact set $\Theta$ in their notation and explains the subscript in $C_M$ in our case. Now recalling the definitions of $\widehat{Q}_{n}$ and $Q_n$, we have 
\begin{align*}
    \|\widehat{Q}_{n}-p\|_1 &\leq \|\widehat{Q}_{n}-\widehat{p}_n\|_1+\|\widehat{p}_n-p\|_1\\
    &\leq \|Q_n-\widehat{p}_n\|_1+\|\widehat{p}_n-p\|_1 
     \leq \|Q_n-p\|_1+2\|\widehat{p}_n-p\|_1.
\end{align*}
The desired result follows.
\qed

\subsubsection{Proof of Lemma \ref{lemma:total weights of outlier}}

Let $\widehat{X}_{n},X$ be any coupling between $\widehat{G}_{n},G$. Then
\begin{align*}
    W_1(\widehat{G}_{n},G)=\mathbb{E}|\widehat{X}_{n}-X| &=\sum_{\ell=1}^{L_n} w_{\ell} \mathbb{E}\left[|\widehat{X}_{n}-X| \,\Big|\, \widehat{X}_{n}=a_{\ell}\right]\geq \eta\sum_{\ell \in A_{\eta}} w_{\ell} .
\end{align*}
\qed

\subsubsection{Proof of Lemma \ref{lemma:thresholding and Ek hat}}

By the definition of $\widehat{g}_n$, we have
\begin{align*}
    \widehat{g}_n(x)&=\sum_{\ell \in A_{\delta_n}} w_{\ell} I_{\delta_n}(x-a_{\ell})+ \sum_{\ell \notin A_{\delta_n}}  w_{\ell} I_{\delta_n}(x-a_{\ell}) \\
    &\leq I_{\delta_n}(0)\sum_{\ell \in A_{\delta_n}} w_{\ell}+\sum_{\ell \notin A_{\delta_n}}  w_{\ell} I_{\delta_n}(x-a_{\ell})
    \leq 2^{-1}\delta_n^{-2} W_1(\widehat{G}_{n},G) +\sum_{\ell \notin A_{\delta_n}}  w_{\ell} I_{\delta_n}(x-a_{\ell}), 
\end{align*}
where $A_{\delta_n}=\{\ell:\operatorname{dist}(a_{\ell},\operatorname{supp}(G))>\delta_n\}$ as in Lemma \ref{lemma:total weights of outlier}. Now for any $x$ such that $\operatorname{dist}(x,\operatorname{supp}(G))>2\delta_n$, we have $|x-a_{\ell}|>\delta_n$ for any $\ell\notin A_{\delta}$ and so the second sum in the above equals zero. In particular we have shown that \begin{align}
    \widehat{g}_n \leq 2^{-1}\delta_n^{-2} W_1(\widehat{G}_{n},G) \quad\text{on}\quad \{x:\operatorname{dist}(x,\operatorname{supp}(G))>2\delta_n\}, \label{eq:outlier density level}
\end{align}
which suggests a threshold $t_n\geq 2^{-1}\delta_n^{-2}W_1(\widehat{G}_n,G)$ and we can write 
\begin{align*}
   \{x: \widehat{g}_n(x)>t_n\}&   =\{x: \widehat{g}_n(x)>t_n\}\cap \{x:\operatorname{dist}(x,\operatorname{supp}(G))\leq 2\delta_n\}\\
  & =\bigcup_{k=1}^K \{x: \widehat{g}_n(x)>t_n\}\cap S_k(2\delta_n)=:\bigcup_{k=1}^K \widehat{E}_k.
\end{align*}
It now remains to show that the $\widehat{E}_k$'s are nonempty.

By \cite[][Lemma~2.1]{chae2020wasserstein} we have 
\begin{align*}
    \|\widehat{G}_{n}\ast I_{\delta_n}-G\ast I_{\delta_n}\|_1 \leq \underset{s\neq t}{\operatorname{sup}}\frac{\|I_{\delta_n}(\cdot-s)-I_{\delta_n}(\cdot-t)\|_1}{|s-t|}W_1(\widehat{G}_{n},G) \leq  \delta_n^{-1}W_1(\widehat{G}_{n},G),
\end{align*}
which in particular gives 
\begin{align} 
    \left|\int_{S_k(2\delta_n)}\widehat{g}_n(x)dx-\int_{S_k(2\delta_n)}g_n(x)dx\right|\leq \delta_n^{-1}W_1(\widehat{G}_{n},G), \label{eq:weight approx 1}
\end{align}
where $g_n(x)=\int_{\mathbb{R}} I_{\delta_n}(x-y)dG(y)$ is the density of $G\ast I_{\delta_n}$. 
Now notice that $\int_{S_k(2\delta_n)} g_n\,dx=\lambda_k$. Indeed we have 
\begin{align*}
    \int_{S_k(2\delta_n)}g_n\,dx &= \sum_{j=1}^K \int_{S_j} \int_{S_k(2\delta_n)} I_{\delta_n}(x-y) dx dG(y) \\
    &= \int_{S_k} \int_{S_k(2\delta_n)} I_{\delta_n}(x-y) dx dG(y) +\sum_{j\neq k} \int_{S_j} \int_{S_k(2\delta_n)} I_{\delta_n}(x-y) dx dG(y)\\
    &=:i_1+i_2.
\end{align*}
The separation assumption \eqref{assm:npmix:sep} together with that   $3\delta_n<\operatorname{min}_{j\neq k}\operatorname{dist}(S_j,S_k)$   implies $|x-y|>\delta_n$ for $x\in S_k(2\delta_n)$, $y\in S_j$ with $j\neq k$ and hence $i_2=0$. Similarly 
\begin{align*}
    i_1=\int_{S_k} \int_{\mathbb{R}} I_{\delta_n}(x-y) dx dG(y) = \int_{S_k} dG(y)=\lambda_k.
\end{align*}
Therefore \eqref{eq:weight approx 1} implies 
\begin{align}
    \left|\int_{S_k(2\delta_n)}\widehat{g}_n(x)dx-\lambda_k\right|\leq \delta_n^{-1}W_1(\widehat{G}_{n},G). \label{eq:weight approx 2}
\end{align}
Now if $\widehat{E}_k=\emptyset$ for some $k$, i.e., $\widehat{g}_n\leq t_n$ on $S_k(2\delta_n)$, then \eqref{eq:weight approx 2} implies 
\begin{align*}
    \lambda_k -\delta_n^{-1}W_1(\widehat{G}_{n},G) \leq \int_{S_k(2\delta_n)}\widehat{g}_n(x)dx \leq (D+4\delta_n)t_n,
\end{align*}
contradicting \eqref{eq:thresholding assumption}. This concludes the proof. 
\qed

\subsubsection{Proof of Lemma \ref{lemma:Ek}}

Let $\hat{a}_k=\operatorname{inf} \widehat{E}_k$ and $\hat{b}_k=\operatorname{sup} \widehat{E}_k$. Define
\begin{align*}
    a_k=\frac12(\hat{b}_{k-1}+\hat{a}_k),\quad b_k=\frac12(\hat{b}_k+\hat{a}_{k+1}),\quad k=2,\ldots,K-1
\end{align*}
with $a_1=-\infty$ and $b_K=\infty$ and let $E_k=[a_k,b_k)$. It is clear that $E_k$'s form a partition of $\mathbb{R}$. To see that $S_k(\xi)\subset E_k$, it suffices to show that $a_k\leq \operatorname{inf}S_k-\xi$ and $b_k\geq \operatorname{sup}S_k+\xi$. By definition of $a_k$ and the fact that $\widehat{E}_k\subset S_k(2\delta)$ we have 
\begin{align*}
    a_k & \leq \frac12 \left(\sup S_{k-1}(2\delta) +\sup S_k(2\delta)\right)\\
    & \leq \frac12 \left(\sup S_{k-1}+\sup S_k+4\delta\right)\\
    & \leq \frac12 \left[(\inf S_k -D-4\xi) + \sup S_k +4\delta\right]\\
    & = \frac12 \left[\inf S_k +(\sup S_k-D)+ 4\delta-4\xi\right]
     \leq \inf S_k-\xi,
\end{align*}
where we have used in the last step the fact that $\sup S_k-D \leq \inf S_k$ and $2\delta<\xi$. The proof for $b_k$ is similar. 
\qed

\subsubsection{Proof of Proposition \ref{lemma:conditioning}}

To simplify the notation, we will suppress the dependence of $\widehat{G}_{n}, \widehat{F}_{n,k}, \widehat{\lambda}_{n,k}, \widehat{\mu}_{n,k}$ on $n$ and denote them as $\widehat{G}, \widehat{F}_k, \widehat{\lambda}_k,\widehat{\mu}_k$. Let $F_k:=f_k(\cdot-\mu_k)$, $\widetilde{G}_k:=G_k(\cdot-\mu_k)$ and $\widehat{G}_k:=\widehat{G}(\,\cdot\given E_k)$ so that $p=\sum_{k=1}^K \lambda_kF_k$, $G=\sum_{k=1}^K \lambda_k \widetilde{G}_k$ and $\widehat{G}=\sum_{k=1}^K \widehat{\lambda}_{k}\widehat{G}_k$. As a result of these notations, we can write $F_k=\phi_{\sigma}\ast \widetilde{G}_k$ and $\widehat{F}_k=\phi_{\sigma}\ast \widehat{G}_k$. 

The first step is to show that the analysis of $\|\widehat{f}_k-f_k\|_1$ reduces to bounding $\|\widehat{F}_k- F_k\|_1$.
Indeed, we have 
\begin{align*}
    \|\widehat{f}_{k}-f_k\|_1
    &\leq \|\widehat{F}_{k}(\cdot+\widehat{\mu}_{k})-F_k(\cdot+\widehat{\mu}_{k})\|_1+\|F_k(\cdot+\widehat{\mu}_{k})-F_k(\cdot+\mu_k)\|_1\\
    & =\|\widehat{F}_{k}-F_k\|_1 + \int \left|\int_0^1 (\widehat{\mu}_{k}-\mu_k) F_k^{\prime}(x+t(\widehat{\mu}_{k}-\mu_k))dt\right| dx\\
    & \leq \|\widehat{F}_{k}-F_k\|_1+ |\widehat{\mu}_{k}-\mu_k|\, \|F_k^{\prime}\|_1.
\end{align*}
Since $F_k=\int_{\mathbb{R}} \phi_{\sigma}(x-\theta)d\widetilde{G}_k(\theta)$, we have 
$F_k^{\prime}=\int_{\mathbb{R}} \left(\frac{\theta-x}{\sigma^2}\right)\phi_{\sigma}(x-\theta)d\widetilde{G}_k(\theta)$
and 
\begin{align*}
    \|F_k^{\prime}\|_1\leq \int_{\mathbb{R}} \int_{\mathbb{R}} \left|\frac{\theta-x}{\sigma^2}\right|\phi_{\sigma}(x-\theta) dx\, d\widetilde{G}_k(\theta)= \sigma^{-2}\int_{\mathbb{R}}|x|\phi_{\sigma}(x)dx=\sqrt{\frac{2}{\pi\sigma^2}}.
\end{align*}
Furthermore we have 
\begin{align*}
    |\widehat{\mu}_{k}-\mu_k|
    &\leq \int_{\mathbb{R}}\left|x \left(\widehat{F}_{k}^{1/2}+F_k^{1/2}\right)\right| \left|\widehat{F}_{k}^{1/2}-F_k^{1/2}\right|dx\\
    & \leq \sqrt{\int_{\mathbb{R}}x^2\left(\widehat{F}_{k}^{1/2}+F_k^{1/2}\right)^2dx}\sqrt{\int_{\mathbb{R}} \left(\widehat{F}_{k}^{1/2}-F_k^{1/2}\right)^2dx}\\
    & \leq \sqrt{2\int_{\mathbb{R}}x^2\left(\widehat{F}_{k}+F_k\right)dx}\sqrt{\|\widehat{F}_{k}-F_k\|_1}.
\end{align*}
Recall $\operatorname{supp}(\widetilde{G}_k)\cup \operatorname{supp}(\widehat{G}_k)\subset [-M,M]$, so that 
\begin{align*}
    \int_{\mathbb{R}} x^2  F_k  dx
    &= \int_{|\theta|\leq M} \int_{\mathbb{R}} x^2\phi_{\sigma}(x-\theta)dx\,d\widetilde{G}_k(\theta)\\
    &=\int_{|\theta|\leq M} \int_{\mathbb{R}}|x+\theta|^2 \phi_{\sigma}(x)dx\,d\widetilde{G}_k(\theta)\leq \sigma^2 +M^2
\end{align*}
and similarly for $\widehat{F}_k$. 
Therefore we have 
\begin{align}
    \|\widehat{f}_{k}-f_k\|_1 \leq \|\widehat{F}_{k}-F_k\|_1+ \sqrt{\frac{8(\sigma^2+M^2)}{\pi\sigma^2}}\sqrt{\|\widehat{F}_{k}-F_k\|_1}. \label{eq:gtl reduction 1}
\end{align}

Next we shall show that to bound $|\widehat{\lambda}_k-\lambda_k|$ and $\|\widehat{F}_{k}-F_k\|_1$, it suffices to bound 
$\|\widehat{\lambda}_{k}\widehat{F}_{k}-\lambda_k F_k\|_1$. 
Indeed we have 
\begin{align*}
    |\widehat{\lambda}_{k}-\lambda_k|=\left|\int \widehat{\lambda}_{k}\widehat{F}_{k}-\lambda_kF_k\right|\leq \|\widehat{\lambda}_{k}\widehat{F}_{k}-\lambda_k F_k\|_1
\end{align*}
and 
\begin{align*}
    \|\widehat{\lambda}_{k}\widehat{F}_{k}-\lambda_k F_k\|_1 &\geq \|\lambda_k \widehat{F}_{k}-\lambda_k F_k\|_1-\|\widehat{\lambda}_{k} \widehat{F}_{k}-\lambda_k \widehat{F}_{k}\|_1 \\
    &= \lambda_k\|\widehat{F}_{k}-F_k\|_1-|\widehat{\lambda}_{k}-\lambda_k|
\end{align*}
so that 
\begin{align}
    |\widehat{\lambda}_{k}-\lambda_k|+\lambda_k\|\widehat{F}_{k}-F_k\|_1 \leq 3\|\widehat{\lambda}_{k}\widehat{F}_{k}-\lambda_k  F_k\|_1.  \label{eq:gtl reduction 2}
\end{align}
Finally we shall bound $\|\widehat{\lambda}_{k}\widehat{F}_{k}-\lambda_k F_k\|_1$, which we decompose as three terms by introducing a mollifier
\begin{align*}
    \|\widehat{\lambda}_{k}\widehat{F}_{k}-\lambda_k F_k\|_1 &
    =\left\|\int_{E_k} \phi_{\sigma}(x-\theta) d\widehat{G}(\theta)-\int_{E_k} \phi_{\sigma}(x-\theta)d\widetilde{G}(\theta)\right\|_1\leq J_1+J_2+J_3,
\end{align*}
where
\begin{align*}
    J_1&:=\left\|\int_{E_k} \phi_{\sigma}(x-\theta) d\widehat{G}(\theta)-\int_{E_k} \phi_{\sigma}(x-\theta) d(\widehat{G}\ast H_{\delta})(\theta)\right\|_1 \\
    J_2&:= \left\|\int_{E_k} \phi_{\sigma}(x-\theta) d(\widehat{G}\ast H_{\delta})(\theta)-\int_{E_k} \phi_{\sigma}(x-\theta) d(\widetilde{G}\ast H_{\delta} )(\theta)\right\|_1 \\
    J_3&:= \left\|\int_{E_k} \phi_{\sigma}(x-\theta) d(\widetilde{G}\ast H_{\delta})(\theta)-\int_{E_k} \phi_{\sigma}(x-\theta) d\widetilde{G}(\theta)\right\|_1.
\end{align*}
Here $H$ is a symmetric density function with bounded first moment whose Fourier transform is supported in $[-1,1]$ and $H_{\delta}=\delta^{-1}H(\delta^{-1}\cdot)$.

\vspace{0.2cm}
\noindent{\textbf{Bound for $J_1$}}: Recall $\widehat{G}=\sum_{k=1}^K \widehat{\lambda}_k \widehat{G}_k$ with $\operatorname{supp}(\widehat{G}_k)\subset E_k$. Then $\widehat{G}\ast H_{\delta}=\sum_{k=1}^{K}\widehat{\lambda}_{k} \widehat{G}_{k}\ast H_{\delta}$ and
\begin{align*}
    \int_{E_k}\phi_{\sigma}(x-\theta) d\widehat{G}(\theta)&=\widehat{\lambda}_k \int_{E_k} \phi_{\sigma}(x-\theta)d\widehat{G}_k(\theta)=\widehat{\lambda}_k \int_{\Theta} \phi_{\sigma}(x-\theta)d\widehat{G}_k(\theta)\\
    \int_{E_k}\phi_{\sigma}(x-\theta) d(\widehat{G}\ast H_{\delta})(\theta)&=\widehat{\lambda}_k\int_{E_k}\phi_{\sigma}(x-\theta)d(\widehat{G}_k\ast H_{\delta})(\theta) \\
    &\quad +\sum_{j\neq k}\widehat{\lambda}_j\int_{E_k} \phi_{\sigma}(x-\theta)d(\widehat{G}_j\ast H_{\delta})(\theta).
\end{align*}
We have 
\begin{align*}
    J_1&\leq \left\|\widehat{\lambda}_k \int_{\mathbb{R}} \phi_{\sigma}(x-\theta)d\widehat{G}_k(\theta)-\widehat{\lambda}_k\int_{\mathbb{R}}\phi_{\sigma}(x-\theta)d(\widehat{G}_k\ast H_{\delta})(\theta)\right\|_1 \\
    &\quad + \left\|\widehat{\lambda}_k\int_{\mathbb{R}}\phi_{\sigma}(x-\theta)d(\widehat{G}_k\ast H_{\delta})(\theta)- \widehat{\lambda}_k\int_{E_k}\phi_{\sigma}(x-\theta)d(\widehat{G}_k\ast H_{\delta})(\theta)\right\|_1 \\
    &\quad + \left\|\sum_{j\neq k}\widehat{\lambda}_j\int_{E_k} \phi_{\sigma}(x-\theta)d(\widehat{G}_j\ast H_{\delta})(\theta)\right\|_1=:e_1+e_2+e_3.
\end{align*}
By \cite[][Lemma~1]{nguyen2013}, we have 
\begin{align}
    e_1\leq W_1(\widehat{G}_k,\widehat{G}_k\ast H_{\delta})\leq C\delta, \label{eq:e1 G hat}
\end{align}
where the last step can be proved as in \cite[][Theorem~2]{nguyen2013}: letting $\theta\sim \widehat{G}_k$ and $\varepsilon \sim H_{\delta}$ gives $W_1(\widehat{G}_k,\widehat{G}_k\ast H_{\delta})\leq\mathbb{E}\|\theta-(\theta+\varepsilon)\|_1\leq C\delta$. 
For $e_2$ we have 
\begin{align*}
    e_2 \leq \widehat{\lambda}_k \left\|\int_{E_k^c} \phi_{\sigma}(x-\theta)d(\widehat{G}_k\ast H_{\delta})(\theta)\right\|_1=\widehat{\lambda}_k(\widehat{G}_k\ast H_{\delta})(E_k^c),
\end{align*}
where
\begin{align*}
    (\widehat{G}_k\ast H_{\delta})(E_k^c)&=\int_{E_k^c} \int_{E_k} H_{\delta}(\theta-z)d\widehat{G}_k(z) d\theta \\
    &=\int_{E_k} \int_{E_k^c} H_{\delta}(\theta-z)d\theta d\widehat{G}_k(z) \\
    &=\int_{S_k(\xi/2)} \int_{E_k^c} H_{\delta}(\theta-z)d\theta d\widehat{G}_k(z)+ \int_{E_k\backslash S_k(\xi/2)} \int_{E_k^c} H_{\delta}(\theta-z)d\theta d\widehat{G}_k(z)\\
    &=:i_1+i_2.
\end{align*}
Recall that $E_k\supset S_k(\xi)$ so we have $\operatorname{dist}(E_k^c,S_k(\xi/2))\geq \frac{\xi}{2}$. Then for $z\in S_k(\xi/2)$
\begin{align*}
    \int_{E_k^c}H_{\delta}(\theta-z)d\theta \leq \int_{|x|>\xi/2} H_{\delta}(x)dx=\int_{|x|>\xi/2\delta}H(x)dx\leq \frac{2\delta}{\xi} \int_{|x|>\xi/2\delta}|x|H(x)dx\leq \frac{2C\delta}{\xi}
\end{align*}
and hence $i_1\leq 2C\xi^{-1}\delta$.
For $i_2$, we have 
\begin{align*}
    i_2 \leq \widehat{G}_k(E_k \backslash S_k(\xi/2))=\frac{\widehat{G}(E_k\backslash S_k(\xi/2))}{\widehat{\lambda}_k}.
\end{align*}
Therefore 
\begin{align}
    e_2 \leq  \widehat{\lambda}_k(\widehat{G}_k\ast H_{\delta})(E_k^c)\leq 2C\widehat{\lambda}_k \xi^{-1}\delta +\widehat{G}(E_k\backslash S_k(\xi/2)), \label{eq:e2 G hat}
\end{align}
and further that 
\begin{align}
    e_3\leq \sum_{j\neq k}\widehat{\lambda}_j (G_j\ast H_{\delta})(E_k)\leq \sum_{j\neq k}\widehat{\lambda}_j (G_j\ast H_{\delta})(E_j^c) \leq \sum_{j\neq k}2C\widehat{\lambda}_j\xi^{-1}\delta+\widehat{G}(E_j\backslash S_j(\xi/2)). \label{eq:e3 G hat}
\end{align}
Combining \eqref{eq:e1 G hat}, \eqref{eq:e2 G hat} and \eqref{eq:e3 G hat} we get 
\begin{align}
    J_1\leq C\xi^{-1}\delta+ \widehat{G}(A_{\xi/2})\leq C_{\xi}\left[\delta+W_1(\widehat{G},G)\right], \label{eq:I1 G hat}
\end{align}
where $A_{\eta}=\{x: \operatorname{dist}(x,\operatorname{supp}(G))>\eta\}$ as in Lemma \ref{lemma:total weights of outlier}. 

\vspace{0.2cm}
\noindent{\textbf{Bound for $J_3$}}: The term $J_3$ can be bounded similarly as 
\begin{align}
    J_3 \leq C_{\xi}\delta. \label{eq:I3 G hat}
\end{align}
Note that since the support of $G$ is $\bigcup_{k=1}^K S_k$, the corresponding error term $G(A_{\delta})$ is zero.

\vspace{0.2cm}
\noindent{\textbf{Bound for $J_2$}}:
\begin{align*}
    J_2&\leq \int_{\mathbb{R}} \int_{E_k} \phi_{\sigma}(x-\theta)d|\widehat{G}\ast H_{\delta}-G\ast H_{\delta}|(\theta) dx\\
    &\leq\int_{\mathbb{R}} \int_{\mathbb{R}} \phi_{\sigma}(x-\theta)d|\widehat{G}\ast H_{\delta}-G\ast H_{\delta}|(\theta) dx\\
    &=\int_{\mathbb{R}}|\widehat{G}\ast H_{\delta}(\theta)-G\ast H_{\delta}(\theta)|d\theta\\
    &=\int_{|\theta|\leq M+\xi} |\widehat{G}\ast H_{\delta}(\theta)-G\ast H_{\delta}(\theta)|d\theta + \int_{|\theta|>M+\xi}|\widehat{G}\ast H_{\delta}(\theta)-G\ast H_{\delta}(\theta)|d\theta.
\end{align*}
The second term can be bounded by noticing that 
\begin{align*}
    \int_{|\theta|>M+\xi} G\ast H_{\delta}(\theta)d\theta
    =\int_{|z|\leq M} \int_{|\theta|>M+\xi} H_{\delta}(\theta-z)d\theta dG(z)
    \leq \int_{|x|>\xi/\delta}H(x)dx \leq C\xi^{-1}\delta 
\end{align*}
and similarly for $\int_{|\theta|>M+\xi} \widehat{G}\ast H_{\delta}(\theta)d\theta$. The first term can be bounded using Cauchy-Schwarz by 
\begin{align*}
    \sqrt{\int_{|\theta|\leq M+\xi}1d\theta \int_{|\theta|\leq M+\xi}|\widehat{G}\ast H_{\delta}(\theta)-G\ast H_{\delta}(\theta)|^2d\theta}\leq \sqrt{2M+2\xi}\|\widehat{G}\ast H_{\delta}-G\ast H_{\delta}\|_2 
\end{align*}
Letting 
$h_{\delta}=\mathcal{F}^{-1}(\mathcal{F}H_{\delta}/\mathcal{F}\phi_{\sigma})$ (since $\mathcal{F}H_{\delta}$ is continuous and compactly supported, and $\mathcal{F}\phi_{\sigma}$ is never zero, $\mathcal{F}H_{\delta}/\mathcal{F}\phi_{\sigma}\in L^1$ and $h_{\delta}$ is well-defined), we have $H_{\delta}=\phi_{\sigma}\ast h_{\delta}$ and then
\begin{align*}
    \widehat{G}\ast H_{\delta}&=(\widehat{G}\ast \phi_{\sigma})\ast h_{\delta}=\widehat{Q} \ast h_{\delta}\\
    G\ast H_{\delta}&=(G\ast \phi_{\sigma})\ast h_{\delta}=p \ast h_{\delta},
\end{align*}
where we recall $\widehat{Q}$ is defined in \eqref{eq:Q_L,n} and $p$ is the true conditional density. 
Thus by Young's inequality we have
\begin{align*}
    \|\widehat{G}\ast H_{\delta}-G\ast H_{\delta}\|_2=\|\widehat{Q}\ast h_{\delta}-p\ast h_{\delta}\|_2\leq \|\widehat{Q}-p\|_1 \|h_{\delta}\|_2
\end{align*}
and by Plancherel's identity
\begin{align*}
    \|h_{\delta}\|^2_2=\left\|\frac{\mathcal{F}H_{\delta}}{\mathcal{F}\phi_{\sigma}}\right\|_2^2\leq C \int_{|w|<1/\delta} \exp\left(\sigma^2w^2\right)dw\leq C\exp\left(\sigma^2\delta^{-2}\right),
\end{align*}
where we have used the fact that $\mathcal{F}H$ is supported on $[-1,1]$, which implies that $\mathcal{F}H_{\delta}$ is supported on $[-1/\delta,1/\delta]$. 
Therefore we have 
\begin{align}
    J_2 \leq C_{M,\xi} \left[\delta+\|\widehat{Q}-p\|_1\exp\left(2^{-1}\sigma^2\delta^{-2}\right) \right]. \label{eq:I2 G hat}
\end{align}
and furthermore by combining \eqref{eq:I1 G hat}, \eqref{eq:I3 G hat}, \eqref{eq:I2 G hat} we have 
\begin{align*}
    \|\widehat{\lambda}_k\widehat{F}_k-\lambda_kF_k\|_1 \leq C_{M,\xi}\left[\delta+\|\widehat{Q}-p\|_1\exp\left(2^{-1}\sigma^2\delta^{-2}\right) +W_1(\widehat{G},G)\right]. 
\end{align*}
Setting $\delta^{-2}=-\log \|\widehat{Q}-p\|_1$, we get 
\begin{align}
    \|\widehat{\lambda}_k\widehat{F}_k-\lambda_kF_k\|_1 \leq C_{M,\xi,\sigma}\left[ \left(-\log \|\widehat{Q}-p\|_1 \right)^{-1/2}+W_1(\widehat{G},G)\right]. \label{eq:gtl reduction 3}
\end{align}
The results follows by combining \eqref{eq:gtl reduction 1}, \eqref{eq:gtl reduction 2}, \eqref{eq:gtl reduction 3} and Lemma \ref{lemma:W distance between G hat and G}.
\qed

\subsubsection{Proof of Lemma \ref{lemma:g continuous density}}

We first bound the error $\|\widehat{g}_n-g\|_1$. By \cite[][Lemma~2.1]{chae2020wasserstein} we have 
\begin{align*}
    \|\widehat{G}_{n}\ast I_{\delta_n}-G\ast I_{\delta_n}\|_1\leq \underset{s\neq t}{\operatorname{sup}}\, \frac{\|I_{\delta_n}(\cdot-s)-I_{\delta_n}(\cdot-t)\|_1}{|s-t|} W_1(\widehat{G}_{n},G)=\delta_n^{-1}W_1(\widehat{G}_{n},G).
\end{align*}
Moreover 
\begin{align*}
    \|G\ast I_{\delta_n}-G\|_1 
    &\leq \int_{|x|\leq M+\delta_n} \int_{|y|\leq \delta_n} |g(x-y)-g(x)|I_{\delta}(y)dy dx \\
    &=\int_{|x|\leq M-\delta_n} \int_{|y|\leq \delta_n} |g(x-y)-g(x)|I_{\delta}(y)dy dx \\
    &\quad +\int_{M-\delta_n<|x|\leq M+\delta_n} \int_{|y|\leq \delta_n} |g(x-y)-g(x)|I_{\delta}(y)dy dx=:e_1+e_2.
\end{align*}
By H\"older continuity of $g$ we have
\begin{align*}
    e_1\leq \int_{|x|\leq M-\delta_n} \int_{|y|\leq \delta} C_{H} |y|^{\beta}I_{\delta}(y)dy dx \leq 2MC_{H} \delta_n^{\beta},
\end{align*}
where $C_H$ is the H\"older constant of $g$. 
By boundedness of $g$ we have 
\begin{align*}
    e_2\leq \int_{M-\delta_n<|x|\leq M+\delta_n} \int_{|y|\leq \delta} 2BI_{\delta}(y)dy dx\leq 8B\delta_n.
\end{align*}
Therefore 
\begin{align*}
    \|\widehat{g}_n-g\|_1\leq \delta_n^{-1}W_1(\widehat{G}_{n},G)+2MC_H\delta_n^{\beta} +8B\delta_n=:E_{n}
\end{align*}
and then
\begin{align*}
    \operatorname{Leb}( N_{t_n}):=\operatorname{Leb}( \{|\widehat{g}_n-g|>t_n\})\leq \frac{E_{n}}{t_n}.
\end{align*}
In particular we have $\{g>2t_n\}\backslash N_{t_n} \subset\{\widehat{g}_n>t_n\}$ and by choosing $t_n$ going to zero faster than $E_n$, we see that $|\widehat{g}_n-g|\leq t_n$ over $N_{t_n}^c$, whose size shrinks to zero. Similarly as in \eqref{eq:outlier density level}, we have
\begin{align*}
    \widehat{g}_n(x)\leq 2^{-1}\delta_n^{-2}W_1(\widehat{G}_n,G) \quad \quad \operatorname{on}\,\, \{x:\operatorname{dist}(x,\operatorname{supp}(G))>2\delta_n\}.
\end{align*}
Let $d_n\rightarrow 0$ be an upper bound of $W_1(\widehat{G}_n,G)$ as in the proof of Theorem \ref{prop:npmix uniform consistency}. Pick $\frac{1}{2+\beta}<\alpha<\frac12$ and set  $\delta_n=d_n^{\alpha}$, $t_n=2^{-1}d_n^{1-2\alpha}$. 
We have 
\begin{align}
    \{g>2t_n\}\backslash N_{t_n} \subset\{\widehat{g}_n>t_n\} \subset \{x:\operatorname{dist}(x,\operatorname{supp}(G))\leq 2\delta_n\}, \label{eq:approximating support}
\end{align}
with 
\begin{align*}
     \operatorname{Leb} (N_{t_n})\leq 
     2d_n^{\alpha} +4C_HMd_n^{\alpha\beta+2\alpha-1}+ 16Bd_n^{3\alpha-1}
    \leq (2+4C_HM+16B)d_n^{\alpha(2+\beta)-1},
\end{align*}
where we have used the fact that $0<\alpha(2+\beta)-1<\operatorname{min}\{\alpha,3\alpha-1\}$. Now write 
\begin{align*}
    \{\widehat{g}_n>t_n\} = \bigcup_{k=1}^K\{\widehat{g}_n>t_n\}\cap S_k(2\delta_n)=:\bigcup_{k=1}^K \widehat{E}_k. 
\end{align*}
Notice that if further $2t_n<b$, then the left hand side of \eqref{eq:approximating support} equals $\operatorname{supp}(G)\backslash N_{t_n}$ and so we have
\begin{align}
    S_k\backslash N_{t_n} \subset \widehat{E}_k \subset S_k(2\delta_n).  \label{eq:approximating each support}
\end{align}
Now since $g\geq b>0$, each $S_k$ is a connected set. It follows then from \eqref{eq:approximating each support} that $\widehat{E}_k$ can be written as a union of connected sets whose pairwise separation is smaller than $\operatorname{Leb}(N_{t_n})+2\delta_n$. At the same time the separation between the $\widehat{E}_k$'s are greater than $4\xi-4\delta_n$. Therefore if further $\operatorname{Leb}(N_{t_n})+2\delta_n<4\xi-4\delta_n$, single linkage clustering recovers the sets $\{\widehat{E}_k\}_{k=1}^K$. 

Similarly as in the proof of Lemma \ref{lemma:Ek}, let $\hat{a}_k=\operatorname{inf} \widehat{E}_k$ and $\hat{b}_k=\operatorname{sup}\widehat{E}_k$. Define 
\begin{align*}
    a_k=\frac12(\hat{b}_{k-1}+\hat{a}_k),\quad b_k=\frac12(\hat{b}_k+\hat{a}_{k+1}),\quad k=2,\ldots,K-1
\end{align*}
with $a_1=-\infty$ and $b_K=\infty$ and let $E_k=[a_k,b_k)$. It is clear that $E_k$'s form a partition of $\mathbb{R}$. To see that $S_k(\xi)\subset E_k$, it suffices to show that $a_k\leq \operatorname{inf}S_k-\xi$ and $b_k\geq \operatorname{sup}S_k+\xi$. By definition of $a_k$ and the fact that $S_k\backslash N_{t_n} \subset\widehat{E}_k\subset S_k(2\delta)$ we have 
\begin{align*}
    a_k & \leq \frac12 \left(\sup S_{k-1}+2\delta_n +\inf S_k+\operatorname{Leb}(N_{t_n})\right)\\
    & \leq \frac12 \left(2\inf S_k-4\xi +2\delta_n+\operatorname{Leb}(N_{t_n})\right)
     \leq \inf S_k-\xi,
\end{align*}
if $2\delta_n+\operatorname{Leb}(N_{t_n})<2\xi$. The proof for $b_k$ is similar. 
\qed

\subsubsection{Proof of Theorem \ref{prop:npmix uniform consistency}}
  Condition \ref{assumption:D1} ensures that the mixing measures $G=\sum_{k=1}^K\lambda_k G_k(\cdot-\mu_k)$ over $\mathscr{U}$ are all contained in the interval $[-M,M]$ for some $M>0$.   Lemma \ref{lemma:kde L1 rate} then implies that we can find a sequence $b_n$ converging to zero so that $\operatorname{sup}_{\mathscr{U}} \mathbb{E}\|\widehat{p}_n-p\|_1\leq b_n^2$, and then 
\begin{align*}
    \underset{\mathscr{U}}{\operatorname{sup}}\,\,\mathbb{P}(\|\widehat{p}_n-p\|_1 > b_{n})\leq b_{n} \xrightarrow{n\rightarrow \infty}0.
\end{align*}
It suffices to consider the event $A=\{\|\widehat{p}_n-p\|_1\leq b_n\}$, over which Lemma \ref{lemma:W distance between G hat and G} implies for that
\begin{align*}
    W_1(\widehat{G}_{n},G)\leq C_M[-\log(b_n+c_n)]^{-1/2}:=d_{n},
\end{align*}
where $c_n$ is a uniform upper bound of the saturation rate over $\mathscr{U}$ given by Lemma \ref{lemma:approx mixing measure}. 
Now let $\delta_{n}=d_{n}^{1/4}$ so that $t_{n}=d_{n}^{1/2}$. Then for all large $n$, the assumptions in Lemma \ref{lemma:thresholding and Ek hat} and \ref{lemma:Ek} are satisfied uniformly over $\mathscr{U}$   thanks to the conditions \ref{assumption:D1} and \ref{assumption:D3},   and the construction applied to any model $\vartheta\in\mathscr{U}$ gives sets $E_k$'s satisfying $E_k\supset S_k(\xi)$, where
\begin{align*}
    4\xi:=\underset{\vartheta\in\mathscr{U}}{\operatorname{inf}}\,\left[\underset{j\neq k}{\operatorname{min}} \, \operatorname{dist} \left(\operatorname{supp} (G_j(\cdot-\mu_j)),\,\operatorname{supp}( G_k(\cdot-\mu_k))\right)- \underset{k}{\operatorname{max}}\,\operatorname{diam}(\operatorname{supp}(G_k))\right].
\end{align*}
Then Proposition \ref{lemma:conditioning} implies that on the event $\{\|\widehat{p}_n-p\|_1\leq b_n\}$ we have 
\begin{align*}
    \underset{\mathscr{U}}{\operatorname{sup}}\,\left[\underset{k}{\operatorname{max}}\, \left(|\widehat{\lambda}_{n,k}-\lambda_k|\vee \|\widehat{f}_{n,k}-f_k\|_1\right) \right]\leq C_{M,\xi,\sigma}(\alpha_{n}+\sqrt{\alpha_{n}}),
\end{align*}
where $\alpha_{n}=\Lambda^{-1} [-\log (c_n+b_n)]^{-1/2}$,   and $\Lambda=\operatorname{inf}_{\mathscr{U}}\,\operatorname{min}_k \lambda_k>0$ by \ref{assumption:D2}.   Setting $\varepsilon_{n}=C_{M,\xi,\sigma}(\alpha_{n}+\sqrt{\alpha_n})$ we have 
\begin{align*}
     &\underset{\mathscr{U}}{\operatorname{sup}}\,\, \mathbb{P} \left( \underset{k}{\operatorname{max}}\,\left(|\widehat{\lambda}_{n,k}-\lambda_k|\vee\|\widehat{f}_{n,k}-f_k\|_{1}\right)>\varepsilon_n\right)
     \leq \underset{\mathscr{U}}{\operatorname{sup}}\,\, \mathbb{P} \left(\|\widehat{p}_{n}-p\|_1>b_n\right)\leq b_n \xrightarrow{n\rightarrow \infty} 0.
\end{align*}

\qed

\subsection{Proofs for Section \ref{sec:gen}}\label{sec:proof for section 6}

\subsubsection{Proof of Lemma \ref{lemma:identifiability K translation}}

Define an order $\prec$ on $\{f_k\}_{k=1}^K$ by 
\begin{align*}
    f_j\prec f_k \quad \Longleftrightarrow \quad \underset{t\rightarrow \infty}{\operatorname{lim}}\, \frac{\varphi_j(t)}{\varphi_k(t)}=0.
\end{align*}
By the assumption we see that either $f_j\prec f_k$ or $f_k\prec f_j$ for $j\neq k$ and so $\prec$ is a total ordering on $\{f_k\}_{k=1}^K$. Therefore we can assume without loss of generality that the $f_k$'s are ordered with respect to $\prec$. 
Now taking the characteristic function of \eqref{eq:identifiability K translation} we have 
\begin{align*}
    \sum_{k=1}^K \lambda_k (e^{i\mu_kt}-e^{im_kt})\varphi_k(t)=0, \quad \quad \forall t\in\mathbb{R}.
\end{align*}
Dividing the above equation by $\varphi_K$ and setting $t\rightarrow \infty$, we obtain as a result of the ordering $\prec$ 
\begin{align*}
    \underset{t\rightarrow\infty}{\operatorname{lim}}\, \lambda_K (e^{i\mu_Kt}-e^{im_Kt})=0,
\end{align*}
which implies $\mu_K=m_K$. The result then follows by induction. 
\qed

\subsubsection{Proof of Proposition \ref{prop:pointwise consistent regression function}}\label{appen:proof of pointwise consistency}

Fix $x\in[a,b]$. Notice that the vectors $\widehat{\theta}_n(x)=(\widehat{m}_{n,1}(x),\ldots,\widehat{m}_{n,K}(x))^T$ and $\theta^{\ast}(x)=(m_1(x),\ldots,m_K(x))^T$ are minimizers of the following functionals respectively,
\begin{align*}
    \widehat{T}_{n,x}(\theta)&=\left\|\sum_{k=1}^K\widehat{\lambda}_{n,k}\widehat{f}_{n,k}(\cdot-\theta_k)-\widehat{p}_{n}(\,\cdot\given x)\right\|_1\\
    T_x(\theta)&=\left\|\sum_{k=1}^K\lambda_kf_k(\cdot-\theta_k)-p(\,\cdot\given x)\right\|_1.
\end{align*}
We need the following lemma (proved right after) for consistency of minimum distance estimators \cite[see also][Theorem 3]{beran1977minimum}.
\begin{lemma}\label{lemma:pointwise consistency convergence}
Fix $x\in\R$. Suppose 
\begin{enumerate}
\item $\|\widehat{p}_n(\,\cdot\given x)-p(\,\cdot\given x)\|_1\xrightarrow{n\rightarrow\infty} 0$
\item $\operatorname{max}_k(|\widehat{\lambda}_{n,k}-\lambda_k|\vee \|\widehat{f}_{n,k}-f_k\|_1)\xrightarrow{n\rightarrow\infty} 0 $
\end{enumerate}
Then any convergent subsequence of $\{\widehat{\theta}_n(x)\}_{n=1}^{\infty}$ converges to a minimizer of $T_x$. In particular if $T_x$ has a unique minimizer $\theta^{\ast}(x)$, then $\{\widehat{\theta}_n(x)\}_{n=1}^{\infty}$ converges to $\theta^{*}(x)$.
\end{lemma}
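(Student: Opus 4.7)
The plan is to run the standard argmin-consistency argument: establish that $\widehat{T}_{n,x}$ converges to $T_x$ uniformly in $\theta$, verify that $T_x$ is continuous, and then pass to the limit along any convergent subsequence of $\widehat{\theta}_n(x)$, which by construction lies in the compact set $[-B,B]^K$.

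The key observation for the uniform convergence step is that the $L^1$ norm is translation invariant, so translations drop out entirely. By the triangle inequality,
\begin{align*}
|\widehat{T}_{n,x}(\theta)-T_x(\theta)| \le \Big\|\sum_k \widehat{\lambda}_{n,k}\widehat{f}_{n,k}(\cdot-\theta_k) - \sum_k \lambda_k f_k(\cdot-\theta_k)\Big\|_1 + \|\widehat{p}_n(\,\cdot\given x) - p(\,\cdot\given x)\|_1,
\end{align*}
and a term-by-term telescoping $\widehat{\lambda}_{n,k}\widehat{f}_{n,k}-\lambda_k f_k = (\widehat{\lambda}_{n,k}-\lambda_k)\widehat{f}_{n,k}+\lambda_k(\widehat{f}_{n,k}-f_k)$ bounds the first term by
\begin{align*}
\sum_k \big(|\widehat{\lambda}_{n,k}-\lambda_k|\,\|\widehat{f}_{n,k}\|_1 + \lambda_k\|\widehat{f}_{n,k}-f_k\|_1\big),
\end{align*}
which is independent of $\theta$ and tends to zero by hypothesis (using $\|\widehat{f}_{n,k}\|_1=1$). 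Hence $\widehat{T}_{n,x}\to T_x$ uniformly on all of $\R^K$. Continuity of $T_x$ then follows from the standard fact that translation is continuous in $L^1$: if $\theta^{(n)}\to \theta$, each $f_k(\,\cdot-\theta^{(n)}_k) \to f_k(\,\cdot-\theta_k)$ in $L^1$, the finite combination converges in $L^1$, and so does its $L^1$-distance to the fixed $p(\,\cdot\given x)$.

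With these two ingredients in hand, the argmin passage is routine. If $\widehat{\theta}_{n_j}(x)\to \tilde{\theta}$ along a subsequence, then for any $\theta\in\R^K$ the defining inequality $\widehat{T}_{n_j,x}(\widehat{\theta}_{n_j}(x)) \le \widehat{T}_{n_j,x}(\theta)$, combined with uniform convergence (applied on both sides) and continuity of $T_x$ (applied to the varying $\widehat{\theta}_{n_j}(x)$), yields $T_x(\tilde{\theta}) \le T_x(\theta)$, so $\tilde{\theta}$ minimizes $T_x$. Under the uniqueness assumption, $\tilde{\theta}=\theta^*(x)$, and since $\{\widehat{\theta}_n(x)\}\subset [-B,B]^K$ is bounded, every subsequence of $\{\widehat{\theta}_n(x)\}$ has a further convergent sub-subsequence whose limit is forced to be $\theta^*(x)$, giving convergence of the full sequence.

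The proof is conceptually standard and I do not anticipate a genuine obstacle; the only place where one has to be careful is not to conflate uniqueness of the minimizer with identifiability, since here uniqueness of $\theta^*(x)$ is \emph{assumed} in the lemma statement and is precisely the conclusion that Lemma~\ref{lemma:identifiability K translation} supplies under \eqref{eq:assumption independence} together with distinct weights, which is how this lemma will be applied in Proposition~\ref{prop:pointwise consistent regression function}.
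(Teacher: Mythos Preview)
Your proof is correct and follows essentially the same route as the paper: establish $\sup_\theta|\widehat{T}_{n,x}(\theta)-T_x(\theta)|\to 0$, verify continuity of $T_x$, and then pass to limits along convergent subsequences in the compact set $[-B,B]^K$. Your presentation is in fact slightly more streamlined: you bound $|\widehat{T}_{n,x}-T_x|$ directly in one step (the paper routes through an intermediate $T_{n,x}(\theta)=\|g_\theta-\widehat{p}_n\|_1$), and you obtain continuity of $T_x$ from continuity of translation in $L^1$ rather than the paper's pointwise-continuity-plus-DCT argument, which has the minor advantage of not requiring continuity of the $f_k$.
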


Notice that the assumptions of Lemma \ref{lemma:pointwise consistency convergence} are satisfied almost surely for the estimators in \eqref{eq:generalization consistency 1} and \eqref{eq:generalization consistency 2} when $x\in(a,b)$. This together with Lemma \ref{lemma:identifiability K translation}, which implies that the minimizers of $T_x$ are unique, gives 
\begin{align*}
    \underset{k}{\operatorname{max}}\, |\widehat{m}_{n,k}(x)-m_k(x)|\xrightarrow{n\rightarrow\infty} 0 \quad \quad \forall x\in(a,b)
\end{align*}
with probability one. Therefore dominated convergence theorem implies that (since the functions $\widehat{m}_{n,k}$'s are bounded)
\begin{align*}
    \underset{k}{\operatorname{max}}\, \|\widehat{m}_{n,k}-m_k\|_{L^1[a,b]}&\xrightarrow{n\rightarrow\infty} 0.
\end{align*}
\qed

\begin{proof}[Proof of Lemma \ref{lemma:pointwise consistency convergence}]

Denote 
\begin{align*}
    g_{\theta}=\sum_{k=1}^K \lambda_kf_k(\cdot-\theta_k)\quad \quad \widehat{g}_{n,\theta}=\sum_{k=1}^K \widehat{\lambda}_{n,k}\widehat{f}_{n,k}(\cdot-\theta_k).
\end{align*}
We first show continuity of $T_x$ in $\theta$, which then guarantees the existence of a minimizer in $\Theta$. Let $\theta_n \rightarrow \theta$. Then 
\begin{align*}
    |T_x(\theta_n)-T_x(\theta)|\leq \|g_{\theta_n}-g_{\theta}\|_1=2\int_{g_{\theta}\geq g_{\theta_n}} g_{\theta}-g_{\theta_n}=2\int (g_{\theta}-g_{\theta_n})\mathbf{1}_{g_{\theta}\geq g_{\theta_n}}.
\end{align*}
Since $f$ is continuous, we have $g_{\theta_n}\rightarrow g_{\theta}$ pointwise and the last quantity converges to zero by dominated convergence theorem. Therefore $T_x$ is continuous in $\theta$. Similarly we can show continuity of $\widehat{T}_{n,x}$ since each $\widehat{f}_{n,k}$ is continuous, which guarantees the existence of a minimizer as defined in \eqref{eq:MDE2}. 

Now we claim that 
\begin{align}
    \underset{\theta\in\Theta}{\operatorname{sup}}\, |\widehat{T}_{n,x}(\theta)-T_x(\theta)| \xrightarrow{n\rightarrow\infty} 0. \label{eq:h_n h2}
\end{align}
This will then imply  
\begin{align} \label{eq:min h_n& min h}
    |\underset{\theta \in \Theta}{\operatorname{min}}\,\widehat{T}_{n,x}(\theta) - \underset{\theta\in\Theta}{\operatorname{min}}\,T_x(\theta)| \xrightarrow{n\rightarrow\infty} 0. 
\end{align}
Since $\Theta$ is compact, $\{\widehat{\theta}_n\}_{n=1}^{\infty}:=\{\widehat{\theta}_n(x)\}_{n=1}^{\infty}$ has a convergence subsequence, still denoted as $\{\widehat{\theta}_n\}_{n=1}^{\infty}$, that converges to a point $\theta_0 \in \Theta$. By continuity of $T_x$, we have 
\begin{align*}
    |T_x(\widehat{\theta}_n)-T_x(\theta_0)|\xrightarrow{n\rightarrow\infty} 0,
\end{align*}
which together \eqref{eq:h_n h2} implies
\begin{align*}
    |\widehat{T}_{n,x}(\widehat{\theta}_n)-T_x(\theta_0)|\xrightarrow{n\rightarrow\infty} 0,
\end{align*}
which together with \eqref{eq:min h_n& min h} further implies \begin{align*}
    T_x(\theta_0)=\underset{\theta\in\Theta}{\operatorname{min}}\, T_x(\theta),
\end{align*}
i.e., $\theta_0$ is a minimizer of $T_x$. Now if $T_x$ has a unique minimizer $\theta^{\ast}$ then any convergent subsequence of $\{\hat{\theta}_n(x)\}_{n=1}^{\infty}$ converges to $\theta^*$, which implies convergence of the whole sequence to $\theta^*$.  

Now we show the claim \eqref{eq:h_n h2}, by considering  an intermediate quantity 
\begin{align*}
    T_{n,x}(\theta)=\|g_{\theta}-\widehat{p}_{n}(\,\cdot\given x)\|_1,
\end{align*}
and bound 
\begin{align*}
    \underset{\theta\in\Theta}{\operatorname{sup}}\, |\widehat{T}_{n,x}(\theta)-T_x(\theta)|\leq \underset{\theta\in\Theta}{\operatorname{sup}}\,|\widehat{T}_{n,x}(\theta)-T_{n,x}(\theta)|+\underset{\theta\in\Theta}{\operatorname{sup}}\, |T_{n,x}(\theta)-T_x(\theta)|.
\end{align*}
For the second term, notice that 
\begin{align}
    |T_{n,x}(\theta)-T_x(\theta)|\leq \|\widehat{p}_{n}(\,\cdot\given x)-p(\,\cdot\given x)\|_1 \xrightarrow{n\rightarrow\infty} 0, \label{eq:D2}
\end{align}
by assumption on $\widehat{p}_{n}(\,\cdot\given x)$. 
To bound the first term, we have 
\begin{align*}
    |\widehat{T}_{n,x}(\theta)-T_{n,x}(\theta)| &\leq \|\widehat{g}_{n,\theta}-g_{\theta}\|_1 \leq \|\widehat{g}_{n,\theta}-\widetilde{g}_{n,\theta}\|_1+\|\widetilde{g}_{n,\theta}-g_{\theta}\|_1,
\end{align*}
where
\begin{align*}
    \widetilde{g}_{n,\theta}=\sum_{k=1}^K \widehat{\lambda}_{n,k}f_k(\cdot-\theta_k).
\end{align*}
The claim then follows from the observations that 
\begin{align*}
    \|\widehat{g}_{n,\theta}-\widetilde{g}_{n,\theta}\|_1&=\left\|\sum_{k=1}^K \widehat{\lambda}_{n,k}\widehat{f}_{n,k}(\cdot-\theta_k)-\sum_{k=1}^K\widehat{\lambda}_{n,k}f_k(\cdot-\theta_k)\right\|_1\leq \underset{k}{\operatorname{max}}\,\|\widehat{f}_{n,k}-f_k\|_1\xrightarrow{n\rightarrow\infty} 0,
\end{align*}
and 
\begin{align*}
    \|\widetilde{g}_{n,\theta}-g_{\theta}\|_1=\left\|\sum_{k=1}^K\widehat{\lambda}_{n,k}f_k(\cdot-\theta_k)-\sum_{k=1}^K\lambda_kf_k(\cdot-\theta_k)\right\|_1 \leq \sum_{k=1}^K |\widehat{\lambda}_{n,k}-\lambda_k|\xrightarrow{n\rightarrow\infty} 0.
\end{align*}
\end{proof}

\subsubsection{Proof of Theorem \ref{thm:identifiability differnt x0}}\label{appen:different sep}

Suppose we are given two models in  $\Phi(\cup_{x_0\in[a,b]}\mor_{x_0})$ with 
\begin{align*}
    \Phi(\mor_{x_0}) \ni p_X(x)\sum_{k=1}^K \lambda_k f(y-m_k(x))=
    q_X(x)\sum_{k=1}^K \pi_kg(y-\mu_k(x))\in\Phi(\mor_{x_1}).
\end{align*}
Then we need to show that $(p_X,f,\{\lambda_k\}_{k=1}^K, \{m_k\}_{k=1}^K)=(q_X,g,\{\pi_k\}_{k=1}^K, \{\mu_k\}_{k=1}^K)$.
By integrating the above equation with respect to $y$ we see that $p_X=q_X$. Since $p_X=q_X>0$ over $[a,b]$ by assumption, the above equation reduces to 
\begin{align*}
    \sum_{k=1}^K \lambda_k f(y-m_k(x))= \sum_{k=1}^K \pi_kg(y-\mu_k(x)) \quad \quad \forall \,\,(x,y)\in[a,b]\times \mathbb{R}.  
\end{align*}
Now we shall first prove that $(f,\{\lambda_k\}_{k=1}^K)=(g,\{\pi_k\}_{k=1}^K)$ by exploiting the points of separation. Let $G$ and $H$ be the such that $f=\phi_{\sigma}\ast G$ and $g=\phi_{\sigma}\ast H$. The above equation, in terms of mixing measures, implies that 
\begin{align}
    \sum_{k=1}^K \lambda_k G(\cdot-m_k(x))=\sum_{k=1}^K \pi_k H(\cdot-\mu_k(x)), \label{eq:identifiability generalization 1}
\end{align}
and in particular their supports are equal, i.e., 
\begin{align}
    \bigcup_{k=1}^K E_k(x)= \bigcup_{k=1}^K F_k(x),  \label{eq:identifiability generalization 2}
\end{align}
where $E_k(x)=\operatorname{supp}(G(\cdot-m_k(x)))$ and $F_k(x)=\operatorname{supp}(H(\cdot-\mu_k(x)))$. Since $x_0$ is a point of separation of the first model, we have 
\begin{align}
    \underset{j\neq k}{\operatorname{min}}\,\operatorname{dist}(E_j(x_0),E_k(x_0)) > \operatorname{diam}(\operatorname{supp}(G))\label{eq:identifiability generalization 4}
\end{align}
and similarly for the second model
\begin{align*}
    \underset{j\neq k}{\operatorname{min}}\,\operatorname{dist}(F_j(x_1),F_k(x_1)) > \operatorname{diam}(\operatorname{supp}(H)).
\end{align*}
Notice that the difference with the case where $x_0=x_1$ is that we cannot say anything about $F_k(x_0)$'s or $E_k(x_1)$'s yet. However, we can make the following claim: it holds that either
\begin{enumerate}
    \item for any $k$, $F_k(x_0)\subset E_{\Pi_0(k)}(x_0)$ for some $\Pi_0(k)$, or 
    \item for any $k$, $E_k(x_1)\subset F_{\Pi_1(k)}(x_1)$ for some $\Pi_1(k)$. 
\end{enumerate}
To see why the claim is true, suppose neither item 1 nor 2 is true. Failure of item 1 implies that there exists $k^*$ such that both $F_{k^*}(x_0)\cap E_{i^*}(x_0)$ and $F_{k^*}(x_0)\cap E_{j^*}(x_0)$ are nonempty for some $i^*\neq j^*.$  This implies that 
\begin{align*}
    \operatorname{diam}(\operatorname{supp}(H))=\operatorname{diam}(F_{k^*}(x_0)) \geq \operatorname{dist}(E_{i^*}(x_0),E_{j^*}(x_0))> \operatorname{diam}(\operatorname{supp}(G))
\end{align*}
where we used \eqref{eq:identifiability generalization 4} in the last step. Similarly, failure of item 2 would then imply that $\operatorname{diam}(\operatorname{supp}(G))>\operatorname{diam}(\operatorname{supp}(H))$, which then leads to a contradiction. 

We note that either one of the above conditions would give that the error densities and mixing proportions are equal as in the proof of Proposition \ref{prop:identifiability convolutional}. For completeness we repeat the details here, by supposing for the moment that item 1 holds. Since the $E_k(x_0)$'s are disjoint, $\Pi_0$ cannot be multivalued. Moreover, $\Pi_0$ must be onto because otherwise  \eqref{eq:identifiability generalization 2} would be violated, which further implies that $\Pi_0$ is in fact bijective. Therefore by disjointness of the $E_k(x_0)$'s again, we conclude that $F_k(x_0)=E_{\Pi_0(k)}(x_0)$. Restricting \eqref{eq:identifiability generalization 1} to the common support $F_k(x_0)$, we get that $\pi_k=\lambda_{\Pi_0(k)}$ and 
\begin{align}
    H(\cdot-\mu_k(x_0))=G(\cdot-m_{\Pi_0(k)}(x_0)). \label{eq:identifiability generalization 3}. 
\end{align}
The assumption that the error density $f$ has mean zero, i.e., $\int_{\mathbb{R}}xf(x)dx=0$, translates to $\int_{\mathbb{R}}\theta dG(\theta)=0$. Therefore \eqref{eq:identifiability generalization 3} implies that $\mu_k(x_0)=m_{\Pi_0(x_0)}$ by computing first moments and hence $H=G$, which gives $g=f.$

Now it remains to show that the regression functions are equal and this part follows exactly the same argument in the proof of Theorem \ref{thm:identifiability convolutional} that is based on the identifiability of translation families. 
\qed

\section{Technical Lemmas}\label{sec:technical lemma}

\begin{lemma}[$L^1$ version of Lemma 3.1 in \cite{ghosal2001entropies}] \label{lemma:approx mixing measure}
Let $G$ be a mixing measure supported on $[-M,M]$. For $k\geq \left(\frac{9e}{2}\right)^6 \vee \big(\frac{2M}{\sigma}\big)^3$, there exists a discrete mixing measure $G_k$ supported on $[-M,M]$ with at most $2k-1$ atoms such that 
\begin{align*}
    \|\phi_{\sigma}\ast G-\phi_{\sigma}\ast G_k\|_1 \leq C\left[k^{-1/3}\exp\left(-\frac{k^{2/3}}{8}\right) +\exp\left(\left(-\frac{k}{3}+\frac13\right)\log k\right) \right]=:s_{k},
\end{align*}
where $C$ is a universal constant. As a result, if $Q_L=\operatorname{arg\,min}_{Q\in \mix_L} \|Q-\phi_{\sigma}\ast G\|_1$ then for all $L$ large
\begin{align*}
    \|Q_L-p\|_1 \leq C s_{\lfloor\frac{L+1}{2}\rfloor}.
\end{align*}
\end{lemma}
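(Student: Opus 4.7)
The plan is to follow the Ghosal--van der Vaart strategy \cite{ghosal2001entropies}, adapted from Hellinger to $L^1$ distance. The backbone is \emph{moment matching}: I will invoke the classical Chebyshev--Markov / Tchakaloff theorem for truncated moment problems to produce, for a parameter $N$ to be optimized later, a discrete probability measure $G_k$ on $[-M,M]$ supported on at most $\lceil(N+1)/2\rceil\le 2k-1$ atoms whose first $N$ moments agree with those of $G$. The low-order cancellations this matching produces will then be exploited through a Taylor expansion of the Gaussian kernel in the translation variable.

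Concretely, fixing $x$ and Taylor-expanding $\theta \mapsto \phi_\sigma(x-\theta)$ about $\theta=0$ yields
\begin{align*}
\phi_\sigma(x-\theta) \;=\; \sum_{j=0}^{N-1}\frac{(-\theta)^j}{j!}\,\phi_\sigma^{(j)}(x) \;+\; R_N(x,\theta), \qquad R_N(x,\theta) \;=\; \frac{\theta^N}{(N-1)!}\int_0^1 (1-t)^{N-1}\phi_\sigma^{(N)}(x-t\theta)\,dt.
\end{align*}
Moment matching makes the polynomial part integrate to zero against $d(G-G_k)$, leaving $(\phi_\sigma\ast G)(x) - (\phi_\sigma\ast G_k)(x) = \int R_N(x,\theta)\,d(G-G_k)(\theta)$. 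To estimate this in $L^1$, I will use the Hermite identity $\phi_\sigma^{(N)}(x) = (-1)^N\sigma^{-N}H_N(x/\sigma)\phi_\sigma(x)$ combined with the Cauchy--Schwarz bound $\int|H_N(u)|\phi_1(u)\,du \le (\E H_N(Z)^2)^{1/2}=\sqrt{N!}$, which gives $\|\phi_\sigma^{(N)}\|_1 \le \sqrt{N!}/\sigma^N$. A Fubini estimate then produces
\begin{align*}
\|\phi_\sigma \ast G - \phi_\sigma \ast G_k\|_1 \;\le\; 2\sup_{|\theta|\le M}\int|R_N(x,\theta)|\,dx \;\le\; \frac{2(M/\sigma)^N}{\sqrt{N!}}.
\end{align*}
Stirling's formula bounds the right-hand side by a multiple of $\bigl(eM^2/(N\sigma^2)\bigr)^{N/2}$, and choosing $N$ in two different regimes (roughly $N\asymp k^{2/3}$ and $N\asymp k$) will reproduce the two summands of $s_k$. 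The assumption $k \ge (9e/2)^6 \vee (2M/\sigma)^3$ is there precisely to force $eM^2/(N\sigma^2)<1$, so Stirling yields genuine exponential decay rather than a trivial bound.

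For the ``as a result'' part, I observe that any convolutional mixture $\phi_\sigma \ast H$ with $H$ discrete and supported on at most $L$ atoms belongs to $\mix_L$. Setting $k=\lfloor(L+1)/2\rfloor$ (so that $2k-1\le L$) and applying the first part gives $\phi_\sigma \ast G_k \in \mix_L$ with $\|\phi_\sigma\ast G_k - p\|_1 \le C s_k$. Since $Q_L$ is by definition the $L^1$-closest element of $\mix_L$ to $p$, the conclusion $\|Q_L - p\|_1 \le C s_{\lfloor(L+1)/2\rfloor}$ follows at once.

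The hardest part will be the explicit bookkeeping of constants required to recover both of the terms $k^{-1/3}\exp(-k^{2/3}/8)$ and $\exp((-k/3+1/3)\log k)$ simultaneously; these two expressions suggest that the ``single'' moment-matching construction should really be run in two different parameter regimes (or in a coupled two-scale fashion) and the minimum of the two bounds reported as $s_k$. Passing from the Hellinger argument of \cite{ghosal2001entropies} to the $L^1$ version outlined above is otherwise routine once one notices that only the $L^1$ norm of $\phi_\sigma^{(N)}$ enters, instead of the weighted $L^2$ quantity appearing in the Hellinger analysis---and this is exactly what the Hermite/Cauchy--Schwarz estimate supplies.
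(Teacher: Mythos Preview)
Your approach is correct and in fact cleaner than the paper's, but it follows a genuinely different route. The paper does \emph{not} Taylor-expand $\phi_\sigma(x-\theta)$ in $\theta$; instead it expands the scalar exponential $e^y$ to order $k-1$ at $y=-(x-\theta)^2/(2\sigma^2)$. Because that remainder behaves like $(e|y|)^k/k^k$ and blows up as $|x|\to\infty$, the paper must split the $L^1$ integral at $R_k=\sigma k^{1/3}$: the piece $|x|>R_k$ is handled by a crude Gaussian tail (Mill's ratio) estimate on each of $\phi_\sigma\ast G$ and $\phi_\sigma\ast G_k$ separately, producing the summand $k^{-1/3}\exp(-k^{2/3}/8)$; the piece $|x|\le R_k$ uses the moment matching to kill the polynomial part (degree $2k-2$ in $\theta$) and produces the summand $\exp((-k/3+1/3)\log k)$. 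The two terms of $s_k$ are thus artefacts of this splitting, not intrinsic features of the problem.

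Your Hermite identity plus Cauchy--Schwarz gives the global bound $\|\phi_\sigma^{(N)}\|_1\le\sqrt{N!}/\sigma^N$, so no splitting is needed and you obtain a single error term $2(M/\sigma)^N/\sqrt{N!}$. Consequently your worry about ``two different parameter regimes'' for $N$ is misplaced: just take $N$ as large as the $(2k-1)$-atom budget permits (e.g.\ $N=2k-2$, to match the paper's citation of Ghosal--van~der~Vaart's Lemma~A.1). Under the hypothesis $k\ge(2M/\sigma)^3$ one has $M/\sigma\le k^{1/3}/2$, and then Stirling gives
\[
\frac{2(M/\sigma)^{2k-2}}{\sqrt{(2k-2)!}}\;\le\;2\Bigl(\frac{k^{2/3}/4}{(2k-2)/e}\Bigr)^{k-1}\;\le\;2\Bigl(\frac{e}{4k^{1/3}}\Bigr)^{k-1}\;=\;2\,(e/4)^{k-1}\,k^{-(k-1)/3}\;\le\;2\,k^{-(k-1)/3},
\]
which is already a universal constant times the \emph{second} summand of $s_k$, hence $\le Cs_k$. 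You never need to reproduce the first (and dominant) summand separately. What your argument buys is a shorter proof with a sharper bound; what the paper's buys is that it stays entirely within elementary real analysis (no Hermite orthogonality), at the cost of the extra tail-splitting step.
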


\begin{proof}
By \cite[][Lemma~A.1]{ghosal2001entropies}, since $G$ is compacted supported, there exists a discrete measure $G_k$ with at most $2k-1$ atoms so that 
\begin{align}
    \int \theta^{\ell} dG(\theta)=\int \theta^{\ell} dG_k(\theta) ,\quad \quad \ell=1,\ldots,2k-2. \label{eq:moment equality}
\end{align}
We have 
\begin{align*}
    \|\phi_{\sigma}\ast G-\phi_{\sigma}\ast G_k\|_1&=\int_{|x|\leq R_k} |\phi_{\sigma}\ast G-\phi_{\sigma}\ast G_k|dx+\int_{|x|>R_k} |\phi_{\sigma}\ast G-\phi_{\sigma}\ast G_k|dx\\
    &=:I_1+I_2, 
\end{align*}
where $R_k:=\sigma k^{1/3} \geq 2M$. 
We have 
\begin{align*}
    \phi_{\sigma}\ast G&=\int_{-M}^{M} \phi_{\sigma}(x-\theta)dG(\theta)\leq \phi_{\sigma}(x-M),\quad \quad x>R_k\\
    \phi_{\sigma}\ast G&=\int_{-M}^{M} \phi_{\sigma}(x-\theta)dG(\theta)\leq \phi_{\sigma}(x+M),\quad \quad x<-R_k
\end{align*}
and hence 
\begin{align*}
    \int_{|x|>R_k} \phi_{\sigma}\ast G \,dx&\leq \int_{R_k}^{\infty} \phi_{\sigma}(x-M)dx +\int_{-\infty}^{-R_k}\phi_{\sigma}(x+M)dx\\
    &= 2\int_{\frac{R_k-M}{\sigma}}^{\infty} \frac{1}{\sqrt{2\pi}}e^{-\frac{x^2}{2}}dx \leq \frac{2}{\sqrt{2\pi}} \frac{\sigma}{R_k-M}e^{-\frac{|R_k-M|^2}{2\sigma^2}},
\end{align*}
where we have used the bound $\int_{a}^{\infty}e^{-\frac{x^2}{2}}dx\leq a^{-1}e^{-\frac{a^2}{2}}$ in the last step. Since $G_k$ is also supported on $[-M,M]$, we have similarly
\begin{align*}
    \int_{|x|>R_k} \phi_{\sigma}\ast G_k \,dx\leq \frac{2}{\sqrt{2\pi}} \frac{\sigma}{R_k-M}e^{-\frac{|R_k-M|^2}{2\sigma^2}}
\end{align*}
and hence 
\begin{align}
    \int_{|x|>R_k}|\phi_{\sigma}\ast G-\phi_{\sigma}\ast G_k|dx\leq \frac{4\sigma}{\sqrt{2\pi}} \frac{e^{-\frac{|R_k-M|^2}{2\sigma^2}}}{R_k-M} \leq C k^{-1/3}\exp\left(-\frac{k^{2/3}}{8}\right), \label{eq:bound |x|>M}
\end{align}
where we have used $R_k-M\geq R_k/2$ in the last step 
To bound $I_1$, we first approximate $\phi_{\sigma}$ using Taylor expansion. For $y<0$ and using that $k!\geq k^ke^{-k}$, we have 
\begin{align*}
    \left|e^y-\sum_{j=0}^{k-1} \frac{y^j}{j!}\right|\leq \frac{|y|^k}{k!}\leq \frac{(e|y|)^k}{k^k}.
\end{align*}
Setting $y=-\frac{x^2}{2\sigma^2}$, we have 
\begin{align*}
    \left|\phi_{\sigma}(x)-\frac{1}{\sqrt{2\pi\sigma^2}}\sum_{j=0}^{k-1}\frac{(-2\sigma^{-2}x^2)^j}{j!}\right|\leq \frac{(e2\sigma^{-2}x^2)^k}{\sqrt{2\pi\sigma^2}k^k}.
\end{align*}
Now for $|x|\leq R_k$, 
\begin{align*}
    |\phi_{\sigma}\ast G-\phi_{\sigma}\ast G_k|&\leq \left|\int \sum_{j=0}^{k-1}\frac{1}{\sqrt{2\pi\sigma^2}} \frac{(-2\sigma^{-2}|x-\theta|^2)^j}{j!}d(G-G_k)(\theta)
    \right| \\
    &\quad + 2\underset{|\theta|\leq M}{\operatorname{sup}} \left|\phi_{\sigma}(x-\theta)-\sum_{j=0}^{k-1}\frac{1}{\sqrt{2\pi\sigma^2}} \frac{(-2\sigma^{-2}|x-\theta|^2)^j}{j!}\right|\\
    &\leq\left|\int \sum_{j=0}^{k-1} \frac{(-2\sigma^{-2})^{j}}{\sqrt{2\pi\sigma^2}j!}  \sum_{\ell=0}^{2j} {2j \choose \ell}\theta^{\ell}x^{2j-\ell} d(G-G_k)(\theta)\right|\\
    &\quad + 2\underset{|\theta|\leq M}{\operatorname{sup}} \frac{(2e\sigma^{-2}|x-\theta|^2)^k}{\sqrt{2\pi\sigma^2}k^k}.
\end{align*}
The first term in the last display equals zero by \eqref{eq:moment equality}. Therefore we have 
\begin{align}
    \int_{|x|\leq R_k} |\phi_{\sigma}\ast G-\phi_{\sigma}\ast G_k|&\leq 2R_k  \underset{|\theta|\leq M, |x|\leq R_k}{\operatorname{sup}}\frac{(2e\sigma^{-2}|x-\theta|^2)^k}{\sqrt{2\pi\sigma^2}k^k}\nonumber \\
    &\leq 2R_k \frac{(2e\sigma^{-2}(R_k+M)^2)^k}{\sqrt{2\pi\sigma^2}k^k}\leq C\exp\left(\left(-\frac{k}{3}+\frac13\right)\log k\right), \label{eq:bound |x|<M}
\end{align}
where we have used $R_k+M\leq 3R_k/2$ in the last step. 
Now combining \eqref{eq:bound |x|>M} and \eqref{eq:bound |x|<M}, we have 
\begin{align*}
    \|\phi_{\sigma}\ast G-\phi_{\sigma}\ast G_k\|_1 \leq C\left[k^{-1/3}\exp\left(-\frac{k^{2/3}}{8}\right) +\exp\left(\left(-\frac{k}{3}+\frac13\right)\log k\right) \right],
\end{align*}
with $C$ a universal constant, provided that $k\geq \left(\frac{9e}{2}\right)^6 \vee \left(\frac{2a}{\sigma}\right)^3$.
\end{proof}

\begin{lemma}\label{lemma:moc}
Suppose $f=\phi_{\sigma}\ast G_0$ is a density satisfying $|(\mathcal{F}G_0)(w)|\geq J(w)>0$ for some function $J$. Suppose $\widehat{V}$ and $V$ are two mixing measures whose supports are contained in $[-B,B]$ for some $B>0$. Then 
\begin{align*}
    W_1(\widehat{V},V)\leq C_B F_{J,\sigma}(\|\widehat{V}\ast f-V\ast f\|_1),
\end{align*}
where $C_B$ is a constant depending only on $B$ and $F_{J,\sigma}$ is a strictly increasing function that depends only on $J,\sigma$ and satisfies $F_{J,\sigma}(d)\xrightarrow{d\rightarrow 0}0$.
\end{lemma}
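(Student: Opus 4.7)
The plan is to follow the Fourier deconvolution strategy used in Theorem~2 of \cite{nguyen2013} (reprised in the proof of Proposition~\ref{lemma:conditioning} above), with the modification that we must now invert the additional factor $\mathcal{F}G_0$ using the uniform lower bound $|\mathcal{F}G_0|\geq J>0$. I would begin by introducing a symmetric probability density $H$ with finite first moment whose Fourier transform is continuous and supported in $[-1,1]$, and setting $H_{\delta}(\cdot)=\delta^{-1}H(\delta^{-1}\cdot)$. By the triangle inequality,
$$W_1(\widehat{V},V)\leq W_1(\widehat{V},\widehat{V}\ast H_{\delta})+W_1(\widehat{V}\ast H_{\delta},V\ast H_{\delta})+W_1(V\ast H_{\delta},V),$$
where the two outer terms are each $\leq C\delta$ via the coupling $(X,X+\varepsilon)$ with $\varepsilon\sim H_{\delta}$, using that $H$ has finite first moment.

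For the middle term, I would observe that $\widehat{V}\ast H_{\delta}$ and $V\ast H_{\delta}$ are densities whose mass is concentrated on $[-B-1,B+1]$ up to tails of order $C_B\delta$ coming from $H_{\delta}$ (argued exactly as the tail bound in the $J_2$-estimate of the proof of Proposition~\ref{lemma:conditioning}). After truncating these tails, Cauchy--Schwarz on the remaining compact window yields
$$W_1(\widehat{V}\ast H_{\delta},V\ast H_{\delta})\leq C_B\bigl\|\widehat{V}\ast H_{\delta}-V\ast H_{\delta}\bigr\|_2+C_B\delta.$$
Now, since $\mathcal{F}\phi_{\sigma}(w)=e^{-\sigma^{2}w^{2}/2}$ never vanishes and $|\mathcal{F}G_0|\geq J>0$, the function $h_{\delta}:=\mathcal{F}^{-1}\!\bigl(\mathcal{F}H_{\delta}/(\mathcal{F}\phi_{\sigma}\cdot\mathcal{F}G_0)\bigr)$ is well defined ($\mathcal{F}H_{\delta}$ being continuous and compactly supported in $[-1/\delta,1/\delta]$), satisfies $H_{\delta}=f\ast h_{\delta}$, and hence
$$\widehat{V}\ast H_{\delta}-V\ast H_{\delta}=(\widehat{V}\ast f-V\ast f)\ast h_{\delta}.$$
Young's inequality and Plancherel then give
$$\bigl\|\widehat{V}\ast H_{\delta}-V\ast H_{\delta}\bigr\|_2\leq \|\widehat{V}\ast f-V\ast f\|_1\,\|h_{\delta}\|_2,\qquad \|h_{\delta}\|_2^{2}\leq C\int_{|w|\leq 1/\delta}\frac{e^{\sigma^{2}w^{2}}}{J(w)^{2}}\,dw=:\Psi_{J,\sigma}(1/\delta).$$

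Combining everything, with $d:=\|\widehat{V}\ast f-V\ast f\|_1$, I obtain
$$W_1(\widehat{V},V)\leq C_B\bigl(\delta+d\cdot\Psi_{J,\sigma}(1/\delta)^{1/2}\bigr),$$
and finally choose $\delta=\delta(d)\downto 0$ as $d\downto 0$ so as to balance the two terms, taking $F_{J,\sigma}(d)$ to be the resulting minimum (or, if needed, a monotone upper envelope to ensure strict monotonicity). In the canonical cases $J(w)=c|w|^{-\beta}$ or $J(w)=c\exp(-|w|^{\beta}/\gamma)$ from \cite{fan1991optimal,nguyen2013}, the standard choice $\delta^{-2}\asymp-\log d$ (or a correspondingly slower scale) recovers the familiar logarithmic moduli of continuity from the deconvolution literature. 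The main obstacle is that $J$ is allowed to be an \emph{arbitrary} positive function rather than one of these prototypes, so I must verify that $\Psi_{J,\sigma}(1/\delta)<\infty$ for every fixed $\delta>0$ and that $\delta(d)$ can be chosen so that the resulting $F_{J,\sigma}$ is strictly increasing with $F_{J,\sigma}(d)\to 0$; both follow from monotonicity and continuity considerations but require some care in the abstract setting, since the best rate depends entirely on how fast $J$ decays at infinity.
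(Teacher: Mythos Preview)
Your proposal is correct and follows essentially the same approach as the paper: introduce a bandlimited mollifier $H_\delta$, bound the outer $W_1$ terms by $C\delta$, pass to $L^2$ on the compact window via Cauchy--Schwarz and control the tail by $C_B\delta$, then deconvolve using $h_\delta=\mathcal{F}^{-1}(\mathcal{F}H_\delta/\mathcal{F}f)$ together with Young and Plancherel. The only place the paper is slightly more explicit is in the final balancing step: rather than minimizing over $\delta$ and taking a monotone envelope, the paper defines $A_{J,\sigma}(\delta)=\delta\big/\int_{|w|<1/\delta}e^{\sigma^2w^2}J(w)^{-2}\,dw$, observes it is continuous, strictly increasing, and maps $(0,\infty)$ onto itself, and then sets $F_{J,\sigma}:=A_{J,\sigma}^{-1}$, which immediately gives the strict monotonicity and the limit $F_{J,\sigma}(d)\to 0$ that you flagged as the main obstacle.
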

\begin{proof}
The proof is similar to \cite[][Theorem~2]{nguyen2013}. We include the details for completeness.
Let $H$ be a density function with bounded first moment whose Fourier transform is supported in $[-1,1]$ and $H_{\delta}=\delta^{-1}H(\delta^{-1}\cdot)$. We have 
\begin{align*}
    W_1(\widehat{V},V)\leq W_1(\widehat{V},\widehat{V}\ast H_{\delta})+ W_1(\widehat{V}\ast H_{\delta},V\ast H_{\delta})+W_1(V\ast H_{\delta},V),
\end{align*}
where $W_1(\widehat{V},\widehat{V}\ast H_{\delta})$ and $W_1(V\ast H_{\delta},V)$ are both bounded by $C\delta$. By \cite[][Theorem~6.15]{villani2008optimal} we have
\begin{align*}
    W_1(\widehat{V}\ast H_{\delta},V\ast H_{\delta})&\leq \int_{\mathbb{R}} |\theta||\widehat{V}\ast H_{\delta}(\theta)-V\ast H_{\delta}(\theta)|d\theta\\
    &=\int_{|\theta|\leq B+1}|\theta| |\widehat{V}\ast H_{\delta}(\theta)-V\ast H_{\delta}(\theta)|d\theta \\
    &\quad + \int_{|\theta|>B+1}|\theta||\widehat{V}\ast H_{\delta}(\theta)-V\ast H_{\delta}(\theta)|d\theta.
\end{align*}
The second term can be bounded by noticing that 
\begin{align*}
    \int_{|\theta|>B+1} |\theta|V\ast H_{\delta}(\theta)d\theta
    &=\int_{|z|\leq M} \int_{|\theta|>B+1} |\theta| H_{\delta}(\theta-z)d\theta dV(z)\\
    &\leq \int_{|z|\leq B} \int_{|\theta+z|>B+1} |\theta| H_{\delta}(\theta)d\theta dV(z)\\
    &\quad+\int_{|z|\leq B} \int_{|\theta+z|>B+1} |z| H_{\delta}(\theta)d\theta dV(z)\\
    &\leq \int_{\mathbb{R}}|\theta|H_{\delta}(\theta)d\theta+\int_{|z|\leq B}|z|dV(z)\int_{|\theta|>1}H_{\delta}(\theta)d\theta\\
    &\leq C\delta+B\int_{|\theta|>1/\delta} \delta|\theta| H(\theta)d\theta  \leq C_B \delta
\end{align*}
and similarly for $\int_{|\theta|>B+1} \widehat{V}\ast H_{\delta}(\theta)d\theta$. The first term can be bounded using Cauchy-Schwarz by 
\begin{align*}
    \sqrt{\int_{|\theta|\leq B+1}|\theta|^2 d\theta \int_{|\theta|\leq B+1}|\widehat{V}\ast H_{\delta}(\theta)-V\ast H_{\delta}(\theta)|^2d\theta}\leq C_B\|\widehat{V}\ast H_{\delta}-V\ast H_{\delta}\|_2 
\end{align*}
Now letting
$h_{\delta}=\mathcal{F}^{-1}(\mathcal{F}H_{\delta}/\mathcal{F}f)$ (since $\mathcal{F}H_{\delta}$ is continuous and compactly supported, and $\mathcal{F}f$ is never zero, $\mathcal{F}H_{\delta}/\mathcal{F}f\in L^1$ and $h_{\delta}$ is well-defined), we have $H_{\delta}=f\ast h_{\delta}$ and then
\begin{align*}
    \widehat{V}\ast H_{\delta}&=(\widehat{V}\ast f)\ast h_{\delta}\\
    V\ast H_{\delta}&=(V\ast f)\ast h_{\delta}.
\end{align*}
Thus by Young's inequality we have
\begin{align*}
    \|\widehat{V}\ast H_{\delta}-V\ast H_{\delta}\|_2\leq \|\widehat{V}\ast f-V\ast f\|_1 \|h_{\delta}\|_2
\end{align*}
and by Plancherel's identity
\begin{align*}
    \|h_{\delta}\|^2_2=\left\|\frac{\mathcal{F}H_{\delta}}{\mathcal{F}f}\right\|_2^2\leq C \int_{|w|<1/\delta} (\mathcal{F}f)(w)^{-2}dw\leq C\int_{|w|<1/\delta} \exp(\sigma^2w^2)J(w)^{-2}dw,
\end{align*}
where we have used the fact that $\mathcal{F}H$ is supported on $[-1,1]$, which implies that $\mathcal{F}H_{\delta}$ is supported on $[-1/\delta,1/\delta]$. 
Therefore we have 
\begin{align*}
    W_1(\widehat{V},V) \leq C_B \left[\delta+\|\widehat{V}\ast f-V\ast f\|_1 \int_{|w|<1/\delta} \exp(\sigma^2w^2)J(w)^{-2}dw\right].
\end{align*}
Now define 
\begin{align*}
    A_{J,\sigma}(\delta)=\frac{\delta}{\int_{|w|<1/\delta} \exp(\sigma^2w^2)J(w)^{-2}dw}
\end{align*}
and notice that $A_{J,\sigma}$ is continuous, strictly increasing, $A_{J,\sigma}(\delta)\xrightarrow{\delta\rightarrow0}0$ and $A_{J,\sigma}(\delta)\xrightarrow{\delta\rightarrow\infty}\infty$. Therefore $A_{J,\sigma}$ is invertible with range $[0,\infty)$ so that we can set
$\delta=A_{J,\sigma}^{-1}(\|\widehat{V}\ast f-V\ast f\|_1)$ and get 
\begin{align*}
    W_1(\widehat{V},V)\leq C_B A_{J,\sigma}^{-1}(\|\widehat{V}\ast f-V\ast f\|_1)=:C_BF_{J,\sigma}(\|\widehat{V}\ast f-V\ast f\|_1).
\end{align*}
The fact that $F_{J,\sigma}$ is also strictly increasing and $F_{J,\sigma}(d)\xrightarrow{d\rightarrow 0}0$ is immediate. 
\end{proof}

\begin{lemma}\label{lemma:uniform transversality}
The conditions \eqref{eq:uniform transversality assumption} implies \ref{assumption:unpmor:sep of reg}.
\end{lemma}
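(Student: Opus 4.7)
Fix an arbitrary sequence $a_n\to 0$. I aim to bound
\[
\operatorname{Leb}\bigl(\{x\in[a,b]:\exists\, j\neq k,\ |m_j(x)-m_k(x)|\leq a_n\}\bigr)
\]
by a quantity depending on $\vartheta\in\mathcal{U}_{x_0}$ only through the uniform constants $N,\delta,\eta$ from \eqref{eq:uniform transversality assumption}. The natural approach is pairwise: for each $j\neq k$, set $d_{jk}:=m_j-m_k$ and split the bad set along the partition $[a,b]=Z_{jk}(\delta)\cup Z_{jk}(\delta)^c$.

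On the complement $Z_{jk}(\delta)^c$, the second clause of \eqref{eq:uniform transversality assumption} gives $|d_{jk}|\geq \eta$, so this portion contributes zero measure as soon as $a_n<\eta$, which holds for all large $n$ uniformly in $\vartheta$ because $\eta$ is uniform across the family.

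On $Z_{jk}(\delta)$, the third clause of \eqref{eq:uniform transversality assumption} gives $|d_{jk}'|\geq \eta$ pointwise, so $d_{jk}$ is strictly monotone on each connected component of $Z_{jk}(\delta)$. Two observations will then be key. First, every connected component of $Z_{jk}(\delta)$ contains exactly one zero of $d_{jk}$: at least one, because the component is an open interval contained in a $\delta$-neighborhood of $Z_{jk}$, and the straight segment from any point in the component to a nearest element of $Z_{jk}$ stays in $Z_{jk}(\delta)$ and hence in the same component; at most one, by strict monotonicity. Consequently the components of $Z_{jk}(\delta)$ biject with the elements of $Z_{jk}$, and there are at most $\operatorname{Card}(Z_{jk})\leq N$ of them. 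Second, on any single component the slope bound and the mean value theorem give $|d_{jk}(x)-d_{jk}(y)|\geq \eta|x-y|$, so the sublevel set $\{|d_{jk}|\leq a_n\}$ is the preimage of an interval of length $2a_n$ under a bi-Lipschitz map and has Lebesgue measure at most $2a_n/\eta$. Summing over at most $N$ components per pair and over at most $\binom{K}{2}$ pairs yields
\[
\sup_{\vartheta\in\mathcal{U}_{x_0}}\operatorname{Leb}\bigl(\{x:\exists\, j\neq k,\ |m_j(x)-m_k(x)|\leq a_n\}\bigr)\leq \binom{K}{2}\,\frac{2Na_n}{\eta},
\]
which tends to $0$, proving \ref{assumption:unpmor:sep of reg}.

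The main obstacle is the combinatorial subclaim that each connected component of $Z_{jk}(\delta)$ carries exactly one zero of $d_{jk}$; this is what keeps the number of components bounded by $N$ even though $Z_{jk}(\delta)$ is a priori an open set whose geometry is not directly controlled by the hypotheses. Once that is established, the rest is an elementary measure estimate using strict monotonicity and the mean value theorem.
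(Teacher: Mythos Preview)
Your proof is correct and follows essentially the same route as the paper: a pairwise decomposition, the observation that for $a_n<\eta$ the bad set lies in $Z_{jk}(\delta)$, and a mean-value-theorem bound of $2a_n/\eta$ on each piece. The only difference is cosmetic: the paper covers $Z_{jk}(\delta)$ directly by the at most $N$ intervals $(x_i-\delta,x_i+\delta)$ centered at the zeros (which are convex, so the MVT argument goes through immediately even if they overlap), whereas you pass to connected components and argue each contains exactly one zero. Your subclaim is correct, but unnecessary---the paper's covering sidesteps what you identify as ``the main obstacle'' entirely, since the number of covering intervals is $\operatorname{Card}(Z_{jk})\leq N$ by definition.
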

\begin{proof}
Let $a_n$ be a sequence converging to zero. Notice that 
\begin{align*}
    \left\{x:\exists j\neq k ,\,  |m_j(x)-m_k(x)|\leq a_n \right\}=\bigcup_{j\neq k} \left\{x:|m_j(x)-m_k(x)|\leq a_n\right\}
\end{align*}
and it suffices to show $\left|\{x:|m_j(x)-m_k(x)|\leq a_n\}\right|\xrightarrow{n\rightarrow\infty} 0$ for all $j\neq k$. By the second assumption in \eqref{eq:uniform transversality assumption} we see that $\{x:|m_j(x)-m_k(x)|\leq a_n\}\subset Z_{jk}(\delta)$ when $n$ is large enough so that $a_n<\eta$. By the first assumption in \eqref{eq:uniform transversality assumption} we can write 
\begin{align*}
    Z_{jk}(\delta)=\bigcup_{i=1}^{N_{jk}} (x_i-\delta,x_i+\delta)
\end{align*}
with $Z_{jk}=\{x_i\}_{i=1}^{N_{jk}}$ and 
\begin{align}
    \left\{x:|m_j(x)-m_k(x)|\leq a_n\right\} &= \left\{x:|m_j(x)-m_k(x)|\leq a_n\right\} \cap Z_{jk}(\delta)\nonumber\\
   &=\bigcup_{i=1}^{N_{jk}}\left\{x:|m_j(x)-m_k(x)|\leq a_n\right\}\cap (x_i-\delta,x_i+\delta). \label{eq:universal transversality set decomp}
\end{align}
Since $N_{jk}\leq N$, it is enough to show that each component of \eqref{eq:universal transversality set decomp} goes to zero. Denoting $\Delta_{jk}(x)=m_j(x)-m_k(x)$, for any $y,z\in \{x:|m_j(x)-m_k(x)|\leq a_n\}\cap (x_i-\delta,x_i+\delta)$ we have 
\begin{align*}
    2a_n \geq |\Delta_{jk}(y)-\Delta_{jk}(z)|  = |\Delta_{jk}^{\prime}(\zeta)||y-z| \geq \eta |y-z|, 
\end{align*}
where we have used the fact that $\zeta$ is a point between $y$ and $z$ so that $\zeta\in(x_i-\delta,x_i+\delta)$ and $|\Delta_{jk}^{\prime}(\zeta)|\geq\eta$ by the third assumption in \eqref{eq:uniform transversality assumption}. Therefore we have 
\begin{align*}
    \left|\left\{x:|m_j(x)-m_k(x)|\leq a_n\right\}\cap (x_i-\delta,x_i+\delta)\right| \leq 2\eta^{-1}a_n \xrightarrow{n\rightarrow \infty} 0,
\end{align*}
which concludes the proof. 
\end{proof}

\section{Consistency of Density Estimators}\label{sec:density estimator}

\subsection{$L^1$ consistency of KDE for Sections \ref{sec:step 1} and \ref{sec:gen}}\label{sec:consistency cde}
In this section we construct a conditional density estimator that suffices for the purposes of Sections \ref{sec:step 1} and \ref{sec:gen}.
Let $\{(X_i,Y_i)\}_{i=1}^n$ be i.i.d. samples from the joint density
\begin{align*}
    p(x,y)=p_X(x)\sum_{k=1}^K \lambda_kf(y-m_k(x)), \quad \quad (x,y)\in[a,b]\times \mathbb{R}.
\end{align*}
Let $H=\frac12\mathbf{1}_{[-1,1]}$ (in general any kernel with bounded $p$-variation) and consider the kernel density estimator
\begin{align*}
    \widehat{p}_n(x,y)=\frac{1}{nh_n^2}\sum_{i=1}^n H\left(\frac{x-X_i}{h_n}\right)H\left(\frac{y-Y_i}{h_n}\right)
\end{align*}
for a suitable sequence $h_n\rightarrow 0$ to be determined. 
The marginal density of $\widehat{p}_n(x,y)$ is 
\begin{align*}
    \widehat{p}_{X,n}(x)=\frac{1}{nh_n}\sum_{i=1}^n H\left(\frac{x-X_i}{h_n}\right).
\end{align*}
and we define our conditional density estimator as
\begin{align}
   \widehat{p}_n(y|x) = \frac{\widehat{p}_n(x,y)}{\widehat{p}_{X,n}(x)}. \label{eq:conditional density estimator}
\end{align}

\subsubsection{Pointwise consistency for KDE in Section \ref{sec:gen}} \label{sec:pointwise consistency CDE}
The following Lemma gives pointwise consistency of \eqref{eq:conditional density estimator} under minimal assumptions. 
\begin{lemma}\label{lemma:convergence of KDE}
Suppose the joint density $p(x,y)$ is $\beta-$H\"older   over its support   (e.g. when $f,$ $p_X$, and the $m_k$'s are all $\beta-$H\"older) and the marginal density $p_X$ is bounded below by a positive constant. Let  $h_n$ satisfy the following scaling 
\begin{align}
    h_n\rightarrow 0, \quad \frac{nh_n^2}{|\log h_n|}\rightarrow\infty, \quad \frac{|\log h_n|}{\log \log n}\rightarrow \infty, \quad h_n^2 \leq ch_{2n}^2 \label{eq:hn scaling}
\end{align}
for some $c>0$. 
Then with probability one, 
\begin{align*}
    \underset{  (x,y)\in[a+h_n,b-h_n]\times\mathbb{R} }{\operatorname{sup}}\, |\widehat{p}_n(y|x)-p(y|x)| &\xrightarrow{n\rightarrow \infty} 0 
    .
\end{align*}
  Consequently, for each $x\in(a,b)$,   $\|\widehat{p}_n(\,\cdot\given x)-p(\,\cdot\given x)\|_1\xrightarrow{n\rightarrow \infty} 0.$
\end{lemma}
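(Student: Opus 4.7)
The plan is to invoke the standard bias--variance decomposition for kernel density estimators, handle the joint and marginal estimators separately, combine them by writing the ratio's error as a linear combination, and finally deduce the $L^1$ conclusion from Scheff\'e's lemma.

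For the bias, since $H=\tfrac12\mathbf 1_{[-1,1]}$ integrates to one and $p$ is $\beta$-H\"older on its support, a direct change of variables gives
\[
\bigl|\E\wh p_n(x,y)-p(x,y)\bigr|
\;\le\;\int_{[-1,1]^2}H(u)H(v)\bigl|p(x-h_nu,y-h_nv)-p(x,y)\bigr|\,du\,dv
\;\le\; C h_n^{\beta},
\]
uniformly in $(x,y)$, and analogously $|\E\wh p_{X,n}(x)-p_X(x)|\le Ch_n^{\beta}$. For the stochastic fluctuations, I would invoke a Stute-type uniform law of the iterated logarithm for kernel density estimators. Under the scaling in \eqref{eq:hn scaling}, the combination $nh_n^2/|\log h_n|\to\infty$ together with $|\log h_n|/\log\log n\to\infty$ and $h_n^2\le ch_{2n}^2$ are precisely the hypotheses that yield
\[
\sup_{(x,y)}\bigl|\wh p_n(x,y)-\E\wh p_n(x,y)\bigr|
=O\!\Bigl(\sqrt{|\log h_n|/(nh_n^{2})}\Bigr)\quad\text{a.s.,}
\]
and the analogous one-dimensional bound $O(\sqrt{|\log h_n|/(nh_n)})$ for $\wh p_{X,n}$. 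Both go to $0$ almost surely.

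To combine, write
\[
\wh p_n(y\mid x)-p(y\mid x)
=\frac{\wh p_n(x,y)-p(x,y)}{\wh p_{X,n}(x)}
-p(y\mid x)\,\frac{\wh p_{X,n}(x)-p_X(x)}{\wh p_{X,n}(x)}.
\]
Since $\inf_{[a,b]}p_X>0$, the uniform marginal bound forces $\wh p_{X,n}(x)\ge \tfrac12\inf p_X$ uniformly on $[a+h_n,b-h_n]$ for all large $n$, so both terms above vanish uniformly. For the $L^1$ statement at fixed $x\in(a,b)$, note that the product structure yields $\int_{\R}\wh p_n(x,y)\,dy=\wh p_{X,n}(x)$, so $\wh p_n(\cdot\mid x)$ and $p(\cdot\mid x)$ are both probability densities on $\R$; pointwise a.s.\ convergence combined with Scheff\'e's lemma then delivers $\|\wh p_n(\cdot\mid x)-p(\cdot\mid x)\|_1\to 0$ a.s.

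The main technical obstacle is the uniformity in $y\in\R$ in the stochastic step, as classical uniform-LIL results are usually stated on compact sets. This is handled by exploiting the compact support of $H$ to localize — only the $Y_i\in[y-h_n,y+h_n]$ contribute to $\wh p_n(x,y)$, so the relevant empirical process is indexed by a VC class with index independent of $y$ — so the $\sqrt{|\log h_n|/(nh_n^2)}$ rate carries over. For the $L^1$ conclusion that is actually invoked elsewhere in the paper, pointwise a.s.\ convergence at each $(x,y)$ plus Scheff\'e already suffices, bypassing this subtlety entirely.
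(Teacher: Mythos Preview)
Your proposal is correct and follows essentially the same route as the paper: the same ratio decomposition, the same $O(h_n^\beta)$ bias bound from H\"older continuity, a uniform stochastic bound on $\wh p_n-\E\wh p_n$ under the scaling \eqref{eq:hn scaling}, and Scheff\'e's lemma for the $L^1$ conclusion. The only substantive difference is that where you gesture at a Stute-type LIL and flag the uniformity in $y\in\R$ as an obstacle to be handled by a VC argument, the paper simply invokes \cite[Theorem~2.3]{gine2002rates}, which directly gives the a.s.\ uniform bound over all of $\R^d$ for bounded-variation kernels---so your ``main technical obstacle'' is already resolved by that reference and need not be treated separately.
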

\begin{proof}
We have 
\begin{align}
    |\widehat{p}_n(y|x)-p(y|x)| \leq \frac{1}{\widehat{p}_{X,n}(x)}|\widehat{p}_n(x,y)-p(x,y)|+p(x,y)\left|\frac{1}{\widehat{p}_{X,n}(x)}-\frac{1}{p_X(x)}\right|. \label{eq:cde decomposition}
\end{align}
Therefore it suffices to study  $\operatorname{sup}_{(x,y)\in[a,b]\times\mathbb{R}}|\widehat{p}_{n}(x,y)-p(x,y)|$ and $\operatorname{sup}_{x\in[a,b]}|\widehat{p}_{X,n}(x)-p_X(x)|$.
Notice that $\widehat{p}_n(x,y)$ and $\widehat{p}_{X,n}(x)$ are KDEs with respect to kernels $\frac14\mathbf{1}_{[-1,1]^2}$ and $\frac12\mathbf{1}_{[-1,1]}$, which are both of bounded variation. By \cite[][Theorem~2.3]{gine2002rates}, the scaling \eqref{eq:hn scaling} implies that with probability one
\begin{align*}
    \underset{(x,y)\in[a,b]\times\mathbb{R}}{\operatorname{sup}}\,|\widehat{p}_n(x,y)-\mathbb{E}\widehat{p}_n(x,y)| &\xrightarrow{n\rightarrow\infty} 0, \\
    \underset{x\in [a,b]}{\operatorname{sup}}\,|\widehat{p}_{X,n}(x)-\mathbb{E}\widehat{p}_{X,n}(x)| &\xrightarrow{n\rightarrow\infty} 0. 
\end{align*} 
(Notice that the second condition on $h_n$ also imples $\frac{nh_n}{|\log h_n|}\rightarrow\infty$, which guarantees the convergence in one dimension.)
For the biases, we have for $(x,y)\in [a+h_n,b-h_n]\times\mathbb{R}$
\begin{align}
    &|\mathbb{E}\widehat{p}_n(x,y)-p(x,y)|\nonumber\\
    &=\left|\frac{1}{h_n^2}\int_{-h_n}^{h_n}\int_{-h_n}^{h_n} H\left(\frac{X}{h_n}\right)H\left(\frac{Y}{h_n}\right)\left[p(x+X,y+Y)-p(x,y)\right]dXdY\right|\nonumber\\
    & \leq \left|\frac{1}{h_n^2}\int_{-h_n}^{h_n}\int_{-h_n}^{h_n} H\left(\frac{X}{h_n}\right)H\left(\frac{Y}{h_n}\right)C_p(X^2+Y^2)^{\frac{\beta}{2}}dXdY\right| \leq C_p h_n^{\beta} \label{eq:bias holder}
\end{align}
and similarly $|\mathbb{E}\widehat{p}_n(x)-p(x)|\leq C_p h_n^{\beta}$.
Therefore with probability one 
\begin{align*}
    \underset{(x,y)\in[a+h_n,b-h_n]\times\mathbb{R}}{\operatorname{sup}}\,|\widehat{p}_n(x,y)-p(x,y)| &\xrightarrow{n\rightarrow\infty} 0, \\
    \underset{x\in [a+h_n,b-h_n]}{\operatorname{sup}}\,|\widehat{p}_{X,n}(x)-p_X(x)| &\xrightarrow{n\rightarrow\infty} 0,
\end{align*} 
and since the marginal density $p$ is assumed to be bounded below by a positive constant, \eqref{eq:cde decomposition} implies that with probability one 
\begin{align*}
    \underset{(x,y)\in[a+h_n,b-h_n]\times\mathbb{R}}{\operatorname{sup}}\, |\widehat{p}_n(y|x)-p(y|x)| \xrightarrow{n\rightarrow \infty} 0. 
\end{align*}
The second assertion follows from   the fact that any $x\in(a,b)$ is eventually contained in all $[a+h_n,b-h_n]$,   and the general observation that if $g_n\rightarrow g$ pointwise with $g_n,g$ densities, then we have 
\begin{align*}
    \int |g_n-g| = 2\int_{g\geq g_n} g-g_n =2\int (g-g_n)\mathbf{1}_{g\geq g_n}\xrightarrow{n\rightarrow\infty} 0 
\end{align*}
by dominated convergence theorem. 
\end{proof}

\subsubsection{Uniform consistency for KDE in Section \ref{sec:step 1}}\label{sec:cde uniform}
In this section we address uniform consistency of \eqref{eq:conditional density estimator}. In particular we need to keep track of the dependence of the error $\|\widehat{p}_n(\,\cdot\given x)-p(\,\cdot\given x)\|_1$ on the model parameters and are essentially finding rates  of convergence. Therefore we need stronger regularity assumptions on the joint density $p$. 

\begin{lemma}\label{lemma:cde bound}
Suppose $f=\phi_{\sigma}\ast G_0$ with $\operatorname{supp}(G_0)\subset [-r,r]$. Suppose $p_X$ and the $m_k$'s are all bounded and have bounded derivatives   over their supports. Suppose further $p_X\geq p_{\operatorname{min}}>0$. Let $h_n$ satisfy the scaling \eqref{eq:hn scaling}. Then
\begin{align*}
    \underset{x\in[a+h_n,b-h_n]}{\operatorname{sup}}\, &\mathbb{E} \|\widehat{p}_n(\,\cdot\given x)-p(\,\cdot\given x)\|_1 
    \leq C_{\sigma}p_{\operatorname{min}}^{-1}\Bigg(h_n\Big[\|p_X^{\prime}\|_{\infty}+(1\vee\underset{k}{\operatorname{max}}\, \|m_k^{\prime}\|_{\infty}) \|p_X\|_{\infty} \Big]\\
    &+\frac{\sqrt{\|p_X\|_{\infty}}}{\sqrt{nh_n^2}}\left[1\vee\big( h_n+2r+\underset{k}{\operatorname{max}}\, \|m_k\|_{\infty}\big)\right]+ \sqrt{\frac{\|p_X\|_{\infty}|\log h_n|}{nh_n}}\Bigg),
\end{align*}
where $C_{\sigma}$ is a constant depending only on $\sigma$. 
\end{lemma}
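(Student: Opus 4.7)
The plan is to decompose the conditional density error in the standard way,
\begin{align*}
\|\widehat{p}_n(\cdot|x) - p(\cdot|x)\|_1 \leq \frac{\|\widehat{p}_n(x,\cdot) - p(x,\cdot)\|_{L^1(y)}}{\widehat{p}_{X,n}(x)} + \frac{|\widehat{p}_{X,n}(x) - p_X(x)|}{\widehat{p}_{X,n}(x)},
\end{align*}
using $\int p(x,y)\,dy = p_X(x)$, and then to control the denominator via a ``good event'' $A_n = \{\widehat{p}_{X,n}(x) \geq p_{\min}/2\}$. On $A_n$, the denominators are at most $2/p_{\min}$; on $A_n^c$, both $\widehat{p}_n(\cdot|x)$ and $p(\cdot|x)$ are probability densities so the trivial bound $\|\widehat{p}_n(\cdot|x) - p(\cdot|x)\|_1 \leq 2$ combined with $\mathbb{P}(A_n^c)$ being negligible (via concentration of $\widehat{p}_{X,n}(x)$) takes care of the rest.

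For the numerator on $A_n$, the bias is handled by Taylor expansion: since $p(x,y) = p_X(x) \sum_k \lambda_k f(y - m_k(x))$ with $f = \phi_\sigma \ast G_0$, we have $\partial_x p(x,y) = p_X'(x) f_x(y) - p_X(x) \sum_k \lambda_k m_k'(x) f'(y - m_k(x))$, and $\|f'\|_1 \leq \|\phi_\sigma'\|_1 = \sqrt{2/(\pi\sigma^2)}$. Integrating in $y$ gives
\begin{align*}
\int |\mathbb{E}\widehat{p}_n(x,y) - p(x,y)|\, dy \leq C_\sigma\, h_n\big[\|p_X'\|_\infty + (1 \vee \max_k \|m_k'\|_\infty)\|p_X\|_\infty\big];
\end{align*}
the symmetric $y$-derivative is handled analogously and contributes a similar term. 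The marginal bias $|\mathbb{E}\widehat{p}_{X,n}(x) - p_X(x)| \lesssim h_n \|p_X'\|_\infty$ is standard.

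The variance of the joint KDE is the delicate piece. I would split the $y$-axis into a window $S_n = [-R_n, R_n]$ with $R_n \sim h_n + 2r + \max_k \|m_k\|_\infty$ (up to a $O(\sigma\sqrt{\log n})$ margin for the Gaussian tails of $f$) and its complement. On $S_n$, Cauchy--Schwarz gives
\begin{align*}
\mathbb{E}\int_{S_n} |\widehat{p}_n(x,y) - \mathbb{E}\widehat{p}_n(x,y)|\, dy \leq \sqrt{|S_n|}\, \sqrt{\int_{S_n}\mathrm{Var}(\widehat{p}_n(x,y))\, dy},
\end{align*}
and a direct computation of $\mathrm{Var}(\widehat{p}_n(x,y))$ followed by Fubini yields $\int_{S_n}\mathrm{Var}(\widehat{p}_n(x,y))\,dy \leq C\|p_X\|_\infty/(nh_n^2)$. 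This produces the factor $(h_n + 2r + \max_k\|m_k\|_\infty)^{1/2}\sqrt{\|p_X\|_\infty}/\sqrt{nh_n^2}$, matching the stated bound. The Gaussian-tail contribution from $S_n^c$ is bounded using that $\sum_k \lambda_k f(\cdot - m_k(x))$ has sub-Gaussian tails with scale $\sigma$ beyond $\max_k |m_k(x)| + r$, yielding a term dominated by the Cauchy--Schwarz bound for an appropriate choice of $R_n$ (and absorbing any extra constant into $C_\sigma$). The marginal variance is treated either by the same Cauchy--Schwarz computation on $[a,b]$ or by invoking the uniform bound of \cite{gine2002rates}, producing the $\sqrt{\|p_X\|_\infty |\log h_n|/(nh_n)}$ term.

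The main obstacle is the variance step for the joint KDE: the conditional density $p(\cdot|x)$ is not compactly supported (Gaussian tails from $\phi_\sigma$), so the truncation radius $R_n$ must be chosen carefully so that both the Cauchy--Schwarz loss $\sqrt{|S_n|}$ and the tail remainder are of the claimed order, while preserving the explicit linear dependence on $2r + \max_k \|m_k\|_\infty$ demanded by the statement. Combining these ingredients on $A_n$ with the trivial bound $\leq 2$ on $A_n^c$ (whose probability is controlled by Chebyshev applied to $\widehat{p}_{X,n}(x)$, using $p_X \geq p_{\min}$) and taking the supremum over $x\in[a+h_n,b-h_n]$ yields the claimed inequality.
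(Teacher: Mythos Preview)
Your overall strategy is sound and close to the paper's, but the key algebraic step differs in a way that matters for the stated constant. The paper adds and subtracts $\widehat{p}_n(x,y)/p_X(x)$ (not $p(x,y)/\widehat{p}_{X,n}(x)$), which yields
\[
\|\widehat{p}_n(\cdot\mid x)-p(\cdot\mid x)\|_1
\;\le\;\frac{1}{p_X(x)}\int_{\mathbb R}|\widehat{p}_n(x,y)-p(x,y)|\,dy
\;+\;\frac{|\widehat{p}_{X,n}(x)-p_X(x)|}{p_X(x)},
\]
because $\int \widehat{p}_n(x,y)\,dy=\widehat{p}_{X,n}(x)$ cancels the random denominator. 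The prefactor is then the \emph{deterministic} $p_X(x)^{-1}\le p_{\min}^{-1}$, so no good-event argument is needed at all, and one takes expectations directly. Your decomposition puts $\widehat{p}_{X,n}(x)$ in the denominator and forces the detour through $A_n=\{\widehat{p}_{X,n}(x)\ge p_{\min}/2\}$; the contribution $2\,\mathbb P(A_n^c)$, bounded by Chebyshev as $C\|p_X\|_\infty/(p_{\min}^2 nh_n)$, carries an extra factor $p_{\min}^{-1}$ beyond the stated $C_\sigma p_{\min}^{-1}(\cdots)$. This is harmless for uniform consistency, but it does not recover the exact dependence on $p_{\min}$ in the lemma.

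For the joint variance, the paper also works slightly differently: it bounds $\int_{\mathbb R}\sqrt{\mathrm{Var}\,\widehat{p}_n(x,y)}\,dy\le (nh_n^2)^{-1/2}\int_{\mathbb R}\sqrt{(\mathcal H_{h_n}\!*\!p)(x,y)}\,dy$ pointwise, and then splits the last integral at $|y|=h_n+2r+\max_k\|m_k\|_\infty$, using $\|p\|_\infty\le C_\sigma\|p_X\|_\infty$ on the bulk and the explicit Gaussian tail of $\phi_\sigma*G_0$ on the complement. This gives the linear factor $1\vee(h_n+2r+\max_k\|m_k\|_\infty)$ directly, with no $\sqrt{\log n}$ margin needed. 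Your Cauchy--Schwarz step on $S_n$ actually produces the tighter $\sqrt{|S_n|}$, which is fine since $\sqrt{x}\le 1\vee x$; but your tail treatment is vague, and the cleanest way to close it is exactly the paper's pointwise $\sqrt{\mathrm{Var}}$ bound rather than enlarging $R_n$ by $O(\sigma\sqrt{\log n})$.
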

\begin{proof}
\begin{align}
    \int_\mathbb{R} \left|\widehat{p}_n(y|x)-p(y|x)\right| dy&=\int_{\mathbb{R}} \left|\frac{\widehat{p}_n(x,y)}{\widehat{p}_{X,n}(x)}-\frac{p(x,y)}{p_X(x)}\right|dy\nonumber \\
    & \leq \int_{\mathbb{R}}\left|\frac{\widehat{p}_n(x,y)}{\widehat{p}_{X,n}(x)}-\frac{\widehat{p}_n(x,y)}{p_X(x)}\right| dy +\int_{\mathbb{R}} \left|\frac{\widehat{p}_n(x,y)}{p_X(x)}-\frac{p(x,y)}{p_X(x)}\right|dy\nonumber\\
    & =  \frac{1}{p_X(x)}\int_{\mathbb{R}}\left|\widehat{p}_n(x,y)-p(x,y)\right|dy+\frac{|\widehat{p}_{X,n}(x)-p_X(x)|}{p_X(x)} , \label{eq:cde decomposition 2}
\end{align}
where we have used the fact that $\int_{\mathbb{R}}\widehat{p}_n(x,y)dy=\widehat{p}_{X,n}(x)$. For the same choice of $h_n$ as in \eqref{eq:hn scaling}, by \cite[][Theorem~2.3]{gine2002rates} and a similar bias analysis as in \eqref{eq:bias holder} when $p$ is differentiable, we have 
\begin{align}
    \underset{x \in[a+h_n,b-h_n]}{\operatorname{sup}}\, |\widehat{p}_{X,n}(x)-p_X(x)| \leq C \sqrt{\frac{\|p_X\|_{\infty}|\log h_n|}{nh_n}}+\|p_X^{\prime}\|_{\infty}h_n, \label{eq:KDE sup 1d rate}
\end{align}
where $C$ is a universal constant. 
So it suffices to bound the term 
\begin{align}
    \int_{\mathbb{R}}\left|\widehat{p}_n(x,y)-p(x,y)\right| dy\leq \int_{\mathbb{R}}|\widehat{p}_n(x,y)-\mathbb{E}\widehat{p}_n(x,y)| dy+\int_{\mathbb{R}}\left|\mathbb{E}\widehat{p}_n(x,y)-p(x,y)\right|dy, \label{eq:cde bv tradeoff}
\end{align}
where the expectation is with respect to the joint distribution of $(X,Y)$. The two integrals $I_1$ and $I_2$ can be interpreted as the variance and bias.

\vspace{0.2cm}
\noindent{\textbf{Bias}}:
Fix $(x,y)\in[a+h_n,b-h_n]\times \mathbb{R}$. Recall that 
\begin{align*}
    \widehat{p}_n(x,y)=\frac{1}{nh_n^2}\sum_{i=1}^n H\left(\frac{x-X_i}{h_n}\right) H\left(\frac{y-Y_i}{h_n}\right), \quad \quad H=\frac12\mathbf{1}_{[-1,1]}, 
\end{align*}
which gives 
\begin{align*}
    \mathbb{E}\widehat{p}_n(x,y)&=\frac{1}{h_n^2}\int_{\mathbb{R}} \int_{\mathbb{R}} H\left(\frac{x-X}{h_n}\right)H\left(\frac{y-Y}{h_n}\right)p(X,Y) dXdY\\
    &=\frac{1}{h_n^2} \int_{-h_n}^{h_n}\int_{-h_n}^{h_n} H\left(\frac{X}{h_n}\right)H\left(\frac{Y}{h_n}\right)p(x+X,y+Y) dXdY.
\end{align*}
Since $p$ is differentiable, we have when $|X|\leq h_n$
\begin{align}
    p(x+X,y+Y)-p(x,y)&=\int_0^1 \nabla p\left((x,y)+t(X,Y)\right)^T [X\,\, Y] dt \nonumber  \\
    &=\int_0^1 \left[\frac{\partial p}{\partial x}\left(x+tX,y+tY\right) X +\frac{\partial p}{\partial y}\left(x+tX,y+tY\right) Y\right] dt, \label{eq:taylor}
\end{align}
where 
\begin{align}
    \frac{\partial p}{\partial x} &= p_X^{\prime}(x)\sum_{k=1}^K\lambda_k f(y-m_k(x))-p_X(x)\sum_{k=1}^K \lambda_k f^{\prime} (y-m_k(x))m_k^{\prime}(x)\label{eq:px}\\
    \frac{\partial p}{\partial y} &=p_X(x)\sum_{k=1}^K\lambda_kf^{\prime}(y-m_k(x)). \label{eq:py}
\end{align}
So by \eqref{eq:taylor} we have 
\begin{align*}
    &\left\|p(x+X,\cdot+Y)-p(x,\cdot)\right\|_1 \\
    &\leq \int_0^1  X\left\|\frac{\partial p}{\partial x}\left(x+tX,\,\cdot+tY\right)\right\|_1+Y\left\|\frac{\partial p}{\partial y}\left(x+tX,\,\cdot+tY\right)\right\|_1dt,
\end{align*}
where by \eqref{eq:px} 
\begin{align*}
    \left\|\frac{\partial p}{\partial x}\left(x+tX,\,\cdot+tY\right)\right\|_1&\leq \sum_{k=1}^K\lambda_k \Big[\|p_X^{\prime}\|_{\infty}  \|f(\cdot-m_k(x+tX))\|_1 \\
    &\quad+ \|p_X\|_{\infty} \|m_k^{\prime}\|_{\infty} \|f^{\prime}(\cdot-m_k(x+tX))\|_1\Big]\\
     & \leq \|p_X^{\prime}\|_{\infty}+ \underset{k}{\operatorname{max}}\, \|m_k^{\prime}\|_{\infty} \|p_X\|_{\infty} \|f^{\prime}\|_{1},  
\end{align*}
and by \eqref{eq:py}
\begin{align*}
    \left\|\frac{\partial p}{\partial y}\left(x+tX,\,\cdot+tY\right)\right\|_1 &\leq 
    \|p_X\|_{\infty}\sum_{k=1}^K \lambda_k \|f^{\prime}(\cdot-m_k(x+tX))\|_1
     \leq \|p_X\|_{\infty}\|f^{\prime}\|_{1}. 
\end{align*}
It then follows that 
\begin{align*}
    \left\|p(x+X,\cdot+Y)-p(x,\cdot)\right\|_1& \leq |X|\big(\|p_X^{\prime}\|_{\infty}+\underset{k}{\operatorname{max}}\, \|m_k^{\prime}\|_{\infty} \|p_X\|_{\infty} \|f^{\prime}\|_{1}\big) \\
    &\quad+ |Y|\|p_X\|_{\infty}\|f^{\prime}\|_{1}.
\end{align*}
Therefore
\begin{align*}
    &\int_{\mathbb{R}} \left|\mathbb{E}p_n(x,y)-p(x,y)\right|dy\\
    &\leq \int_{\mathbb{R}} \left[\frac{1}{h_n^2} \int_{-h_n}^{h_n}\int_{-h_n}^{h_n} H\left(\frac{X}{h_n}\right)H\left(\frac{Y}{h_n}\right)\left|p(x+X,y+Y)-p(x,y)\right| dXdY\right] dy \nonumber\\
    &=\frac{1}{h_n^2} \int_{-h_n}^{h_n}\int_{-h_n}^{h_n} H\left(\frac{X}{h_n}\right)H\left(\frac{Y}{h_n}\right) \left\|p(x+X,\cdot+Y)-p(x,\cdot)\right\|_1  dXdY\nonumber \\
    & \leq \frac{1}{h_n^2} \int_{-h_n}^{h_n}\int_{-h_n}^{h_n} H\left(\frac{X}{h_n}\right)H\left(\frac{Y}{h_n}\right)|X|\Big(\|p_X^{\prime}\|_{\infty}+ \underset{k}{\operatorname{max}}\, \|m_k^{\prime}\|_{\infty} \|p_X\|_{\infty} \|f^{\prime}\|_{1}\Big) dXdY\nonumber\\
    &\quad + \frac{1}{h_n^2} \int_{-h_n}^{h_n}\int_{-h_n}^{h_n} H\left(\frac{X}{h_n}\right)H\left(\frac{Y}{h_n}\right)|Y|\|p_X\|_{\infty}\|f^{\prime}\|_{1} dXdY \nonumber\\
    & \leq h_n\Big[\|p_X^{\prime}\|_{\infty}+(1\vee\underset{k}{\operatorname{max}}\, \|m_k^{\prime}\|_{\infty}) \|p_X\|_{\infty} \|f^{\prime}\|_{1}\Big].
\end{align*}
Notice that 
\begin{align*}
    f^{\prime}(x)=\int_{-r}^r \left(\frac{\theta-x}{\sigma^2}\right)\phi_{\sigma}(x-\theta)dG_0(\theta)
\end{align*}
and 
\begin{align*}
    \|f^{\prime}\|_1\leq \int_{-r}^r\int_{\mathbb{R}} \frac{|x|}{\sigma^2}\phi_{\sigma}(x) dx dG_0(\theta)\leq C_{\sigma}
\end{align*}
for some constant $C_{\sigma}$. So for $x\in[a+h_n,b-h_n]$ we have 
\begin{align}
    \int_{\mathbb{R}} \left|\mathbb{E}p_n(x,y)-p(x,y)\right|dy \leq C_{\sigma}h_n\Big[\|p_X^{\prime}\|_{\infty}+(1\vee\underset{k}{\operatorname{max}}\, \|m_k^{\prime}\|_{\infty}) \|p_X\|_{\infty} \Big] \label{eq:cde bias}
\end{align}

\vspace{0.2cm}
\noindent{\textbf{Variance}}:
Fix $x\in[a,b]$. Applying Cauchy-Schwarz to $\mathbb{E}|\widehat{p}_n(x,y)-\mathbb{E}\widehat{p}_n(x,y)|$ (where both expectations are with respect to the joint law), we have 
\begin{align*}
    \mathbb{E} \left[\int_{\mathbb{R}} |\widehat{p}_n(x,y)-\mathbb{E}\widehat{p}_n(x,y)| dy \right]
    &=\int_{\mathbb{R}} \mathbb{E} |\widehat{p}_n(x,y)-\mathbb{E}\widehat{p}_n(x,y)| dy \\
    &\leq \int_{\mathbb{R}} \sqrt{\mathbb{E}|\widehat{p}_n(x,y)-\mathbb{E}\widehat{p}_n(x,y)|^2 }dy,
\end{align*}
where we compute 
\begin{align*}
    &\mathbb{E}|\widehat{p}_n(x,y)-\mathbb{E}\widehat{p}_n(x,y)|^2\\
    &=\mathbb{E}[\widehat{p}_n(x,y)]^2-[\mathbb{E}\widehat{p}_n(x,y)]^2\\
    &=\mathbb{E}\left[\frac{1}{n^2h_n^4}\sum_{i,j}H\left(\frac{x-X_i}{h_n}\right)H\left(\frac{y-Y_i}{h_n}\right)H\left(\frac{x-X_j}{h_n}\right)H\left(\frac{y-Y_j}{h_n}\right)\right]\\
    &\quad - \frac{1}{h_n^4}\left[\int_{\mathbb{R}} \int_{\mathbb{R}} H\left(\frac{x-X}{h_n}\right)H\left(\frac{y-Y}{h_n}\right)p(X,Y) dXdY\right]^2\\
    &=\frac{1}{n^2h_n^4}\sum_{i=1}^n \int_{\mathbb{R}} \int_{\mathbb{R}} H^2\left(\frac{x-X}{h_n}\right)H^2\left(\frac{y-Y}{h_n}\right)p(X,Y)dXdY \\
    &\quad + \frac{n(n-1)}{n^2h_n^4}\left[\int_{\mathbb{R}} \int_{\mathbb{R}} H\left(\frac{x-X}{h_n}\right)H\left(\frac{y-Y}{h_n}\right)p(X,Y) dXdY\right]^2\\
    &\quad - \frac{1}{h_n^4}\left[\int_{\mathbb{R}} \int_{\mathbb{R}} H\left(\frac{x-X}{h_n}\right)H\left(\frac{y-Y}{h_n}\right)p(X,Y) dXdY\right]^2\\
    &\leq \frac{1}{nh_n^2} \int_{\mathbb{R}} \int_{\mathbb{R}} \frac{1}{h_n^2} H^2\left(\frac{x-X}{h_n}\right)H^2\left(\frac{y-Y}{h_n}\right)p(X,Y)dXdY.
\end{align*}
Denoting $\mathcal{H}_{h_n}$ as the product kernel $h_n^{-2}H^2(h_n^{-1}\cdot)H^2(h_n^{-1}\cdot)$, we have shown that 
\begin{align*}
    \mathbb{E} \left[\int_{\mathbb{R}} |\widehat{p}_n(x,y)-\mathbb{E}\widehat{p}_n(x,y)| dy \right]
    &\leq \frac{1}{\sqrt{nh_n^2}}  \int_{\mathbb{R}} \sqrt{(\mathcal{H}_{h_n}\ast p)(x,y)} dy.
\end{align*}
Recall that $f=\phi_{\sigma}\ast G_0$ with $\operatorname{supp}(G_0)\subset [-r,r]$. Then for $|y|>2r$ we have 
\begin{align*}
    f(y)\leq \frac{1}{\sqrt{2\pi\sigma^2}}\exp\left(-\frac{y^2}{8\sigma^2}\right), 
\end{align*}
and for $|y|>2r+\underset{k}{\operatorname{max}}\, |m_k(x)|$
\begin{align*}
    p(x,y)=p_X(x)\sum_{k=1}^K\lambda_k f(y-m_k(x)) \leq  C_{\sigma}p_X(x) \exp(-c_{\sigma}y^2). 
\end{align*}
Therefore for $y>h_n+2r+\underset{k}{\operatorname{max}}\, \|m_k\|_{\infty}$, we have 
\begin{align*}
    (\mathcal{H}_{h_n}\ast p)(x,y)&= \int_{-1}^1 \int_{-1}^1 H^2(X)H^2(Y)p(x+h_nX,y+h_nY) dXdY\\
    &\leq C_{\sigma}\|p_X\|_{\infty} \int_{-1}^1 \int_{-1}^1 H^2(X)H^2(Y) \exp\left(-c_{\sigma}(y+h_nY)^2\right)dXdY\\
    &\leq C_{\sigma} \|p_X\|_{\infty} \exp\left(-c_{\sigma}(y-h_n)^2\right).
\end{align*}
Therefore 
\begin{align*}
    \int^{\infty}_{h_n+2r+\underset{k}{\operatorname{max}}\, \|m_k\|_{\infty}} \sqrt{(\mathcal{H}_{h_n}\ast p)(x,y)} dy \leq C_{\sigma}\sqrt{\|p_X\|_{\infty}}
\end{align*}
for a possibly different $C_{\sigma}$. 
Similarly,
\begin{align*}
    \int_{-\infty}^{-h_n-2r-\underset{k}{\operatorname{max}}\, \|m_k\|_{\infty}}\sqrt{(\mathcal{H}_{h_n}\ast p)(x,y)} dy \leq C_{\sigma}\sqrt{\|p_X\|_{\infty}}.
\end{align*}
Meanwhile since $(\mathcal{H}_{h_n}\ast p)(x,y)\leq \|p_X\|_{\infty}\|f\|_{\infty}\leq C_{\sigma}\|p_X\|_{\infty}$, then
\begin{align*}
    \int_{-h_n-2r-\underset{k}{\operatorname{max}}\, \|m_k\|_{\infty}}^{h_n+2r+\underset{k}{\operatorname{max}}\, \|m_k\|_{\infty}}\sqrt{(\mathcal{H}_{h_n}\ast p)(x,y)} dy\leq C_{\sigma}\sqrt{\|p_X\|_{\infty}}\big( h_n+2r+\underset{k}{\operatorname{max}}\, \|m_k\|_{\infty}\big).
\end{align*}
Hence we have
\begin{align}
    \mathbb{E} \left[\int_{\mathbb{R}} |\widehat{p}_n(x,y)-\mathbb{E}\widehat{p}_n(x,y)| dy \right] 
    \leq \frac{C_{\sigma}\sqrt{\|p_X\|_{\infty}}}{\sqrt{nh_n^2}}\left[1\vee\big( h_n+2r+\underset{k}{\operatorname{max}}\, \|m_k\|_{\infty}\big)\right]. \label{eq:cde variance}
\end{align}
Plugging \eqref{eq:cde bias}, \eqref{eq:cde variance} in \eqref{eq:cde bv tradeoff}, we have for $x\in[a+h_n,b-h_n]$
\begin{align*}
    \mathbb{E} \int_{\mathbb{R}} \left|\widehat{p}_n(x,y)-p(x,y)\right|dy& \leq C_{\sigma}h_n\Big[\|p_X^{\prime}\|_{\infty}+(1\vee\underset{k}{\operatorname{max}}\, \|m_k^{\prime}\|_{\infty}) \|p_X\|_{\infty} \Big]\\
    &\quad+\frac{C_{\sigma}\sqrt{\|p_X\|_{\infty}}}{\sqrt{nh_n^2}}\left[1\vee\big( h_n+2r+\underset{k}{\operatorname{max}}\, \|m_k\|_{\infty}\big)\right],
\end{align*}
which together with \eqref{eq:KDE sup 1d rate}, \eqref{eq:cde decomposition 2} and the fact that $p_X\geq p_{\operatorname{min}}>0$ we get
\begin{align*}
    \underset{x\in[a+h_n,b-h_n]}{\operatorname{sup}}\, \mathbb{E} \|\widehat{p}_n(\,\cdot\given x)-p(\,\cdot\given x)\|_1 \leq C_{\sigma}h_np_{\operatorname{min}}^{-1}\Big[\|p_X^{\prime}\|_{\infty}+(1\vee\underset{k}{\operatorname{max}}\, \|m_k^{\prime}\|_{\infty}) \|p_X\|_{\infty} \Big]\\
    +\frac{C_{\sigma}p_{\operatorname{min}}^{-1}\sqrt{\|p_X\|_{\infty}}}{\sqrt{nh_n^2}}\left[1\vee\big( h_n+2r+\underset{k}{\operatorname{max}}\, \|m_k\|_{\infty}\big)\right]+ C p_{\operatorname{min}}^{-1}\sqrt{\frac{\|p_X\|_{\infty}|\log h_n|}{nh_n}}.
\end{align*}

\end{proof}

\subsection{Uniform $L^1$-consistency for KDE in Section \ref{sec:npmix}} \label{sec:kde uniform L1}
In this section we provide a specific density estimator that suffices for the purpose of Section \ref{sec:npmix}, i.e.,  $\operatorname{sup}_{\mathscr{U}}\mathbb{E}\|\widehat{p}_n-p\|_1\xrightarrow{n\rightarrow\infty}0$. To do so we need to keep track of all the parameter dependency when bounding $\mathbb{E}\|\widehat{p}_n-p\|_1$, as in Lemma \ref{lemma:kde L1 rate} below.  Let $\{Y_i\}_{i=1}^n$ be i.i.d. samples from the density $p(y)=\sum_{k=1}^K\lambda_kf_k(y-\mu_k)=\phi_{\sigma}\ast G$. Consider the kernel density estimator 
\begin{align*}
    \widehat{p}_n(y)=\frac{1}{nh_n}\sum_{i=1}^N H\left(\frac{y-Y_i}{h_n}\right), \quad \quad H=\frac12\mathbf{1}_{[-1,1]}.
\end{align*}

\begin{lemma}\label{lemma:kde L1 rate}
Suppose $\operatorname{supp}(G)\subset [-M,M]$. Let $h_n$ satisfy $h_n\rightarrow 0$ and $nh_n\rightarrow\infty$. Then 
\begin{align*}
    \mathbb{E}\|\widehat{p}_n-p\|_1\leq C_{\sigma}\left[h_n+ \frac{1\vee( h_n+2M)}{\sqrt{nh_n}}\right],
\end{align*}
where $C_{\sigma}$ is a constant depending only on $\sigma$. 
\end{lemma}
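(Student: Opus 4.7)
The plan is to carry out the standard bias-variance decomposition
\[
\mathbb{E}\|\widehat{p}_n-p\|_1
\;\le\;
\underbrace{\|\mathbb{E}\widehat{p}_n-p\|_1}_{\text{bias}}
\;+\;
\underbrace{\mathbb{E}\|\widehat{p}_n-\mathbb{E}\widehat{p}_n\|_1}_{\text{variance}},
\]
and control each piece using the fact that $p=\phi_\sigma\ast G$ is a smooth density whose tail decays like a Gaussian centered in $[-M,M]$. Since $G$ is supported in $[-M,M]$, for $|y|>2M$ one has the pointwise Gaussian bound $p(y)\le C_\sigma \exp(-c_\sigma y^2)$, and globally $\|p\|_\infty\le \|\phi_\sigma\|_\infty=(2\pi\sigma^2)^{-1/2}$. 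These two facts will do most of the work.

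For the bias, I would use that $\mathbb{E}\widehat{p}_n=H_{h_n}\ast p$ with $H_{h_n}(\cdot)=h_n^{-1}H(\cdot/h_n)$, write
\[
|\mathbb{E}\widehat{p}_n(y)-p(y)|
\;\le\;
\int_{-h_n}^{h_n}\!\tfrac{1}{h_n}H(u/h_n)\,|p(y+u)-p(y)|\,du,
\]
and bound $|p(y+u)-p(y)|\le |u|\int_0^1|p'(y+tu)|\,dt$. Integrating in $y$ and applying Fubini gives $\|\mathbb{E}\widehat{p}_n-p\|_1\le h_n\,\|p'\|_1$. Since $p'=\phi_\sigma'\ast G$, Young's inequality yields $\|p'\|_1\le \|\phi_\sigma'\|_1=\sqrt{2/(\pi\sigma^2)}=:C_\sigma$, independent of $G$. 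This gives the $C_\sigma h_n$ term, with a constant depending only on $\sigma$.

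For the variance, I would bound the pointwise variance by the second moment of one summand,
\[
\mathrm{Var}(\widehat{p}_n(y))
\;\le\;
\tfrac{1}{nh_n^2}\int H^2\!\bigl((y-z)/h_n\bigr)\,p(z)\,dz
\;=\;
\tfrac{1}{nh_n}\int_{-1}^{1}H^2(u)\,p(y-h_nu)\,du,
\]
so that by Cauchy--Schwarz
\[
\mathbb{E}\|\widehat{p}_n-\mathbb{E}\widehat{p}_n\|_1
\;\le\;
\int\sqrt{\mathrm{Var}(\widehat{p}_n(y))}\,dy
\;\le\;
\frac{1}{\sqrt{nh_n}}\int\sqrt{\int H^2(u)\,p(y-h_nu)\,du}\;dy.
\]
Then split the outer integral as $|y|\le h_n+2M$ and $|y|>h_n+2M$. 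On the bounded region use $\|p\|_\infty\le C_\sigma$ and $\|H^2\|_1\le 1/4$ to get a contribution of order $C_\sigma\cdot(h_n+2M)$. On the tail region, for $u\in[-1,1]$ and $|y|>h_n+2M$ one has $|y-h_nu|>2M$, so the Gaussian tail bound gives $p(y-h_nu)\le C_\sigma\exp(-c_\sigma(|y|-h_n)^2)$, and integrating $\sqrt{\cdot}$ over this region contributes only an $O_\sigma(1)$ constant. This yields the second term $C_\sigma(1\vee(h_n+2M))/\sqrt{nh_n}$.

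I do not expect a genuine obstacle here: both steps are routine, and the only quantity one must track carefully is that all implicit constants depend only on $\sigma$ (for the Gaussian tail/sup-norm bounds and for $\|\phi_\sigma'\|_1$), so that $M$ appears only through the explicit factor $1\vee(h_n+2M)$. The mildly delicate point is ensuring that the Cauchy--Schwarz step does not introduce an uncontrolled range of integration; this is precisely why the tail/bulk split based on the uniform support bound $[-M,M]$ is used, rather than, say, a direct Cauchy--Schwarz over all of $\mathbb{R}$, which would pick up an unbounded $\sqrt{|\mathbb{R}|}$.
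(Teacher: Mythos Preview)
Your proposal is correct and follows essentially the same approach as the paper: the same bias--variance split, the same bound $\|\mathbb{E}\widehat{p}_n-p\|_1\le h_n\|p'\|_1$ with $\|p'\|_1\le\|\phi_\sigma'\|_1$, and the same Cauchy--Schwarz step on the variance followed by the identical tail/bulk split at $|y|=h_n+2M$ using the Gaussian tail of $p$. The paper's argument is virtually line-by-line what you wrote.
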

\begin{proof}
We have
\begin{align*}
    \mathbb{E}\|\widehat{p}_n-p\|_1 \leq \mathbb{E}\|\widehat{p}-\mathbb{E}\widehat{p}_n\|_1+ \|\mathbb{E}\widehat{p}_n-p\|_1
\end{align*}
and it suffices to bound the variance and bias terms respectively. 

\vspace{0.2cm}
\noindent{\textbf{Bias}}:
We have 
\begin{align*}
    \mathbb{E}\widehat{p}_n(y)=\frac{1}{h_n}\int_{\mathbb{R}} H\left(\frac{y-Y}{h_n}\right)p(Y)dY =\frac{1}{h_n}\int_{\mathbb{R}}H\left(\frac{Y}{h_n}\right)p(y+Y)dY
\end{align*}
and then 
\begin{align*}
    \|\mathbb{E}\widehat{p}_n-p\|_1 \leq \frac{1}{h_n}\int_{\mathbb{R}}H\left(\frac{Y}{h_n}\right)\|p(\cdot+Y)-p(\cdot)\|_1 dY.
\end{align*}
Notice that we have 
\begin{align*}
    p(y+Y)-p(y)=\int_{0}^1 p^{\prime}(y+tY)Ydt 
\end{align*}
and $\|p(\cdot+Y)-p(\cdot)\|_1 \leq \|p^{\prime}\|_1 Y.$ Since $f_k=\phi_{\sigma}\ast G_k$, we have $f_k^{\prime}=\phi_{\sigma}^{\prime}\ast G_k$ and $\|f_k^{\prime}\|_1\leq \|\phi_{\sigma}^{\prime}\|_1\leq C_{\sigma}$. Therefore $\|p^{\prime}\|_1 \leq \sum_{k=1}^K\lambda_k \|f_k^{\prime}\|_1 \leq C_{\sigma}$ and we get 
\begin{align}
    \|\mathbb{E}\widehat{p}_n-p\|_1\leq\frac{1}{h_n}\int_{\mathbb{R}}H\left(\frac{Y}{h_n}\right) C_{\sigma} Y dY \leq C_{\sigma}h_n. \label{eq:kde bias}
\end{align}

\vspace{0.2cm}
\noindent{\textbf{Variance}}:
Applying Cauchy-Schwarz to $\mathbb{E}|\widehat{p}_n(y)-\mathbb{E}\widehat{p}_n(y)|$, we have 
\begin{align*}
    \mathbb{E} \left[\int_{\mathbb{R}} |\widehat{p}_n(y)-\mathbb{E}\widehat{p}_n(y)| dy \right]=\int_{\mathbb{R}} \mathbb{E} |\widehat{p}_n(y)-\mathbb{E}\widehat{p}_n(y)| dy \leq \int_{\mathbb{R}} \sqrt{\mathbb{E}|\widehat{p}_n(y)-\mathbb{E}\widehat{p}_n(y)|^2 }dy,
\end{align*}
where we compute 
\begin{align*}
    &\mathbb{E}|\widehat{p}_n(y)-\mathbb{E}\widehat{p}_n(y)|^2\\
    &=\mathbb{E}[\widehat{p}_n(y)]^2-[\mathbb{E}\widehat{p}_n(y)]^2\\
    &=\mathbb{E}\left[\frac{1}{n^2h_n^2}\sum_{i,j}H\left(\frac{y-Y_i}{h_n}\right)H\left(\frac{y-Y_j}{h_n}\right)\right]
    - \frac{1}{h_n^2}\left[\int \int H\left(\frac{y-Y}{h_n}\right)p(Y) dY\right]^2\\
    &=\frac{1}{n^2h_n^2}\sum_{i=1}^n  \int_{\mathbb{R}} H^2\left(\frac{y-Y}{h_n}\right)p(Y)dY 
    + \frac{n(n-1)}{n^2h_n^2}\left[ \int_{\mathbb{R}} H\left(\frac{y-Y}{h_n}\right)p(Y) dY\right]^2\\
    &\quad - \frac{1}{h_n^2}\left[\int_{\mathbb{R}} H\left(\frac{y-Y}{h_n}\right)p(Y) dY\right]^2\\
    &\leq \frac{1}{nh_n}  \int_{\mathbb{R}} \frac{1}{h_n} H^2\left(\frac{y-Y}{h_n}\right)p(Y)dY.
\end{align*}
Denoting $\mathcal{H}_{h_n}$ as the product kernel $h_n^{-1}H^2(h_n^{-1}\cdot)$, we have shown that 
\begin{align*}
    \mathbb{E} \left[\int_{\mathbb{R}} |\widehat{p}_n(y)-\mathbb{E}\widehat{p}_n(y)| dy \right]
    &\leq \frac{1}{\sqrt{nh_n}}  \int_{\mathbb{R}} \sqrt{(\mathcal{H}_{h_n}\ast p)(y)} dy.
\end{align*}
Recall that $p=\phi_{\sigma}\ast G$ with $\operatorname{supp}(G)\subset [-M,M]$. Then we have
\begin{align*}
    p(y)\leq \frac{1}{\sqrt{2\pi\sigma^2}}\exp\left(-\frac{y^2}{8\sigma^2}\right), \quad \quad |y|>2M. 
\end{align*}
For $y>h_n+2M$, we have 
\begin{align*}
    (\mathcal{H}_{h_n}\ast p)(y)&= \int_{-1}^1 H^2(Y)p(y+h_nY) dY\\
    &\leq \int_{-1}^1 H^2(Y)C_{\sigma}\exp\left(-c_{\sigma}(y+h_nY)^2\right) dY 
    \leq C_{\sigma}  \exp\left(-c_{\sigma}(y-h_n)^2\right).
\end{align*}
Therefore 
\begin{align*}
    \int^{\infty}_{h_n+2M} \sqrt{(\mathcal{H}_{h_n}\ast p)(y)} dy \leq C_{\sigma}
\end{align*}
for a possibly different $C_{\sigma}$. 
Similarly,
\begin{align*}
    \int_{-\infty}^{-h_n-2M}\sqrt{(\mathcal{H}_{h_n}\ast p)(y)} dy \leq C_{\sigma}.
\end{align*}
Meanwhile since $(\mathcal{H}_{h_n}\ast p)(y)\leq C_{\sigma}$, then
\begin{align*}
    \int_{-h_n-2M}^{h_n+2M}\sqrt{(\mathcal{H}_{h_n}\ast p)(y)} dy\leq C_{\sigma}( h_n+2M).
\end{align*}
Hence we have
\begin{align}
    \mathbb{E} \left[\int_{\mathbb{R}} |\widehat{p}_n(y)-\mathbb{E}\widehat{p}_n(y)| dy \right] 
    \leq \frac{C_{\sigma}}{\sqrt{nh_n}}\left[1\vee( h_n+2M)\right]. \label{eq:kde variance}
\end{align}
The result then follows from \eqref{eq:kde bias} and \eqref{eq:kde variance}. 
\end{proof}

\end{document}